\newtheorem{thm}{Theorem}[section]
\newtheorem{lem}{Lemma}[section]
\newtheorem{cor}{Corollary}[section]
\newtheorem{prop}{Proposition}[section]
\newtheorem{rem}{Remark}[section]
\theoremstyle{definition}
\begin{document}
\numberwithin{equation}{section}

\title[On  the boundary complex   of   the $k$-Cauchy-Fueter complex          ]
 {   On  the  boundary complex   of    the $k$-Cauchy-Fueter complex      }
\author{  Wei Wang}
\thanks{
Supported by National Nature Science Foundation in China (No.
11971425)  }\thanks{   Department of Mathematics,
Zhejiang University, Zhejiang 310027,
 P. R. China, Email:   wwang@zju.edu.cn}

\begin{abstract}   The $k$-Cauchy-Fueter complex, $k=0,1,\ldots$, in quaternionic analysis are  the counterpart  of the Dolbeault complex in the theory of several complex variables. In this paper, we construct explicitly    boundary  complexes  of these complexes on   boundaries   of    domains, corresponding to  the tangential Cauchy-Riemann complex in complex analysis.
They are only known boundary complexes outside of complex analysis that have  interesting applications to the function theory. As an application, we establish the  Hartogs-Bochner extension    for $k$-regular functions,  the  quaternionic counterpart of holomorphic functions. These  boundary  complexes have a very simple form on a kind of  quadratic hypersurfaces, which have the structure of  right-type nilpotent Lie groups  of step two. They allow  us to introduce  the quaternionic Monge-Amp\`{e}re operator and     opens the door to investigate   pluripotential theory on such
  groups.  We also apply   abstract duality theorem to   boundary  complexes to obtain the generalization of Malgrange's vanishing theorem and   Hartogs-Bochner extension   for $k$-CF functions,    the  quaternionic counterpart of CR functions, on this kind of groups.
\end{abstract}
\keywords{ boundary complexes; the $k$-Cauchy-Fueter complex;  the  Hartogs-Bochner extension for $k$-regular functions; right-type   groups  of step two;  abstract duality theorem;  Malgrange's vanishing theorem}

\maketitle
\section{Introduction}
 The Cauchy-Riemann operator and the Dolbeault complex  play   central  roles in the theory of several complex variables.  In quaternionic analysis, we have a family of operators,    the $k$-Cauchy-Fueter   operator, acting on  $ \odot^{k} \mathbb{C}^2 $-valued functions,
$k=0,1,\ldots$. This is because the group ${\rm SU}(2)$  of unit quaternionic numbers has a family of  irreducible representations $ \odot^{k} \mathbb{C}^2 $,  while the group   of unit complex numbers  has only one irreducible representation space $\mathbb{C}$.
The corresponding  complexes are  the $k$-Cauchy-Fueter complexes, which are already   known explicitly (cf. \cite{adams2,Ba,bS,bures,CSS,Wang,wang-mfd} and references therein), and   used to show several interesting properties of $k$-regular functions,  the  quaternionic counterpart of holomorphic functions.  The $0$-Cauchy-Fueter complex also has important applications  to  the  quaternionic Monge-Amp\`{e}re operator and  quaternionic
 plurisubharmonic functions (cf. \cite{wan-wang,wang21} and references therein). For a  differential complex, a fundamental  problem is to characterize domains on which the complex is exact, i.e.  the Poincar\'e Lemma holds or its cohomology groups vanish.  The Neumann problem associated to   the $k$-Cauchy-Fueter complex  on  $k$-pseudoconvex domains was investigated in \cite{wang19}. It is expected that the nonhomogeneous $k$-Cauchy-Fueter equation is solvable if and only if the domain is $k$-pseudoconvex.

In   the complex  case, when the Dolbeault complex   is restricted to a CR submanifold, ones obtain   the tangential Cauchy-Riemann complex, which is a powerful tool to investigate holomorphic functions on   domains and
  the Dolbeault complex.
 One way to study  the $k$-Cauchy-Fueter complex  and $k$-regular functions is to study its boundary  complex. The theory of boundary  complexes  of
  general differential complexes began in 1970s by Andreotti,   Hill,   Lojasiewicz,   Mackichan, and  Nacinovich et  al. (cf., e.g. \cite{Andreotti,Andreotti2,Nacinovich,Nacinovich85} and references therein).   In this paper, we will write down explicitly  the boundary  complex   of  the $k$-Cauchy-Fueter complex  on   boundaries  of    domains, and apply it to establish the  Hartogs-Bochner extension     for $k$-regular functions, and   construct the quaternionic Monge-Amp\`{e}re operator on right-type   nilpotent Lie groups  of step two, corresponding to a kind of rigid quadratic hypersurfaces. On this kind of groups, we also apply abstract duality theorem to   boundary  complexes to obtain the generalization of Malgrange's vanishing theorem and  the  Hartogs-Bochner extension    for $k$-CF functions, the  quaternionic counterpart of CR functions,  under the momentum condition. They are only known  boundary complexes outside of  complex analysis that have  interesting applications to the function theory.

\subsection{     The $k$-Cauchy-Fueter complex   }
Denote
\begin{equation*}\begin{aligned}\label{eq:Vab}
\mathcal{ {V}}^{\sigma,\tau}:=&\odot^{\sigma}\mathbb{C}^{2}\otimes
\wedge^\tau\mathbb{C}^{2(n+1)},
\end{aligned}\end{equation*} where $\odot^{\sigma}\mathbb{C}^{2}$ is the $\sigma$-th symmetric product of $\mathbb{C}^2 $, and    $\wedge^{\tau}\mathbb{C}^{2(n+1)}$  is the  $\tau$-th exterior product of $\mathbb{C}^{2(n+1)}$. For fixed $k=0,1,\cdots$,
the  {\it  $k$-Cauchy-Fueter complex} on $\mathbb{H}^{n+1}$ is  given by
\begin{equation}\begin{aligned}\label{cf}
0\rightarrow \Gamma(D, \mathcal{{V}}_0)
&\xrightarrow{\mathcal{ {D}}_{0}}\cdots \rightarrow \Gamma(D, \mathcal{{V}}_j)\xrightarrow{ \mathcal{{D}}_{j}} \Gamma(D, \mathcal{{V}}_{j+1})\rightarrow \cdots\xrightarrow{ \mathcal{{D}}_{2n }} \Gamma(D, \mathcal{{V}}_{2n+1})\rightarrow0,
\end{aligned}\end{equation}
for  a domain  $D $   in  $\mathbb{H}^{n+1}$, where $\Gamma(D, \mathcal{{V}}_j)$ is the space of smooth $ \mathcal{{V}}_j$-valued functions with
$
\mathcal{ {V}}_j:=  \mathcal{{V}}^{\sigma_j ,\tau_j } $ (see \eqref{eq:sigma-tau} for $\sigma_j $ and $\tau_j $).

To   write down   operators  in the complex \eqref{cf}, we need complex  vector fields \cite{Wang}
\begin{align}\label{nabla}
\left(\nabla_{\dot{A}A'}\right):=\left(\begin{array}{rr} \partial_{ {1}}+\textbf{i}\partial_{ {2}}& -\partial_{ {3}}-\textbf{i}\partial_{ {4}}\\ \partial_{ {3}}-\textbf{i}\partial_{ {4}}&\ \  \partial_{ {1}}-\textbf{i}\partial_{ {2}}\\  \vdots\qquad&\ \ \ \ \ \ \ \vdots\qquad\\ \partial_{ {4l+1}}+\textbf{i}\partial_{ {4l+2}}& -\partial_{ {4l+3}}-\textbf{i}\partial_{ {4l+4}}\\ \partial_{ {4l+3}}
-\textbf{i}\partial_{ {4l+4}}& \ \ \partial_{ {4l+1}}-\textbf{i}\partial_{ {4l+2}}\\ \ \ \ \ \vdots\qquad&\ \ \ \ \ \ \ \vdots\qquad\end{array}\right),
\end{align}
  where $\dot{A}=0,\ldots, 2n+1$, $A'=0',1'$, and $\partial_j=\frac {\partial}{\partial x_j}$. It is motivated by the embedding $\tau$ of  quaternionic algebra $\mathbb{H}$ into  $\mathfrak{gl}(2,\mathbb{C}):$
\begin{align}\label{tau}
\tau(x_{1}+x_{2}\textbf{i}+x_{3}\textbf{j}+x_{4}\textbf{k})=
\left(\begin{array}{rr} x_{1}+\textbf{i}x_{2}& -x_{3}-\textbf{i}x_{4}\\ x_{3}-\textbf{i}x_{4}& x_{1}-\textbf{i}x_{2}\end{array}\right).
\end{align} The quaternionic structure of $\mathbb{H}^{n+1}$ is encoded in these vector fields. In the sequel,  we identify $\mathbb{H}^{n+1}$ with the underlying space $\mathbb{R}^{4(n+1)}$.
  For a fixed basis
$\{\omega^0, \ldots$, $\omega^{2n+1}\}$ of $\mathbb{C}^{2(n+1)}$, define two differential operators $d_{A'}:\Gamma( D ,\wedge^{\tau}\mathbb{C}^{2(n+1)})\rightarrow \Gamma ( D ,\wedge^{\tau+1}\mathbb{C}^{2(n+1)})$  by
\begin{equation}\label{eq:d01}\begin{split}&d_{A'}F:=\sum_{\mathbf{\dot{A}}}\sum_{\dot{A}=0}^{2n+1}\nabla_{\dot{A}A' }f_{\mathbf{\dot{A}}}~\omega^{\dot{A}}\wedge\omega^{\mathbf{\dot{A}}},
\end{split}\end{equation}$A'=0',1' $, for
   $F=\sum_{\mathbf{\dot{A}}}\ f_{\mathbf{\dot{A}}}~ \omega^{\mathbf{\dot{A}}}\in \Gamma( D ,\wedge^{\tau}\mathbb{C}^{2(n+1)})$, where
$\omega^{\mathbf{\dot{A}}}:=\omega^{\dot{A}_1}\wedge\ldots\wedge\omega^{\dot{A}_{\tau}}$ for the multi-index
${\mathbf{\dot{A}}}= \dot{A}_1 \cdots \dot{A}_{\tau} $. As $ {\partial}$ and $\overline{\partial}$ in complex analysis,
 $d_{0'}$ and $d_{1'}$ introduced in  \cite{wan-wang} are a pair of anti-commutative operators behaving like exterior differentials:
 \begin{equation}\label{eq:d20}
    d_{0'}^2=d_{1'}^2=0,\qquad
 d_{0'}d_{1'}=-d_{1'}d_{0'}.
 \end{equation}
 They
give a very useful   expression of the quaternionic Monge-Amp\`{e}re operator, and allow us to prove many important results in quaternionic pluripotential theory (cf. \cite{wan20,wan-wang,wang21}  and references therein).

By raising primed indices,  we have operator $d^{A'}$ \eqref{eq:d01'}.
It is convenient to identify $\odot^{\sigma}\mathbb{C}^{2 } $ with the space $   \mathcal{{P}}_\sigma(\mathbb{C}^2)$ of homogeneous polynomials of degree $\sigma$ on $\mathbb{C}^2$ \cite{LSW}. A  $ \mathcal{{V}}_j $-valued function $f$ can be viewed as a function in variables $x\in \mathbb{R}^{4(n+1)}$, $s^{ {A}'}\in \mathbb{C}^2$ and Grassmannian variables $\omega^A$:
\begin{equation*}
  f= f_{\mathbf{A}'\dot{\mathbf{A}}}(x)s^{\mathbf{A}'}\omega^{\dot{\mathbf{A} }}
\end{equation*}
where $s^{\mathbf{A}'}:=s^{A_1'}\cdot\ldots\cdot s^{A_\sigma'}$ for the multi-index $\mathbf{A}' = A_1' \cdots A_\sigma'$. Let
      $\partial_{A'} =\frac \partial{\partial s^{A'} }$.
Under this identification, differential operators in   the $k$-Cauchy-Fueter  complex   have a very simple form:
\begin{equation}\label{eq:operator-k-CF}    \mathcal{D}_j
 = \left\{
    \begin{array}{ll}  \partial_{A'} d^{ {A} '} ,\quad &{\rm if}\quad j=0, \ldots, k-1,\\
      d^{0 '}  d^{ 1'} ,\quad &{\rm if}\quad j=  k ,\\
      s_{A'} d^{ {A} '} ,\quad &{\rm if}\quad  j=k+1,\ldots,2n ,
    \end{array}\right.
\end{equation}

\subsection{The boundary complex} Consider a domain
\begin{equation}\label{eq:domain}
   D=\{\mathbf{q}=(\mathbf{q}',q_{n+1 }) \in \mathbb{H}^{n  }\times \mathbb{H} ;\varrho(\mathbf{q})>0\} .
\end{equation}
By rotation if necessary, we can assume the defining function $\varrho$ has the following form  near the origin: \begin{equation}\label{eq:quadratic-hypersurface}\begin{split}
\varrho (\mathbf{q} ) &=
 \operatorname{ Re}  q_{n +1  }-\phi (\mathbf{q}', \operatorname{Im} q_{n +1  }) ,
\end{split}\end{equation}  where  $\phi(\mathbf{q}', \operatorname{Im} q_{n  +1 } )= O(|\mathbf{q}',\operatorname{Im} q_{n  +1 }|^2)$.
As the boundary version of operators $d^{A'}$,  we introduce  operators $\mathfrak  d^{A'}:  \Gamma(bD,\wedge^\tau\mathbb{C}^{2n})$ $\rightarrow \Gamma(bD,\wedge^{\tau+ 1}\mathbb{C}^{2n})$ by
\begin{equation}\label{eq:mathfrak-d}\begin{split}
\mathfrak  d^{A'}f&=\sum_{ A,  {\mathbf{A}} } Z _{A  }^{A'}f_{ {\mathbf{A}}} \omega^A \wedge\omega^{ {\mathbf{A}}},
 \end{split} \end{equation}   for $f =\sum_{  |{\mathbf{A}}|=\tau} f_{ {\mathbf{A}}}  \omega^{ {\mathbf{A}}} $, where complex vector field   $
    Z_{A}^{A'}
$  is tangential to the boundary:
$
    Z_{A}^{A'}\varrho=0
$,  for $ {A}=0,\ldots, 2n-1$, $A'=0',1'$ (cf. Subsection \ref{subsection:Z-AA'}). They are.
Denote
\begin{equation*}\begin{aligned}\label{eq:Vab-boundary}
\mathscr {V}^{\sigma,\tau}:=&\odot^{\sigma}\mathbb{C}^{2}\otimes
\wedge^\tau\mathbb{C}^{2n }.
\end{aligned}\end{equation*}

\begin{thm} \label{thm:bd-complex-main:1}The   boundary complex of the    the $k$-Cauchy-Fueter complex is the differential complex
 \begin{equation} \label{eq:bdoperator}0\rightarrow
  \Gamma\left(bD,\mathscr{V}_{0}\right)\xrightarrow{\mathscr{D} _0 }   \cdots\longrightarrow
 \Gamma\left(bD,\mathscr{V}_j\right)\xrightarrow{\mathscr{D} _j }
 \Gamma\left(bD,\mathscr{V}_{j+1}\right)\rightarrow \cdots\xrightarrow{ \mathscr{D} _{2n-2}} \Gamma(bD,\mathscr{V}_{2n-1})\rightarrow0,
 \end{equation}
where $\mathscr{V}_j:=\mathscr{V}_j^{(1)}\oplus\mathscr{V}_j^{(2)}$ with
\begin{equation}\label{eq:Q}\begin{array}{llll}
\mathscr  {V}_j^{(1)}:=&\mathscr{V} ^{\sigma_j,\tau_j } ,\qquad\qquad & \mathscr  {V}_j^{(2)}:=\mathscr{V} ^{\sigma_{j+1 } ,\tau_j -1}  ,\qquad    & {\rm if}\quad j \neq k  ,\\\mathscr  {V}_k^{(1)}:=&\mathscr{V} ^{0,k } ,\qquad\qquad & \mathscr  {V}_k^{(2)}:=\mathscr{V} ^{0,k }   ,
\end{array}\end{equation}
and $\mathscr{V}_0^{(2)}=\emptyset$. For $\mathbb{ F}=(\mathbb{F}_1,  \mathbb{F}_2 )\in \Gamma ( bD,\mathscr{V}_ j^{(1)}  )\oplus \Gamma (bD,\mathscr{V}_ j^{(2)}   )$,
$
    {\mathscr{D} }_{j}  \mathbb{F} = ({\mathscr{D} }_{j}^{(1)} \mathbb{F} , {\mathscr{D} }_{j}^{(2)} \mathbb{F}  )
$
with
\begin{equation}  \label{eq:bdoperator0} {\mathscr{D} }_{j}^{(1)} \mathbb{F}
 = \left\{
    \begin{array}{ll}  \partial_{A'}\mathfrak  d^{ {A} '}\mathbb{F}_1+\mathcal E_0\wedge  \mathbb{F}_2,\quad &{\rm if}\quad j=0, \ldots, k-1,\\
     \mathfrak  d^{0 '}\mathfrak   d^{ 1'}\mathbb{F}_1 -\mathcal E_0  \wedge  ( {\mathbf T}^{1'0 '} \mathbb{F}_1 +  \mathbb{F}_2 ) ,\quad &{\rm if}\quad j=  k ,\\
      s_{A'}\mathfrak  d^{ {A} '}\mathbb{F}_1+\mathcal E_0\wedge  \mathbb{F}_2 ,\quad &{\rm if}\quad  j=k+1,\ldots,2n-2,
    \end{array}\right.
\end{equation}
where
$
   \mathcal E_0  :=-\mathfrak  {d}^{0'}\mathfrak  {d}^{1'}\varrho.
$
\end{thm}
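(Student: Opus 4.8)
The plan is to obtain \eqref{eq:bdoperator} by carrying out, for the $k$-Cauchy-Fueter complex \eqref{cf}, the general recipe for the boundary (tangential) complex of a differential complex (Andreotti--Hill, Nacinovich et al.), adapted to the two features special to the present situation: the complex \eqref{cf} is not of pure first order, the single operator $\mathcal{D}_k=d^{0'}d^{1'}$ being second order; and the bundles $\mathcal{V}_j$ carry a symmetric factor $\odot^{\sigma_j}\mathbb{C}^2$. The first step is to fix a coframe of $\mathbb{C}^{2(n+1)}$ adapted to $bD$: keep $\omega^0,\ldots,\omega^{2n-1}$, spanning the tangential $\mathbb{C}^{2n}$ on which the $\mathfrak d^{A'}$ act, and choose the two remaining elements $\omega^{2n},\omega^{2n+1}$ so that on $bD$ they carry the normal directions cut out by $\varrho$; dually, express the complex vector fields $\nabla_{\dot A A'}$ of \eqref{nabla} through the tangential fields $Z_A^{A'}$ (which satisfy $Z_A^{A'}\varrho=0$), the real normal derivative, and the characteristic fields along $bD$. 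The key computation is the resulting decomposition, near $bD$, of $d^{A'}$ into its tangential part $\mathfrak d^{A'}$, a term carrying the normal jet of the argument, a term with interior multiplication by the normal direction, and a term proportional to $\varrho$; iterating this for the second-order operator $d^{0'}d^{1'}$ produces the $2$-form $\mathcal{E}_0=-\mathfrak d^{0'}\mathfrak d^{1'}\varrho$, the quaternionic analogue of the Levi form, together with the ``missing-direction'' operator ${\mathbf T}^{A'B'}$ built from the characteristic fields.

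The second step is to run the construction with these data. Restricting a section $F$ of $\mathcal{V}_j$ to $bD$ one separates the part $\mathbb{F}_1$ with no normal coframe component from the part $\mathbb{F}_2$ recording the remaining normal (Cauchy) datum; since $\mathcal{D}_k$ is second order this datum cannot be discarded, and it is precisely this that forces the splitting $\mathscr{V}_j=\mathscr{V}_j^{(1)}\oplus\mathscr{V}_j^{(2)}$ of \eqref{eq:Q}, the shifted weight $\sigma_{j+1}$ in $\mathscr{V}_j^{(2)}$ being dictated by the component of $\mathcal{V}_{j+1}$ into which the normal part of $\mathcal{D}_jF$ falls, so that $\mathcal{E}_0\wedge\mathbb{F}_2$ carries $\mathscr{V}_j^{(2)}$ into $\mathscr{V}_{j+1}^{(1)}$ (and $\mathscr{V}_0^{(2)}=\emptyset$ since there is nothing to carry yet). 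Feeding the adapted-frame expressions of $\partial_{A'}d^{A'}$, $d^{0'}d^{1'}$ and $s_{A'}d^{A'}$ through this bookkeeping, retaining only the part that differentiates $(\mathbb{F}_1,\mathbb{F}_2)$ tangentially or multiplies by $\mathcal{E}_0\wedge$ and dropping everything proportional to $\varrho$ or to the genuine normal derivative, one reads off the operators $\mathscr{D}_j^{(1)}$ of \eqref{eq:bdoperator0} together with their companions $\mathscr{D}_j^{(2)}$; in particular the term $\mathcal{E}_0\wedge\mathbb{F}_2$, and for $j=k$ the correction $-\mathcal{E}_0\wedge({\mathbf T}^{1'0'}\mathbb{F}_1+\mathbb{F}_2)$, emerge from the second iteration of the expansion above.

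It then remains to verify that \eqref{eq:bdoperator} is genuinely a complex, i.e. $\mathscr{D}_{j+1}\mathscr{D}_j=0$ for all $j$, which is the bulk of the work. On each component this reduces to the structure equations on $bD$ for the pair $\mathfrak d^{0'},\mathfrak d^{1'}$ --- which, unlike $d^{0'},d^{1'}$ in \eqref{eq:d20}, neither square to zero nor anticommute, their defects being explicit combinations of $\mathcal{E}_0\wedge(\cdot)$ and ${\mathbf T}^{A'B'}$ --- combined with the purely algebraic identities for $\partial_{A'}$ and $s_{A'}$ on $\odot^\bullet\mathbb{C}^2$, and the already-known facts that \eqref{cf} is a complex and that $d_{0'}^2=d_{1'}^2=0$, $d_{0'}d_{1'}=-d_{1'}d_{0'}$ (from which those structure equations are themselves derived). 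The delicate range is around $j=k$: there the shape of $\mathscr{D}_j$ changes from the $\partial_{A'}\mathfrak d^{A'}$-type to the $\mathfrak d^{0'}\mathfrak d^{1'}$-type to the $s_{A'}\mathfrak d^{A'}$-type, the operator ${\mathbf T}^{1'0'}$ enters only through $\mathscr{D}_k$, and the cross terms between $\mathcal{E}_0\wedge(\cdot)$ and the second-order tangential defects must cancel precisely; I expect this case-by-case cancellation, rather than the derivation of the formulas, to be the principal obstacle. The ranges $j<k-1$ and $j>k$ are then comparatively routine, being controlled only by the first-order structure equations for the $\mathfrak d^{A'}$ and the linear algebra of $\partial_{A'}$ and $s_{A'}$.
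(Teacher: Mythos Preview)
Your high-level strategy --- adapted coframe, decompose $d^{A'}$ into tangential plus normal pieces, read off $\mathscr{D}_j$ --- shares its first half with the paper (indeed the paper's Proposition~\ref{prop:d-b} is exactly the decomposition you describe), but then diverges in a way that makes your plan harder than necessary and slightly off in its conception of $\mathscr{V}_j^{(2)}$.

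The paper does not write down $\mathscr{D}_j$ and then verify $\mathscr{D}_{j+1}\mathscr{D}_j=0$ directly. Instead it follows the general quotient construction recalled in Section~2.3: for each $j$ one identifies the sheaf $\mathcal{J}_j$ of sections with zero Cauchy data for $\mathcal{D}_j$ (via the integration-by-parts identity \eqref{eq:zero-Cauchy}), proves the inclusion $\mathcal{D}_j\mathcal{J}_j\subset\mathcal{J}_{j+1}$ (equation~\eqref{eq:inclusion}, an automatic consequence of $\mathcal{D}_{j+1}\mathcal{D}_j=0$), and defines $\mathscr{D}_j$ as the induced map on the quotients $Q^{(j)}(bD)=\Gamma(\cdot,\mathcal{V}_j)/\mathcal{J}_j$. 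With this definition $\mathscr{D}_{j+1}\mathscr{D}_j=0$ is \emph{free}; the work is not a cancellation computation but rather (i) computing each $\mathcal{J}_j$ explicitly (Propositions~\ref{prop:JW0} and~\ref{prop:JW}) and (ii) tracking what $\mathcal{D}_j$ does modulo $\mathcal{J}_{j+1}$ (Propositions~\ref{prop:JZ 0}, \ref{prop:JZ0}, \ref{prop:JZ>}). What you call ``the bulk of the work'' and ``the principal obstacle'' is thus entirely avoided.

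This also means your description of $\mathscr{V}_j^{(2)}$ is not quite right. It is not the ``remaining normal Cauchy datum'' kept because $\mathcal{D}_k$ is second order; for $j\neq k$ the operator $\mathcal{D}_j$ is first order and $Q^{(j)}(bD)\cong\Gamma(bD,\ker\sigma_j^*(\nu))$ by symbol-exactness \eqref{eq:symbol}. The splitting $\mathscr{V}_j^{(1)}\oplus\mathscr{V}_j^{(2)}$ arises because, in the basis $\{\omega^0,\ldots,\omega^{2n-1},\Omega^{0'},\Omega^{1'}\}$, the quotient by $\mathcal{J}_j$ kills the $\mathbb{D}\mathbf{S}^a_{\sigma_j+1}$- and $\Omega^{0'}\wedge\Omega^{1'}$-pieces but leaves both the purely tangential piece and a single $\mathbf{\Omega}^{b+1}_{\sigma_j}$-piece (see \eqref{eq:W-j}--\eqref{eq:W-j2}); the shift to $\sigma_{j+1}$ in $\mathscr{V}_j^{(2)}$ is linear algebra in $\odot^\bullet\mathbb{C}^2$, not a relic of second order. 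Only at $j=k$ does the second-order nature of $\mathcal{D}_k$ enter, and there the characterization of $\mathcal{J}_k$ (Proposition~\ref{prop:JW} (3)) genuinely involves a first $\varrho$-jet, which is why $\mathscr{V}_k^{(2)}$ is a second copy of $\wedge^k\mathbb{C}^{2n}$ rather than something of lower form degree.
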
  Explicit formulae for ${\mathscr{D} }_{j}$'s are given by \eqref{eq:bdoperator<} and \eqref{eq:bdoperator>=}.
  Compared to $d^{A'}$ in \eqref{eq:d20},  operators $\mathfrak d^{A'}$'s usually do not behave like  anti-commutative  exterior differentials.  We call  a hypersurface  $bD$
  {\it right-type} if   $\mathcal E_0  $   vanishes, because on such groups, operator $\mathfrak d^{A'}$'s behave like that on  the  right quaternionic Heisenberg group \eqref{eq:right}. In this case,
   ${\mathscr{D} }_{j}$ maps  a $ \mathscr {V}_j^{(1)} $-valued function   to a    $ \mathscr {V}_{j+1}^{(1)} $-valued one    by \eqref{eq:bdoperator0},  i.e. we obtain    a  differential  subcomplex:
 \begin{equation} \label{eq:subcomplex}
0\longrightarrow \Gamma\left(bD,\mathscr  {V}_0^{(1)}\right)\xrightarrow{\mathscr{D} _0 } \cdots\longrightarrow \Gamma\left(bD,\mathscr {V}_j^{(1)}\right)\xrightarrow{\mathscr{D} _j }
    \cdots\xrightarrow{\mathscr{D} _{2n-2} }
   \Gamma\left(bD,\mathscr{V}_{2n-1}^{(1)}\right)\longrightarrow 0
 \end{equation}
with
\begin{equation*} \mathscr   D_j
 = \left\{
    \begin{array}{ll}  \partial_{A'}\mathfrak  d^{ {A} '} ,\quad &{\rm if}\quad j=0, \ldots, k-1,\\
     \mathfrak  d^{0 '}\mathfrak   d^{ 1'} ,\quad &{\rm if}\quad j=  k ,\\
      s_{A'}\mathfrak  d^{ {A} '} ,\quad &{\rm if}\quad  j=k+1,\ldots,2n-2.
    \end{array}\right.
\end{equation*}

This subcomplex     and its operators  are very similar to the    $k$-Cauchy-Fueter complex on $\mathbb{H}^{ n+1 }$.
  \subsection{The Hartogs-Bochner extension for $k$-regular functions}
On a domain $D\subset \mathbb{H}^{n+1}$,
a function   $f\in \Gamma( D,  \odot^k \mathbb{C}^2 )$ is   called {\it $k$-regular} if
$
  \mathcal{ D}_0f=0.
$
The space of all $k$-regular functions on a domain $D $ is denoted by $\mathscr{O}_k(D)$. On a domain $\Omega$ in $ bD$, a function   $F\in \Gamma( \Omega,  \odot^k \mathbb{C}^2 )$ is   called {\it $k$-CF} if
$
    \mathscr{D}_0F=0.
$
We  have the following    Hartogs-Bochner extension for $k$-regular functions.

\begin{thm} \label{thm:Hartogs-Bochner}  Let $D $ be a bounded domain in   $   \mathbb{H}^{n+1} $ ($n\geq 1$)   with
smooth boundary such that $ \mathbb{H}^{n+1} \setminus \overline{D} $ is connected. If $f $ is a smooth $k$-CF function on  $bD$, then there exists $\widetilde{f}\in \mathscr{ O}_k( D) $ smooth up to the boundary
 such that $\widetilde{f} =f $ on $bD$.
 \end{thm}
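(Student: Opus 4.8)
The plan is to run the Ehrenpreis--Bochner scheme familiar from the complex case: extend $f$ off $bD$ so that its $k$-Cauchy--Fueter defect is flat along the boundary, remove that defect by solving the nonhomogeneous $k$-Cauchy--Fueter equation with a \emph{compactly supported} right-hand side, and then use connectedness of $\mathbb{H}^{n+1}\setminus\overline D$ together with ellipticity of $\mathcal{D}_0$ to see that the correction leaves the boundary values unchanged.

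\emph{Step 1: flat extension of $f$.} From $\mathscr{D}_0 f=0$ I would construct $F\in\Gamma(\overline D,\odot^k\mathbb{C}^2)$, smooth up to $bD$, with $F|_{bD}=f$ and with $\mathcal{D}_0 F$ vanishing to infinite order on $bD$. For an arbitrary smooth extension $\widetilde f_0$ of $f$, the construction of the boundary operator in Theorem~\ref{thm:bd-complex-main:1} and the explicit formula \eqref{eq:bdoperator0} for $j=0$ identify $\mathscr{D}_0 f$ with the obstruction to solving $\mathcal{D}_0\widetilde f_0=\varrho\, g_1$ smoothly; since $f$ is $k$-CF this obstruction vanishes, and replacing $\widetilde f_0$ by $\widetilde f_0-\varrho\, h_1$ for a suitable $h_1$ gains one order of flatness. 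Iterating and Borel-summing the resulting formal series in $\varrho$ produces $F$, the solvability of the successive correction equations on $bD$ being guaranteed by the fact that the boundary operators \eqref{eq:bdoperator} form a complex. Putting $g:=\mathcal{D}_0 F$ on $\overline D$ and $g:=0$ on $\mathbb{H}^{n+1}\setminus\overline D$, the infinite-order flatness makes $g$ a smooth $\mathcal{V}_1$-valued form on $\mathbb{H}^{n+1}$ with compact support contained in $\overline D$, and $\mathcal{D}_1 g=0$ on all of $\mathbb{H}^{n+1}$ (on $D$ because $\mathcal{D}_1\mathcal{D}_0=0$, and across $bD$ by flatness).

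\emph{Step 2: solving with compact support.} The analytic core is to solve $\mathcal{D}_0 u=g$ with $u\in\Gamma(\mathbb{H}^{n+1},\odot^k\mathbb{C}^2)$ of compact support. This is the vanishing of the first compactly supported cohomology of the $k$-Cauchy--Fueter complex \eqref{cf} on $\mathbb{H}^{n+1}$, the quaternionic counterpart of $H^{0,1}_c(\mathbb{C}^m)=0$ for $m\geq2$, and it is here that the hypothesis $n\geq1$ enters: the complex \eqref{cf} is exact on $\mathbb{H}^{n+1}$ in all positive degrees by the Poincar\'e lemma, and a duality argument of the type applied below to the boundary complex identifies $H^1_c$ with the dual of the degree-$2n$ cohomology, which vanishes precisely when $2n\geq1$. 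Since $g$ has compact support and $\mathcal{D}_1 g=0$, this yields a compactly supported $u$ with $\mathcal{D}_0 u=g$. I expect this step, together with making the duality rigorous and tracking the precise bundle decompositions coming from $\varrho$, to be the main obstacle; alternatively one could build $u$ from a Bochner--Martinelli-type representation using a fundamental solution of the overdetermined elliptic operator $\mathcal{D}_0$.

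\emph{Step 3: conclusion.} Since $\mathcal{D}_0$ is overdetermined elliptic with real-analytic coefficients, $u$ is real-analytic wherever $\mathcal{D}_0 u=g=0$, in particular on the connected open set $\mathbb{H}^{n+1}\setminus\overline D$; as $u$ has compact support it vanishes on the nonempty open subset of this set lying outside a large ball, hence $u\equiv0$ on $\mathbb{H}^{n+1}\setminus\overline D$ and, by continuity, $u|_{bD}=0$. Finally $\widetilde f:=F-u|_{\overline D}$ is smooth up to $bD$, satisfies $\mathcal{D}_0\widetilde f=\mathcal{D}_0 F-g=0$ on $D$ so that $\widetilde f\in\mathscr{O}_k(D)$, and $\widetilde f|_{bD}=F|_{bD}=f$. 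This establishes the Hartogs--Bochner extension.
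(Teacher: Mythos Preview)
Your proposal is correct and follows essentially the same three-step strategy as the paper: flat extension (the paper's Proposition~\ref{prop:extension}), compactly supported solution of $\mathcal{D}_0 u=g$, and unique continuation on the connected exterior. For Step~2 the paper does not invoke abstract duality but quotes an explicit solving result (Theorem~\ref{thm:CF-D0}, from \cite{Wang}) built from the fundamental solution of the fourth-order Hodge--Laplacian $\square_1$, which is your ``Bochner--Martinelli-type'' alternative; the unique continuation in Step~3 is obtained from the fact that $k$-regular functions are harmonic.
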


 This theorem for $k=1$ was proved by Maggesi-Pertici-Tomassini
\cite{MPT} recently. They introduced the notion of admissible functions on the boundary, which  coincides with the notion of $1$-CF functions here, although it is written in a different form.

For a $k$-CF function $f$ on $bD $, the  boundary complex allows us to construct a representative $ \widehat{ f}\in \Gamma(D, \odot^k \mathbb{C}^2)$ such that $ \widehat{ f}|_{bD}=f$ and
$
  \mathcal{ D}_0f
$ is flat on $bD$. Then by using the solution to the nonhomogeneous
 $k$-Cauchy-Fueter equation  with compact support, we can construct  the $k$-regular function $\widetilde{f}  $.

\subsection{   Right-type
  groups   and  the quaternionic Monge-Amp\`{e}re operator } As in the complex case,  we call a domain $D$      a {\it rigid   domains}  if  it has a defining function of the following form: \begin{equation}\label{eq:rigid-hypersurface}\begin{split}
\varrho (\mathbf{q})&=
{\rm Re}\, q_{n+1  }-\phi (\mathbf{q}') ,
\end{split}\end{equation}i.e. $\phi$ is independent of $\operatorname{Im} q_{n +1  }$.
If we take $\phi(\mathbf{q}' )= \sum_{j,k=1 }^{4n}  \mathbb{S}_{jk }x_jx_k$ for a real symmetric $ 4n \times  4n $-matrix $\mathbb S $,    the boundary is the {\it  rigid  quadratic hypersurface}. It has the structure of a
nilpotent Lie group  of step two, denoted by $\mathcal{N}_{\mathbb{S}}$, with
the multiplication given by
\begin{equation} \label{eq:mul}
(\mathbf{x} , \mathbf{t})\cdot(\mathbf{{y}}, \mathbf{{s}})= {\left(\mathbf{x}+\mathbf{y}  ,  t_{\beta}+s_{\beta}+2
 \sum_{a,b=1}^{4 n }B_{ab}^{\beta}x_{a}y_{b}\right)},
\end{equation}
for $\mathbf{x},\mathbf{y}\in \mathbb{R}^{4n},\ \mathbf{t},\mathbf{s}\in \mathbb{R}^{3},\ \beta=1,2,3,$ where skew symmetric matrices  $B^{\beta}=(B_{ab}^{\beta})$
are determined by $ \mathbb{S}$ in \eqref{eq:B-S}. We call the associated group $\mathcal{N}_{\mathbb{S}}$
  {\it right-type} if the quadratic hypersurface   is. A simple characterization of   right-type groups in terms of matrices $B^\beta$ is given in
   Proposition \ref{prop:right-type}. This kind of groups are abundant.

The \emph{right quaternionic Heisenberg group} is $ \mathbb{H}^n\times\rm{Im}\ \mathbb{H}$ with  the multiplication given by
\begin{align}\label{eq:right}
(\mathbf{p} , \mathbf{t})\cdot(\mathbf{q} , {\mathbf{s}} )=  \left(\mathbf{p}+ \mathbf{ q } ,  \mathbf{t} +  {\mathbf{s}}  +2{\rm{Im}} (\mathbf{{p}} \overline{\mathbf{q} })\right),
\end{align}
where $ \mathbf{ p },\mathbf{q } \in \mathbb{H}^{n }, \mathbf{t}, {\mathbf{s}}\in {\rm{Im}}\, \mathbb{H} $.
 This group  is right-type  and the tangential $k$-Cauchy-Fueter complex   constructed in \cite{shi-wang}  is a special case of  the subcomplex
\eqref{eq:subcomplex}.
While the  {\it left quaternionic Heisenberg group} is $ \mathbb{H}^n\times\rm{Im}\ \mathbb{H}$ with the multiplication given by
 \begin{equation}\label{eq:left}
(\mathbf{p} , \mathbf{t})\cdot(\mathbf{q} , {\mathbf{s}} )=  \left(\mathbf{p}+ \mathbf{ q}  ,  \mathbf{t} +  {\mathbf{s}}  +2{\rm{Im}} (\overline{\mathbf{p}}{\mathbf{q} })\right).
 \end{equation} This group  is not right-type, but we have already constructed the tangential $k$-Cauchy-Fueter complex on this group  by using the twistor method (cf. \cite[Theorem 1.0.1]{wang1}), which is   different from  the complex on the right one. To understand this difference,  which is explained by  Theorem \ref{thm:bd-complex-main:1},   is one of  main motivations of this paper.

  On   a    right-type
  group,  operators $\mathfrak d^{A'}$'s  behave like  anti-commutative  exterior differentials:
\begin{equation}\label{eq:delta20}
   \mathfrak d_{0'}^2=\mathfrak  d_{1'}^2=0,\qquad
 \mathfrak    d_{0'}\mathfrak  d_{1'}=-\mathfrak  d_{1'}   \mathfrak d_{0'} .
\end{equation}
 This kind of  nice operators  was first observed  for $(4n+1)$-dimensional   Heisenberg group in \cite{wang21}.
$\mathfrak{d}^{A'}$'s allow  us to introduce the {\it quaternionic Monge-Amp\`{e}re operator} on a right-type  group as
\begin{equation}\label{eq:wedge-delta}
   \triangle
u \wedge\ldots\wedge\triangle u,
\qquad {\rm where
}\quad
   \triangle
u:=\mathfrak d_{0'}\mathfrak d_{1'}u,
\end{equation}and  plurisubharmonicity of a  function $u$  in terms of the positivity of the $2$-current $\triangle u$ ($2$-form for a $ C^2 $-function). A direct application of \eqref{eq:delta20}
gives us a key identity for $\mathfrak d_{0'}$, $\mathfrak d_{1'}$ and $\triangle$, by which we can show  important  Chern-Levine-Nirenberg type estimate in Theorem \ref{thm:estimate-C0}, and obtain the   existence  of the   Monge-Amp\`{e}re measure   for a continuous
plurisubharmonic function.
This opens the door to investigate the quaternionic Monge-Amp\`{e}re equation and pluripotential theory on   right-type
  groups, generalizing the theory
    on the Heisenberg group \cite{wang21}.

\subsection{The generalization of Malgrange's vanishing theorem and the  Hartogs-Bochner  extension   for   $k$-CF functions}
It is  important  to investigate the
validity of the Poincar\'e lemma, i.e. vanishing of its cohomology groups, for     boundary  complexes   in terms of the Levi-type forms, which were introduced in the study of
the Neumann problem for the $k$-Cauchy-Fueter complex in \cite{wang19}, and the behavior of $k$-CF functions  related to hypoellipticity,   unique continuation, the maximum modulus principle, hypoanaliticity etc., as in the theory of CR functions and the tangential Cauchy-Riemann complex  on   CR manifolds  (see e.g. \cite{AH72,Brinkschulte10,Brinkschulte,HL,HN00,Hill03,Hill,LL99,LL00,Nacinovich,Nacinovich17}). In this paper, we discuss the most simple case: right-type
  groups. We have subelliptic estimate.

\begin{prop} \label{prop:Subelliptic-estimate} Suppose that  $\mathcal{N}_{\mathbb{S}}$ is a stratified  right-type  group    and $K\Subset \mathcal{N}_{\mathbb{S}}$. Then  there are positive
constants $  C_1, C_2 $ only depending on $K$  such that
    \begin{equation}\label{eq:Subelliptic-estimate}
\left\|\mathscr{D  }_{0} f\right\|_0^2\geq C_1\| f\|_{\frac 12}^2-C_2\| f\|_{0}^2 ,
    \end{equation}
    for any $f\in C_0^\infty(K,\mathscr{V}_0)$.
 \end{prop}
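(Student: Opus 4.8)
The plan is to prove \eqref{eq:Subelliptic-estimate} in two steps: first reduce it to a bound of the full horizontal gradient $\sum_{A,A'}\|Z_A^{A'}f\|_0^2$ by $\|\mathscr D_0f\|_0^2+\|f\|_0^2$, which is where the algebraic structure of $\mathscr D_0$ is used; then invoke the classical subelliptic estimate of H\"ormander for a sum of squares satisfying the bracket-generating condition at step two, which is precisely the situation on the stratified group $\mathcal N_{\mathbb S}$.

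For the first step, note that $\mathscr V_0^{(2)}=\emptyset$, so $f$ is a $\odot^k\mathbb C^2$-valued function, and since $\mathcal N_{\mathbb S}$ is right-type (hence $\mathcal E_0=0$), \eqref{eq:bdoperator0} reduces to $\mathscr D_0f=\partial_{A'}\mathfrak d^{A'}f$ (for $k=0$ one argues in the same way with $\mathscr D_0=\mathfrak d^{0'}\mathfrak d^{1'}$). I would expand $\|\mathscr D_0f\|_0^2$ as a sum of terms $\langle Z_A^{A'}f_\bullet,Z_B^{B'}f_\bullet\rangle$ with constant coefficients coming from the spinor contraction $\partial_{A'}$ and the wedge product, and integrate by parts over the unimodular (nilpotent) group $\mathcal N_{\mathbb S}$, using that the formal adjoint of a left-invariant vector field $Z$ is $-\overline Z$. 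The diagonal terms reproduce $\sum_{A,A'}\|Z_A^{A'}f\|_0^2$ weighted by the symbol quadratic form of $\mathscr D_0$; in the cross terms, after commuting $\overline{Z_B^{B'}}$ past $Z_A^{A'}$, the commutator $[\overline{Z_B^{B'}},Z_A^{A'}]$ lies in the centre and has the form $\sqrt{-1}\cdot(\text{real vector field})$, hence is skew-adjoint and contributes only purely imaginary (or order-zero) terms that do no harm. The crucial pointwise input is that the horizontal symbol of $\mathscr D_0$ is injective at every non-zero horizontal covector $\xi$: there it coincides with the Euclidean symbol of the $k$-Cauchy-Fueter operator $\mathcal D_0$ on the $4n$-dimensional horizontal space, which is injective because that operator is overdetermined elliptic; equivalently the second-order operator $\mathscr D_0^{\,*}\mathscr D_0$ is horizontally elliptic, and indeed, as for $\mathcal D_0$ on $\mathbb H^{n+1}$, the corresponding Laplacian agrees with a positive multiple of the sub-Laplacian up to lower-order terms (cf.\ \cite{wang19}). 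Combining this injectivity with the integration by parts and a G\aa rding-type reabsorption of the lower-order errors yields a constant $C$ depending only on $K$ with
\[
\sum_{A,A'}\|Z_A^{A'}f\|_0^2\ \le\ C\bigl(\|\mathscr D_0f\|_0^2+\|f\|_0^2\bigr),\qquad f\in C_0^\infty(K,\mathscr V_0).
\]

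For the second step, since $\mathcal N_{\mathbb S}$ is stratified of step two, the real and imaginary parts of the $Z_A^{A'}$, $A=0,\dots,2n-1$, $A'=0',1'$, span the first stratum and, together with their brackets, the whole Lie algebra; hence the sub-Laplacian $-\sum_{A,A'}\bigl((\operatorname{Re}Z_A^{A'})^2+(\operatorname{Im}Z_A^{A'})^2\bigr)$ satisfies H\"ormander's bracket condition at step two, and the associated subelliptic estimate with gain $\tfrac12$ gives, for $K\Subset\mathcal N_{\mathbb S}$, a constant $C'$ with $\|u\|_{1/2}^2\le C'\bigl(\sum_{A,A'}\|Z_A^{A'}u\|_0^2+\|u\|_0^2\bigr)$ for scalar $u\in C_0^\infty(K)$. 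Applying this componentwise to $f\in C_0^\infty(K,\mathscr V_0)$ and chaining with the bound of the first step gives $\|f\|_{1/2}^2\le C''\bigl(\|\mathscr D_0f\|_0^2+\|f\|_0^2\bigr)$, i.e.\ \eqref{eq:Subelliptic-estimate} with $C_1=(C'')^{-1}$ and $C_2=1$.

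The main obstacle is the first step, namely verifying the injectivity of the horizontal symbol of $\mathscr D_0$ (equivalently, that the $k$-Cauchy-Fueter Laplacian is a multiple of the sub-Laplacian modulo lower order) and carrying out the integration by parts with enough care that every error term produced by the non-commutativity of $\mathcal N_{\mathbb S}$ is genuinely of lower order and can be absorbed. Once the horizontal gradient is under control, the gain of $\tfrac12$ is a direct consequence of subellipticity on step-two stratified groups.
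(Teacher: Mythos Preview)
Your two-step strategy (bound the horizontal gradient by $\|\mathscr D_0 f\|_0^2$, then invoke the step-two H\"ormander estimate) is exactly the paper's, and the second step is fine. The gap is in the first step.

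Your handling of the commutator terms does not work. After integration by parts the cross terms produce $\langle[\overline{Z_B^{B'}},Z_A^{A'}]f_\alpha,f_\beta\rangle$, and these commutators are linear combinations of the central vector fields $\partial_{t_\gamma}$. On a step-two stratified group these have parabolic homogeneity $2$, the \emph{same} as a horizontal second-order operator, so they are not ``lower order'' and cannot be absorbed by a G\aa rding argument. Your claim that such terms are purely imaginary is also off: if the commutator is $iT$ with $T$ a real skew-adjoint field, then $iT$ is self-adjoint and $\langle iTf,f\rangle$ is real. Horizontal ellipticity of the symbol alone is not enough here; this is exactly the phenomenon that makes the Kohn Laplacian fail to be hypoelliptic on (0,0)-forms on the lowest-dimensional Heisenberg group despite having injective horizontal symbol.

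What rescues the estimate is precisely the right-type hypothesis, used in a sharper way than you indicate. The paper computes $\mathscr D_0^{(1)*}\mathscr D_0^{(1)}$ directly and uses Corollary~\ref{cor:Z -bracket} (the identity $Z_{[A}^{(A'}Z_{B]}^{B')}=0$ on right-type groups) to show that all the cross terms $\sum_{A=2l,2l+1}\overline{Z}_A^{0'}Z_A^{1'}$, $\sum_{A=2l,2l+1}\overline{Z}_A^{1'}Z_A^{0'}$ vanish \emph{exactly}, and that the diagonal blocks reduce to $\sum_j X_{4l+j}^2$ with no $\partial_{t_\gamma}$ contamination. This yields the clean matrix identity $\mathscr D_0^{(1)*}\mathscr D_0^{(1)}=\operatorname{diag}(\triangle_b,2\triangle_b,\dots,2\triangle_b,\triangle_b)$, from which $\|\mathscr D_0 f\|_0^2\ge\|\mathscr D_0^{(1)}f\|_0^2\ge\sum_a\|X_a f\|_0^2$ follows immediately, with no error term to absorb. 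So the ``equivalence'' you assert between horizontal symbol injectivity and $\mathscr D_0^{*}\mathscr D_0=c\,\triangle_b+\text{l.o.t.}$ is not an equivalence; the latter is strictly stronger, and it is the right-type bracket identity that delivers it.
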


Abstract duality theorem  for  a Fr\'echet-Schwartz space or  the dual of a Fr\'echet-Schwartz space with   topological homomorphisms can be applied to our case.
Let $\widehat{\mathscr{V}}_\bullet $ be
the differential complex dual to the $k$-Cauchy-Fueter complex.
We  have the following generalization of Malgrange's vanishing theorem.
\begin{thm} \label{thm:Malgrange} On a  right-type
  group  $ \mathcal{{N}}_{\mathbb{S}}$   satisfying condition (H), the homology groups  $H_0( \mathscr E(\mathcal{{N}}_{\mathbb{S}},   \widehat{\mathscr{V}}_\bullet))$ and $H_0(\mathscr D'(\mathcal{{N}}_{\mathbb{S}},  \widehat{\mathscr{V}}_\bullet))$ vanish.
 \end{thm}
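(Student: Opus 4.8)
The plan is to reduce the vanishing of $H_0$ for the two complexes $\mathscr E(\mathcal{N}_{\mathbb S},\widehat{\mathscr V}_\bullet)$ (smooth sections with the Fréchet-Schwartz topology) and $\mathscr D'(\mathcal{N}_{\mathbb S},\widehat{\mathscr V}_\bullet)$ (distributional sections, the dual space) to an estimate for the \emph{first} operator $\mathscr D_0$ of the boundary subcomplex \eqref{eq:subcomplex}, via the abstract duality machinery alluded to just before the statement. Concretely, $H_0$ of the dual complex is the cokernel of the last dual map, which is the transpose of $\mathscr D_0$; so its vanishing is equivalent to $\mathscr D_0$ having closed range with no cokernel obstruction, i.e. to $\mathscr D_0^{t}$ (equivalently $\mathscr D_0$ after the duality identification $\widehat{\mathscr V}_\bullet \leftrightarrow \mathscr V_{2n-1-\bullet}$) being \emph{injective with closed range}. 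The first step is therefore to set up this dictionary carefully: identify $\widehat{\mathscr V}_\bullet$ with the appropriately dualized bundles, and check that the serre-type duality pairing (integration over $\mathcal{N}_{\mathbb S}$ against the Haar measure) makes $H_k(\mathscr D'(\cdot))$ and $H^k(\mathscr E(\cdot))$ into (topological) duals, which requires the homomorphism property of the differentials — this is where the Fréchet-Schwartz / DFS hypothesis and the closed-range conclusion from subellipticity feed in.

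The second step is to upgrade Proposition \ref{prop:Subelliptic-estimate} from a local $\tfrac12$-subelliptic estimate on a fixed compact $K$ to a \emph{global} injectivity-with-closed-range statement for $\mathscr D_0$ on the whole group $\mathcal{N}_{\mathbb S}$. Here condition (H) enters: it should be the hypothesis guaranteeing that the zeroth-order Levi-type term can be absorbed (or that a weighted/global version of \eqref{eq:Subelliptic-estimate} holds without the $-C_2\|f\|_0^2$ defect, e.g. after passing to $L^2$ with a suitable weight), so that $\mathscr D_0$ becomes globally injective with closed range on $\mathscr E$. Since $\mathscr D_0 = \partial_{A'}\mathfrak d^{A'}$ is left-invariant on the stratified right-type group, one can run a standard Fourier-analytic/representation-theoretic argument on the nilpotent group, or a partition-of-unity patching of the local estimate, to obtain the global version. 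The subelliptic estimate also gives hypoellipticity of $\mathscr D_0$, which is what lets one identify the smooth cohomology with the distributional one and close the argument for both complexes simultaneously.

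The third step is purely formal once the above is in place: by the abstract duality theorem (for a complex of Fréchet-Schwartz spaces whose differentials are topological homomorphisms), $H_0(\mathscr D'(\mathcal N_{\mathbb S},\widehat{\mathscr V}_\bullet))$ is the topological dual of $H^{2n-1}$ — wait, rather of the top cohomology $H^{\,2n-1}(\mathscr E(\mathcal N_{\mathbb S},\mathscr V_\bullet))$ of the boundary subcomplex, hmm, more precisely of $H^0$ of the original boundary complex — and dually $H_0(\mathscr E(\mathcal N_{\mathbb S},\widehat{\mathscr V}_\bullet))$ pairs with $H^0$ of the boundary complex. Vanishing of $H_0$ is then exactly the statement that $\mathscr D_0$ has dense range in the appropriate space and trivial "$H^0$ dual", i.e. the cokernel of the top dual map is trivial; the closed-range + injectivity of $\mathscr D_0$ from Step 2 gives precisely this. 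One then spells out that both the $\mathscr E$ and $\mathscr D'$ versions follow — the former directly, the latter by the duality theorem together with hypoellipticity (so the distributional $H_0$ computes the same thing as the smooth one).

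The main obstacle I expect is Step 2: extracting a clean \emph{global, defect-free} injectivity-and-closed-range statement for $\mathscr D_0$ from the merely local estimate \eqref{eq:Subelliptic-estimate}, and pinning down exactly what condition (H) must say for this to work — in particular ruling out $L^2$-kernel of $\mathscr D_0$ (there are no global $k$-CF $\odot^k\mathbb C^2$-valued functions in $L^2$ on the whole group, which should follow from the maximum-modulus / unique-continuation behavior of $\mathscr D_0$-solutions, itself a consequence of hypoellipticity plus the stratified structure). Everything else — the duality bookkeeping and the identification $\widehat{\mathscr V}_\bullet \leftrightarrow \mathscr V_{2n-1-\bullet}$ — is routine, modulo carefully tracking the two summands $\mathscr V_j = \mathscr V_j^{(1)}\oplus\mathscr V_j^{(2)}$ and the fact that on a right-type group the complex splits off the subcomplex \eqref{eq:subcomplex}, so that $\widehat{\mathscr V}_\bullet$ and its $H_0$ decompose accordingly.
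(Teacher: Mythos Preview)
Your overall strategy---reduce the vanishing of $H_0$ for the dual complex to injectivity-with-closed-range of $\mathscr D_0$ on the predual, then invoke the abstract duality theorem---is the paper's strategy too. But Step~2, which you correctly flag as the crux, is not set up right, and the misidentification would block the argument.

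First, the function spaces. The paper does \emph{not} prove a global estimate for $\mathscr D_0$ on $\mathscr E(\mathcal N_{\mathbb S},\mathscr V_0)$; it proves injectivity and closed range for $\mathscr D_0$ on the spaces $\mathscr E'(\mathcal N_{\mathbb S},\mathscr V_0)$ and $\mathscr D(\mathcal N_{\mathbb S},\mathscr V_0)$ of \emph{compactly supported} distributions and test functions. These are the preduals of $\mathscr E(\mathcal N_{\mathbb S},\widehat{\mathscr V}_0)$ and $\mathscr D'(\mathcal N_{\mathbb S},\widehat{\mathscr V}_0)$, so abstract duality (part (vi) of Theorem~\ref{thm:duality}) gives exactly the surjectivity of $\widehat{\mathscr D}_0$ on the latter two spaces, i.e.\ the vanishing of $H_0$. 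Working with compactly supported objects is what makes the problem tractable: there is no ``global defect-free estimate'' to prove on all of $\mathscr E$.

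Second, and more seriously, you have the role of condition~(H) wrong. The subelliptic estimate \eqref{eq:Subelliptic-estimate} needs only that the group be stratified; condition~(H) is used \emph{not} to absorb the $-C_2\|f\|_0^2$ term by some weighted trick, but to invoke M\'etivier's theorem that on such groups $C^\infty$-hypoellipticity implies \emph{analytic} hypoellipticity. Since the associated Hodge Laplacian is (a diagonal multiple of) the SubLaplacian $\triangle_b$ (equation \eqref{eq:diag}), every $k$-CF function is real-analytic. This is the unique-continuation mechanism: a compactly supported $u\in\mathscr E'$ with $\mathscr D_0 u=0$ outside a compact $K$ is real-analytic there, hence vanishes on the (connected) complement of $K$, hence is supported in $K$. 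Smooth hypoellipticity alone does not give this; your sentence ``unique-continuation\ldots itself a consequence of hypoellipticity plus the stratified structure'' is the gap.

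With analytic unique continuation in hand, the closed-range proof is a Rellich-type compactness argument: if $\mathscr D_0 f_\nu\to g$ in $\mathscr E'$, the $\mathscr D_0 f_\nu$ sit in a fixed $W^s$ with support in a fixed $K$; unique continuation forces $\operatorname{supp} f_\nu\subset K$ as well, and then a standard contradiction argument (using compactness of $W^{s+1/2}\hookrightarrow W^s$ on $K$ and again unique continuation to kill any limiting kernel element) shows the estimate \eqref{eq:Subelliptic-estimate-r} actually holds with $C_{s,K}=0$. That defect-free estimate then gives closed range. So the $-C_2\|f\|_0^2$ \emph{is} eventually removed, but only \emph{after} one has analytic hypoellipticity; it is not an input hypothesis coming from condition~(H).
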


In the step two case,  a stratified    group is exactly a group satisfying H\"ormander's condition, which is used to promise hypoellipticity of the SubLaplacian,
while the assumption of condition (H) is a technique condition to  promise unique continuation.
 We also prove  the Hartogs-Bochner  extension   for $k$-CF functions under the momentum condition  in Theorem \ref{thm:Hartogs-Bochner-CF}. See \cite{Brinkschulte,LL99,LL00,Nacinovich,Nacinovich17}  for Malgrange's vanishing theorem and the Hartogs-Bochner  extension   for CR functions on CR manifolds.

This paper is organized as follows. In Section 2, we give the basic notations, complex tangential vector fields  $Z_{AA'}$, and  recall the definition of the boundary  complex  of a
  general differential complex. In Section 3, we determine   vector spaces of the boundary complex of the   $k$-Cauchy-Fueter complex and   induced   operators  for $j< k-1$. For the case $j\geq k-1$, it is done
in Section 4. In Section 5, the  Hartogs-Bochner extension theorem   for $k$-regular functions   is established. In Section 6, we prove the characterization of
   right-type  groups in Proposition \ref{prop:right-type} and the  Chern-Levine-Nirenberg type estimate,  and   construct quaternionic Monge-Amp\`{e}re measure
  on    right-type
  groups. In Section 7,  after establish subelliptic estimate,  we apply   abstract duality theorem to  boundary  complexes on     right-type
  groups  to obtain the generalization of Malgrange's vanishing theorem and the  Hartogs-Bochner extension    for $k$-CF functions under the momentum condition. In the appendix,
we show differential operators \eqref{eq:operator-k-CF} in the $k$-Cauchy-Fueter  complex  coincide  with the usual form used before (e.g. in \cite{Wang,wang-mfd,wang19}).
\section{preliminaries }\subsection{Notations }We adopt the following index notations:
  \begin{equation*}
  \begin{split}
  \dot{A} , \dot{ B} ,  \dot{C} ,  \cdots&\in\{0,1, \cdots,2n+1\},\\A,B,C,  \cdots&\in\{ 0,1,\cdots,2n-1\},\\
  A',B',C',D', \cdots&\in\{0',1' \}.
  \end{split}
  \end{equation*}
We will use the Einstein convention of taking summation for repeated indices.

It  is similar to lower  or raise  indices by a metric  in differential geometry that we use
 \begin{equation} \label{eq:varepsilon} (\varepsilon_{A'B'})=\left( \begin{array}{cc} 0&
 1\\-1& 0\end{array}\right) \qquad {\rm and } \qquad \left (\varepsilon^{A'B'}\right) =\left( \begin{array}{cc} 0&
- 1\\1& 0\end{array}\right)
 \end{equation} to lower or raise primed  indices,  where $(\varepsilon^{A'B'})$ is the inverse of $(\varepsilon_{A'B'})$.
 For example,
\begin{equation} \label{eq:varepsilon1}
    f^{ A'  } =   f_{ {B '} }\varepsilon^{B' A' },\qquad  f^{ {A' } } \varepsilon_{A ' C' }=  f_{ {C' } }.
\end{equation} It is the same when an index is raised (or lowered) and then lowered  (or raised) \cite{PR1,PR2}. Here $(\varepsilon^{A'B'})$ is a volume element in  $\mathbb{C}^2$. The contraction of an upper and a lower primed  indices is invariant under the action of ${\rm SL}(2,\mathbb{C})$ on $\mathbb{C}^2$ (cf. e.g. \cite{wang-mfd,wang19}). These primed  and unprimed  indices are the generalization of Penrose's two-spinor  notations (cf.  \cite{PR1,PR2}).  By raising primed indices, we have
 \begin{equation}\label{eq:k-CF-raised-flat} ( \nabla_{\dot{A}}^{A'})=  \left(
                                      \begin{array}{rr}\vdots\hskip 13mm&\vdots\hskip 13mm\\- \partial_{ {4l+3}} -\textbf{i}\partial_{ {4l+4}}
                                    & -\partial_{ {4l+1}} -\textbf{i}\partial_{ {4l+2}} \\ \partial_{ {4l+1}}-\textbf{i}\partial_{ {4l+2}}
                                  &   - \partial_{ {4l+3}}+\textbf{i}\partial_{ {4l+4}}\\\vdots\hskip 13mm&\vdots\hskip 13mm
                                                                    \\- \partial_{ {4n+3}} -\textbf{i}\partial_{ {4n+4}}
                                    & -\partial_{ {4n+1}} -\textbf{i}\partial_{ {4n+2}} \\ \partial_{ {4n+1}}-\textbf{i}\partial_{ {4n+2}}
                                  &   - \partial_{ {4n+3}}+\textbf{i}\partial_{ {4n+4}}           \end{array}
                                    \right),
\end{equation}i.e.
$
   \nabla_{{\dot{A}}}^{0' }=\nabla_{{\dot{A}}1' },   \nabla_{{\dot{A}}}^{1' }=-\nabla_{{\dot{A}}0' }
$ by \eqref{eq:varepsilon1}. Here $\dot{A}  $ is the row index,  while ${A'}$ is the column index. For  $ f=  f_{\mathbf{\dot{A}}} \omega^{\mathbf{\dot{A}}}\in \Gamma( {D},\wedge^{\tau}\mathbb{C}^{2(n+1)}) ,$
\begin{equation}\label{eq:d01'}\begin{split}&d^{A'}f:= \nabla_{{\dot{A}}}^{A' }f_{\mathbf{\dot{A}}}~\omega^{\dot{A}}\wedge\omega^{\mathbf{\dot{A}}}.
\end{split}\end{equation}

\begin{prop}\label{prop:dd} {\rm (Proposition 2.2 in \cite{wan-wang})}  (1) $d^{(A'} d^{B')}=0$, i.e. \eqref{eq:d20} holds. \\
(2) For $F\in \Gamma( \wedge^{\tau}\mathbb{C}^{2n})$, $G\in \Gamma( \wedge^{\varsigma}\mathbb{C}^{2n})$, we have\begin{equation*}  d^{A'}(F\wedge G)=  d^{A'}
F\wedge
G+(-1)^{\tau}F\wedge   d^{A'} G,\qquad  {A'}=0' ,1' .\end{equation*}
\end{prop}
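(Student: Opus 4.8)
The plan is to reduce both assertions to one elementary fact: the first-order operators $\nabla^{A'}_{\dot{A}}$ displayed in \eqref{eq:k-CF-raised-flat} have constant coefficients, so any two of them commute and each obeys the ordinary Leibniz rule. Before iterating or multiplying I would record that \eqref{eq:d01'} really defines $d^{A'}$ on all of $\Gamma(D,\wedge^{\bullet}\mathbb{C}^{2(n+1)})$ and not only on expansions in an increasing basis: permuting the entries of a multi-index $\mathbf{\dot{A}}$ by $\sigma$ multiplies both $\omega^{\mathbf{\dot{A}}}$ and the coefficient $f_{\mathbf{\dot{A}}}$ that represents the same form by $\operatorname{sgn}\sigma$, while any summand in \eqref{eq:d01'} in which $\dot{A}$ already occurs in $\mathbf{\dot{A}}$ contributes $\omega^{\dot{A}}\wedge\omega^{\mathbf{\dot{A}}}=0$; hence the right-hand side of \eqref{eq:d01'} is unchanged. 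This licenses composing two such operators and applying $d^{A'}$ to a wedge product written with arbitrary, not necessarily ordered, multi-indices.

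For (1), applying \eqref{eq:d01'} twice to $f=f_{\mathbf{\dot{A}}}\omega^{\mathbf{\dot{A}}}$ gives
\begin{equation*}
  d^{A'}d^{B'}f=\sum_{\mathbf{\dot{A}}}\sum_{\dot{C},\dot{D}}\nabla^{A'}_{\dot{C}}\nabla^{B'}_{\dot{D}}f_{\mathbf{\dot{A}}}\;\omega^{\dot{C}}\wedge\omega^{\dot{D}}\wedge\omega^{\mathbf{\dot{A}}},
\end{equation*}
and the same identity with $A'$ and $B'$ interchanged. Relabelling the dummy indices $\dot{C}\leftrightarrow\dot{D}$ in the latter, using $\omega^{\dot{C}}\wedge\omega^{\dot{D}}=-\omega^{\dot{D}}\wedge\omega^{\dot{C}}$, and then commuting the two constant-coefficient operators, one obtains $d^{B'}d^{A'}f=-d^{A'}d^{B'}f$, i.e. $d^{(A'}d^{B')}=\tfrac{1}{2}\bigl(d^{A'}d^{B'}+d^{B'}d^{A'}\bigr)=0$. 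Taking $A'=B'$ yields $d^{A'}d^{A'}=0$ and taking $A'\neq B'$ yields $d^{0'}d^{1'}=-d^{1'}d^{0'}$; passing through the constant change of basis $d^{A'}=\varepsilon^{B'A'}d_{B'}$ (cf. \eqref{eq:varepsilon1}), this is precisely \eqref{eq:d20}.

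For (2), write $F=f_{\mathbf{\dot{A}}}\omega^{\mathbf{\dot{A}}}$ of degree $\tau$ and $G=g_{\mathbf{\dot{B}}}\omega^{\mathbf{\dot{B}}}$ of degree $\varsigma$, so that $F\wedge G=f_{\mathbf{\dot{A}}}g_{\mathbf{\dot{B}}}\,\omega^{\mathbf{\dot{A}}}\wedge\omega^{\mathbf{\dot{B}}}$. Then \eqref{eq:d01'} and the Leibniz rule $\nabla^{A'}_{\dot{C}}(f_{\mathbf{\dot{A}}}g_{\mathbf{\dot{B}}})=(\nabla^{A'}_{\dot{C}}f_{\mathbf{\dot{A}}})g_{\mathbf{\dot{B}}}+f_{\mathbf{\dot{A}}}(\nabla^{A'}_{\dot{C}}g_{\mathbf{\dot{B}}})$ give
\begin{equation*}
  d^{A'}(F\wedge G)=(\nabla^{A'}_{\dot{C}}f_{\mathbf{\dot{A}}})g_{\mathbf{\dot{B}}}\,\omega^{\dot{C}}\wedge\omega^{\mathbf{\dot{A}}}\wedge\omega^{\mathbf{\dot{B}}}+f_{\mathbf{\dot{A}}}(\nabla^{A'}_{\dot{C}}g_{\mathbf{\dot{B}}})\,\omega^{\dot{C}}\wedge\omega^{\mathbf{\dot{A}}}\wedge\omega^{\mathbf{\dot{B}}}.
\end{equation*}
The first summand is $d^{A'}F\wedge G$; in the second, moving $\omega^{\dot{C}}$ past the degree-$\tau$ factor $\omega^{\mathbf{\dot{A}}}$ by $\omega^{\dot{C}}\wedge\omega^{\mathbf{\dot{A}}}=(-1)^{\tau}\omega^{\mathbf{\dot{A}}}\wedge\omega^{\dot{C}}$ exhibits it as $(-1)^{\tau}F\wedge d^{A'}G$, which is the claimed graded Leibniz rule.

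The only points that need real care are the well-definedness of $d^{A'}$ recorded at the outset and the sign bookkeeping when a $1$-form is pulled through a form of degree $\tau$; there is no deeper obstruction here. I would emphasize that the graded anticommutation \eqref{eq:d20} of $d^{0'}$ and $d^{1'}$ is, perhaps surprisingly, a purely formal consequence of the commutativity of the constant-coefficient vector fields $\nabla^{A'}_{\dot{A}}$ and uses nothing about the quaternionic structure — in contrast with the boundary operators $\mathfrak{d}^{A'}$ of Theorem \ref{thm:bd-complex-main:1}, which are assembled from non-commuting tangential vector fields and hence generically fail \eqref{eq:delta20} unless $\mathcal{E}_{0}$ vanishes.
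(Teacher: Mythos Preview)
Your proof is correct and follows essentially the same line as the paper's: for (1) you iterate \eqref{eq:d01'}, swap the dummy indices, use the antisymmetry of the wedge and the commutativity of the constant-coefficient $\nabla^{A'}_{\dot{A}}$, exactly as the paper does; for (2) your Leibniz computation is the same as the one the paper points to in \eqref{eq:Leibnitz2}. The extra remarks on well-definedness and the contrast with $\mathfrak{d}^{A'}$ are sound but go beyond what the paper records.
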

\begin{proof} See \eqref{eq:sym-0}  for  symmetrization. For $F=F_{\mathbf{A} }~ \omega^{\mathbf{A}}$ with $|\mathbf{A}|=\tau$,
\begin{equation*}
   d^{A'}  d^{B'}F= \nabla_{\dot{A}  }^{A'} \nabla_{\dot{B}  }^{B'} F_{\mathbf{A} }~ \omega^{\dot{A}  }\wedge\omega^{\dot{B}  }\wedge\omega^{\mathbf{A}}=-\nabla_{\dot{B}  }^{B'}\nabla_{\dot{A } }^{A'}F_{\mathbf{A} }~
  \omega^{\dot{B}  }\wedge\omega^{\dot{A}  }\wedge\omega^{\mathbf{A}}=-  d^{B'}  d^{A'}F,
\end{equation*}by $\nabla_{\dot{A } }^{A'}$'s commuting each other as differential operators of constant coefficients.
The proof of (2) is the same as \eqref{eq:Leibnitz2}.
\end{proof}

The Leibnitz law in Proposition \ref{prop:dd} (2) will be frequently used.
 It is convenient to identify $\odot^{\sigma}\mathbb{C}^{2 } $ with the space $   \mathcal{{P}}_\sigma(\mathbb{C}^2)$ of homogeneous polynomials of degree $\sigma$ on $\mathbb{C}^2$ \cite{LSW}.   $ \mathcal{{V}}_j$ is realized as $  \mathcal{{P}}_{\sigma_j }(\mathbb{C}^2) \otimes \wedge^{\tau_j}\mathbb{C}^{2(n +1)} $.
Let $s^{0'}, s^{1'}$ be coordinate functions of $\mathbb{C}^2$.  We choose
      \begin{equation*}
        \mathbf{S}_\sigma^a:= \frac {(s^{0'})^{\sigma-a}}{(\sigma-a)!}  \frac {(s^{1'})^a}{a!} ,
      \end{equation*}$a=0,\ldots,\sigma,$
     as a basis of $ \mathcal{{P}}_{\sigma  }(\mathbb{C}^2)$. The advantage of this basis is that
      \begin{equation}\label{eq:partial-S}
       \partial_{0'}  \mathbf{S}_\sigma^a=\mathbf{S}_{\sigma-1}^{a},\qquad \partial_{1'}  \mathbf{S}_\sigma^a=\mathbf{S}_{\sigma-1}^{a-1 }.
      \end{equation}We   write
       $s^{\mathbf{A}'}:=s^{A_1'}\cdots s^{A_\sigma'}$ for $\mathbf{A}' = A_1' \ldots A_\sigma'$ and set $|\mathbf{A }'|=\sigma$ and $o(\mathbf{A}')$ to be the number of $1'$  in $\mathbf{A}'$.

     For $j\geq k$, we also use
$
        \widetilde{ \mathbf{S}}_\sigma^a:= (s_{0'})^{\sigma-a} (s_{1'})^a  ,$ $ a=0,\ldots,\sigma,
$
     as a basis of $ \mathcal{{P}}_{\sigma  }(\mathbb{C}^2)$. Here $s_{A'} $'s are  obtained by lowering primed  indices. The advantage of this basis is that
      \begin{equation}\label{eq:partial-S'}
       s_{0'}\widetilde{ \mathbf{S}}_\sigma^a=\widetilde{ \mathbf{S}}_{\sigma+1}^{a},\qquad s_{1'}\widetilde{ \mathbf{S}}_\sigma^a=\widetilde{ \mathbf{S}}_{\sigma+1}^{a +1}.
      \end{equation}
        Here and in the sequel, we use the convention $\mathbf{S}_{\sigma-1}^{\sigma }=0=\mathbf{S}_{\sigma-1}^{-1 }$,  $\widetilde{ \mathbf{S}}_{\sigma-1}^{\sigma }=0=\widetilde{ \mathbf{S}}_{\sigma-1}^{-1 }$.

     For fixed $k$, indices in  $
\mathcal{ {V}}_j:= \mathcal{ {V}}^{\sigma_j ,\tau_j } $ in \eqref{cf} are given by
 \begin{equation}\label{eq:sigma-tau} \begin{split}&
    \sigma_j  :=\left\{
    \begin{array}{ll} k-j  ,\quad &{\rm if}\quad j=0, \ldots, k,\\
       j-k-1  ,\quad &{\rm if}\quad  j=k+1,\ldots,2n+1,
    \end{array}\right.
    \\& \tau_j :=\left\{
    \begin{array}{ll}  j ,\qquad \qquad \quad &{\rm if}\quad j=0, \ldots, k,\\
        j+1 ,\quad &{\rm if}\quad  j=k+1,\ldots,2n+1.
    \end{array}\right.
 \end{split}\end{equation}

  Under this realization, we can easily see \eqref{cf} is a complex, i.e. $ \mathcal{{D}}_{j+1} \mathcal{{D}}_{j}=0$, by
\begin{equation}\label{eq:DD1}
   \partial_{A'}\partial_{B'} d^{ {A} '} d^{ {B} '}=0,\qquad  s_{A'}s_{B'} d^{ {A} '} d^{ {B} '}=0,
\end{equation}since $d^{ {A} '} d^{ {B} '}$ is skew-symmetric in ${A} ', {B} '$ by Proposition \ref{prop:dd} (1), while $\partial_{A'}\partial_{B'}$ and $s_{A'}s_{B'}$
are both symmetric in ${A} ', {B} '$, and
\begin{equation}\label{eq:DD2}
   d^{ 0'}d^{ 1'} \partial_{A'}d^{ {A} '} =0 ,\qquad   s_{A'} d^{ {A} '} d^{ 0'}d^{ 1'} =0.
\end{equation}

 \subsection{Complex tangential vector fields  $Z _{  AA'}$} \label{subsection:Z-AA'} Write  $q_{l+1 }:=x_{4l+1}+\textbf{i}x_{4l+2}+\textbf{j}x_{4l+3}+\textbf{k}x_{4l+4},$  $l=0,\cdots,n-1 $.       The {\it  Cauchy-Fueter operator} on $\mathbb{H}^{n+1}$ is
  $$\overline{\partial}_{q_{l +1}}=\partial_{x_{4l+1}}+\textbf{i}
\partial_{x_{4l+2}}+\textbf{j}\partial_{x_{4l+3}}+\textbf{k}
\partial_{x_{4l+4}}, $$
$l=0,\ldots,n $.
We have {\it quaternionic tangential vector fields} on the boundary:
\begin{equation}\label{eq:Z-S}
    \overline{Q}_{l}=\overline{\partial}_{q_l}-\overline\partial_{q_l} \varrho\cdot(\overline\partial_{q_{n+1}}\varrho)^{-1}\cdot
    \overline\partial_{q_{n+1}}
\end{equation}$l=1,\ldots,n  $, since $\overline{Q}_l\varrho=0$ by definition.

The definition of  the map  $\tau$ in \eqref{tau}   can extended  to a mapping from quaternionic $l \times m$-matrices to complex
$ 2l \times 2m $-matrices by setting $\tau(\mathbf{a}):= (\tau(\mathbf{a}_{jk}))$ for a quaternionic  matrix $\mathbf{a} = (\mathbf{a}_{jk})$.
It is known $\tau(\mathbf{a} \mathbf{b})=\tau(\mathbf{a})\tau(\mathbf{b})$ for a quaternionic $ p\times m $-matrix $\mathbf{a}$ and a quaternionic $(m\times l)$-matrix $\mathbf{b}$ \cite[Lemma 2.1]{wan-wang}. Apply  it to  get
\begin{equation*}
   \tau(\overline\partial_{q_l})=\left(\begin{array}{ll} \nabla_{(2l)0'}& \nabla_{(2l)1'}\\ \nabla_{(2l+1)0'}&\nabla_{(2l+1)1'} \end{array}\right),\qquad \tau(\overline\partial_{q_l}\varrho)=\left(\begin{array}{ll} \nabla_{(2l)0'}\varrho& \nabla_{(2l)1'}\varrho\\ \nabla_{(2l+1)0'}\varrho&\nabla_{(2l+1)1'}\varrho \end{array}\right),
\end{equation*}by definition of  $\nabla_{AA'}$ in \eqref{nabla}, and so
\begin{equation}\label{eq:n-AA}
  (\nabla_{AA'}\varrho)=\tau\left(\begin{array}{c} \overline\partial_{q_1}\varrho
 \\ \vdots \\  \overline\partial_{q_n}\varrho\end{array}\right)  \qquad{\rm and }\qquad(Z_{AA'}): =\tau\left(\begin{array}{c}   \overline{Q}_1
 \\ \vdots \\    \overline{Q}_n  \end{array}\right).
 \end{equation} The vector fields
  $ Z_{AA'}$'s are   tangential to the    boundary     (\ref{eq:quadratic-hypersurface}), i.e. $Z_{AA'}\varrho=0$,  since $\overline{Q}_l\varrho=0$.
 If we write $Q_l=X_{4l+1} +\textbf{i}X_{4l+2}
                                     +\textbf{j} X_{4l+3}  +\textbf{k}X_{4l+4}$, then we have
\begin{equation}\label{eq:t-k-CF} ( Z_{A A'})=  \left(
                                      \begin{array}{rr}\vdots\hskip 13mm&\vdots\hskip 13mm\\  X_{4l+1} +\textbf{i}X_{4l+2}
                                    &- X_{4l+3}  -\textbf{i}X_{4l+4} \\
                                  X_{ 4l+3}  -\textbf{i}X_{4l+4}&X_{4l+1} -\textbf{i}X_{4l+2}  \\\vdots\hskip 13mm&\vdots\hskip 13mm
                                                                               \end{array}
                                    \right).
\end{equation}

Denote $\mathbf{N}_{B'C'}:=\nabla_{(2n+o(B'))C'}.$ Then,
\begin{equation*}
  (\mathbf{N}_{B'C'})=\left(\begin{array}{ll} \nabla_{(2n)0'} & \nabla_{(2n)1'}\\ \nabla_{(2n+1)0'} &\nabla_{(2n+1)1'}  \end{array}\right) ,
\end{equation*}
and $ \mathbf{N} \varrho $ is the $2\times 2$-matrix $\tau\left( \overline\partial_{q_{n+1}}\varrho \right)$. By applying $\tau$ to \eqref{eq:Z-S}, $ Z_{AA'}$ can be written as
\begin{equation}\label{eq:Z-AA'1}
   Z_{AC'}=\nabla_{AC'}-n_{AB'} \mathbf{N}_{B'C'},
\end{equation}
with
\begin{equation}\label{eq:n}
  n_{A B'}=\nabla_{A D'}\varrho\cdot (\mathbf{N}\varrho)^{-1}_{D'B' }.
\end{equation}
 Then by raising primed indices  \eqref{eq:varepsilon1}, we get
 \begin{equation}\label{eq:Z}
   Z _{A}^{A'}=\nabla_{A}^{A'}-n_{A B'} \mathbf{N}_{B'}^{A'},
\end{equation}with
\begin{equation}\label{eq:t-k-CF-raised} ( Z_{A}^{ A'})=  \left(
                                      \begin{array}{rr}\vdots\hskip 13mm&\vdots\hskip 13mm\\ - X_{4l+3}  -\textbf{i}X_{4l+4}
                                    &- X_{4l+1} -\textbf{i}X_{4l+2}\\X_{4l+1} -\textbf{i}X_{4l+2}
                           &    -   X_{ 4l+3}+\textbf{i}X_{4l+4}\\\vdots\hskip 13mm&\vdots\hskip 13mm
                                                                               \end{array}
                                    \right).
\end{equation} It also also direct to check that
$
    Z_{A}^{ A'}\varrho=0
$. The operator $ d^{ {A} '}$ in \eqref{eq:d01'} can be rewritten as
 \begin{equation}\label{eq:dA'0}\begin{split}
  d^{ {A} '}f   =& \left\{ \nabla_{  {A} }^{ A '} f_{ \mathbf{\dot {A}} }   \omega^{ A}   +\mathbf{N}_{  C' }^{ A '} f_{ \mathbf{\dot {A}}  }\omega^{  2n+ o(C')  }\right\}\wedge  \omega^{\mathbf{\dot {A}} }
   .
\end{split}\end{equation}

\subsection{Definition of the boundary complex} Let us recall the definition of the boundary  complex  of a
  general differential complex (cf. e.g. \cite{Andreotti,Andreotti2,Nacinovich}). Let $D$ be a domain in $ \mathbb{R}^N$.
Suppose that we have a differential complex
on $ \mathbb{R}^N$:
\begin{equation*}
\Gamma(  \mathbb{R}^N , E^{(0)} )\xrightarrow{A_0(x, \partial)}\Gamma( \mathbb{R}^N, E^{(1)} )\xrightarrow{A_1(x,\partial)}\Gamma( \mathbb{R}^N, E^{(2)})\xrightarrow{A_2(x, \partial)}\cdots.
\end{equation*} We say
$u \in \Gamma(\overline{D} , E^{(j )} )$ has {\it zero Cauchy
data} on the boundary $bD$ for $A_j(x, \partial)$  if for any $\psi\in \Gamma(U , E^{(j+1)} )$  compactly  supported in $U$, we have
\begin{equation}\label{eq:zero-Cauchy}
   \int_D\langle A_j(x, \partial)u, \psi\rangle dV=\int_D \langle u,   A_j^*(x, \partial)\psi \rangle dV,
\end{equation} where $\langle \cdot, \cdot\rangle$ is the inner product of $ E^{(j)}$, and $A_j^*(x, \partial)$ is the formal adjoint operator of $A_j(x, \partial)$.
  Set
\begin{equation}\label{eq:zero-Cauchy-sheaf}
   \mathcal{J}_{A_j}(bD,U):=\{u \in \Gamma (U,  E^{(j)} ); u  \hskip 2mm{\rm has \hskip 2mm zero\hskip 2mm Cauchy \hskip 2mm data\hskip 2mm   on \hskip 2mm}   bD \hskip 2mm  \hskip 2mm{\rm for}\hskip 2mm  A_j(x, \partial)\},
\end{equation}for any open set $U$.
 Then $U\rightarrow  \mathcal{J} _{A_j}(bD ,U)$ is a sheaf.
We must have
\begin{equation}\label{eq:inclusion}
   A_j(x, \partial)\mathcal{J} _{A_j}(bD ,U)\subset   \mathcal{J}_{A_{j+1}}(bD ,U).
\end{equation}
This is because
\begin{equation*}\begin{split}
   \int_D\langle A_{j+1}(x, \partial)A_j(x, \partial)u ,\psi\rangle dV &=0=\int_D \langle u , A_j^*(x, \partial) A_{j+1}^*(x, \partial)\psi \rangle dV\\
   &=\int_D \langle A_j (x,\partial)u ,  A_{j+1}^*(x, \partial)\psi\rangle dV,
 \end{split} \end{equation*}for any $u\in \mathcal{J} _{A_j}(bD ,U)$ and compactly  supported $\psi\in \Gamma(U , E^{(j+1)} )$, by $A_{j+1} A_j=0,  A_j^* A_{j+1}^* =0$.
Setting
 \begin{equation*}
    Q^{(j)}(bD)=\frac { \Gamma( \mathbb{R}^N , E^{(j)} )}{\mathcal{J} _{A_j}(bD, \mathbb{R}^N)},
 \end{equation*}
we obtain   a quotient complex of the form
\begin{equation*}
  Q^{(0)}(bD)\xrightarrow{\widehat{A}_{0 } } Q^{(1)}(bD)\xrightarrow{\widehat{A}_{1 }} Q^{(2)}(bD)\xrightarrow{\widehat{A}_{2 }}\cdots
\end{equation*}
where  $\widehat{A}_{j }$ is induced  by the differential operator  $A_j(x, \partial)$,  but is not necessarily
a differential operator  (cf. \cite[Section 6 (d)-(g)]{Andreotti2}).

Note that when the differential operator is of first order,
 (\ref{eq:zero-Cauchy}) is satisfied if and only if
 \begin{equation*}
    \int_{bD}\langle A(x, \nu)u ,\psi\rangle dS=0
 \end{equation*}
 by Stokes' formula, where $\nu$ is the unit vector outer normal to the  boundary $bD$. Since $\psi$ is arbitrarily chosen, it is equivalent to
 \begin{equation*}
    A(x, \nu)u|_{bD}= 0
 \end{equation*}

For the $k$-Cauchy-Fueter complex, $ \mathcal{{D}}_{j}$ in \eqref{eq:operator-k-CF} with  $j\neq k$ is a differential operator of the first order, and so
$
   Q^{(j)}(bD)\cong \Gamma(bD, \ker \sigma_j^*(\nu ))
$
by   the exactness of its symbol sequence \cite[Proposition 3.2]{wang-mfd}
\begin{equation}\begin{split}
0\longrightarrow &  \mathcal{{V}}_0
\xrightarrow{\sigma_0(v )} \mathcal{{V}}_1
\xrightarrow{\sigma_1(v )} \mathcal{{V}}_2 \longrightarrow  \cdots
\xrightarrow{\sigma_{2n } (v ) }\mathcal{ {V}}_{2n+1}\longrightarrow
0\end{split}\label{eq:symbol}
\end{equation}  for any $0\neq v\in \mathbb{R}^{4 (n+1)}$,
where $\sigma_j (v )$ is the symbol of $ \mathcal{{D}}_{j}$ at $v $.

  \section{The boundary complex  for $j< k-1$   }

 \subsection{Vector spaces of the boundary complex    }
 The first step is to identify $\mathcal{J} _{j}(U):= \mathcal{J} _{ D_j}(bD,U)$   for the    $k$-Cauchy-Fueter complex. To do so, we need the formula of $ d^{A'}$ in terms of vector fields tangential to $bD$. Here and in the sequel, we extend the definition \eqref{eq:mathfrak-d} of $\mathfrak  d^{A'}$ to $  \Gamma( U,\wedge^\tau\mathbb{C}^{2n+2})\rightarrow \Gamma( U,\wedge^{\tau+ 1}\mathbb{C}^{2n+2})$ on the domain by
\begin{equation}\label{eq:mathfrak-d-2}\begin{split}
\mathfrak  d^{A'}F&= Z _{A  }^{A'}f_{ {\mathbf{\dot{A}}}} \omega^A \wedge\omega^{ {\mathbf{\dot{A}}}},
 \end{split} \end{equation} for $ F =  f_{ {\mathbf{\dot{A}}}}  \omega^{ {\mathbf{\dot{A}}}} $. Operators $\mathfrak  d^{A'}$  also satisfy the Leibnitz law: if $|\mathbf{\dot{A}}|=\tau$ and
  $G= g_{\mathbf{\dot{B}}}\omega^{\mathbf{\dot{B}}}$, we have
\begin{equation}\label{eq:Leibnitz2}\begin{aligned}\mathfrak d^{A'}(F\wedge G)=& Z _{A }^{A'}(f_{\mathbf{\dot{A}}}g_{\mathbf{\dot{B}}})~\omega^A\wedge\omega^{\mathbf{\dot{A}}}\wedge\omega^{\mathbf{\dot{B}}}\\
=& Z _{A }^{A'} f_{\mathbf{\dot{A}}} ~\omega^A\wedge\omega^{\mathbf{\dot{A}}}\wedge   g_{\mathbf{\dot{B}}}\omega^{\mathbf{\dot{B}}}+(-1)^{\tau} f_{\mathbf{\dot{A}}}\omega^{\mathbf{\dot{A}}}\wedge Z _{ {A}
}^{A'} g_{\mathbf{\dot{B}}} \omega^A\wedge\omega^{\mathbf{\dot{B}}}\\
=&\mathfrak d^{A'} F\wedge G+(-1)^{\tau}F\wedge\mathfrak d^{A'} G.
\end{aligned}\end{equation} by $\omega^A\wedge\omega^{\mathbf{\dot{A}}} =(-1)^{\tau}
\omega^{\mathbf{\dot{A}}}\wedge\omega^A $. Denote
\begin{equation*}
  \Omega^{  A '} := d^{ A '} \varrho ,\qquad
   \mathcal E  :=- {d}^{0'} {d}^{1'}\varrho
\end{equation*} By definition,
  \begin{equation}\label{eq:d-Omega0}\begin{split}
   d^{ 0'}\Omega^{ 1' } &  =  -d^{1'}\Omega^{ 0' }=  -\mathcal E,\qquad d^{A'}\Omega^{ A'  }=d^{A'} d^{A'}\varrho=0.
\end{split}\end{equation}
 \begin{prop}\label{prop:d-b} We have
 \begin{equation}\label{eq:d-b}
    d^{ A'}= \Omega^{ A'} \wedge\partial_{4n+1}+d^{ A'}_b,
 \end{equation}
  where
$
 d^{ A'}_b:=  \mathfrak{ d}^{ {A} '}f   +\Omega^{ B'}\wedge   {\mathbf T}_{ B'}^{ A '}
$ is the part     only involving vector fields tangential to the boundary $bD$, and
 \begin{equation*}
  {\mathbf T}_{ B'}^{ A '} :=R_{ B'C'} \mathbf{N}_{C'}^{A'}- \delta^{ A'}_{  B'}\partial_{4n+1} ,
\end{equation*}
with $R_{ B' C '}:= \varepsilon_{B'D'}  (\mathbf{N}\varrho)^{-1} _{D' C' } $.
 \end{prop}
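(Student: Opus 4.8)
The plan is to expand $d^{A'}$ in the frame $\omega^{0},\dots,\omega^{2n+1}$ and split off the part supported along directions tangential to $bD$. Concretely, for $f=f_{\dot{\mathbf A}}\omega^{\dot{\mathbf A}}$ with $|\dot{\mathbf A}|=\tau$, formula \eqref{eq:dA'0} gives
\[
d^{A'}f=\nabla_{A}^{A'}f_{\dot{\mathbf A}}\,\omega^{A}\wedge\omega^{\dot{\mathbf A}}+\mathbf N_{C'}^{A'}f_{\dot{\mathbf A}}\,\omega^{2n+o(C')}\wedge\omega^{\dot{\mathbf A}},
\]
and I would substitute the decomposition $\nabla_{A}^{A'}=Z_{A}^{A'}+n_{AB'}\mathbf N_{B'}^{A'}$ from \eqref{eq:Z}. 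The summand involving $Z_{A}^{A'}$ is exactly $\mathfrak d^{A'}f$ by \eqref{eq:mathfrak-d-2}; the two remaining summands both carry a transversal operator $\mathbf N_{C'}^{A'}$ acting on the scalar component $f_{\dot{\mathbf A}}$, so (after renaming the summation index $B'\to C'$ in the first of them) collecting the coefficient of $\mathbf N_{C'}^{A'}f_{\dot{\mathbf A}}$ leaves the $1$-form $n_{AC'}\omega^{A}+\omega^{2n+o(C')}$. Hence the proposition reduces to the purely algebraic identity $n_{AC'}\omega^{A}+\omega^{2n+o(C')}=R_{B'C'}\,\Omega^{B'}$.

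To prove this I would expand $\Omega^{B'}=d^{B'}\varrho$ again by \eqref{eq:dA'0}, reducing the claim to the two scalar identities $R_{B'C'}\nabla_{A}^{B'}\varrho=n_{AC'}$ and $R_{B'C'}\mathbf N_{E'}^{B'}\varrho=\delta_{E'C'}$. Both follow from index gymnastics alone: writing $\nabla_{A}^{B'}=\nabla_{AF'}\varepsilon^{F'B'}$ and $\mathbf N_{E'}^{B'}=\mathbf N_{E'F'}\varepsilon^{F'B'}$ via \eqref{eq:varepsilon1}, using $\sum_{B'}\varepsilon_{B'D'}\varepsilon^{F'B'}=\delta^{F'}_{D'}$ from \eqref{eq:varepsilon}, the definition $R_{B'C'}=\varepsilon_{B'D'}(\mathbf N\varrho)^{-1}_{D'C'}$, and $(\mathbf N\varrho)^{-1}(\mathbf N\varrho)={\rm id}$; the first one matches \eqref{eq:n} on the nose. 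Feeding this back, the non-$\mathfrak d^{A'}$ part of $d^{A'}f$ becomes $\Omega^{B'}\wedge R_{B'C'}\mathbf N_{C'}^{A'}f$, and I would then simply unwind $\mathbf T_{B'}^{A'}=R_{B'C'}\mathbf N_{C'}^{A'}-\delta_{B'}^{A'}\partial_{4n+1}$ to split this as $\Omega^{A'}\wedge\partial_{4n+1}f+\Omega^{B'}\wedge\mathbf T_{B'}^{A'}f$, which is \eqref{eq:d-b}. Finally, to justify that $d_{b}^{A'}$ involves only vector fields tangential to $bD$, I would check $\mathbf T_{B'}^{A'}\varrho=0$: the same computation gives $R_{B'C'}\mathbf N_{C'}^{A'}\varrho=\delta_{B'}^{A'}$, while $\partial_{4n+1}\varrho=1$ because $\varrho=\operatorname{Re}q_{n+1}-\phi$ and $\phi$ is independent of $\operatorname{Re}q_{n+1}=x_{4n+1}$ by \eqref{eq:quadratic-hypersurface}; together with $Z_{A}^{A'}\varrho=0$ this completes the claim.

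The main obstacle is bookkeeping rather than substance: one has to keep the $\varepsilon$-raising and $\varepsilon$-lowering conventions, and the two roles of the primed index of $\mathbf N_{B'C'}$ (row selector versus spinor index), consistent throughout, so that the contractions collapse with the correct signs. It is also worth noticing — so as not to be misled by the shape of the formula — that the two appearances of $\partial_{4n+1}$, one hidden inside $\mathbf T_{B'}^{A'}$ and one explicit in $\Omega^{A'}\wedge\partial_{4n+1}$, are introduced and cancelled together; the genuine content is the single identity $d^{A'}=\mathfrak d^{A'}+\Omega^{B'}\wedge R_{B'C'}\mathbf N_{C'}^{A'}$, and the stated split into $\Omega^{A'}\wedge\partial_{4n+1}+d_{b}^{A'}$ is merely the convenient way to isolate the transversal derivative $\partial_{4n+1}$.
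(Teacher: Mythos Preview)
Your proposal is correct and follows essentially the same approach as the paper: the paper introduces the $1$-form $\Theta^{C'}:=\omega^{2n+o(C')}+n_{BC'}\omega^{B}$ (your $n_{AC'}\omega^{A}+\omega^{2n+o(C')}$), proves $\Theta^{C'}=\Omega^{B'}R_{B'C'}$ by the same $\varepsilon$-index computation, and then carries out the identical splitting via $R_{B'C'}\mathbf N_{C'}^{A'}\varrho=\delta_{B'}^{A'}$. Your explicit verification that $\mathbf T_{B'}^{A'}\varrho=0$ (hence that $d_{b}^{A'}$ is genuinely tangential) is a welcome addition that the paper leaves implicit.
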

 \begin{proof}
If we set  $
     \Theta^{ C' }  :={\omega}^{ 2n+  o(C')  } +   n_{B C'}\omega^{  B } ,
$ then
\begin{equation}\label{eq:OmegaA'}\begin{split}\Theta^{      C '}&=  {\omega}^{ 2n+  o(C')  } + \nabla_{B D'}\varrho\cdot (\mathbf{N}\varrho)^{-1}_{D'C' }\omega^{  B }\\&= \left[{\omega}^{ 2n+ o(B') }\mathbf{N}_{  B ' D'}\varrho+   \nabla_{BD'}\varrho\cdot\omega^{  B }   \right](\mathbf{N}\varrho)^{-1}_{D'C' } =
\\ &   =d_{D'} \varrho\, (\mathbf{N}\varrho)^{-1}_{D'C' } = \Omega^{ B' }R_{ B' C'}
\end{split}\end{equation}by \eqref{eq:n}, \eqref{eq:dA'0} and lowering index.
So for
$
   f =    f_{ \mathbf{\dot {A}} }      \omega^{  \mathbf{\dot {A}} }\in \Gamma(D,\wedge^\tau\mathbb{C}^{2n+2})
$, we can write
 \begin{equation}\label{eq:dA'-Omega}\begin{split}
  d^{ {A} '}f   =&   \left\{ Z_{  {A} }^{ A '}f_{ \mathbf{\dot {A}}  }  \omega^{  A}   +\mathbf{N}_{  C' }^{ A '} f_{ \mathbf{\dot {A}}  } \omega^{  2n+  o(C')  }
       +   n_{{A} B'}\mathbf{N}_{ B' }^{ A '}  f_{ \mathbf{\dot {A}} }  \omega^{ A} \right\} \wedge   \omega^{\mathbf{\dot {A}} } \\=&  \mathfrak{ d}^{ {A} '}f   +\Theta^{   C'  }\wedge \mathbf{N}_{  C' }^{ A '}   f \\
= &\mathfrak{ d}^{ {A} '}f   +\Theta^{   C'  }\wedge (\mathbf{N}_{  C' }^{ A '}-\mathbf{N}_{  C' }^{ A '}\varrho \,\partial_{4n+1}) f+\Theta^{  C '  }\mathbf{N}_{  C' }^{ A '}\varrho \wedge   \partial_{4n+1}f  \\
= &\mathfrak{ d}^{ {A} '}f   +\Omega^{ B'}\wedge   {\mathbf T}_{ B'}^{ A '} f+ \Omega^{  A '}\wedge\partial_{4n+1} f
\end{split}\end{equation} by  the formula \eqref{eq:dA'0} of $ d^{ {A} '}$,   the expression   \eqref{eq:Z} of $ Z_{  {A} }^{ A '}$,  \eqref{eq:OmegaA'}    and using
\begin{equation*}
   R_{ B'C '}\mathbf{N}_{  C' }^{ A '}\varrho= \varepsilon_{B'D'} (\mathbf{N}\varrho)^{-1}_{D'C' }(\mathbf{N}\varrho)_{  C'E' }\varepsilon^{E' A '}= \varepsilon_{B'D'} \varepsilon^{D' A '}= \delta^{ A'}_{  B'}.
\end{equation*}
The proposition is proved.
 \end{proof}

\begin{rem} (1) By definition, we have
\begin{equation}\label{eq:Omega-N}\left(\mathbf{N}_{A'B'}\varrho (\mathbf{q})\right)= \left(
                                      \begin{array}{rr}
                                    1& 0 \\
                         0  & 1    \end{array}
                                    \right)+O(|\mathbf{q}| ), \qquad
   \Omega^{  A '}= \omega^{  2n+  o(A')  }+O(|\mathbf{q}| ).
\end{equation}
    If the hypersurface \eqref{eq:rigid-hypersurface} is rigid, then $\overline\partial_{q_{n+1}}\varrho=1$. Thus $( \mathbf{N}_{A'B '   }\varrho)$ is the identity matrix,  and so $ R_{ B' C '} =\varepsilon_{B'C'} $. Hence
\begin{equation}\label{eq:T-AB}
   ( {\mathbf T}_{ B'}^{ A '} )=\left(
                                      \begin{array}{rr}-\textbf{i}\partial_{4n+2}&- \partial_{ 4n+3}+ \textbf{i}\partial_{4n+4}
\\ \partial_{4n+3} +\textbf{i}\partial_{4n+4} & \textbf{i}\partial_{4n+2}                                                                               \end{array}
                                    \right).
\end{equation}
\\
(2) Vector fields $Z_{  {A} }^{ A '}$'s   span  the tangential space perpendicular to the quaternionic line of the normal vector, while $ {\mathbf T} _{  {A}' }^{ B '}$'s   span  $3$-dim tangential space contained in  the quaternionic line of the normal vector.
\end{rem}

The symbol of the differential operator  $\partial_{A'} d^{ {A} '}$ at the direction $   {\operatorname{grad} \varrho} $ is
\begin{equation}\label{eq:DD}
    \mathbb{D}:=\Omega^{A'} \wedge \partial_{A'},
 \end{equation}
  because the symbol of $d^{ {A} '}$ is easily seen to be $d^{ {A} '} \varrho \wedge=\Omega^{  A '}\wedge$ by definition if we replace $\partial_{x_j}$ by $\partial_{x_j}\varrho$.
Set
  \begin{equation}\label{eq:Omega-a-j}\begin{split}
     \check{\mathbb{D}}:&=\frac 1{2 }\left(\Omega^{0'} \wedge \partial_{0'}-\Omega^{1'} \wedge \partial_{1'}\right) ,\\
    \mathbf{ \Omega}_{\sigma_j  }^{a   } :&= \check{\mathbb{D}} \mathbf{S}_{\sigma_j+1 }^{a   }  =\frac 1{2 } \left(  \mathbf{S}_{\sigma_j }^{a }  \Omega^{0'} - \mathbf{S}_{\sigma_j }^{a  -1  }  \Omega^{1'}\right),
  \end{split}  \end{equation}
where $ a=   1,\ldots,  \sigma_{j} $.
  When acting on $ \mathcal{{V}}^{\sigma,\tau} = \odot^{\sigma}\mathbb{C}^{2}\otimes
\wedge^\tau\mathbb{C}^{2n +2}$ in \eqref{eq:Vab},
  $   \mathbb{D}$ and  $ \check{  \mathbb{D}}$ are both linear transformations.
     $ \mathcal{J} _{j} $'s are determined by the following proposition.

\begin{prop} \label{prop:JW0}   Let $k$ be fixed and $j=0,\ldots,k-1$. Then,
\\
  (1)  $ \mathcal{J} _{j} (U)$   consists of
 \begin{equation}\label{eq:J-j}\begin{split}&
\mathbb{D}\mathbf{S}_{\sigma_j +1}^{a    }  \wedge f_{  a} + \varrho f,
 \end{split} \end{equation}
 where $f_a\in \Gamma( U ,\wedge^{\tau_j-1}\mathbb{C}^{2 n +2})$, $a=0,\ldots, \sigma_j+1$ and $f \in \Gamma( U ,\wedge^{\tau_j }\mathbb{C}^{2 n +2})$.
 \\
(2)  The $j$-th vector space   of the boundary complex is
  $\mathscr{V}_ j =\mathscr{V}_ j^{(1)}\oplus\mathscr{V}_ j^{(2)}$ with
   \begin{equation*} \begin{split}
       \mathscr{V}_ j^{(1)}&\cong \odot^{ \sigma_j } \mathbb{C}^{2} \otimes
\wedge^j\mathbb{C}^{2n } , \qquad
\mathscr{V}_ j^{(2)} \cong \odot^{ \sigma_{j+1}  } \mathbb{C}^{2} \otimes
\wedge^{j-1}\mathbb{C}^{2n },
  \end{split}\end{equation*}and $\mathscr{V}_0^{(2)}=\emptyset$. As  $\Gamma(U,  \mathbb{C})$-modules,
    $\Gamma(U,  \mathscr{V}_ j^{(1)})$  and $\Gamma(U,  \mathscr{V}_ j^{(2)})$  are  spanned by
  \begin{equation}\label{eq:W-j}\begin{split}&
     \mathbf{ {S}}_{\sigma_j }^{a   }   \mathbf{\omega}^{  \mathbf{{A}} },
  \end{split}\end{equation}  $a= 0,\ldots, {\sigma_j }, |   \mathbf{A}|=j,$
  and\begin{equation}\label{eq:W-j2}\begin{split}
      \mathbf{ \Omega}_{\sigma_j  }^{  b+1  }    \wedge\omega^{  \mathbf{ {B}} },
            \end{split}\end{equation}  $ b= 0,\ldots, {\sigma_j }-1=\sigma_{j+1} , $  $  |    \mathbf{B}|=j-1 $, respectively.
   \end{prop}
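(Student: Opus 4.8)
The plan is to reduce the determination of the sheaf $\mathcal{J}_{j}(U):=\mathcal{J}_{D_j}(bD,U)$ to pointwise linear algebra governed by the symbol $\mathbb{D}=\Omega^{A'}\wedge\partial_{A'}$ of $\mathcal{D}_j=\partial_{A'}d^{A'}$, and then to read off the quotient by decomposing $\wedge^\bullet\mathbb{C}^{2n+2}$ with respect to the two $1$-forms $\Omega^{0'},\Omega^{1'}$. First I would work on a neighbourhood $U$ of $bD$ (after the rotation bringing $\varrho$ to the form \eqref{eq:quadratic-hypersurface}, a neighbourhood of the origin) on which $d\varrho\neq 0$ and, by \eqref{eq:Omega-N}, $\Omega^{0'}=\omega^{2n}+O(|\mathbf{q}|)$, $\Omega^{1'}=\omega^{2n+1}+O(|\mathbf{q}|)$; after shrinking, $\{\omega^0,\ldots,\omega^{2n-1},\Omega^{0'},\Omega^{1'}\}$ is a moving coframe of $\mathbb{C}^{2n+2}$, which gives a splitting of $\Gamma(U,\mathbb{C})$-modules $\Gamma(U,\mathcal{V}_j)=A_j\oplus B_j\oplus C_j$, where $A_j$, $B_j$, $C_j$ collect the terms in which $0$, exactly $1$, and $2$ of the $\Omega^{A'}$ occur; here $A_j$ is spanned by the $\mathbf{S}_{\sigma_j}^a\omega^{\mathbf{A}}$ with $|\mathbf{A}|=\tau_j=j$, i.e. by \eqref{eq:W-j}.

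Since $\mathcal{D}_j$ is of first order and $\mathbb{D}$ is its symbol at $\operatorname{grad}\varrho$ (see \eqref{eq:DD}), the criterion for first-order operators recalled in Section 2 together with division by the defining function $\varrho$ gives $\mathcal{J}_{j}(U)=\{u:\mathbb{D}u|_{bD}=0\}=\{u:\mathbb{D}u=\varrho g\text{ for some }g\in\Gamma(U,\mathcal{V}_{j+1})\}$. The inclusion ``$\supseteq$'' of (1) is immediate, since $\varrho$ and $f_a$ do not involve $s$: $\mathbb{D}(\varrho f)=\varrho\mathbb{D}f$ and $\mathbb{D}(\mathbb{D}\mathbf{S}_{\sigma_j+1}^a\wedge f_a)=\mathbb{D}^2\mathbf{S}_{\sigma_j+1}^a\wedge f_a=0$, the latter because $\mathbb{D}^2=\Omega^{A'}\wedge\Omega^{B'}\wedge\partial_{A'}\partial_{B'}=0$ ($\Omega^{A'}\wedge\Omega^{B'}$ skew, $\partial_{A'}\partial_{B'}$ symmetric). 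For ``$\subseteq$'', let $\mathbb{D}u=\varrho g$. Applying the next symbol $\sigma_{j+1}(\operatorname{grad}\varrho)$ of the complex \eqref{eq:symbol} ($=\mathbb{D}$ if $j+1<k$, $=\Omega^{0'}\wedge\Omega^{1'}\wedge\,$ if $j+1=k$) and using $\sigma_{j+1}(\operatorname{grad}\varrho)\circ\mathbb{D}=0$, one gets $\varrho\,\sigma_{j+1}(\operatorname{grad}\varrho)g=\sigma_{j+1}(\operatorname{grad}\varrho)\mathbb{D}u=0$, hence $\sigma_{j+1}(\operatorname{grad}\varrho)g=0$ on $U$; exactness of \eqref{eq:symbol} at $\mathcal{V}_{j+1}$ along $\operatorname{grad}\varrho$ (\cite[Proposition 3.2]{wang-mfd}) plus constancy of symbol ranks yields a smooth $h$ with $g=\mathbb{D}h$. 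Then $\mathbb{D}(u-\varrho h)=0$, and exactness at $\mathcal{V}_j$ produces $w\in\Gamma(U,\mathcal{V}_{j-1})$ with $u-\varrho h=\mathbb{D}w$ (for $j=0$, $\sigma_0(\operatorname{grad}\varrho)$ is injective, so $u=\varrho h$). Since $\sigma_{j-1}=\sigma_j+1$ and $\tau_{j-1}=\tau_j-1$, expanding $w=\sum_{a=0}^{\sigma_j+1}\mathbf{S}_{\sigma_j+1}^a\wedge f_a$, $f_a\in\Gamma(U,\wedge^{\tau_j-1}\mathbb{C}^{2n+2})$, and using $\mathbb{D}(\mathbf{S}\wedge f_a)=\mathbb{D}\mathbf{S}\wedge f_a$ gives $u=\sum_a\mathbb{D}\mathbf{S}_{\sigma_j+1}^a\wedge f_a+\varrho h$, which is \eqref{eq:J-j}.

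For (2) I would push \eqref{eq:J-j} through the $\Omega$-splitting. By \eqref{eq:partial-S}, $\mathbb{D}\mathbf{S}_{\sigma_j+1}^a=\mathbf{S}_{\sigma_j}^a\Omega^{0'}+\mathbf{S}_{\sigma_j}^{a-1}\Omega^{1'}$. The key algebraic fact, verified by an explicit $2(\sigma_j+1)\times 2(\sigma_j+1)$ matrix in the basis $\{\mathbf{S}_{\sigma_j}^b\Omega^{A'}\}$, is that the $\sigma_j+2$ vectors $\mathbb{D}\mathbf{S}_{\sigma_j+1}^a$ ($a=0,\ldots,\sigma_j+1$) and the $\sigma_j$ vectors $\mathbf{\Omega}_{\sigma_j}^a=\check{\mathbb{D}}\mathbf{S}_{\sigma_j+1}^a$ ($a=1,\ldots,\sigma_j$) of \eqref{eq:Omega-a-j} together form a basis of $\mathcal{P}_{\sigma_j}\otimes\langle\Omega^{0'},\Omega^{1'}\rangle$; this splits $B_j=B_j^{(1)}\oplus B_j^{(2)}$, with $B_j^{(2)}$ spanned by the $\mathbf{\Omega}_{\sigma_j}^{b+1}\wedge\omega^{\mathbf{B}}$, $|\mathbf{B}|=j-1$, i.e. by \eqref{eq:W-j2}. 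Feeding into $\sum_a\mathbb{D}\mathbf{S}_{\sigma_j+1}^a\wedge f_a$ the $\mathbb{C}^{2n}$-, $\Omega^{A'}\wedge\mathbb{C}^{2n}$-, and $\Omega^{0'}\wedge\Omega^{1'}\wedge\mathbb{C}^{2n}$-parts of $f_a$ produces, using $\Omega^{A'}\wedge\Omega^{A'}=0$, respectively all of $B_j^{(1)}$, all of $C_j$, and $0$; together with $\varrho\Gamma(U,\mathcal{V}_j)$ and $\varrho B_j^{(1)}\subseteq B_j^{(1)}$, $\varrho C_j\subseteq C_j$, this gives $\mathcal{J}_{j}(U)=B_j^{(1)}\oplus C_j\oplus\varrho A_j\oplus\varrho B_j^{(2)}$, whence $\mathscr{V}_j=\Gamma(U,\mathcal{V}_j)/\mathcal{J}_{j}(U)\cong (A_j/\varrho A_j)\oplus(B_j^{(2)}/\varrho B_j^{(2)})$. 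Dividing coefficients by $\varrho$ once more identifies the two summands with $\Gamma(bD,\mathscr{V}_j^{(1)})$ and $\Gamma(bD,\mathscr{V}_j^{(2)})$, where $\mathscr{V}_j^{(1)}\cong\odot^{\sigma_j}\mathbb{C}^2\otimes\wedge^j\mathbb{C}^{2n}$ is spanned by \eqref{eq:W-j} and $\mathscr{V}_j^{(2)}\cong\odot^{\sigma_{j+1}}\mathbb{C}^2\otimes\wedge^{j-1}\mathbb{C}^{2n}$ (of fibre dimension $\sigma_{j+1}+1=\sigma_j$) by \eqref{eq:W-j2}. When $j=0$, $\tau_0-1=-1$ so $B_0=C_0=0$, $\mathcal{J}_{0}(U)=\varrho\Gamma(U,\mathcal{V}_0)$ and $\mathscr{V}_0^{(2)}=\emptyset$.

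The hard part will be the descent from the pointwise exactness of the symbol sequence \eqref{eq:symbol} to the existence of \emph{smooth} $h$ and $w$ on $U$: this needs the symbols $\sigma_i(\operatorname{grad}\varrho)$ to have locally constant rank along $U$ (a consequence of exactness), so that their kernels and images are smooth subbundles admitting smooth splittings. The remaining ingredients --- the basis claim for $\mathcal{P}_{\sigma_j}\otimes\langle\Omega^{0'},\Omega^{1'}\rangle$ and the bookkeeping of how the $\Omega$-degree of $f_a$ distributes between $B_j^{(1)}$ and $C_j$ --- are routine. One point deserving care is the endpoint $j=k-1$, where $\mathcal{D}_{j+1}=\mathcal{D}_k=d^{0'}d^{1'}$ is not of the form $\partial_{A'}d^{A'}$; the argument above already covers it, since it uses only that \eqref{eq:symbol} is an exact complex and that $\sigma_{j+1}(\operatorname{grad}\varrho)\circ\mathbb{D}=0$.
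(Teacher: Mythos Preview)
Your proof is correct and takes a genuinely different route from the paper for part~(1). The paper argues by direct computation: it writes down the basis \eqref{eq:basis} coming from the frame $\{\omega^0,\ldots,\omega^{2n-1},\Omega^{0'},\Omega^{1'}\}$, checks by hand that $\mathbb{D}$ kills the terms in \eqref{eq:J-j} (via $\mathbb{D}^2=0$) and that $\mathbb{D}$ applied to the proposed representatives \eqref{eq:W-j}--\eqref{eq:W-j2} yields linearly independent elements (equations \eqref{eq:omega-0}--\eqref{eq:omega-0'}), and then concludes by counting that nothing else can survive in the quotient. Your argument instead leverages the exactness of the symbol sequence \eqref{eq:symbol} at $\operatorname{grad}\varrho$: from $\mathbb{D}u=\varrho g$ you peel off $g=\mathbb{D}h$ and then $u-\varrho h=\mathbb{D}w$ using smooth splittings of constant-rank bundle maps, which immediately produces the normal form \eqref{eq:J-j}. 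This is more conceptual and avoids the explicit independence verification; the paper's approach, by contrast, makes all the combinatorics visible and doesn't appeal to the external exactness result \cite[Proposition~3.2]{wang-mfd}. For part~(2) the two proofs are essentially the same---both use the $\Omega$-degree decomposition $A_j\oplus B_j\oplus C_j$ and the splitting of $B_j$ by the basis $\{\mathbb{D}\mathbf{S}_{\sigma_j+1}^a\}\cup\{\mathbf{\Omega}_{\sigma_j}^a\}$---though you package it more systematically as the direct-sum identity $\mathcal{J}_j(U)=B_j^{(1)}\oplus C_j\oplus\varrho A_j\oplus\varrho B_j^{(2)}$, whereas the paper works modulo $\mathcal{J}_j$ term by term via \eqref{eq:Omega-Omega}--\eqref{eq:Omega-Omega01}.
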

\begin{proof} (1) Let $\langle\cdot,\cdot\rangle$ be the   inner product    of $  \mathcal{V}_j$ by choosing $\mathbf{S}_{\sigma_j }^{a    } \mathbf{\omega}^{  \dot{\mathbf{{A}}} }$ to be an orthonormal basis.  For $f\in \Gamma(  U ,  V_j)$ and $\psi\in \Gamma(  U ,  V_{j+1})$ with compact   support, by integration by part and using Proposition \ref{prop:d-b}, we get
\begin{equation*}\begin{split}
\int_D\left\langle  \partial_{A'} d^{ {A} '}f,\psi\right\rangle dV  & =  \int_D\left\langle \partial_{A'}\left(\Omega^{A'}\wedge \partial_{4n+1}+d^ {A'}_b\right)f , \psi\right\rangle dV\\
&=-\int_{bD}\left\langle    \mathbb{D} f,\psi\right\rangle \frac {dS}{|\operatorname{grad} \varrho|}+\int_D\left\langle f,\left(\partial_{A'} d^{ {A} '}\right)^*\psi\right\rangle dV,
\end{split}
\end{equation*}
   where  $-  {\operatorname{grad} \varrho}/{|\operatorname{grad} \varrho|}$ is the unit vector outer normal to $bD$  with
    \begin{equation*}
       \operatorname{grad} \varrho(x)=\left(-\phi_{x_1}(x),\ldots,-\phi_{x_{4n}}, 1,-\phi_{x_{4n+2}},-\phi_{x_{4n+3}},-\phi_{x_{4n+4}}\right ).
    \end{equation*}    Since $d^{ A'}_b$ only involves vector fields tangential to the boundary $bD$, there is no boundary term after integration by part.
Thus, $f \in \mathcal{J} _{j}(U) $ if and only if
\begin{equation*}
   \mathbb{D} f|_{bD\cap  U }=0.
\end{equation*}

It is direct to see that
\begin{equation}\label{eq:DD-omega-0}\begin{split}
      \mathbb{D} &\mathbb{D}\mathbf{S}_{\sigma_j +1}^{a    } = \Omega^{A'} \wedge \Omega^{B'}   \partial_{A'}\partial_{B'} \mathbf{S}_{\sigma_j +1}^{a    } =0,
\end{split}\end{equation}  since the term is skew-symmetric in superscripts $ A', B'$ and is symmetric in subscripts $ A', B'$. Thus, forms in  \eqref{eq:J-j} belongs to
$ \mathcal{J} _{j} (U) $.
On the other hand,
 \begin{equation}\label{eq:omega-0}\begin{split}
           \mathbb{D}& ( \mathbf{ \Omega}_{\sigma_j  }^{b+1   } \wedge\mathbf{\omega}^{  \mathbf{B} })  =-\mathbf{S}_{\sigma_j-1}^{b   }  \Omega^{ 0'} \wedge \Omega^{ 1'}  \wedge\mathbf{\omega}^{  \mathbf{B} } \neq 0,
\end{split}\end{equation}  for  $ b=0,\ldots, \sigma_j-1 $, by \eqref{eq:partial-S}, and
\begin{equation}\label{eq:omega-0'}
\mathbb{D}(\mathbf{S}_{\sigma_j }^{b    } \mathbf{\omega}^{  \mathbf{{A}} })  =\left(\mathbf{S}_{\sigma_j-1 }^{b }  \Omega^{0'} + \mathbf{S}_{\sigma_j -1}^{b  -1 }  \Omega^{1'}\right)\wedge\mathbf{\omega}^{  \mathbf{{A}} }.
\end{equation}
It is direct to see that terms in the right hand sides of \eqref{eq:omega-0}-\eqref{eq:omega-0'} are linearly independent. So
the linear combination of $\mathbf{ \Omega}_{\sigma_j  }^{b+1   }  \wedge\mathbf{\omega}^{  \mathbf{B} }$ and $\mathbf{S}_{\sigma_j }^{b   } \mathbf{\omega}^{  \mathbf{{A}} }$ with coefficients in $\Gamma(U,  \mathbb{C})$
can not be annihilated by $\mathbb{D}$.

 On the other hand, since
$ \{\Omega^{ 0'}, \Omega^{1'},  \omega^{0} ,\ldots ,\omega^{2n-1}\}$ locally is also a basis of $ \mathbb{C}^{2 (n+1) }$ by \eqref{eq:Omega-N}, we see that $ \Gamma(U,  \odot^{\sigma_j } \mathbb{C}^{2} \otimes
\wedge^j\mathbb{C}^{2n+2})$ has a basis consisting of
\begin{equation}\label{eq:basis}
    \mathbf{S}_{\sigma_j }^{a  }   \omega^{\mathbf{ {A}}}, \qquad  \mathbf{S}_{\sigma_j }^{a  } \,\Omega^{A'} \wedge \omega^{  \mathbf{  {B}} }, \qquad  \mathbf{S}_{\sigma_j }^{a  } \,  \Omega^{0'} \wedge\Omega^{1'} \wedge \omega^{ \mathbf{  {C}} },
\end{equation}
where  $| \mathbf{  {A}}|=| \mathbf{  {B}}|+1= | \mathbf{  {C}}|+2=j   $, $a=0,1,\ldots, \sigma_j $. But
\begin{equation}\label{eq:Omega-Omega}\mathbf{S}_{\sigma_j }^{a  } \Omega^{0'} \wedge \Omega^{ 1'} \wedge\omega^{ \mathbf{C}}  \in\mathcal{J} _{j} (U),
\end{equation}by \eqref{eq:omega-0},
    and
    \begin{equation}\label{eq:Omega-Omega01}
   \mathbf{S}_{\sigma_j }^{a  } \Omega^{A'}\wedge\omega^{  \mathbf{ {B}} }=   ( -1)^{ o(A')   }\mathbf{ \Omega}_{\sigma_j  }^{a +o(A')   }  \wedge\omega^{  \mathbf{ {B}} },\qquad {\rm mod }\quad    \mathcal{J}_{j } (U).
\end{equation}
    Thus $ \Gamma(U,  \odot^{\sigma_j } \mathbb{C}^{2} \otimes
\wedge^j\mathbb{C}^{2n+2})$  mod $\mathcal{J} _{j } (U)$ is spanned by terms in  \eqref{eq:W-j}-\eqref{eq:W-j2}, and so $\mathcal{J} _{j } (U)$ is spanned by terms in  \eqref{eq:J-j}.  \eqref{eq:Omega-Omega01} holds because
    \begin{equation*}\label{eq:Omega-Omega1}
   \mathbf{S}_{\sigma_j }^{a  } \Omega^{1'}\wedge\omega^{  \mathbf{ {B}} }= \Omega^{ 1'}\partial_{1'}\mathbf{S}_{\sigma_j+1 }^{a +1 } \wedge\omega^{  \mathbf{ {B}} } = -\mathbf{ \Omega}_{\sigma_j  }^{a+1    }  \wedge\omega^{  \mathbf{ {B}} },\qquad {\rm mod }\quad    \mathcal{J}_{j } (U),
\end{equation*}
for $a=0,\ldots, \sigma_j-1$,    and similarly,
$
   \mathbf{S}_{\sigma_j }^{a  } \Omega^{0'}\wedge\omega^{  \mathbf{ {B}} } = \mathbf{ \Omega}_{\sigma_j  }^{a    }  \wedge\omega^{  \mathbf{ {B}} } $ mod  $\mathcal{J}_{j } (U).
$
The proposition is proved.
 \end{proof}

\begin{rem}\label{rem:D-checkD} The indices $b$ in $\mathbf{ \Omega}_{\sigma_j  }^{b+1   } $ \eqref{eq:W-j2}
    are only taken over     $   1,\ldots, {\sigma_j } -1  $,  because
    \begin{equation*}
      2 \mathbf{ \Omega}_{\sigma_j  }^{0 } = \mathbf{S}_{\sigma_j }^{0   }\Omega^{0'}  = {\mathbb{D}}\mathbf{S}_{\sigma_j+1 }^{0    } , \qquad 2\mathbf{ \Omega}_{\sigma_j  }^{ \sigma_j +1   }    = -  \mathbf{S}_{\sigma_j }^{\sigma_j     } \Omega^{1'} =   -  {\mathbb{D}}\mathbf{S}_{\sigma_j+1 }^{\sigma_j+1   }\in  \mathcal{J}_{j }(U).
    \end{equation*}
\end{rem}

  \begin{lem}\label{lem:partial-Omega0} For $j=0,1,\cdots,k-1 $,
     \begin{equation}\label{eq:partial-Omega0}\begin{split}
     \partial_{0'}  \mathbf{ \Omega}_{\sigma_j   }^{a+1   }   &=   \mathbf{ \Omega}_{\sigma_j -1 }^{a+1   }  , \qquad \partial_{1'} \mathbf{ \Omega}_{\sigma_j  }^{a+1   }   =  \mathbf{ \Omega}_{\sigma_j -1 }^{a    }  , \qquad  \partial_{A'}  d^{A'} \mathbf{ \Omega}_{\sigma_j  }^{a+1   }   =  \mathcal E  \mathbf{S}^{a   }_{ \sigma_{j+1 }}.
 \end{split} \end{equation}
  \end{lem}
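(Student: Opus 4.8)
The three identities all follow by a direct computation from the closed-form expression
$\mathbf{\Omega}_{\sigma_j}^{a+1}=\tfrac12\bigl(\mathbf{S}_{\sigma_j}^{a+1}\Omega^{0'}-\mathbf{S}_{\sigma_j}^{a}\Omega^{1'}\bigr)$
recorded in \eqref{eq:Omega-a-j}. The key structural observation, which I would state once at the outset, is that $\partial_{A'}=\partial/\partial s^{A'}$ acts only on the homogeneous-polynomial factor $\mathcal{P}_\bullet(\mathbb{C}^2)$ while $d^{A'}$ acts only on the $x$-variables and on the Grassmann factor $\wedge^\bullet\mathbb{C}^{2(n+1)}$; hence $\partial_{A'}$ and $d^{B'}$ commute, $\partial_{A'}$ annihilates every one-form $\Omega^{B'}=d^{B'}\varrho$ and every $2$-form such as $\mathcal{E}$, and $d^{A'}$ treats each $\mathbf{S}_\sigma^b$ as a scalar, so it passes through $d^{A'}$ by the Leibnitz law of Proposition \ref{prop:dd}(2).

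For the first two identities I would apply $\partial_{0'}$ and $\partial_{1'}$ termwise to the above expression and use \eqref{eq:partial-S}, i.e. $\partial_{0'}\mathbf{S}_\sigma^a=\mathbf{S}_{\sigma-1}^a$ and $\partial_{1'}\mathbf{S}_\sigma^a=\mathbf{S}_{\sigma-1}^{a-1}$, together with the conventions $\mathbf{S}_{\sigma-1}^{\sigma}=0=\mathbf{S}_{\sigma-1}^{-1}$ to handle the extreme values of $a$. This produces $\partial_{0'}\mathbf{\Omega}_{\sigma_j}^{a+1}=\tfrac12(\mathbf{S}_{\sigma_j-1}^{a+1}\Omega^{0'}-\mathbf{S}_{\sigma_j-1}^{a}\Omega^{1'})$ and $\partial_{1'}\mathbf{\Omega}_{\sigma_j}^{a+1}=\tfrac12(\mathbf{S}_{\sigma_j-1}^{a}\Omega^{0'}-\mathbf{S}_{\sigma_j-1}^{a-1}\Omega^{1'})$, which by the very definition \eqref{eq:Omega-a-j} are exactly $\mathbf{\Omega}_{\sigma_j-1}^{a+1}$ and $\mathbf{\Omega}_{\sigma_j-1}^{a}$.

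For the third identity I would first compute $d^{A'}\mathbf{\Omega}_{\sigma_j}^{a+1}$ by pulling the scalar factors through $d^{A'}$ and then invoking the structure relations \eqref{eq:d-Omega0}, namely $d^{0'}\Omega^{0'}=d^{1'}\Omega^{1'}=0$, $d^{0'}\Omega^{1'}=-\mathcal{E}$ and $d^{1'}\Omega^{0'}=\mathcal{E}$. This gives $d^{0'}\mathbf{\Omega}_{\sigma_j}^{a+1}=\tfrac12\,\mathbf{S}_{\sigma_j}^{a}\,\mathcal{E}$ and $d^{1'}\mathbf{\Omega}_{\sigma_j}^{a+1}=\tfrac12\,\mathbf{S}_{\sigma_j}^{a+1}\,\mathcal{E}$. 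Applying $\partial_{0'}$, respectively $\partial_{1'}$, and using \eqref{eq:partial-S} once more, both summands collapse to $\tfrac12\,\mathbf{S}_{\sigma_j-1}^{a}\,\mathcal{E}$; adding them and using $\sigma_{j+1}=\sigma_j-1$ for $j=0,\dots,k-1$ from \eqref{eq:sigma-tau} yields $\partial_{A'}d^{A'}\mathbf{\Omega}_{\sigma_j}^{a+1}=\mathcal{E}\,\mathbf{S}^a_{\sigma_{j+1}}$, which is the assertion.

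There is no genuine difficulty here: the work is entirely routine index bookkeeping. The only points deserving attention are the commutation of $\partial_{A'}$ with $d^{B'}$ and the transparency of polynomial coefficients to $d^{A'}$ — both immediate since the two operators act on disjoint sets of variables — and the careful tracking of the index shifts $a\mapsto a,\ a-1,\ a+1$ together with the boundary conventions for the basis $\mathbf{S}_\sigma^a$.
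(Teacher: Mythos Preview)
Your proof is correct and follows essentially the same approach as the paper's own proof: both use the explicit formula \eqref{eq:Omega-a-j} for $\mathbf{\Omega}_{\sigma_j}^{a+1}$, the differentiation rules \eqref{eq:partial-S}, and the relations \eqref{eq:d-Omega0} for $d^{A'}\Omega^{B'}$, together with $\sigma_{j+1}=\sigma_j-1$. The only cosmetic difference is that for the third identity you first compute $d^{A'}\mathbf{\Omega}_{\sigma_j}^{a+1}$ and then apply $\partial_{A'}$, whereas the paper writes out the two surviving cross terms $\partial_{1'}\mathbf{S}_{\sigma_j}^{a+1}\,d^{1'}\Omega^{0'}$ and $-\partial_{0'}\mathbf{S}_{\sigma_j}^{a}\,d^{0'}\Omega^{1'}$ in a single line.
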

\begin{proof} The first two identities follows from definition \eqref{eq:Omega-a-j}, while for the third one, we have
\begin{equation*}
    \partial_{A'}  d^{A'} \mathbf{ \Omega}_{\sigma_j  }^{a+1   }  =\frac 1{2 } \left(  \partial_{1'} \mathbf{S}_{\sigma_j }^{a+1 }  d^{1'}\Omega^{0'} - \partial_{0'} \mathbf{S}_{\sigma_j }^{a    }  d^{0'} \Omega^{1'}\right)=  \mathcal E  \mathbf{S}^{a   }_{ \sigma_{j }-1},
\end{equation*}by \eqref{eq:d-Omega0}   and $  \sigma_{j+1 }= \sigma_{j }-1$.
 \end{proof}
 \subsection{The induced action of operators $ \mathcal{D }_{j}$'s on  the boundary complex} By Proposition \ref{prop:JW0}, an element of $\Gamma(U, \mathcal{  V}_ j  )$ modulo $ \mathcal{J} _{j }(U)$ has the form
 \begin{equation*}
    f_{  a  }  \mathbf{S}^{a}_{\sigma_j }   +  \mathbf{ \Omega}_{\sigma_j  }^{ b+1   }    \wedge G_{ b  },
 \end{equation*}for some $f_a     \in  \Gamma(U,   \wedge^{\tau_j}\mathbb{C}^{2n  } ), G_c     \in  \Gamma( U,     \wedge^{{\tau_j}-1}\mathbb{C}^{2n } )  $.
We need to calculate the action of $ \partial_{A'}d^{A'} $ on such elements modulo $ \mathcal{J} _{j+1 }(U)$.  Here and in  the sequel, we will use the convention that  the summation of repeated indices $a$ in $ \mathbf{S}^{a}_{\sigma_j }   $  is taken over
 $a= 0,\ldots, {\sigma_j }$, while the summation of repeated indices $b$ in $ \mathbf{ \Omega}_{\sigma_j  }^{ b+1   }   $  is taken over
 $b= 0,\ldots, {\sigma_j }-1=\sigma_{j+1}$.    We can write
 \begin{equation}\label{eq:E=}
 \mathcal{  E}=\mathcal{  E}_0+\Omega^{ A'} \wedge \mathcal{  E}_{A'}+ \mathcal{  E}_{ 0'1'}\Omega^{ 0'} \wedge \Omega^{ 1'}
 \end{equation}
for some $\mathcal{  E}_0, \mathcal{  E}_{ 0'},\mathcal{  E}_{ 1'}$ and $\mathcal{  E}_{ 0'1'}$ to be $2$-, $1$-, $1$- and $0$-forms only involving $\omega^A$.
Denote
\begin{equation}\label{eq:RT+-}\begin{split}\mathscr{R}_{c  }^\pm :&=
\mathfrak{ d}^{A '} f_{  c\pm  o(A')   }        + \mathcal E_0  \wedge G_{ c  },\\
 \mathscr{T}_c^{\pm} :&=  -\mathfrak{ d}^{A '} G_{ c\pm  o(A') }-\mathcal{  E}^{A'} \wedge G_{c\pm  o(A')} - \mathbf{ T }^{A ' B '} f_{ c\pm  o(A'B ')  } ,
\end{split}\end{equation}where $ o(A'B ')= o(A' )+ o( B ')$. By raising indices,
\begin{equation*}\label{eq:T-T}\begin{split}
  \mathbf{ T }^{0' 0 '}  = {\mathbf T}_{1'}^{ 0 '},\qquad \mathbf{ T }^{ 0 '1'}= - {\mathbf T}_ { 0'}^{ 0 '}  ,\qquad   \mathbf{T}^{ 1'  0 '}  ={\mathbf T}_{1'}^{ 1 '} ,\qquad \mathbf{ T }^{1' 1 '}   =-{\mathbf T}_ { 0'}^{ 1 '} .
\end{split}\end{equation*}

\begin{prop} \label{prop:JZ 0} If $j=0,1,\cdots, k-2,$
we have
    \begin{equation*} \begin{split}
 \partial_{A'}d^{A'}& \left[    f_{  a  }  \mathbf{S}^{a}_{\sigma_j }   + \mathbf{ \Omega}_{\sigma_j  }^{ b+1   }  \wedge G_{ b  } \right]
= \mathscr{R}_{b  }^+\mathbf{S}^{b  }_{{  \sigma_{j+1 }  }}   + \mathbf{ \Omega}_{\sigma_{j+1}  }^{ c+1   }  \wedge   \mathscr{T}_{c }^+
      \quad {\rm mod}  \quad \mathcal{J} _{j+1} (U).
\end{split}\end{equation*}
 \end{prop}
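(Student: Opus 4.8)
The plan is to compute $\partial_{A'}d^{A'}$ directly on each of the two kinds of generators of $\Gamma(U,\mathcal{V}_j)/\mathcal{J}_j(U)$ listed in Proposition~\ref{prop:JW0}(2), namely $f_a\mathbf{S}^a_{\sigma_j}$ and $\mathbf{\Omega}_{\sigma_j}^{b+1}\wedge G_b$, then expand the result in the basis \eqref{eq:basis} and discard everything that lies in $\mathcal{J}_{j+1}(U)$ using \eqref{eq:Omega-Omega}, \eqref{eq:Omega-Omega01} and Remark~\ref{rem:D-checkD}. First I would treat the term $f_a\mathbf{S}^a_{\sigma_j}$: writing $d^{A'}=\Omega^{A'}\wedge\partial_{4n+1}+\mathfrak{d}^{A'}+\Omega^{B'}\wedge\mathbf{T}^{A'}_{\,B'}$ from Proposition~\ref{prop:d-b}, and using $\partial_{A'}\mathbf{S}^a_{\sigma_j}=\mathbf{S}^a_{\sigma_j-1}$ or $\mathbf{S}^{a-1}_{\sigma_j-1}$ from \eqref{eq:partial-S} together with $\partial_{A'}f_a=0$, I get a sum of a tangential piece $\mathfrak{d}^{A'}f_{a}\,\mathbf{S}^{\cdots}_{\sigma_j-1}$ (to be reorganized into $\mathscr R^+$) plus terms carrying an explicit $\Omega^{A'}\wedge$; the latter, after the substitution \eqref{eq:Omega-Omega01} rewriting $\mathbf{S}\,\Omega^{A'}\wedge$ as $\pm\mathbf{\Omega}$, feed into the $\mathbf{\Omega}\wedge\mathscr T^+$ part, producing precisely the $\mathbf{T}^{A'B'}f$ contributions in \eqref{eq:RT+-}. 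The $\partial_{4n+1}$-piece must be shown to vanish modulo $\mathcal{J}_{j+1}$: since $\partial_{A'}\partial_{B'}$ is symmetric while $\Omega^{A'}\wedge\Omega^{B'}$ is skew (as in \eqref{eq:DD-omega-0}), the double-$\Omega$ contribution drops out, and the single-$\Omega$ normal-derivative terms are exactly what the $\mathbf{T}^{A'}_{\,B'}=R_{B'C'}\mathbf{N}^{A'}_{C'}-\delta^{A'}_{B'}\partial_{4n+1}$ definition is engineered to absorb.

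Next I would handle $\mathbf{\Omega}_{\sigma_j}^{b+1}\wedge G_b$. Here the Leibniz rule of Proposition~\ref{prop:dd}(2) splits $d^{A'}(\mathbf{\Omega}\wedge G)$ into $(d^{A'}\mathbf{\Omega})\wedge G\pm\mathbf{\Omega}\wedge d^{A'}G$, and I apply $\partial_{A'}$ using \eqref{eq:partial-Omega0} from Lemma~\ref{lem:partial-Omega0} for the first summand and \eqref{eq:partial-S}/\eqref{eq:Omega-a-j} for the second. The key input is the third identity in \eqref{eq:partial-Omega0}, $\partial_{A'}d^{A'}\mathbf{\Omega}_{\sigma_j}^{a+1}=\mathcal E\,\mathbf{S}^a_{\sigma_{j+1}}$; substituting the decomposition \eqref{eq:E=} of $\mathcal E$, the $\mathcal E_0$-part gives the $\mathcal E_0\wedge G$ term of $\mathscr R^+$, the $\Omega^{A'}\wedge\mathcal E_{A'}$-part (after \eqref{eq:Omega-Omega01}) gives the $\mathcal E^{A'}\wedge G$ term of $\mathscr T^+$, and the $\Omega^{0'}\wedge\Omega^{1'}$-part dies modulo $\mathcal{J}_{j+1}$ by \eqref{eq:Omega-Omega}. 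The $\mathbf{\Omega}\wedge d^{A'}G$ summand contributes the $\mathfrak{d}^{A'}G$ and the residual normal-derivative terms, again reassembled via $\mathbf{T}^{A'}_{\,B'}$, into $\mathscr T^+$. Throughout, one must keep careful track of the sign $(-1)^{\tau}$ in the Leibniz rule and of the index shifts $o(A')$, which is where the $\pm o(A')$ bookkeeping in \eqref{eq:RT+-} comes from.

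The main obstacle I anticipate is not any single hard idea but the combinatorial bookkeeping: correctly collecting, after reduction modulo $\mathcal{J}_{j+1}(U)$, all contributions proportional to a given $\mathbf{S}^b_{\sigma_{j+1}}$ versus those proportional to a given $\mathbf{\Omega}_{\sigma_{j+1}}^{c+1}$, with the right index shifts and signs so that they coalesce exactly into $\mathscr R^+_b$ and $\mathscr T^+_c$ as defined in \eqref{eq:RT+-}. In particular one has to verify that the $\partial_{4n+1}$ (normal-derivative) pieces appearing from the three separate sources — the $\Omega^{A'}\wedge\partial_{4n+1}$ term of $d^{A'}$ acting on each generator, and the $\mathbf{T}^{A'}_{\,B'}$ term's own $-\delta^{A'}_{B'}\partial_{4n+1}$ — combine so that only tangential operators survive in the final answer, which is the structural content of "the boundary complex is genuinely a complex of operators on $bD$." I would also double-check the edge cases of the index range for $b$ (using Remark~\ref{rem:D-checkD} that $\mathbf{\Omega}^0_{\sigma}$ and $\mathbf{\Omega}^{\sigma+1}_{\sigma}$ are already in $\mathcal{J}_j$) so that no spurious boundary generators are introduced when $j$ is close to $k-2$; since we are in the range $j\le k-2$ with $\sigma_j\ge 2$, the formulas of Lemma~\ref{lem:partial-Omega0} apply uniformly and no degeneracy occurs.
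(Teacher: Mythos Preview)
Your approach is essentially the paper's: compute $\partial_{A'}d^{A'}$ separately on the two kinds of representatives, use Proposition~\ref{prop:d-b} to split $d^{A'}$, use Lemma~\ref{lem:partial-Omega0} and the decomposition \eqref{eq:E=} of $\mathcal E$, and reduce modulo $\mathcal J_{j+1}(U)$ via \eqref{eq:Omega-Omega}--\eqref{eq:Omega-Omega01}.

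One point of your outline is muddled, though, and if you follow it literally you will look for cancellations that do not exist. The normal-derivative piece $\Omega^{A'}\wedge\partial_{4n+1}$ acting on $f_a\mathbf S^a_{\sigma_j}$ is \emph{not} absorbed into $\mathbf T^{A'}_{\,B'}$; rather, it yields
\[
\partial_{A'}\bigl(\Omega^{A'}\wedge\partial_{4n+1}(f_a\mathbf S^a_{\sigma_j})\bigr)=\mathbb D\,\mathbf S^a_{\sigma_j}\wedge\partial_{4n+1}f_a,
\]
which is already an element of $\mathcal J_{j+1}(U)$ by Proposition~\ref{prop:JW0}(1) (this is \eqref{eq:4n+1} in the paper). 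No symmetry--antisymmetry argument in the style of \eqref{eq:DD-omega-0} is needed here, since only one $d^{A'}$ is applied. Similarly, for the $\mathbf\Omega^{b+1}_{\sigma_j}\wedge G_b$ piece, when you hit $G_b$ with $d^{A'}$ you do \emph{not} need to track the $\Omega^{B'}\wedge\mathbf T^{A'}_{B'}G_b$ and $\Omega^{A'}\wedge\partial_{4n+1}G_b$ terms: since $\partial_{A'}\mathbf\Omega^{b+1}_{\sigma_j}=\mathbf\Omega^{\cdots}_{\sigma_{j+1}}$ already carries one $\Omega$, those terms produce $\mathbf\Omega_{\sigma_{j+1}}\wedge\Omega^{C'}\wedge(\cdots)$, which lies in $\mathcal J_{j+1}(U)$ by \eqref{eq:Omega-Omega} (see \eqref{eq:Omega-Omega=0}). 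Thus the only surviving contribution from $d^{A'}G_b$ is $\mathfrak d^{A'}G_b$, and the $\mathbf T^{A'B'}$ term in $\mathscr T^+_c$ comes \emph{solely} from the $f$-part, not from $G$. Once you make these two simplifications, the bookkeeping you describe goes through exactly as in the paper.
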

 \begin{proof}     Note that
\begin{equation*}\begin{split}
   \partial_{A'}     d^{A'} \mathbf{ \Omega}_{\sigma_j  }^{ b+1   }&  = \mathbf{S}^{b  }_{\sigma_{j+1}} \, \mathcal E
     =   \mathcal E_0  \mathbf{S}^{b  }_{ \sigma_{j+1 }}+ \mathbf{\Omega}_{\sigma_{j+1 }  }^{b  } \wedge \mathcal{  E}_{ 0'}-  \mathbf{\Omega} _{\sigma_{j+1 }  }^{b +1   }  \wedge \mathcal{  E}_{ 1'}\qquad  {\rm mod}  \quad \mathcal{J} _{j+1} (U).
\end{split}\end{equation*}by  using Lemma \ref{lem:partial-Omega0}, \eqref{eq:Omega-Omega01} and \eqref{eq:E=}.
By using the formula \eqref{eq:dA'-Omega} of $ d^{ {A} '}$  and Lemma \ref{lem:partial-Omega0} again, we get
  \begin{equation*} \begin{split}
  \partial_{A'}d^{A'}\left[  \mathbf{ \Omega}_{\sigma_j  }^{ b+1   } \wedge   G_{b }   \right]
  = &\partial_{A'}     d^{A'}\mathbf{ \Omega}_{\sigma_j  }^{ b+1   }  \wedge   G_{b }   -  \partial_{A'}   \mathbf{ \Omega}_{\sigma_j  }^{ b+1   }  \wedge d^{A'}  G_{b }    \\
 =&  \mathbf{S}^{b   }_{\sigma_{j+1}} \, \mathcal E_0 \wedge G_{b } -  \mathbf{ \Omega}_{\sigma_{j+1 }  }^{ b  +1  }  \wedge\left(\mathfrak{ d}^{ 0 '} G_{b } + \mathcal{  E}_{ 1'} \wedge G_{b } \right) - \mathbf{ \Omega}_{\sigma_{j+1 } }^{ b   }  \wedge\left(\mathfrak{ d} ^{ 1 '} G_{b } -\mathcal{  E}_{ 0'} \wedge G_{b } \right)
\end{split}\end{equation*}   mod $\mathcal{J} _{j+1}  (U)   $, since
\begin{equation}\label{eq:Omega-Omega=0}
  \mathbf{ \Omega}_{\sigma_{j+1 }  }^{ b    }\wedge\Omega^{ A'}   \wedge\cdot =0  \qquad  {\rm mod}  \quad \mathcal{J} _{j+1} (U),
   \end{equation}by
 (\ref{eq:Omega-Omega}). Now
  by  relabeling    and raising indices, we get
 \begin{equation*}  \begin{split}
 \partial_{A'}d^{A'}\left[  \mathbf{ \Omega}_{\sigma_{j+1}   }^{ b+1   }  \wedge G_{ b   }    \right]=  &    \mathbf{{S}}^{b  }_{ {\sigma_{j+1} } }\mathcal E_0 \wedge G_{b }-   \mathbf{ \Omega}_{\sigma_{j+1}  }^{ c+1   }  \wedge  \left[\mathfrak{ d}^{A '} G_{ c+ o(A') }+\mathcal{  E}^{A'} \wedge G_{c+  o(A')}\right]\quad  {\rm mod}  \quad \mathcal{J} _{j+1} (U).
\end{split}\end{equation*}

On the other hand,
 \begin{equation}\label{eq:4n+1}
  \partial_{A'}\Omega^{  A '}\wedge\partial_{4n+1}  (f_{  a  }  \mathbf{S}^{a}_{\sigma_j }   ) = \mathbb{D}\mathbf{S}_{\sigma_j }^{a    }\wedge \partial_{4n+1}   f_{  a  } \in \mathcal{J} _{j+1} (U),
 \end{equation}
   and so
 \begin{equation*}\label{eq:Dj-2}\begin{split} \partial_{A'}d^{A'} & \left[    f_{  a  }  \mathbf{S}^{a}_{\sigma_j }     \right]= \partial_{A'} \mathbf{S}^{a}_{\sigma_j }  \left(\mathfrak{ d}^{ A '} f_{  a  }     +    \Omega^{ B'} \wedge  {\mathbf T}_{  B'}^{ A '} f_{ a   } \right)\\& =   \mathbf{S}^{a- o(A')}_{\sigma_j-1 }  \mathfrak{ d}^{ A '} f_{  a  }     + \mathbf{S}^{a }_{ \sigma_{j  }-1 }\left ( \Omega^{ 0'}  \wedge {\mathbf T}_{0'}^{ 0 '} f_{  a  } +     \Omega^{ 1'}  \wedge {\mathbf T}_{1'}^{ 0 '} f_{ a   }\right) +\mathbf{S}^{a-1 }_{ \sigma_{j  }-1 }   \left (\Omega^{ 0'}  \wedge{\mathbf T}_{0'}^{ 1 '} f_{ a  } +      \Omega^{ 1'} \wedge{\mathbf T}_{1'}^{ 1 '} f_{  a  } \right)\\
 &=  \mathbf{S}^{a}_{\sigma_{j+1 } }  \mathfrak{ d}^{ A '} f_{  a + o(A') }     +  \mathbf{ \Omega}_{\sigma_{j+1 }  }^{    a  }   \wedge \left({\mathbf T}_{ 0 '} ^{ 0 '} -{\mathbf T}_{1'}^{ 1 '}\right ) f_{  a   }  -  \mathbf{ \Omega}_{\sigma_{j+1 }  }^{   a+1 }   \wedge {\mathbf T}_{1'}^{ 0 '}  f_{  a   }     +  \mathbf{ \Omega}_{\sigma_{j+1 }  }^{   a-1 } \wedge {\mathbf T}_{0'}^{ 1 '} f_{  a  }       \\
 &=  \mathbf{S}^{a}_{\sigma_{j+1 } }  \mathfrak{ d}^{ A '} f_{  a + o(A') }   - \mathbf{ \Omega}_{\sigma_{j+1 }  }^{c  +1    }    \wedge \left[\left({\mathbf T} ^{0 ' 1 '} +{\mathbf T} ^{ 1 '0'}\right ) f_{ c +1  }  + {\mathbf T} ^{ 0 '0'}  f_{ c }     +   {\mathbf T} ^{ 1 '1'} f_{  c +2 } \right]  \quad {\rm mod}   \quad\mathcal{J} _{j+1} (U)
\end{split}\end{equation*}
by using (\ref{eq:dA'-Omega}),   \eqref{eq:Omega-Omega01},   Proposition  \ref{prop:JW0} (1),  and relabeling indices. Where $c$  is taken over
 $  0,\ldots,  \sigma_{j+1}-1=\sigma_{j+2 }$. The result follows from the sum of the above identities.
\end{proof}
 \subsection{Proof of theorem \ref{thm:bd-complex-main:1}   for $j=0,1,\cdots,k-1$ }  To write down   operators  ${\mathscr{D} }_{j}$ in  the boundary complex,     define    isomorphisms
  \begin{equation}\label{eq:Pi-0}\begin{split}
     \mathbf{\Pi}_j  :  \Gamma(U, {V}_ j  ) / \mathcal{J} _{j }(U)& \longrightarrow \Gamma( bD\cap U,\mathscr{V}_ j^{(1)}  )\oplus \Gamma( bD\cap U, \mathscr{V}_ j^{(2)}  ),\\
     f_{  a  }  \mathbf{S}^{a}_{\sigma_j }   + \mathbf{ \Omega}_{\sigma_{j  }  }^{  b +1 }      \wedge G_{b } &\mapsto \left(f_{  a  }  \mathbf{S}^{a}_{\sigma_j } ,   G_{ b}\mathbf{S}^{b }_{{  \sigma_{j+1 }  }}\right),
 \end{split} \end{equation}where the first summation  is taken over
 $a= 0,\ldots, {\sigma_j }$, while the second one is taken over
 $b= 0,\ldots,  \sigma_{j+1 } $.

    The operator ${\mathscr{D} }_{j}: \Gamma(bD,\mathscr{V}_ j ) \longrightarrow  \Gamma(bD,\mathscr{V}_{j+1} ) $ is given by
  \begin{equation}\label{eq:Pi-Q1}
     {\mathscr{D} }_{j}=  \mathbf{\Pi}_{j+1 }\circ \partial_{A'}d^{A'}\circ \mathbf{\Pi}_j^{-1} \qquad {\rm mod} \quad \mathcal{J} _{j+1 }(U).
  \end{equation}
  By Proposition \ref{prop:JZ 0},
      we get
 \begin{equation*}\begin{split}
 {\mathscr{D} }_{j}\left(f_{  a  }  \mathbf{S}^{a}_{\sigma_j } ,   G_{ b}\mathbf{S}^{b }_{{  \sigma_{j+1 }  }}\right)&=  \mathbf{\Pi}_{j+1 }\left(\partial_{A'}d^{A'}  \left[    f_{  a  }  \mathbf{S}^{a}_{\sigma_j }   +  \mathbf{ \Omega}_{\sigma_{j  }  }^{   b +1    }     \wedge G_{b} \right]\right)\\&
=  \mathbf{\Pi}_{j+1 }\left( \mathscr{R}_{b  }^+ \mathbf{S}^{b  }_{{  \sigma_{j+1 }  }}  +  \mathbf{ \Omega}_{\sigma_{j+1 }  }^{    c  +1  }   \wedge   \mathscr{T}_c^+\right)\\&
= \left ( \mathscr{R}_{b  }^+ \mathbf{S}^{b  }_{{  \sigma_{j+1 }  }} ,  \mathscr{T}_{c  }^+\mathbf{S}^{c }_{{  \sigma_{j+2 }  }}\right)
      ,
 \end{split} \end{equation*}where $c$  is taken over
 $  0,\ldots,  \sigma_{j+2}$. On the other hand, we have
 \begin{equation}\label{eq:RT+}\begin{split} \mathscr{R}_{b  }^+
  \mathbf{S}^{b  }_{{  \sigma_{j+1 }  }}& =  \partial_{A'}\mathfrak  d^{A '}  \left(f_{  a  }  \mathbf{S}^{a}_{\sigma_j } \right)  +\mathcal E_0\wedge  \left ( G_{ b}\mathbf{S}^{b}_{{  \sigma_{j+1 }  }} \right),\\
  \mathscr{T}_{c  }^+\mathbf{S}^{c }_{{  \sigma_{j+2 }  }} &= - \partial_{A'} \mathfrak  d^{A'}  \left (G_{ b}\mathbf{S}^{b }_{{  \sigma_{j+1 }  }}\right)-\mathcal{  E}^{A'} \wedge \partial_{A'}\left  (G_{b }  \mathbf{S}^{b }_{{  \sigma_{j+1 }  }}\right)-\partial_{A'}\partial_{B'} \mathbf{T }^{A' B '} \left(f_{  a  }  \mathbf{S}^{a}_{\sigma_j } \right)
 \end{split} \end{equation}by direct differentiation  with respect to $s^{A'}$,
and formulae \eqref{eq:RT+-} of $\mathscr{R}_{b  }^+$ and $
 \mathscr{T}_c^+$.
 Thus, if taking $\mathbb{ F}_1   =f_{  a  }  \mathbf{S}^{a}_{\sigma_j } $ and $\mathbb{ F}_2   =G_{ b}\mathbf{S}^{b}_{{  \sigma_{j+1 }  }}$, we see that
  $ {\mathscr{D} }_{j}$ is given by  \begin{equation}\label{eq:bdoperator<}\begin{split}{\mathscr{D} }_{j}\mathbb{ F}   =&\left(
 \partial_{A'}\mathfrak  d^{A '} \mathbb{F}_1  +\mathcal E_0\wedge    \mathbb{F}_2 ,  -\partial_{A'} (\mathfrak  d^{A'}+\mathcal{{  E}}^{A'}\wedge ) \mathbb{F}_2-\partial_{A'}\partial_{B'} \mathbf{T }^{A' B '} \mathbb{F}_1
\right) .
\end{split} \end{equation}

\section{The boundary complex  for $j\geq k-1$   }
\subsection{Vector spaces of the boundary complex   }
 For $j=k+1,\cdots,2n+1$,  let
  \begin{equation}\label{eq:Omega-check-D}
    \widetilde{\mathbb{D}}:=s_{A'} \Omega^{A'}\wedge
  \end{equation} which is the symbol of the differential operator  $s_{A'} d^{ {A} '}$ at the direction $   {\operatorname{grad} \varrho} $,
and
 \begin{equation}\label{eq:Omega-a-p}
     \mathbf{\widetilde{{\Omega}}}^{b }_{\sigma_j} := \frac 12\left(  \widetilde{ \mathbf{{S}}}_{\sigma_j }^{b -1  } \Omega^{ 0'}-      \widetilde{\mathbf{{S}}}_{\sigma_j }^{b }  \Omega^{1'} \right),
  \end{equation}where  $ b=   0,\ldots, \sigma_j +1 . $ Unlike $\mathbf{  \Omega }^{a }_{\sigma_j}$, elements $ \mathbf{\widetilde{{\Omega}}}^{0 }_{\sigma_j}:=- \frac 12  \widetilde{ {S}}_{\sigma_j }^{0  } \Omega^{1'}$ and
  $ \mathbf{\widetilde{{\Omega}}}^{\sigma_j +1 }_{\sigma_j}:= \frac 12  \widetilde{ \mathbf{{S}}}_{\sigma_j }^{\sigma_j    } \Omega^{ 0'}$ are not in $\ker\widetilde{\mathbb{D}}$.   It is direct to check  that
  \begin{lem}\label{lem:partial-Omega}
     \begin{equation}\label{eq:partial-Omega}\begin{split}
        s_{0'} \mathbf{\widetilde{ {\Omega}}}^{b}_{\sigma_j } &=  \mathbf{\widetilde{{\Omega}}}^{b }_{\sigma_{j+1 } }, \qquad s_{1'} \mathbf{\widetilde{ {\Omega}}}^{b }_{\sigma_j } = \mathbf{ \widetilde{{\Omega}}}^{b+1}_{\sigma_{j+1 } },  \qquad
     s_{A'}  d^{A'}\mathbf{\widetilde{ {\Omega}}}^{b }_{\sigma_j }   =  \mathcal E  \widetilde{ \mathbf{S}}^{b  }_{ \sigma_{j+1 }}.
 \end{split} \end{equation}
  \end{lem}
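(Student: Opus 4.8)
The plan is to derive all three identities directly from the elementary multiplication rules \eqref{eq:partial-S'} for the basis $\widetilde{\mathbf{S}}_\sigma^a$ and the structure relations \eqref{eq:d-Omega0} for $d^{A'}\Omega^{B'}$, exactly as in the proof of its lowered counterpart Lemma \ref{lem:partial-Omega0}. Two facts are used throughout: for $j\geq k+1$ one has $\sigma_{j+1}=\sigma_j+1$ by \eqref{eq:sigma-tau}; and the operators $d^{A'}$ differentiate only in the variables $x$, hence they commute with multiplication by the polynomials $\widetilde{\mathbf{S}}_\sigma^a$ in $s$ and, by the Leibnitz law of Proposition \ref{prop:dd}(2), satisfy $d^{A'}(\widetilde{\mathbf{S}}_\sigma^a\Omega^{B'})=\widetilde{\mathbf{S}}_\sigma^a\, d^{A'}\Omega^{B'}$.

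For the first two identities I would multiply the defining formula \eqref{eq:Omega-a-p} by $s_{0'}$, resp.\ $s_{1'}$, and apply \eqref{eq:partial-S'} termwise. Using $\sigma_{j+1}=\sigma_j+1$ this gives
\begin{equation*}
s_{0'}\mathbf{\widetilde{{\Omega}}}^{b}_{\sigma_j}=\tfrac12\bigl(\widetilde{\mathbf{S}}_{\sigma_{j+1}}^{\,b-1}\Omega^{0'}-\widetilde{\mathbf{S}}_{\sigma_{j+1}}^{\,b}\Omega^{1'}\bigr)=\mathbf{\widetilde{{\Omega}}}^{b}_{\sigma_{j+1}},\qquad s_{1'}\mathbf{\widetilde{{\Omega}}}^{b}_{\sigma_j}=\tfrac12\bigl(\widetilde{\mathbf{S}}_{\sigma_{j+1}}^{\,b}\Omega^{0'}-\widetilde{\mathbf{S}}_{\sigma_{j+1}}^{\,b+1}\Omega^{1'}\bigr)=\mathbf{\widetilde{{\Omega}}}^{b+1}_{\sigma_{j+1}},
\end{equation*}
where the conventions $\widetilde{\mathbf{S}}_{\sigma-1}^{\sigma}=0=\widetilde{\mathbf{S}}_{\sigma-1}^{-1}$ make the extreme values $b=0$ and $b=\sigma_j+1$ require no separate treatment.

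For the third identity I would apply $s_{A'}d^{A'}$ to \eqref{eq:Omega-a-p}, pull $d^{A'}$ past the $\widetilde{\mathbf{S}}$-factors as above, and expand the sum over $A'=0',1'$. By \eqref{eq:d-Omega0} one has $d^{0'}\Omega^{0'}=d^{1'}\Omega^{1'}=0$, $d^{1'}\Omega^{0'}=\mathcal E$ and $d^{0'}\Omega^{1'}=-\mathcal E$, so only the cross contributions survive and the expression collapses to $\tfrac12\mathcal E\bigl(s_{0'}\widetilde{\mathbf{S}}_{\sigma_j}^{\,b}+s_{1'}\widetilde{\mathbf{S}}_{\sigma_j}^{\,b-1}\bigr)$; by \eqref{eq:partial-S'} each summand equals $\widetilde{\mathbf{S}}_{\sigma_j+1}^{\,b}=\widetilde{\mathbf{S}}_{\sigma_{j+1}}^{\,b}$, giving $\mathcal E\,\widetilde{\mathbf{S}}^{\,b}_{\sigma_{j+1}}$. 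There is no genuine obstacle here: the only thing to watch is the sign bookkeeping coming from \eqref{eq:d-Omega0} and correctly tracking which index of $\widetilde{\mathbf{S}}$ is shifted by $s_{0'}$ versus $s_{1'}$. The whole argument is the mirror image of Lemma \ref{lem:partial-Omega0}, with the degree-lowering operator $\partial_{A'}$ replaced by the degree-raising operator $s_{A'}$, and it can be carried out line for line.
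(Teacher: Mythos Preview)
Your proof is correct and is exactly the direct verification the paper intends: the paper simply prefaces the lemma with ``It is direct to check that'' and gives no further argument, so your expansion via \eqref{eq:partial-S'}, \eqref{eq:Omega-a-p}, and \eqref{eq:d-Omega0} is precisely what is needed, mirroring the proof of Lemma~\ref{lem:partial-Omega0} with $\partial_{A'}$ replaced by $s_{A'}$ and $\sigma_{j+1}=\sigma_j+1$ in place of $\sigma_{j+1}=\sigma_j-1$.
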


\begin{prop} \label{prop:JW} For fixed $k$, we have
  (1)
 $ \mathcal{J} _{k+1} (U)$   consists of
 \begin{equation}\label{eq:J-j'}\begin{split}& \Omega^{ 0'} \wedge \Omega^{ 1'}\wedge  f   + \varrho F  ,
 \end{split} \end{equation}where $f  \in \Gamma( U ,\wedge^{k}\mathbb{C}^{2n  })$ and $F \in \Gamma(U,\wedge^{k+2 }\mathbb{C}^{2n+2 })$.
\\
 (2) if $j=k+2,\ldots,2n-1$,  $ \mathcal{J} _{j}(U) $   consists of
 \begin{equation}\label{eq:J-j''}\begin{split}& \widetilde{\mathbb{D}}   \widetilde{ \mathbf{{S}}}_{\sigma_j-1 }^{b   }  \wedge f_{  b} + \varrho F,
 \end{split} \end{equation} where  $f_{  b} \in \Gamma(U ,\wedge^{\tau_j-1}\mathbb{C}^{2n+2 })$, $F \in \Gamma(U ,\wedge^{\tau_j }\mathbb{C}^{2n +2})$, and $b=  0,\ldots, \sigma_{j}-1 $.
\\
 (3)     $ \mathcal{J} _{k } (U)$   consists of
 \begin{equation}\label{eq:J-k+1}\begin{split}& \Omega^{ A'}\wedge f_{ A'} +\Omega^{ 0'} \wedge\Omega^{ 1'}\wedge g
+  \varrho  \left( \mathfrak{d}^{ A'}  f_{ A'}  - \mathcal E_0 \wedge g \right ) ,
 \end{split} \end{equation}
where  $f_{ A'}\in \Gamma(U ,\wedge^{k-1}\mathbb{C}^{2n })$  and   $g   \in \Gamma(U ,\wedge^{k-2}\mathbb{C}^{2n })$. If $k=1$, then $g=0$.
 \end{prop}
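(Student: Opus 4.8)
\textbf{Proof plan for Proposition \ref{prop:JW}.}

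The plan is to mimic the strategy already used for Proposition \ref{prop:JW0}, namely: (i) integrate $s_{A'}d^{A'}$ (or $d^{0'}d^{1'}$ when $j=k$) by parts against a compactly supported test form, peel off the boundary term, and identify the relevant symbol operator at the direction $\operatorname{grad}\varrho$; (ii) show that a form lies in $\mathcal J_j(U)$ precisely when that symbol annihilates its restriction to $bD$; and (iii) compute the kernel of the symbol inside the local frame $\{\Omega^{0'},\Omega^{1'},\omega^0,\dots,\omega^{2n-1}\}$, which is a basis of $\mathbb C^{2(n+1)}$ near the origin by \eqref{eq:Omega-N}. For part (2), with $j=k+2,\dots,2n-1$, the operator is $s_{A'}d^{A'}$ and by Proposition \ref{prop:d-b} its part transversal to $bD$ is $s_{A'}\Omega^{A'}\wedge\partial_{4n+1}=\widetilde{\mathbb D}\,\partial_{4n+1}$; so after integration by parts the boundary term is (up to the density $dS/|\operatorname{grad}\varrho|$) $\langle \widetilde{\mathbb D}f,\psi\rangle$, whence $f\in\mathcal J_j(U)$ iff $\widetilde{\mathbb D}f|_{bD}=0$. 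Using the basis \eqref{eq:basis}-type decomposition and the identities of Lemma \ref{lem:partial-Omega}, one checks that $\widetilde{\mathbb D}(\widetilde{\mathbb D}\widetilde{\mathbf S}^b_{\sigma_j-1}\wedge f_b)=0$ because $\widetilde{\mathbb D}\widetilde{\mathbb D}=s_{A'}s_{B'}\Omega^{A'}\wedge\Omega^{B'}=0$ (symmetric $\times$ skew), while the complementary generators $\widetilde{\mathbf{\Omega}}^0_{\sigma_j}$, $\widetilde{\mathbf{\Omega}}^{\sigma_j+1}_{\sigma_j}$ and the $\widetilde{\mathbf S}^a_{\sigma_j}\omega^{\mathbf A}$ are \emph{not} killed by $\widetilde{\mathbb D}$ and are linearly independent after applying it; modulo $\varrho$-multiples this gives that $\mathcal J_j(U)$ is exactly \eqref{eq:J-j''}.

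For part (1), $j=k+1$: here the incoming operator is again $s_{A'}d^{A'}$ (with $\sigma_{k+1}=0$, so there is no $s$-variable left and $\widetilde{\mathbb D}$ degenerates), but the \emph{outgoing} slot is irrelevant; what matters is that the symbol of $\mathscr D_k=d^{0'}d^{1'}$ into $\mathcal V_{k+1}$ is $\Omega^{0'}\wedge\Omega^{1'}\wedge$, so $f\in\mathcal J_{k+1}$ should be detected by $s_{A'}\Omega^{A'}\wedge\cdot$ — but with $\sigma=0$ this just says $\Omega^{0'}\wedge\Omega^{1'}\wedge f$ lies in the image of the previous symbol, i.e. the quotient picks up only the $\wedge^k$-part transverse to $\Omega^{0'},\Omega^{1'}$. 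Concretely, decompose a general section of $\wedge^{k+2}\mathbb C^{2n+2}$ along \eqref{eq:basis}: the pieces with both $\Omega^{0'}$ and $\Omega^{1'}$ are exactly $\Omega^{0'}\wedge\Omega^{1'}\wedge f$ and these, together with $\varrho F$, constitute $\mathcal J_{k+1}(U)$; the pieces with at most one $\Omega^{A'}$ survive and give $\mathscr V_{k+1}$. The integration-by-parts step uses that $s_{A'}d^{A'}$ restricted to the relevant degree again has transversal part $\widetilde{\mathbb D}\partial_{4n+1}$, and that $d^{0'}d^{1'}$ (the operator $\mathscr D_k$) has transversal symbol $\Omega^{0'}\wedge\Omega^{1'}\wedge\partial_{4n+1}\partial_{4n+1}$ — a second-order transversal piece — so one must be a little careful and argue with the \emph{two-step} compatibility $s_{A'}d^{A'}\circ d^{0'}d^{1'}$, \eqref{eq:DD2}, to pin down the correct $\mathcal J$.

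Part (3), $j=k$, is the genuinely new and hardest case, because $\mathscr D_k=d^{0'}d^{1'}$ is second order, so the naive "$A(x,\nu)u|_{bD}=0$" criterion does not apply and we are in the situation of \cite[Section 6]{Andreotti2} where $\widehat A_j$ need not be differential. The hard part will be to correctly account for the first-order transverse data: writing $d^{A'}=\Omega^{A'}\wedge\partial_{4n+1}+d^{A'}_b$ from Proposition \ref{prop:d-b} and expanding $d^{0'}d^{1'}=(\Omega^{0'}\wedge\partial_{4n+1}+d^{0'}_b)(\Omega^{1'}\wedge\partial_{4n+1}+d^{1'}_b)$, the zero-Cauchy-data condition \eqref{eq:zero-Cauchy} against all test forms forces the vanishing on $bD$ of \emph{both} a zeroth-order quantity ($\Omega^{0'}\wedge\Omega^{1'}\wedge\partial_{4n+1}\partial_{4n+1}u$-type, i.e.\ the full second symbol) \emph{and} a first-order one coming from the cross terms $\Omega^{0'}\wedge\partial_{4n+1}d^{1'}_b u + d^{0'}_b(\Omega^{1'}\wedge\partial_{4n+1}u)$; it is precisely the interplay of these two that produces the combined generators $\Omega^{A'}\wedge f_{A'}$ (the "one $\Omega$" part, giving $\wedge^{k-1}\mathbb C^{2n}$-valued $f_{A'}$) and $\Omega^{0'}\wedge\Omega^{1'}\wedge g$, together with the inhomogeneous $\varrho$-term $\varrho(\mathfrak d^{A'}f_{A'}-\mathcal E_0\wedge g)$. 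To get that precise $\varrho$-coefficient I would integrate by parts a second time, use $\mathcal E=-d^{0'}d^{1'}\varrho$ and the Leibniz rule \eqref{eq:Leibnitz2}, and track the $\partial_{4n+1}$-derivative landing on the defining function; the bookkeeping is where essentially all the work sits. I would finish by using the frame \eqref{eq:basis} once more to check the listed forms span $\mathcal J_k(U)$ and that the quotient has the claimed dimension $\dim\mathscr V_k^{(1)}+\dim\mathscr V_k^{(2)}=2\binom{2n}{k}$, consistently with the $\mathscr V_k$ in \eqref{eq:Q}, and note the degenerate case $k=1$ (where $g=0$ since $\wedge^{-1}=0$) separately.
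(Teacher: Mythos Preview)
Your plan for parts (2) and (3) is essentially the paper's proof. For (2) you correctly identify the criterion $\widetilde{\mathbb D}f|_{bD}=0$ via integration by parts and the vanishing $\widetilde{\mathbb D}\widetilde{\mathbb D}=0$; for (3) you correctly anticipate that the second-order operator $d^{0'}d^{1'}$ forces \emph{two} boundary conditions (one from testing against $\psi$, one from testing against $\partial_{4n+1}\psi$), and that the second condition fixes the linear-in-$\varrho$ coefficient to be $\mathfrak d^{A'}f_{A'}-\mathcal E_0\wedge g$. The paper writes these two conditions explicitly as
\[
\Omega^{0'}\wedge\Omega^{1'}\wedge f\big|_{bD}=0,\qquad
d^{0'}(\Omega^{1'}\wedge f)\big|_{bD}+\Omega^{0'}\wedge d^{1'}_b f\big|_{bD}=0,
\]
and then expands $f=\Omega^{A'}\wedge f_{A'}+\Omega^{0'}\wedge\Omega^{1'}\wedge g+\varrho F+O(\varrho^2)$ to solve for $F$; your bookkeeping remark is accurate.

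Your treatment of part (1), however, is muddled in a way that would not survive a careful write-up. By definition $\mathcal J_{k+1}$ is the zero-Cauchy-data ideal for the \emph{outgoing} operator $\mathcal D_{k+1}=s_{A'}d^{A'}$, which is first order; the incoming operator $d^{0'}d^{1'}$ and the ``two-step compatibility'' \eqref{eq:DD2} play no role whatsoever. The argument is simply the $j=k+1$ instance of the integration-by-parts you already did for (2): one gets $\widetilde{\mathbb D}f|_{bD}=0$, and since $\sigma_{k+1}=0$ the target has $\sigma_{k+2}=1$, so $\widetilde{\mathbb D}f=s_{0'}\,\Omega^{0'}\wedge f+s_{1'}\,\Omega^{1'}\wedge f$ vanishes iff $\Omega^{0'}\wedge f|_{bD}=0$ and $\Omega^{1'}\wedge f|_{bD}=0$ separately. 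That immediately gives $f=\Omega^{0'}\wedge\Omega^{1'}\wedge h+\varrho F$ in the frame $\{\Omega^{0'},\Omega^{1'},\omega^A\}$. Your detour through the symbol of $d^{0'}d^{1'}$ and the phrase ``$\Omega^{0'}\wedge\Omega^{1'}\wedge f$ lies in the image of the previous symbol'' is not the right mechanism and should be dropped.
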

\begin{proof} (1)
For $f \in \Gamma(  U ,  \mathcal{V}_j)$ and $ \psi\in \Gamma(  U , \mathcal{V}_{j+1})$  with compact   support, we have \begin{equation*}\begin{split}
\int_D\left\langle  s_{A'} d^{ {A} '}f,\psi\right\rangle dV  & =  \int_D\left\langle s_{A'}\left(\Omega^{A'}\wedge \partial_{ {4n+1}}+d^{ 0'}_b\right)f ,\psi\right\rangle dV\\
&=-\int_{bD}\left\langle  \widetilde{\mathbb{D}}  f,\psi\right\rangle \frac {dS}{|\operatorname{grad} \varrho|}+\int_D\left\langle f,\left(s_{A'} d^{ {A} '}\right)^*\psi\right\rangle dV,
\end{split}
\end{equation*} where $ (s_{A'} d^{ {A} '} )^*$ is the formal adjoint of $s_{A'} d^{ {A} '}$. Thus,  $f \in \mathcal{J} _{j} (U)$ if and only if
 \begin{equation}\label{eq:J-bD}
    \widetilde{\mathbb{D}}  f|_{bD\cap U}=0.
 \end{equation}

 When $j=k+ 1$, we have $\sigma_{k+ 1}=0$.
 The condition \eqref{eq:J-bD} is equivalent to $\Omega^{ 0'}    f =\Omega^{ 1'}  \wedge f =0$ on the boundary for $f  \in \Gamma(U,\wedge^{k+2}\mathbb{C}^{2n })$. Thus,  $ \mathcal{J} _{k+ 1} (U)$ consists of elements of the form (\ref{eq:J-j'}).

(2) For $j=k+2,\ldots,2n-1$, we have $\sigma_j=1,2,\ldots$, and it is direct to see that
\begin{equation}\label{eq:tilde-DDS}\begin{split}&
    \widetilde{\mathbb{D}} \widetilde{\mathbb{D}}     \widetilde{ \mathbf{{S}}}_{\sigma_j -1 }^{b } =0,\qquad \qquad\qquad  \quad\qquad b=0,\ldots, \sigma_j-1,\\&
    \widetilde{\mathbb{D}}   \mathbf{\widetilde{{\Omega}}}^{c }_{\sigma_j}=-\widetilde{\mathbf{S}}_{\sigma_j +1}^{c   }  \Omega^{ 0'} \wedge \Omega^{ 1'} \neq 0,\qquad  c=0,\ldots, \sigma_j+1,
\end{split}\end{equation}
   by \eqref{eq:Omega-a-p} and $\widetilde{\mathbb{D}} \widetilde{\mathbb{D}}   =0$ as in \eqref{eq:DD-omega-0}. Thus $\widetilde{\mathbb{D}}     \widetilde{ \mathbf{{S}}}_{\sigma_j -1 }^{b } =\widetilde{ \mathbf{{S}}}_{\sigma_j }^{b   } \Omega^{ 0'}+     \widetilde{\mathbf{{S}}}_{\sigma_j }^{b +1}  \Omega^{1'} \in\mathcal{J} _{j}(U) $. Note that
 \begin{equation}\label{eq:tilde-Omega-Omega01}
   \widetilde{\mathbf{S}}_{\sigma_{j  }}^{b  } \Omega^{A'}\wedge\omega^{  \mathbf{ {B}} }=   ( -1)^{ o(A')   }\widetilde{\mathbf{ \Omega}}_{\sigma_{j }  }^{b+1-o(A')   }  \wedge\omega^{  \mathbf{ {B}} },\qquad {\rm mod }\quad    \mathcal{J} _{j}(U),
\end{equation}because
  \begin{equation*}\label{eq:S-Omega}\begin{split}
     \widetilde{\mathbf{{S}}}^{b }_{\sigma_{j  } }  \Omega^{ 0'}=  \mathbf{\widetilde{{\Omega}}}^{ b+1 }_{  \sigma_{j  }  },\qquad   \widetilde{\mathbf{{S}}}^{b }_{\sigma_{j  } }  \Omega^{ 1'}= - \mathbf{\widetilde{{\Omega}}}^{ b }_{  \sigma_{j  }  } \qquad {\rm mod}  \quad  \mathcal{J} _{j}(U) ,
\end{split}  \end{equation*}by definition.
      Thus $ \mathcal{J} _{j}(U) $ consists of elements in (\ref{eq:J-j''}) as in the case $j<k$ in Proposition \ref{prop:JW0} .

(3) Since  $\partial_{ {4n+1}}\Omega^{A'}=0$, we have that for $f =    f_{ \mathbf{\dot {A}} }      \omega^{  \mathbf{\dot {A}} }\in \Gamma(U,\wedge^{k }\mathbb{C}^{2n+2 })$,
\begin{equation*}
 d^{ 0'}d^{ 1'}f=  \Omega^{ 0'}\wedge\Omega^{ 1'} \wedge\partial_{ {4n+1}}^2 f +d^{ 0'}_b (\Omega^{ 1'}\wedge\partial_{4n+1}f) +\Omega^{ 0'} \wedge d^{ 1'}_b \partial_{4n+1}f+ d^{ 0'}_b  d^{ 1'}_bf
\end{equation*}by  using \eqref{eq:d-b} twice.   Recall that there is no boundary term for $d^{ 0'}_b$ after integration by part. We get
\begin{equation*}\begin{split}
\int_D\left\langle d^{ 0 '}  d^{ 1 '} f,\psi\right\rangle &dV= \hskip 2mm  \int_D\left\langle  f, (d^{ 0 '}d^{ 1 '})^*\psi\right\rangle dV -\int_{bD}\left\langle   d^{ 0'}_b (\Omega^{ 1'}\wedge f) +\Omega^{ 0'} \wedge d^{ 1'}_b   f,\psi\right\rangle \frac {dS}{|\operatorname{grad} \varrho|}\\& +\int_{bD}\left\langle\Omega^{ 0'}\wedge\Omega^{ 1'} \wedge f,\partial_{ {4n+1}} \psi\right\rangle \frac {dS}{|\operatorname{grad} \varrho|}-\int_{bD}\left\langle  \Omega^{ 0'}\wedge\Omega^{ 1'}  \wedge \partial_{ {4n+1}}f, \psi\right\rangle \frac {dS}{|\operatorname{grad} \varrho|},
\end{split}
\end{equation*}
by integration by part twice.
Thus $f \in \mathcal{J} _{k} (U)$ if and only if
\begin{equation}\label{eq:condition-Jk}\begin{split}
\left.\Omega^{ 0'}\wedge\Omega^{ 1'} \wedge f\right|_{bD} &=0,\\
 \left. d^{ 0'} (\Omega^{ 1'}\wedge f)  \right|_{bD} +\left.\Omega^{ 0'} \wedge d^{ 1'}_b    f    \right|_{bD} & =0,
\end{split}\end{equation}
by definition of $d^{ 0'}_b$.
By the first equation, we can write
\begin{equation*}
   f=\Omega^{ A'}\wedge f_{ A'}+\Omega^{ 0'} \wedge\Omega^{ 1'}\wedge g +\varrho F+O(\varrho^2)
\end{equation*}
for some $f_{ A'} ,g, F$ independent of $\varrho$ and valued in $\wedge^{*}\mathbb{C}^{2n }$.  Then,
\begin{equation*}\begin{split}
    \left. d^{ 0'} (\Omega^{ 1'}\wedge f)\right|_{bD}  &= \left. {d}^{ 0'}  (  \Omega^{ 1'}\wedge \Omega^{ 0'}\wedge f_{ 0'}+\varrho \Omega^{ 1'}\wedge F )\right|_{bD}\\&=  -\mathcal E\wedge \Omega^{ 0'}\wedge f_{ 0'}-\Omega^{ 0'}\wedge\Omega^{ 1'}\wedge \mathfrak{d}^{ 0'} f_{ 0'} +\Omega^{ 0'}\wedge   \Omega^{ 1'}\wedge F,
 \end{split}\end{equation*}by   using the Leibnitz law and \eqref{eq:dA'-Omega}. Similarly,
 \begin{equation*}\begin{split}
   \left. \Omega^{ 0'} \wedge d^{ 1'}_b    f\right|_{bD}&=\left. \Omega^{ 0'} \wedge d^{ 1'}  f\right|_{bD}-\left.\Omega^{ 0'}\wedge \Omega^{ 1'}\wedge \partial_{{4n+1}}f\right |_{bD}\\&=\Omega^{ 0'} \wedge d^{ 1'} \left.\left(\Omega^{ A'} \wedge f_{ A'}  +\Omega^{ 0'} \wedge\Omega^{ 1'}\wedge g +\varrho F\right)\right|_{bD}- \Omega^{ 0'}\wedge \Omega^{ 1'}\wedge F
    \\&=  \Omega^{ 0'}\wedge \mathcal E\wedge f_{ 0'} -\Omega^{ 0'}\wedge\Omega^{ 1'}\wedge \mathfrak{d}^{1'} f_1+ \Omega^{ 0'}\wedge\Omega^{ 1'} \wedge\mathcal E\wedge g .
 \end{split}\end{equation*}Their sum gives us that the second equation in \eqref{eq:condition-Jk} is equivalent to
\begin{equation*}\left.
    \Omega^{ 0'} \wedge \Omega^{ 1'}\wedge \left( F-\mathfrak{d}^{ A'}  f_{ A'} +\mathcal E \wedge g \right)\right|_{bD}  =0,
\end{equation*}
i.e. $F=\mathfrak{d}^{ A'}  f_{ A'}   - \mathcal E_0 \wedge g $ mod $\Omega^{ 0'}, \Omega^{ 1'}$. The result follows.
\end{proof}

By Proposition \ref{prop:JW} and its proof, we know  vector spaces of the boundary complex.
\begin{cor}\label{cor:W-bundle}
(1)
for $j=k+1,\ldots,2n-1$,
 \begin{equation*} \begin{split}
  \mathscr{V}_ j^{(1)}\cong \odot^{ \sigma_j } \mathbb{C}^{2} \otimes
\wedge^{j+1}\mathbb{C}^{2n } , \qquad
  \mathscr{V}_ j^{(2)}\cong \odot^{ \sigma_{j+1} } \mathbb{C}^{2} \otimes
\wedge^j\mathbb{C}^{2n } ,
\end{split}\end{equation*}
 are   spanned by\begin{equation}\label{eq:W-j2'}\begin{split}&
    \widetilde{ {\mathbf{{S}}}}_{\sigma_j }^{a   }   \omega^{  \mathbf{{B}} } ,   \qquad  \mathbf{\widetilde{{\Omega}}}^{b }_{\sigma_j} \wedge\omega^{  \mathbf{ {C}} },
            \end{split}\end{equation}respectively,  where $a=0,\ldots, {\sigma_j }$, $b= 0,  \ldots,\sigma_{j+1}$, $|    \mathbf{B}|=j+1$, $|    \mathbf{C}|=j $.

  (2)    $\mathscr{V}_ k^{(1)}\cong  \wedge^k\mathbb{C}^{2n } $ and  $\mathscr{V}_ k^{(2)}\cong \wedge^k\mathbb{C}^{2n } $ are  spanned by\begin{equation}\label{eq:W-k}\begin{split}&
         \omega^{  \mathbf{ {A}} }, \qquad \varrho\, \omega^{  \mathbf{ {B}} },
            \end{split}\end{equation}respectively, where $|    \mathbf{A}|= |    \mathbf{B}|=k$.
   \end{cor}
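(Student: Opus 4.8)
The plan is to read off the quotient $Q^{(j)}(bD)=\Gamma(U,\mathcal V_j)/\mathcal J_j(U)$ directly from the description of $\mathcal J_j(U)$ given in Proposition \ref{prop:JW}, exactly as the vector spaces in the range $j<k$ were extracted from Proposition \ref{prop:JW0}\,(1) in the proof of Proposition \ref{prop:JW0}\,(2). The only extra input is that near the origin $\{\Omega^{0'},\Omega^{1'},\omega^0,\dots,\omega^{2n-1}\}$ is a local frame of $\mathbb C^{2(n+1)}$ by \eqref{eq:Omega-N}; hence every section of $\wedge^{\tau}\mathbb C^{2(n+1)}$ over $U$ is a $\Gamma(U,\mathbb C)$-combination of wedge monomials in this frame, which I group according to how many factors ($0$, $1$ or $2$) of $\Omega^{0'},\Omega^{1'}$ occur.

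For $j=k+1,\dots,2n-1$ I would work on $\mathcal V_j=\odot^{\sigma_j}\mathbb C^2\otimes\wedge^{\tau_j}\mathbb C^{2(n+1)}$ with $\sigma_j=j-k-1$, $\tau_j=j+1$, so that $\sigma_{j+1}=\sigma_j+1$, and expand a general section in the basis $\widetilde{\mathbf S}^a_{\sigma_j}$ of $\mathcal P_{\sigma_j}(\mathbb C^2)$ together with the frame above, obtaining the three families $\widetilde{\mathbf S}^a_{\sigma_j}\omega^{\mathbf A}$, $\widetilde{\mathbf S}^a_{\sigma_j}\Omega^{A'}\wedge\omega^{\mathbf B}$, $\widetilde{\mathbf S}^a_{\sigma_j}\Omega^{0'}\wedge\Omega^{1'}\wedge\omega^{\mathbf C}$. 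By the second identity in \eqref{eq:tilde-DDS} the two--$\Omega$ family lies in $\mathcal J_j(U)$; by \eqref{eq:tilde-Omega-Omega01} the single--$\Omega$ family reduces, modulo $\mathcal J_j(U)$, to the $\mathbf{\widetilde{{\Omega}}}^{b}_{\sigma_j}\wedge\omega^{\mathbf C}$'s; and, since $\mathcal J_j(U)$ also contains $\varrho F$ for arbitrary $F$, every class has a representative supported on $bD$ inside the two families \eqref{eq:W-j2'}, which are linearly independent there because they are part of the frame. Tracking the index ranges $a=0,\dots,\sigma_j$ and $b=0,\dots,\sigma_{j+1}$ — note that here, unlike the case $j<k$, the extreme elements $\mathbf{\widetilde{{\Omega}}}^{0}_{\sigma_j}$ and $\mathbf{\widetilde{{\Omega}}}^{\sigma_j+1}_{\sigma_j}$ survive, being outside $\ker\widetilde{\mathbb D}$ by the remarks after \eqref{eq:Omega-a-p} — then gives $\mathscr V_j^{(1)}\cong\odot^{\sigma_j}\mathbb C^2\otimes\wedge^{j+1}\mathbb C^{2n}$ and $\mathscr V_j^{(2)}\cong\odot^{\sigma_{j+1}}\mathbb C^2\otimes\wedge^{j}\mathbb C^{2n}$. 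The boundary case $j=k+1$ ($\sigma_j=0$) comes out the same way from Proposition \ref{prop:JW}\,(1).

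For $j=k$ I would invoke Proposition \ref{prop:JW}\,(3). Write $u\in\Gamma(U,\wedge^k\mathbb C^{2(n+1)})$ as $v+\Omega^{A'}\wedge h_{A'}+\Omega^{0'}\wedge\Omega^{1'}\wedge p$ with $v\in\Gamma(U,\wedge^k\mathbb C^{2n})$, $h_{A'}\in\Gamma(U,\wedge^{k-1}\mathbb C^{2n})$, $p\in\Gamma(U,\wedge^{k-2}\mathbb C^{2n})$ ($p=0$ when $k=1$). Subtracting from $u$ the element $\Omega^{A'}\wedge h_{A'}+\Omega^{0'}\wedge\Omega^{1'}\wedge p+\varrho(\mathfrak d^{A'}h_{A'}-\mathcal E_0\wedge p)$ of $\mathcal J_k(U)$ kills every $\Omega$-factor and leaves the $\wedge^k\mathbb C^{2n}$-valued representative $\widetilde v=v-\varrho(\mathfrak d^{A'}h_{A'}-\mathcal E_0\wedge p)$. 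Because $\mathcal J_k(U)$ also contains every $\varrho^2 H$ — the $O(\varrho^2)$ freedom appearing in the proof of Proposition \ref{prop:JW}\,(3) — the $\varrho^2$-part of $\widetilde v$ may be discarded, so each class is represented by $\widetilde v_0+\varrho\,\widetilde v_1$ with $\widetilde v_0,\widetilde v_1\in\Gamma(bD,\wedge^k\mathbb C^{2n})$; these are $\mathscr V_k^{(1)}$ and $\mathscr V_k^{(2)}$, spanned by $\omega^{\mathbf A}$ and $\varrho\,\omega^{\mathbf B}$ respectively. For injectivity of $(\widetilde v_0,\widetilde v_1)\mapsto[\widetilde v_0+\varrho\,\widetilde v_1]$ I would observe that any element of $\mathcal J_k(U)$ with no $\Omega$-factor must have $h_{A'}=0$ and $p=0$, hence lie in $\varrho^2\Gamma(U,\wedge^k\mathbb C^{2(n+1)})$, which forces $\widetilde v_0=0$ and $\widetilde v_1|_{bD}=0$; so $\mathscr V_k^{(1)}\cong\mathscr V_k^{(2)}\cong\wedge^k\mathbb C^{2n}$.

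The main obstacle, and the only genuinely new point compared with every $j\neq k$, is this $j=k$ step: one has to exploit that $\mathcal D_k=d^{0'}d^{1'}$ is second order, so that $\mathcal J_k(U)$ fails to contain $\varrho F$ for general $F$ but does contain $\varrho^2F$, and one has to track how the $\varrho$--coefficient $\mathfrak d^{A'}h_{A'}-\mathcal E_0\wedge p$ forced by the $\Omega$--part interacts with the free normal $1$--jet, so that the quotient records exactly the boundary value together with one normal derivative and nothing further. Once this is isolated, the rest is the bookkeeping already done for $j<k$.
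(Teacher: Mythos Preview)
Your proposal is correct and follows essentially the same approach as the paper, which simply states that the corollary follows ``by Proposition \ref{prop:JW} and its proof'': you have spelled out exactly those details, mirroring the argument given for $j<k$ in Proposition \ref{prop:JW0}. One small remark: the reason the two--$\Omega$ family $\widetilde{\mathbf S}^a_{\sigma_j}\Omega^{0'}\wedge\Omega^{1'}\wedge\omega^{\mathbf C}$ lies in $\mathcal J_j(U)$ is simply that $\widetilde{\mathbb D}$ annihilates it (there being only two $\Omega$'s), not the second identity in \eqref{eq:tilde-DDS} as you write, but this does not affect the argument.
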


\subsection{The induced action of operators $ \mathcal{D }_{j}$'s on the   boundary complex}
\begin{prop} \label{prop:JZ0}
If $j=k+1,\cdots,2n,$  we have\begin{equation*} \begin{split}
s_{A'}d^{A'} & \left[    f_{ a }\widetilde{\mathbf{{S}}}^{a}_{\sigma_j }   + \mathbf{\widetilde{{\Omega}}}^{b}_{\sigma_j } \wedge  G_{ b }   \right]
= \mathscr{R}_{b  }^-\widetilde{\mathbf{{S}}}^{b  }_{ \sigma_{j+1 }  }
   + \mathbf{\widetilde{{\Omega}}}^{c }_{ \sigma_{j+1 } }  \wedge  {\mathscr{T}}_c^-
      \quad {\rm mod}  \quad \mathcal{J} _{j+1}(U);
\end{split}\end{equation*}
  \end{prop}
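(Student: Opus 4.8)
The plan is to mimic exactly the computation carried out in Proposition \ref{prop:JZ 0} for the range $j<k-1$, but now using the ``lowered-index'' basis $\widetilde{\mathbf{S}}_\sigma^a$, the forms $\mathbf{\widetilde{\Omega}}^b_{\sigma_j}$, and the identities of Lemma \ref{lem:partial-Omega} in place of Lemma \ref{lem:partial-Omega0}. First I would expand $s_{A'}d^{A'}$ via Proposition \ref{prop:d-b}, writing $d^{A'}=\Omega^{A'}\wedge\partial_{4n+1}+\mathfrak d^{A'}+\Omega^{B'}\wedge\mathbf T_{B'}^{A'}$, and treat the two summands $f_a\widetilde{\mathbf S}^a_{\sigma_j}$ and $\mathbf{\widetilde\Omega}^b_{\sigma_j}\wedge G_b$ separately.

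For the first summand, the key point is that $s_{A'}\Omega^{A'}\wedge\partial_{4n+1}(f_a\widetilde{\mathbf S}^a_{\sigma_j})=\widetilde{\mathbb D}(\partial_{4n+1}f_a\,\widetilde{\mathbf S}^a_{\sigma_j})$ lies in $\mathcal J_{j+1}(U)$ by Proposition \ref{prop:JW}(2) and \eqref{eq:tilde-DDS} (the analogue of \eqref{eq:4n+1}), so that normal-derivative term drops out mod $\mathcal J_{j+1}(U)$. Then $s_{A'}\mathfrak d^{A'}(f_a\widetilde{\mathbf S}^a_{\sigma_j})$ contributes, via \eqref{eq:partial-S'}, a term $\widetilde{\mathbf S}^a_{\sigma_{j+1}}\mathfrak d^{A'}f_{a-o(A')}$ after relabelling, and $s_{A'}\Omega^{B'}\wedge\mathbf T_{B'}^{A'}(f_a\widetilde{\mathbf S}^a_{\sigma_j})$ contributes, after rewriting $\widetilde{\mathbf S}^a_{\sigma_j}\Omega^{B'}$ in terms of $\mathbf{\widetilde\Omega}^{b}_{\sigma_{j+1}}$ via \eqref{eq:tilde-Omega-Omega01} and killing the $\Omega^{A'}\wedge\mathbf{\widetilde\Omega}^b_{\sigma_{j+1}}$ terms by the analogue of \eqref{eq:Omega-Omega=0}, the $\mathbf T$-terms packaged inside $\mathscr T^-_c$ with the correct index shifts; this is where raising the indices $\mathbf T^{A'B'}$ as recorded just before Proposition \ref{prop:JZ 0} is used. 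For the second summand I would use Lemma \ref{lem:partial-Omega}: $s_{A'}d^{A'}\mathbf{\widetilde\Omega}^b_{\sigma_j}=\mathcal E\,\widetilde{\mathbf S}^b_{\sigma_{j+1}}$, expand $\mathcal E$ as in \eqref{eq:E=}, apply the Leibnitz law \eqref{eq:Leibnitz2} to $s_{A'}d^{A'}(\mathbf{\widetilde\Omega}^b_{\sigma_j}\wedge G_b)=s_{A'}d^{A'}\mathbf{\widetilde\Omega}^b_{\sigma_j}\wedge G_b-s_{A'}\partial_{?}\mathbf{\widetilde\Omega}^b_{\sigma_j}\wedge d^{A'}G_b$ (being careful with the sign $(-1)^{\tau}$ and with which of $\partial_{0'},\partial_{1'}$ acts), and collapse everything mod $\mathcal J_{j+1}(U)$ using \eqref{eq:Omega-Omega} again. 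Summing the two contributions and matching against the definitions \eqref{eq:RT+-} of $\mathscr R^-_c$ and $\mathscr T^-_c$ gives the stated formula.

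The main obstacle I expect is purely bookkeeping: keeping the index ranges of $a,b,c$ consistent across the shifts $\sigma_j\to\sigma_{j+1}\to\sigma_{j+2}$ (recall $\sigma_j$ is now \emph{increasing} in $j$ for $j\ge k+1$, unlike the decreasing case of Section 3), correctly handling the boundary conventions $\widetilde{\mathbf S}_{\sigma-1}^{\sigma}=0=\widetilde{\mathbf S}_{\sigma-1}^{-1}$, and verifying that the ``extra'' forms $\mathbf{\widetilde\Omega}^0_{\sigma_j}$ and $\mathbf{\widetilde\Omega}^{\sigma_j+1}_{\sigma_j}$ — which, unlike in the $j<k$ case, are \emph{not} in $\ker\widetilde{\mathbb D}$ — do not appear on the right-hand side; this forces some terms that naively look like they survive to actually be absorbed into $\mathcal J_{j+1}(U)$ via \eqref{eq:tilde-Omega-Omega01}. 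There is also the boundary value $j=k+1$, where $\sigma_{k+1}=0$ so the input has no $\widetilde{\mathbf S}^a$-sum of positive degree and $\mathcal J_{k+1}(U)$ has the special form \eqref{eq:J-j'}; I would check that case either directly or note it is subsumed once the conventions are set correctly. None of this is conceptually new relative to Proposition \ref{prop:JZ 0}, so the proof should read ``the computation is entirely parallel to that of Proposition \ref{prop:JZ 0}, using Lemma \ref{lem:partial-Omega} and \eqref{eq:tilde-Omega-Omega01} in place of Lemma \ref{lem:partial-Omega0} and \eqref{eq:Omega-Omega01}'' with the key intermediate identities displayed.
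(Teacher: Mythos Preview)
Your proposal is correct and follows exactly the paper's approach: compute $s_{A'}d^{A'}$ on the two summands separately using Proposition \ref{prop:d-b}, Lemma \ref{lem:partial-Omega}, \eqref{eq:tilde-Omega-Omega01}, and the analogue of \eqref{eq:4n+1}, then relabel and sum. One minor slip: in the Leibniz expansion for the $\mathbf{\widetilde\Omega}^b_{\sigma_j}\wedge G_b$ term the second piece is $-s_{A'}\mathbf{\widetilde\Omega}^b_{\sigma_j}\wedge d^{A'}G_b$ (multiplication by $s_{A'}$, not a $\partial_{A'}$), with the index shift then supplied by $s_{A'}\mathbf{\widetilde\Omega}^b_{\sigma_j}=\mathbf{\widetilde\Omega}^{b+o(A')}_{\sigma_{j+1}}$ from Lemma \ref{lem:partial-Omega}.
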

\begin{proof}
 Note that
\begin{equation*}\begin{split}
   s_{A'}     d^{A'}\widetilde{\mathbf{{\Omega}}}^{b }_{\sigma_j }&=  \widetilde{ \mathbf{S}}^{b   }_{\sigma_{j+1}} \, \mathcal E=  \widetilde{ \mathbf{S}}^{b   }_{\sigma_{j+1}}\left(\mathcal E_0+ \Omega^{A'} \wedge\mathcal{  E}_{ A'}\right)
    \\&=   \mathcal E_0  \widetilde{ \mathbf{S}}^{b   }_{ \sigma_{j+1 }}+\widetilde{\mathbf{ {\Omega}}}^{b+1 }_{\sigma_{j+1} }\wedge \mathcal{  E}_{ 0'}-\widetilde{\mathbf{ {\Omega}}}^{b}_{\sigma_{j+1} }\wedge \mathcal{  E}_{ 1'}\qquad  {\rm mod}  \quad \mathcal{J} _{j+1}(U).
\end{split}\end{equation*}by \eqref{eq:tilde-Omega-Omega01}.  By using \eqref{eq:partial-Omega}, we find that
\begin{equation*}\label{eq:00}\begin{split}
  s_{A'}d^{A'}\left[\mathbf{\widetilde{{\Omega}}}^{b }_{\sigma_j }    \wedge  G_b  \right]
 = &  s_{A'}d^{A'} \mathbf{\widetilde{{\Omega}}}^{b }_{\sigma_j }    \wedge  G_b  - s_{A'} \mathbf{\widetilde{{\Omega}}}^{b}_{\sigma_j }    \wedge d^{A'} G_b \\
= &  \widetilde{ \mathbf{S}}^{b   }_{  \sigma_{j+1 }  }\,   \mathcal E_0 \wedge  G_b- \mathbf{ \widetilde{ {\Omega}}}^{b }_{  \sigma_{j+1 }  }\wedge\left (  \mathfrak{ d}^{ 0 '} G_b+\mathcal{  E}_{ 1'}\wedge G_b\right)  - \mathbf{\widetilde{ {\Omega}}}^{b+1 }_{  \sigma_{j+1 }  }\wedge \left( \mathfrak{ d}^{ 1 '} G_b -\mathcal{  E}_{ 0'}\wedge G_b\right)\\= &   \widetilde{ \mathbf{S}}^{c  }_{ {\sigma_{j+1} }}\, \mathcal E_0 \wedge  G_c-\mathbf{\widetilde{ {\Omega}}}^{b   }_{\sigma_{j+1 } }\wedge \left[    \mathfrak{ d}^{A '} G_{ b-  o(A') }+\mathcal{  E}^{A'} \wedge G_{b- o(A')}\right] \qquad {\rm mod}  \quad \mathcal{J} _{j+1}(U),
\end{split}\end{equation*}   by
$
   \mathbf{\widetilde{{\Omega}}}^{b }_{\sigma_{j+1}  }  \wedge\Omega^{A'}\wedge*\in\mathcal{J} _{j+1} (U)
$ (cf. \eqref{eq:tilde-DDS}).

As in \eqref{eq:4n+1}, $
 s_{A'}\Omega^{  A '}\wedge\partial_{4n+1}  (f_{  a  }  \widetilde{ \mathbf{S}}^{a}_{\sigma_j }   ) = \widetilde{\mathbb{D}}\widetilde{ \mathbf{S}}_{\sigma_j }^{a    } \wedge\partial_{4n+1}   f_{  a  } \in \mathcal{J} _{j+1}(U)$.
 So  we have
 \begin{equation*}\begin{split} s_{A'} d^{ {A} '} (f_{ a  }  \widetilde{ \mathbf{S}}^{a}_{\sigma_{j  }  }   )  &=  s_{A'}\widetilde{ \mathbf{S}}^{a}_{\sigma_{j  }  }  \left ( \mathfrak d^{ A '} f_{  a  } +  \Omega^{ B'} \wedge {\mathbf T}_ { B'}^{ A '} f_{  a  } \right) \\& = \widetilde{ \mathbf{S}}^{a+o(A')}_{\sigma_{j +1 }  }   \mathfrak d^{ A '} f_{  a  } +  \widetilde{ \mathbf{S}}^{b }_{\sigma_{j+1 } }\left( \Omega^{ 0'}\wedge {\mathbf T}_{0'}^{ 0 '}  +  \Omega^{ 1'}\wedge {\mathbf T}_{1'}^{ 0 '} \right) f_{ b   }  +\widetilde{ \mathbf{S}}^{b +1 }_{\sigma_{j+1 } } \left ( \Omega^{ 0'}\wedge {\mathbf T}_{0'}^{ 1 '}  + \Omega^{ 1'}\wedge {\mathbf T}_{1'}^{ 1 '} \right)  f_{ b  }\\
 &=  \widetilde{ \mathbf{S}}^{a}_{\sigma_{j +1 }  }   \mathfrak d^{ A '} f_{  a - o(A') } + \left[  \mathbf{\widetilde{{\Omega}}}^{ b+1}_{  \sigma_{j+1 }  }\wedge\left ( {\mathbf T}_{0'}^{ 0 '}  -    {\mathbf T}_{1'}^{ 1 '}\right)   -\mathbf{\widetilde{{\Omega}}}^{ b  }_{  \sigma_{j+1 }  } \wedge {\mathbf T}_{1'}^{ 0 '} +\mathbf{\widetilde{{\Omega}}}^{ b +2 }_{  \sigma_{j+1 }  }  \wedge{\mathbf T}_{0'}^{ 1 '}  \right ]f_{ b  }\\
 &=\widetilde{ \mathbf{S}}^{a}_{\sigma_{j +1 }  }   \mathfrak d^{ A '} f_{  a - o(A') }    - \mathbf{\widetilde{{\Omega}}}^{ b }_{  \sigma_{j+1 }  }\wedge\left[ \left ({\mathbf T}^{0'1 '}+ {\mathbf T}^{1'0 '}\right) f_{ b -1 } +   {\mathbf T}^{0'0 '}  f_{ b  } +   {\mathbf T}^{1'1 '} f_{ b-2  }   \right ]\\
 &=\widetilde{ \mathbf{S}}^{a}_{\sigma_{j +1 }  }   \mathfrak d^{ A '} f_{  a - o(A') }   - \mathbf{\widetilde{{\Omega}}}^{ b }_{  \sigma_{j+1 }  }\wedge {\mathbf T}^{A'B '}   f_{ b-o(A'B ')  }
   \qquad {\rm mod}  \quad \mathcal{J} _{j+1}(U),
\end{split}\end{equation*}
  by using (\ref{eq:dA'-Omega}), \eqref{eq:tilde-Omega-Omega01} and relabeling indices. The result follows from the sum of these two identities.
 \end{proof}

\begin{prop} \label{prop:JZ>}
We have
 (1) if $j=k-1 ,$  \begin{equation*} \begin{split}
   \partial_{A'}d^{A'}    [  f_{  a  }  \mathbf{S}^{a}_{1 } &+ \mathbf{\Omega}_{1}^{1    }  \wedge G   ]=  \mathfrak  d^{  A'} f_{o(A')}  + \mathcal E_0 \wedge G \\&+\varrho \left[
     \mathfrak{ d} ^{ [0 '} \mathfrak{ d} ^{ 1 ']} G -\mathfrak{ d} ^{ A ' } \left( \mathcal{  E}_{ A'}\wedge  G\right)+\mathcal E_0\wedge \left(    \mathbf{T} ^{[0 '1']}+\mathcal{  E}_{ 0'1'}  \right) G -\mathfrak{ d} ^{ B '} {\mathbf T}_ {B'  }^{   A'   }   f_{ o(A')  }\right] \quad {\rm mod}  \quad \mathcal{J} _{k}(U)
\end{split}\end{equation*}
for $  f_{  a  }\in  \Gamma (U,     \wedge^{k-1}\mathbb{C}^{2n  } )$ and $  G\in  \Gamma (U,     \wedge^{k-2}\mathbb{C}^{2n  } )$. Here  $ {\mathbf T}^{[0 '1']}= \frac 12 ({\mathbf T}^{ 0 '1' } -{\mathbf T}^{1' 0 ' })$ is   skew-symmetrization;
\\
 (2)  if $j=k ,$   \begin{equation*} \begin{split}
d^{ 0 '} d^{ 1 '} (f_0+\varrho f_1)= &  \mathfrak  d^{  0'} \mathfrak  d^{  1'} f_0 - \mathcal E\wedge \left ( {\mathbf T}^{1'0 '} f_0+  f_1 \right)  + \Omega^{   A'   }\wedge \left(2{\mathbf T}_ {  A'  }^{[ 0'} \mathfrak  d^{ 1']}     f_0-   \mathfrak  d_{A ' }   f_1\right)
\end{split}\end{equation*} holds   mod $ \mathcal{J} _{k+1}(U)$ for $  f_{  0  }, f_{  1 }\in  \Gamma (U,     \wedge^{k }\mathbb{C}^{2n  } )$   independent of $\varrho$.
 \end{prop}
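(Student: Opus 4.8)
The plan is to compute $\partial_{A'}d^{A'}$ (for part (1), with $j=k-1$, where $\sigma_{k-1}=1$) and $d^{0'}d^{1'}$ (for part (2), with $j=k$, where $\sigma_k=0$) on the generating elements of $\Gamma(U,\mathcal V_j)/\mathcal J_j(U)$ described in Proposition \ref{prop:JW0}, keeping careful track of terms modulo $\mathcal J_{j+1}(U)$, which for the relevant $j$ is $\mathcal J_k(U)$ or $\mathcal J_{k+1}(U)$ as described in Proposition \ref{prop:JW}. The essential inputs are the decomposition $d^{A'}=\Omega^{A'}\wedge\partial_{4n+1}+d^{A'}_b$ from Proposition \ref{prop:d-b}, with $d^{A'}_b=\mathfrak d^{A'}+\Omega^{B'}\wedge\mathbf T_{B'}^{A'}$, together with the Leibnitz rule for $\mathfrak d^{A'}$ (formula \eqref{eq:Leibnitz2}), the relations $d^{0'}\Omega^{1'}=-d^{1'}\Omega^{0'}=-\mathcal E$, $d^{A'}\Omega^{A'}=0$ from \eqref{eq:d-Omega0}, and the expansion $\mathcal E=\mathcal E_0+\Omega^{A'}\wedge\mathcal E_{A'}+\mathcal E_{0'1'}\,\Omega^{0'}\wedge\Omega^{1'}$ from \eqref{eq:E=}. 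One also uses Lemma \ref{lem:partial-Omega0} (for the $\partial_{A'}d^{A'}$ action on $\mathbf\Omega$-type elements) exactly as in the proof of Proposition \ref{prop:JZ 0}, but now one must retain the terms proportional to $\varrho$ rather than discarding them, since at the transition index these are precisely the data entering $\mathcal J_k$ via the compatibility relation $F=\mathfrak d^{A'}f_{A'}-\mathcal E_0\wedge g$ in Proposition \ref{prop:JW}(3).

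For part (1): write a representative as $f_a\mathbf S_1^a+\mathbf\Omega_1^1\wedge G$. The leading (non-$\varrho$) part of $\partial_{A'}d^{A'}$ acting on this is computed exactly as in Proposition \ref{prop:JZ 0}, yielding $\mathfrak d^{A'}f_{o(A')}+\mathcal E_0\wedge G$; this is the $\Omega^{A'}\wedge f_{A'}+\Omega^{0'}\wedge\Omega^{1'}\wedge g$ part of the target in \eqref{eq:J-k+1} with $f_{A'}=\mathfrak d^{A'}f_{\cdot}+\dots$ etc. To obtain the $\varrho$-coefficient I would isolate all contributions that, modulo $\Omega^{0'},\Omega^{1'}$, survive: these come from (a) applying $d^{A'}_b$ a second time, producing $\mathfrak d^{[0'}\mathfrak d^{1']}G$ and the mixed $\mathfrak d\,\mathbf T$ and $\mathcal E\wedge$ terms; (b) the $\Omega\wedge\partial_{4n+1}$ pieces hitting $\mathbf\Omega_1^1$ and the $\mathcal E$-expansion of $\partial_{A'}d^{A'}\mathbf\Omega_1^1$. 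Collecting and antisymmetrizing in the primed indices (legitimate because $\partial_{A'}\partial_{B'}$ is symmetric) reproduces $\mathfrak d^{[0'}\mathfrak d^{1']}G-\mathfrak d^{A'}(\mathcal E_{A'}\wedge G)+\mathcal E_0\wedge(\mathbf T^{[0'1']}+\mathcal E_{0'1'})G-\mathfrak d^{B'}\mathbf T_{B'}^{\ A'}f_{o(A')}$.

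For part (2): a representative is $f_0+\varrho f_1$ with $f_0,f_1\in\Gamma(U,\wedge^k\mathbb C^{2n})$. Apply $d^{0'}d^{1'}=(\Omega^{0'}\wedge\partial_{4n+1}+d^{0'}_b)(\Omega^{1'}\wedge\partial_{4n+1}+d^{1'}_b)$ and expand, using $d^{0'}_b\Omega^{1'}=-\mathcal E$ mod lower-order, $\partial_{4n+1}\Omega^{A'}=0$, and the Leibnitz rule. Modulo $\mathcal J_{k+1}(U)$ — i.e. modulo $\Omega^{0'}\wedge\Omega^{1'}\wedge(\cdot)$ and $\varrho(\cdot)$ — the $\partial_{4n+1}^2$ term and the $\varrho f_1$-contributions reorganize: the surviving pieces are $\mathfrak d^{0'}\mathfrak d^{1'}f_0$ from $d^{0'}_bd^{1'}_b$ acting on $f_0$, the term $-\mathcal E\wedge(\mathbf T^{1'0'}f_0+f_1)$ from the cross terms $d^{0'}_b(\Omega^{1'}\wedge\partial_{4n+1}f_0)$ plus $\Omega^{0'}\wedge d^{1'}_b\partial_{4n+1}(\varrho f_1)$, and the $\Omega^{A'}\wedge$ term $2\mathbf T_{A'}^{[0'}\mathfrak d^{1']}f_0-\mathfrak d_{A'}f_1$ from the remaining single-$\Omega$ contributions.

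The main obstacle I anticipate is the bookkeeping at step (b) of part (1) and the analogous collection in part (2): one must correctly separate which $\Omega$-bilinear and $\partial_{4n+1}$-involving terms are genuinely absorbed into $\mathcal J_{k+1}(U)$ (using \eqref{eq:Omega-Omega=0}-type vanishing and the relation \eqref{eq:Omega-Omega01}) versus which contribute to the $\varrho$-coefficient through the compatibility constraint of Proposition \ref{prop:JW}(3). Getting the signs and the antisymmetrization brackets $\mathbf T^{[0'1']}$, $\mathfrak d^{[0'}\mathfrak d^{1']}$ right, and confirming that the $\mathcal E_{A'}$ and $\mathcal E_{0'1'}$ pieces land in exactly the stated positions, will require patient but routine manipulation with the Leibnitz rule and the identities \eqref{eq:d-Omega0}, \eqref{eq:E=}; there is no conceptual difficulty beyond what already appears in the proofs of Propositions \ref{prop:JZ 0} and \ref{prop:JZ0}.
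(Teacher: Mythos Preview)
Your proposal is correct and follows essentially the same approach as the paper: expand using $d^{A'}=\Omega^{A'}\wedge\partial_{4n+1}+\mathfrak d^{A'}+\Omega^{B'}\wedge\mathbf T_{B'}^{A'}$, apply the Leibnitz rule and Lemma~\ref{lem:partial-Omega0}, decompose $\mathcal E$ via \eqref{eq:E=}, and then convert the $\Omega^{A'}\wedge(\cdot)$ and $\Omega^{0'}\wedge\Omega^{1'}\wedge(\cdot)$ pieces into the $\varrho$-coefficient using the characterization of $\mathcal J_k(U)$ in Proposition~\ref{prop:JW}(3). One small simplification the paper makes explicit and you should too: take $f_a,G$ independent of $\varrho$ from the start (legitimate since $\partial_{A'}d^{A'}(\varrho\,\omega)\in\mathcal J_k(U)$ by \eqref{eq:inclusion}), so the $\partial_{4n+1}$ contributions in part~(1) drop out directly.
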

 \begin{proof}
(1) Since $\sigma_{k-1}=1$,  $ \mathbf{\Omega}_{\sigma_{k-1}}^{a   }  $  has only one element:
$
  \mathbf{\Omega}_{1}^{1    } = \frac 1{2 }  ( s^{1'}   \Omega^{ 0'}-s^{0'}  \Omega^{ 1'}  ).
$  Here we can assume $  f_{  a  } $ and $  G $ are  independent of $\varrho$, because $\partial_{A'}d^{A'}(\varrho \omega)$ for $  \omega\in  \Gamma (U,     \wedge^{k-1}\mathbb{C}^{2n +2 } )$ belongs to $ \mathcal{J} _{k}(U)$ automatically by \eqref{eq:inclusion}.
 Thus
 \begin{equation*}\label{eq:d-G2}\begin{split}
     \partial_{A'}d^{A'}\left[ \mathbf{\Omega}_{1}^{1    }  \wedge G    \right] =& \mathcal E\wedge G -  \partial_{A'}    \mathbf{\Omega}_{1}^{1    } \wedge  d^{A'}  G  \\   = & \mathcal E_0\wedge G+ \Omega^{A'} \wedge\mathcal{  E}_{ A'}\wedge    G+\Omega^{ 0'}\wedge \Omega^{ 1'}   \wedge\mathcal{  E}_{ 0'1'}      G \\&+\frac 12 \Omega^{ 1'}\wedge  \mathfrak{ d}  ^{ 0 '} G -\frac 12 \Omega^{ 0'}\wedge  \mathfrak{ d}  ^{ 1 '} G+  \frac 12\Omega^{ 1'}   \wedge \Omega^{ 0'}\wedge\left (\mathbf{T}_{  0'}^{0 '}+\mathbf{T}_{ 1' }^{1 '}\right) G
     \\=  &  \mathcal E_0\wedge G + \varrho \left[- \mathfrak{ d} ^{ A ' } \left( \mathcal{  E}_{ A'}\wedge  G\right)+ \mathfrak{ d} ^{[0 '} \mathfrak{ d} ^{ 1 ']}  G + \mathcal E_0\wedge \mathbf{T} ^{[0 '1']} G+\mathcal{  E}_{ 0 } \wedge  \mathcal{  E}_{ 0'1'}  G  \right]   \quad  {\rm mod}  \quad \mathcal{J} _{k}(U),
 \end{split} \end{equation*} by Proposition \ref{prop:JW} (3).
On the other hand,  $f_{  a  }  \mathbf{S}^{a}_{1 }= f_{  0  }  s^{0' }+f_{  1  }  s^{1'}$ by definition, and
\begin{equation*}\begin{split}
    \partial_{A'} d^{ {A} '} (f_{  a  }  \mathbf{S}^{a}_{1 }   )= & \mathfrak  d^{  0'} f_0+ \mathfrak  d^{  1'}  f_1 +\Omega^{B'}\wedge\left({\mathbf T}_ { B' }^{ 0 '}  f_{ 0 }+{\mathbf T}_ { B' }^{1 '}  f_{1 }\right )\\
   = & \mathfrak  d^{  A'} f_{o(A')}   -\varrho \left(  \mathfrak  d^{B'}  {\mathbf T}_ { B' }^{ 0 '}  f_{  0}+\mathfrak  d^{B'}{\mathbf T}_ { B' }^{1 '}  f_{ 1}  \right)   \quad {\rm mod}  \quad \mathcal{J} _{k}(U).
\end{split}\end{equation*}Their sum gives us  the result.

 (2) We have \begin{equation} \begin{split}
   d^{ 0 '}d^{ 1 '}(f_0+\varrho f_1)= &   d^{ 0'}\left[\mathfrak  d^{ 1  '} f_0\        + \Omega^{  A'  }  \wedge   \mathbf T^{1 '}_{A'}  f_0+\Omega^{1'}\wedge    f_1+\varrho \left(\mathfrak  d^{  1'} f_1        + \Omega^{  A'  }  \wedge   {\mathbf T}_ {  A'  }^{ 1 '}  f_1\right) \right]  \\=&\mathfrak  d^{  0'}\mathfrak  d^{  1'} f_0\ + \Omega^{  A'  }\wedge {\mathbf T}_ {  A'  }^{ 0'} \mathfrak  d^{ 1'}   f_0 - \mathcal E  \wedge{\mathbf T}_{1'}^{ 1 '} f_0  - \Omega^{  A'  }  \wedge  \mathfrak  d^{  0'} {\mathbf T}_{  A'  }^{ 1'} f_0\\& - \mathcal{{ E}}\wedge    f_1 +(\Omega^{0 '}\wedge  \mathfrak   d^{1'}   - \Omega^{ 1'  }\wedge  \mathfrak  d^{ 0 '} )  f_1 ,\qquad {\rm mod}  \quad \mathcal{J} _{k+1}(U)
\end{split}\end{equation}by using  (\ref{eq:d-Omega0}), (\ref{eq:dA'-Omega}) and the  characterization of $ \mathcal{J} _{k+1} (U)$    in \eqref{eq:J-j'}. The result follows.
  \end{proof}

\subsection{The operator  ${\mathscr{D} }_{j}$ in the  boundary complex     }
For $j=k+1,\cdots,2n,$   define    isomorphisms
  \begin{equation}\label{eq:Pi-j>}\begin{split}
     \mathbf{\Pi}_j  :  \Gamma(U, {V}_ j  ) / \mathcal{J} _{j }& \longrightarrow \Gamma( bD\cap U,\mathscr{V}_ j^{(1)}  )\oplus \Gamma(bD\cap U, \mathscr{V}_ j^{(2)}  ),\\
     f_{  a  }  \widetilde{ \mathbf{S}}^{a}_{\sigma_j }   + \widetilde{\mathbf{{\Omega}}}^{c}_{\sigma_j }  \wedge G_{ c  } &\mapsto \left(f_{  a  }  \widetilde{ \mathbf{S}}^{a}_{\sigma_j } ,   G_{ c}\widetilde{ \mathbf{S}}^{c }_{{  \sigma_{j+1 }  }}\right),
 \end{split} \end{equation}where the first summation  is taken over
 $a= 0,\ldots, {\sigma_j }$, while the second one is taken over
 $c= 0,\ldots, {\sigma_{j+1 }  }$.

\begin{thm} \label{thm:bd-complex-main:2}   For $\mathbb{ F}=(\mathbb{{F}} _{ 1 }  ,   \mathbb{{F}} _{ 2 } ) \in \Gamma(bD,\mathscr{V}_ j )$, we have
\begin{equation}\label{eq:bdoperator>=}\begin{split}{\mathscr{D} }_{j}\mathbb{ F}   =&\left(
 s_{A'}\mathfrak  d^{ A '}\mathbb{{F}} _{ 1 }   +\mathcal E_0\wedge    \mathbb{{F}} _{ 2 }  , -s_{A'}(\mathfrak  d^{A '} +\mathcal{  E}^{A'} \wedge)  \mathbb{F}_2 - s_{A'} s_{B'} \mathbf{ {T}}^{A ' B }  \mathbb{{F}} _{ 1 } \right),\qquad j=k+1,\cdots,2n ,\\ {\mathscr{D} }_{k-1}\mathbb{ F}   =&\left ( \partial_{A'}\mathfrak  d^{  A'} \mathbb{F}_1   +\mathcal E_0 \wedge  \mathbb{F}_2 , \mathfrak{ d} ^{ [0 '} \mathfrak{ d} ^{ 1 ']} \mathbb{F}_2  -\mathfrak{ d} ^{ A ' } \left( \mathcal{  E}_{ A'}\wedge  \mathbb{F}_2 \right) +\mathcal E_0\wedge \left(\mathbf{T} ^{[0 '1']} +\mathcal{  E}_{ 0'1'}\right) \mathbb{F}_2-\partial_{A'} \mathfrak{ d} ^{ B '} {\mathbf T}_ {B'  }^{   A'   }   \mathbb{F}_1\right),
\\{\mathscr{D} }_{k}\mathbb{ F}   =&\left(
\mathfrak  d^{   0'} \mathfrak  d^{  1' }\mathbb{F}_1 -\mathcal E_0  \wedge \left ( {\mathbf T}^{1'0 '} \mathbb{F}_1 +  \mathbb{F}_2\right)    , -\mathfrak   d_{  A'}  \mathbb{F}_2-  \mathcal{  E}_{A'} \wedge \left ( {\mathbf T}^{1'0 '} \mathbb{F}_1 + \mathbb{F}_2 \right) + 2{\mathbf T}_ {  A'  }^{[ 0'} \mathfrak  d^{ 1']}      \mathbb{F}_1\right)
.
\end{split} \end{equation}  \end{thm}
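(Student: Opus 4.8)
The proof will run exactly parallel to Subsection~3.3, building on the description of the ideals $\mathcal{J}_j(U)$ and on the induced actions already computed in this section. For each of the three ranges of $j$ the strategy is the same: first apply the identification isomorphism $\mathbf{\Pi}_j$ to pass from $\Gamma(bD,\mathscr{V}_j)$ to representatives in $\Gamma(U,\mathcal{V}_j)/\mathcal{J}_j(U)$; then apply the operator $\mathcal{D}_j$ of the $k$-Cauchy-Fueter complex and reduce modulo $\mathcal{J}_{j+1}(U)$ using Proposition~\ref{prop:JZ0} or Proposition~\ref{prop:JZ>}; finally apply $\mathbf{\Pi}_{j+1}$ and rewrite the resulting $\Omega$-coefficient expression in the polynomial bases, so that the index shifts turn into contractions with $s_{A'}$ (or differentiations $\partial_{A'}$).

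For $j=k+1,\ldots,2n$ we have $\mathscr{D}_j=\mathbf{\Pi}_{j+1}\circ s_{A'}d^{A'}\circ\mathbf{\Pi}_j^{-1}$ modulo $\mathcal{J}_{j+1}(U)$, with $\mathbf{\Pi}_j$ as in \eqref{eq:Pi-j>}. Writing $\mathbb{F}_1=f_a\widetilde{\mathbf{S}}^a_{\sigma_j}$ and $\mathbb{F}_2=G_b\widetilde{\mathbf{S}}^b_{\sigma_{j+1}}$, Proposition~\ref{prop:JZ0} gives $\mathscr{D}_j\mathbb{F}=(\mathscr{R}_b^-\widetilde{\mathbf{S}}^b_{\sigma_{j+1}},\ \mathscr{T}_c^-\widetilde{\mathbf{S}}^c_{\sigma_{j+2}})$. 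Substituting \eqref{eq:RT+-} and using \eqref{eq:partial-S'} in the form $s_{A'}\widetilde{\mathbf{S}}^a_\sigma=\widetilde{\mathbf{S}}^{a+o(A')}_{\sigma+1}$, the index shift $b=a+o(A')$ collapses $\mathfrak{d}^{A'}f_{b-o(A')}\widetilde{\mathbf{S}}^b_{\sigma_{j+1}}$ to $s_{A'}\mathfrak{d}^{A'}\mathbb{F}_1$, while $\mathbf{T}^{A'B'}f_{c-o(A'B')}\widetilde{\mathbf{S}}^c_{\sigma_{j+2}}$ becomes $s_{A'}s_{B'}\mathbf{T}^{A'B'}\mathbb{F}_1$ and $(\mathfrak{d}^{A'}+\mathcal{E}^{A'}\wedge)G_{c-o(A')}\widetilde{\mathbf{S}}^c_{\sigma_{j+2}}$ becomes $s_{A'}(\mathfrak{d}^{A'}+\mathcal{E}^{A'}\wedge)\mathbb{F}_2$; the terms $\mathcal{E}_0\wedge(\cdot)$ carry over unchanged. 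This yields the first line of \eqref{eq:bdoperator>=}.

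For $j=k-1$ we take $\mathscr{D}_{k-1}=\mathbf{\Pi}_k\circ\partial_{A'}d^{A'}\circ\mathbf{\Pi}_{k-1}^{-1}$ modulo $\mathcal{J}_k(U)$, with $\mathbf{\Pi}_{k-1}$ from \eqref{eq:Pi-0} and $\mathbf{\Pi}_k$ the isomorphism sending a representative $\mathbb{F}_1+\varrho\mathbb{F}_2$ (in the form allowed by Corollary~\ref{cor:W-bundle}(2)) to the pair $(\mathbb{F}_1,\mathbb{F}_2)$. Since $\sigma_{k-1}=1$, Proposition~\ref{prop:JZ>}(1) reads off the two components of $\mathscr{D}_{k-1}\mathbb{F}$ directly, and \eqref{eq:partial-S} in the form $\partial_{A'}\mathbf{S}^a_1=\delta^a_{o(A')}$ rewrites $\mathfrak{d}^{A'}f_{o(A')}$ as $\partial_{A'}\mathfrak{d}^{A'}\mathbb{F}_1$ and $\mathfrak{d}^{B'}\mathbf{T}^{A'}_{B'}f_{o(A')}$ as $\partial_{A'}\mathfrak{d}^{B'}\mathbf{T}^{A'}_{B'}\mathbb{F}_1$, giving the second line. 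For $j=k$ we take $\mathscr{D}_k=\mathbf{\Pi}_{k+1}\circ d^{0'}d^{1'}\circ\mathbf{\Pi}_k^{-1}$ applied to $\mathbb{F}_1+\varrho\mathbb{F}_2$; Proposition~\ref{prop:JZ>}(2) computes $d^{0'}d^{1'}(\mathbb{F}_1+\varrho\mathbb{F}_2)$ modulo $\mathcal{J}_{k+1}(U)$, and then one expands $\mathcal{E}$ by \eqref{eq:E=} and invokes Proposition~\ref{prop:JW}(1), which allows one to discard every $\Omega^{0'}\wedge\Omega^{1'}\wedge(\cdot)$ and every $\varrho(\cdot)$; reading off, through $\mathbf{\Pi}_{k+1}$, the $\omega$-coefficient as the first entry and the coefficient of $\Omega^{A'}$ as the second entry gives the last line.

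The content of the theorem is carried entirely by Propositions~\ref{prop:JZ0} and \ref{prop:JZ>}; once these are in hand the rest is index bookkeeping. The one step demanding genuine care is fixing the normalizations of the three exceptional isomorphisms $\mathbf{\Pi}_{k-1}$, $\mathbf{\Pi}_k$, $\mathbf{\Pi}_{k+1}$ (whose source or target involves $\varrho$ and the forms $\Omega^{0'},\Omega^{1'}$ through the elements $\widetilde{\mathbf{\Omega}}^b_\sigma$) so that they are compatible with the splitting $\mathscr{V}_j=\mathscr{V}_j^{(1)}\oplus\mathscr{V}_j^{(2)}$ of Corollary~\ref{cor:W-bundle}; with these conventions fixed, the signs and the $s_{A'}$-contractions in \eqref{eq:bdoperator>=} emerge by direct substitution, exactly as in the derivation of \eqref{eq:bdoperator<}.
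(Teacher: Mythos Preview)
Your proposal is correct and follows essentially the same route as the paper's own proof: in each range you invoke the corresponding computational proposition (Proposition~\ref{prop:JZ0} for $j\geq k+1$, Proposition~\ref{prop:JZ>} parts (1) and (2) for $j=k-1$ and $j=k$), conjugate by the identification isomorphisms $\mathbf{\Pi}_j$, and convert the index shifts $\pm o(A')$ into contractions with $s_{A'}$ or $\partial_{A'}$ via \eqref{eq:partial-S} and \eqref{eq:partial-S'}. Your treatment of the $j=k$ step (expanding $\mathcal{E}$ via \eqref{eq:E=} and discarding the $\Omega^{0'}\wedge\Omega^{1'}$ and $\varrho$ pieces by Proposition~\ref{prop:JW}(1)) spells out in slightly more detail what the paper records tersely as ``the result follows from Proposition~\ref{prop:JZ>}(2)'', and your closing remark about fixing the normalizations of $\mathbf{\Pi}_{k-1},\mathbf{\Pi}_k,\mathbf{\Pi}_{k+1}$ correctly flags the only place where a sign or factor could slip.
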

   \begin{proof} (1)
 For $j=k+2,\cdots,2n,$
      we have
 \begin{equation*}\begin{split}
 {\mathscr{D} }_{j}\left(f_{  a  }  \widetilde{ \mathbf{S}}^{a}_{\sigma_j } ,   {G}_{ b}\widetilde {\mathbf{S}}^{b }_{{  \sigma_{j+1 }  }}\right)&=  \mathbf{\Pi}_{j+1 }\left(s_{A'}d^{A'}  \left[    f_{ a }\widetilde{ \mathbf{S}}^{a}_{\sigma_j }   + \mathbf{\widetilde{{\Omega}}}^{b}_{\sigma_j } \wedge  G_{ b }   \right]
\right)\\&
=  \mathbf{\Pi}_{j+1 }\left( \mathscr{R}_{b  }^- \widetilde{ \mathbf{S}}^{b  }_{ \sigma_{j+1 }  }
   + \mathbf{\widetilde{{\Omega}}}^{c }_{ \sigma_{j+1 } }  \wedge  {\mathscr{T}}_c^- \right)\\&
=  \mathscr{R}_{b }^- \widetilde{ \mathbf{S}}^{b }_{{  \sigma_{j+1 }  }}   +   \mathscr{T}_{c }^-\widetilde{ \mathbf{S}}^{c }_{{  \sigma_{j+2 }  }}
      \quad {\rm mod}  \quad \mathcal{J} _{j+1},
 \end{split} \end{equation*}by   Proposition \ref{prop:JZ0}, where
 \begin{equation}\label{eq:RT-}\begin{split} \mathscr{R}_{b  }^-
\widetilde{\mathbf{ S}}^{b  }_{{  \sigma_{j+1 }  }}  & =  s_{A'}\mathfrak  d^{A '}   (f_{  a  }  \widetilde{ \mathbf{S}}^{a}_{\sigma_j }  )  +\mathcal E_0\wedge  \left  (G_{ b}\widetilde{ \mathbf{S}}^{b }_{{  \sigma_{j+1 }  }}\right) ,\\
  \mathscr{T}_{c }^-\widetilde{ \mathbf{S}}^{c }_{{  \sigma_{j+2 }  }}  &=
   -s_{A'} \mathfrak  d^{A'}   \left (G_{b}\widetilde{ \mathbf{S}}^{b}_{{  \sigma_{j +1 }  }}\right )-\mathcal{  E}^{A'} \wedge s_{A'} \left(G_{b  }\widetilde{ \mathbf{S}}^{b }_{{  \sigma_{j+1  }  }}\right )-s_{A'}s_{B'} \mathbf{T }^{A' B '} \left (f_{  a  }  \widetilde{ \mathbf{S}}^{a}_{\sigma_j } \right ),
 \end{split} \end{equation}by multiplying $s_{A'}$.
  Thus
  $ {\mathscr{D} }_{j}$  is given by the first formula.

(2)   Since $\sigma_{k-1}=1$,  let $ \mathbb{F}_1  =f_{  a  }  \mathbf{S}^{a}_{1 }= f_{  0  } s^{0'}+f_{  1  } s^{1' }$ and $ G   =\mathbb{F}_2$. We have
 \begin{equation*} \begin{split}
 \mathfrak  d^{ A '}  f_{o(A') }&= \partial_{A'}\mathfrak  d^{  A'} \mathbb{F}_1,\\
     \mathfrak{ d} ^{ B '} {\mathbf T}_ {B'  }^ { A '}   f_{o(A') }&  =\partial_{A'}\mathfrak{ d} ^{ B '} {\mathbf T}_ {B'  }^{   A'   } \mathbb{F}_1.
\end{split}\end{equation*}
The result follows Proposition \ref{prop:JZ>}
  (1).

 (3) Note that for $j=k $,  $ \mathbb{F}_1  =f_{  0  }  $ and $  \mathbb{F}_2 =f_{  1  } $. The result follows Proposition \ref{prop:JZ>}
  (2).
  \end{proof}
\begin{rem}
$\mathscr{D  }_{j}$'s for  $j>k+1 $  in   \eqref{eq:bdoperator>=}  are exactly operators for  $j<k-1$ in  \eqref{eq:bdoperator<} with  $\partial_{A'}$ replaced by $s_{A'}$.

(2) By comparing  \eqref{eq:RT+} with \eqref{eq:RT-}, we see that $+o(A')$ in \eqref{eq:RT+-} comes from $\partial_{A'}$, while $-o(A')$ in \eqref{eq:RT+-} comes from multiplying $s_{A'}$.
  \end{rem}
\section{The Hartogs-Bochner extension  of   $k$-regular functions }

\begin{prop}\label{prop:extension} For fixed $k$, if $f\in \Gamma(bD,\odot^k \mathbb{C}^2)$ is $k$-CF, then there exists a representative $ \widehat{ f}\in \Gamma(\overline{D}, \odot^k \mathbb{C}^2)$ such that $\widehat{f}|_{bD}=f$ and
$
  \mathcal{ D}_0\widehat f
$ is flat on $bD$.
 \end{prop}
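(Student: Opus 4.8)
The plan is to run the classical flat-extension argument: take an arbitrary smooth extension of $f$ and correct it order by order in $\varrho$ so that $\mathcal D_0$ of it vanishes to infinite order on $bD$, then sum the corrections by Borel's lemma. The computational backbone is a Leibniz identity coming from Proposition \ref{prop:d-b}. Writing $d^{A'}=\Omega^{A'}\wedge\partial_{4n+1}+d^{A'}_b$ with $d^{A'}_b$ purely tangential, and using $\partial_{4n+1}\varrho=1$ together with $d^{A'}_b\varrho=0$ (which follows from $Z^{A'}_A\varrho=0$ and the identity $R_{B'C'}\mathbf N^{A'}_{C'}\varrho=\delta^{A'}_{B'}$ established in the proof of Proposition \ref{prop:d-b}), one gets, for $g\in\Gamma(\overline D,\odot^k\mathbb C^2)$ and $m\ge1$, the identity
\begin{equation*}
\mathcal D_0(\varrho^m g)=m\,\varrho^{m-1}\,\mathbb D g+\varrho^{m}\bigl(\mathbb D\partial_{4n+1}g+\partial_{A'}d^{A'}_b g\bigr),
\end{equation*}
where $\mathbb D=\Omega^{A'}\wedge\partial_{A'}$ is the normal symbol \eqref{eq:DD}; the analogous identity holds for $\mathcal D_1$ (with its own normal symbol in place of $\mathbb D$) when $k\ge2$, while for $k=1$ the second-order operator $\mathcal D_1=d^{0'}d^{1'}$ has leading term $m(m-1)\varrho^{m-2}\,\Omega^{0'}\wedge\Omega^{1'}\wedge g$.

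First I would pick any $\hat f_0\in\Gamma(\overline D,\odot^k\mathbb C^2)$ with $\hat f_0|_{bD}=f$. Since $\mathcal J_0=\varrho\,\Gamma(\mathcal V_0)$ near $bD$ (Proposition \ref{prop:JW0}(1) with $j=0$), this represents the class of $f$, so $\mathscr D_0 f=0$ means $\mathcal D_0\hat f_0\in\mathcal J_1$; by the explicit description of $\mathcal J_1$ in Proposition \ref{prop:JW0}(1) (or Proposition \ref{prop:JW}(3) when $k=1$), one has $\mathcal D_0\hat f_0=\mathbb D g+\varrho F$ with $g\in\Gamma(\odot^k\mathbb C^2)$, $F\in\Gamma(\mathcal V_1)$. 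Then $\hat f^{(1)}:=\hat f_0-\varrho g$ still restricts to $f$, and the identity above gives $\mathcal D_0\hat f^{(1)}=O(\varrho)$ (a direct inspection of the $k=1$ case moreover shows the $\varrho$-coefficient of $\mathcal D_0\hat f^{(1)}$ already lies in $\operatorname{span}\{\Omega^{0'},\Omega^{1'}\}=\operatorname{im}\mathbb D$). Inductively, suppose $\hat f^{(m)}|_{bD}=f$ and $\mathcal D_0\hat f^{(m)}=\varrho^m G_m+O(\varrho^{m+1})$. Applying $\mathcal D_1$ and using $\mathcal D_1\mathcal D_0=0$ together with the Leibniz identity, the lowest-order term must vanish on $bD$, which forces $\sigma_1(\operatorname{grad}\varrho)G_m|_{bD}=0$ (respectively $\Omega^{0'}\wedge\Omega^{1'}\wedge G_m|_{bD}=0$ when $\mathcal D_1$ is second order); by the exactness of the symbol sequence \eqref{eq:symbol} at $\mathcal V_1$ for the nonzero covector $\operatorname{grad}\varrho$, this gives $G_m|_{bD}=\mathbb D H_m$ for some $H_m\in\Gamma(\odot^k\mathbb C^2)$, chosen smoothly. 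Setting $\hat f^{(m+1)}:=\hat f^{(m)}-\tfrac{1}{m+1}\varrho^{m+1}H_m$ yields $\mathcal D_0\hat f^{(m+1)}=O(\varrho^{m+1})$, still with boundary value $f$.

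Finally, the accumulated corrections $-\varrho g-\sum_{m\ge1}\tfrac1{m+1}\varrho^{m+1}H_m$ form a formal power series in $\varrho$ with $C^\infty$ coefficients, so by Borel's lemma there is $\widehat f\in\Gamma(\overline D,\odot^k\mathbb C^2)$ (cut off by a function equal to $1$ near $bD$ so as to be defined on all of $\overline D$) whose $\varrho$-Taylor expansion along $bD$ agrees with $\hat f_0$ plus those corrections; then $\widehat f|_{bD}=f$ and $\mathcal D_0\widehat f=O(\varrho^m)$ for every $m$, i.e. $\mathcal D_0\widehat f$ is flat on $bD$. The case $k=0$, where $\mathcal D_0=d^{0'}d^{1'}$ is second order and a $k$-CF function on $bD$ is a $1$-jet, is handled the same way, correcting two orders of $\varrho$ at a time. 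I expect the only genuinely nontrivial point to be the inductive step --- that the obstruction $G_m$ lands in the image of the normal symbol --- which is precisely the exactness of the symbol complex \eqref{eq:symbol} (from \cite{wang-mfd}) combined with $\mathcal D_1\mathcal D_0=0$; the bookkeeping in the second-order cases $k=0,1$ (two normal derivatives per step, and using that $\ker(\Omega^{0'}\wedge\Omega^{1'}\wedge\,\cdot)$ coincides with $\operatorname{im}\mathbb D$ again by \eqref{eq:symbol}) is the main place where care is required.
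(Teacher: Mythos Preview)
Your proposal is correct and follows essentially the same approach as the paper's proof: extend arbitrarily, correct order by order in $\varrho$, and sum via Borel/Whitney. The paper splits into the same three cases $k\ge2$, $k=1$, $k=0$, and the inductive mechanism is identical --- apply the next operator in the complex to force the $\varrho^m$-coefficient into the range of the normal symbol, then peel it off.

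The only notable difference is in framing: where you invoke the abstract exactness of the symbol sequence \eqref{eq:symbol} to conclude $G_m|_{bD}\in\operatorname{im}\mathbb D$, the paper instead re-applies the concrete description of $\mathcal J_1$ (Proposition~\ref{prop:JW0} for $k\ge2$, Proposition~\ref{prop:JW}(1)(3) for $k=0,1$), which amounts to the same thing but is phrased pointwise. Your ``direct inspection'' for the $m=1$ step when $k=1$ --- where the second-order symbol of $\mathcal D_1$ contributes $m(m-1)=0$ and so gives no constraint --- is exactly where the paper leans on the full two-condition characterization of $\mathcal J_1$ in Proposition~\ref{prop:JW}(3); that extra Cauchy-data condition is what forces the $\varrho$-coefficient into $\operatorname{span}\{\Omega^{0'},\Omega^{1'}\}$, so your instinct that this step needs the explicit structure rather than just symbol exactness is right. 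For $k=0$ the paper corrects one $\varrho$-order per step (with correction of degree two higher, since $\mathcal D_0$ is second order), not two at once, but your sketch is otherwise on target.
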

\begin{proof} Denote also by $f$    any fixed extensions of $f$ to $U\supset D$
as a $ C^\infty$ function.

(1) {\it The case $k> 1$}.  Since $f $ is $k$-CF, $\partial_{A'} d^{ {A} '} f\in \mathcal{J} _{1}(U) $ and so
 \begin{equation}\label{eq:D0-f}\begin{split}\partial_{A'} d^{ {A} '} f=&
      \mathbb{D}\mathbf{S}_{\sigma_1 }^{a    } \cdot  f_{  a} + \varrho F,
 \end{split} \end{equation}
for some functions $f_a\in \Gamma(U, \mathbb{C}), F\in \Gamma(U,\mathcal{V}_1)$, $a=0,\ldots, \sigma_1 $, by the characterization of $ \mathcal{J} _{1} (U) $ in Proposition \ref{prop:JW0}. Note that   $\mathbb{D}$ given by \eqref{eq:DD} is globally defined since $\Omega^{ {A} '}$'s are. But
\begin{equation*}
   \partial_{A'} d^{ {A} '}
     \left(\varrho  \mathbf{S}_{\sigma_1 }^{a    }    f_{  a}\right) = \Omega^{A'}  \partial_{A'} \mathbf{S}_{\sigma_1 }^{a    } \cdot  f_{  a} + O(\varrho)  = \mathbb{D}\mathbf{S}_{\sigma_1 }^{a    } \cdot f_{  a} + O(\varrho).
\end{equation*}
We see that
\begin{equation}\label{eq:D0-f1}\begin{split}\partial_{A'} d^{ {A} '}  \left( f-\varrho  \mathbf{S}_{\sigma_1 }^{a    }   f_{  a}\right)=&
       \varrho F_1^{(1)},
 \end{split} \end{equation}for some $F_1^{(1)}\in \Gamma(U, \mathcal{V}_1)$. As  $\partial_{A'} d^{ {A} '}$  applied to the left hand side gives zero  by \eqref{eq:DD1}, we get
\begin{equation*}
  0= \partial_{A'} d^{ {A} '} (\varrho F_1^{(1)})=\mathbb{D}F_1^{(1)}|_{bD}
+ O(\varrho),
\end{equation*}Namely, $\mathbb{D}F_1^{(1)}|_{bD}=0$. Hence
\begin{equation*}
   F_1^{(1)}=\mathbb{D} \mathbf{S}_{\sigma_1 }^{a    }  \cdot F_{  a} + \varrho F_1 ^{(2)}
\end{equation*}for some functions $F_a\in \Gamma(U, \mathbb{C})$ by Proposition \ref{prop:JW0} again, and so
\begin{equation}\label{eq:D0-f2}\begin{split}\partial_{A'} d^{ {A} '}  \left( f-\varrho  \mathbf{S}_{\sigma_1 }^{a    }   f_{  a}\right)=&
       \varrho \mathbb{D}  \mathbf{S}_{\sigma_1 }^{a    }\cdot  F_{  a} + \varrho^2 F_1 ^{(2)}.
 \end{split} \end{equation}Repeating this procedure, we get
 \begin{equation*}
    \partial_{A'} d^{ {A} '}(f+\varrho f^{(1)}+\varrho^2 f^{(2)}+\cdots)\equiv O_{bD}^\infty
 \end{equation*}
 with a formal power series in $\varrho$ with coefficients $ C^\infty$ on $bD$, where $O_{bD}^\infty$ denotes functions
  vanishing of infinite order on $bD$. By using the Whitney extension theorem
(cf.  \cite[I, Proposition 22]{AH72}),  we get the conclusion.

(2) {\it The case $k =0$}. Write $f=u_0 + \varrho u_1$.  Since $(u_0 , u_1) $ is $0$-CF on $bD$, $ d^{0'}d^{1'} (u_0 + \varrho u_1)\in \mathcal{J} _{1}(U  )  $,
\begin{equation*}
   d^{0'}d^{1'}(u_0 + \varrho u_1)=\varrho \alpha_1+\Omega^{0'}  \wedge \Omega^{1'} \beta_1
\end{equation*}
  for some $\alpha_1\in \Gamma(U, \wedge^2 \mathbb{C}^{2n+2}), \beta_1\in \Gamma(U,  \mathbb{C} )$, by the characterization of $ \mathcal{J} _{1}(U  )  $ for $k =0$ in Proposition  \ref{prop:JW} (1). Then
\begin{equation*}
   d^{0'}d^{1'}\left(u_0 + \varrho u_1-\frac 12\varrho^2\beta_1\right)=\varrho \alpha_1'
\end{equation*}
  for some $\alpha_1'\in \Gamma(U, \wedge^2 \mathbb{C}^{2n+2}) $. As  $d^{0'} $ and $d^{1'}$  applied to the left hand side give zero by \eqref{eq:DD2}, we get
\begin{equation*}
  \Omega^{0'}  \wedge  \alpha_1'|_{bD}=0,\qquad \Omega^{1'}  \wedge  \alpha_1'|_{bD}=0,
\end{equation*} and so
\begin{equation*}
 \alpha_1'=\Omega^{0'} \wedge \Omega^{1'} \beta_2+ \varrho \alpha_2
\end{equation*}  for some $\alpha_2\in \Gamma(U, \wedge^2 \mathbb{C}^{2n+2}) $ and $\beta_2\in \Gamma(U,\mathbb{C} ) $. Thus,
\begin{equation*}
     d^{0'}d^{1'}\left(u_0 + \varrho u_1-\frac 12\varrho^2\beta_1-\frac 16\varrho^3\beta_2+\cdots\right)=\varrho^2 \alpha_2'.
 \end{equation*}
Repeating this procedure, we get
 \begin{equation*}
     d^{0'}d^{1'}\left(u_0 + \varrho u_1 +\cdots\right)\equiv O_{bD}^\infty
 \end{equation*}and get the conclusion as the case $k> 1$.

(3) {\it The case $k =1$}.
Since $f $ is $1$-CF, $ \partial_{A'} d^{ {A} '}f\in \mathcal{J} _{1}(U  ) $, and so
 \begin{equation*}
   \partial_{A'} d^{ {A} '}
        f    = \Omega^{ {A} '}
           f_{ A'}+\varrho   \mathfrak{d}^{ A'} f_{ A'}+ O(\varrho^2)
\end{equation*}for some functions $f_{ A'}\in \Gamma(U, \mathbb{C} )$, by the characterization of $ \mathcal{J} _{1} (U  ) $ for $k =1$ in Proposition  \ref{prop:JW} (3).
 Since
 \begin{equation*}
   \partial_{A'} d^{ {A} '}
     \left(  \varrho s^{B'}     f_{ B'}\right)  =\Omega^{ {A} '}
            f_{ A'}+\varrho  \left (\mathfrak{d}^{ A'} f_{ A'} +\Omega^{ B'}\wedge   \left({\mathbf T}_{ B'}^{ A '}+\partial_{4n+1}\right) f_{ A'}\right)+ O(\varrho^2)
\end{equation*}
we see that
\begin{equation*}
   \partial_{A'} d^{ {A} '}
     \left(    f -\varrho s^{A'}    f_{ A'} \right)  = \varrho \, \Omega^{ {A} '}
          \cdot  f^{(1)}_{ A'}+   \varrho^2 G
\end{equation*}  for some $f^{(1)}_{ A'}\in \Gamma(U, \mathbb{C}) $   and $G\in \Gamma(U,   \mathbb{C}^{2n+2} ) $.
 As  $d^{0'}  d^{1'}$  applied to the left hand side gives zero, we get
 \begin{equation*}
   \Omega^{1 '}
        \wedge \Omega^{ 0 '}
        \cdot \mathfrak{d}^{ 0'}  f^{(1)}_{0'}- \Omega^{0 '}
         \wedge \Omega^{ 1'}
         \cdot \mathfrak{d}^{ 1'}  f^{(1)}_{1'}+   2 \Omega^{0 '}
          \wedge \Omega^{ 1'}
        G=0
 \end{equation*}on the boundary $bD$, i.e.
 \begin{equation*}2G=
     \mathfrak{d}^{ A'}  f^{(1)}_{A'} +   O(\varrho ),\qquad {\rm mod}\quad \Omega^{A '}.
 \end{equation*}
 Therefore
 \begin{equation*}
   \partial_{A'} d^{ {A} '}
     \left(    f -\varrho s^{A'}   f_{ A'} \right)  = \varrho  \left[\Omega^{ {A} '}
       \cdot  f^{(1)}_{ A'}+  \frac  \varrho 2 \left( \mathfrak{d}^{ A'} f^{(1)}_{ A'}+\Omega^{ {A} '}
         \cdot  \widetilde{f}^{(2)}_{ A'}\right)+ O(\varrho^2)\right],
\end{equation*} for some $\widetilde{f}^{(2)}_{ A'}\in \Gamma(U, \mathbb{C}) $,
and so
 \begin{equation*}
   \partial_{A'} d^{ {A} '}
     \left(    f -\varrho s^{A'}   f_{ A'} -  \frac  {\varrho^2} 2    s^{A'} f^{(1)}_{ A'}\right)  = \varrho^2  \left[\Omega^{ {A} '}
         \cdot  f^{(2)}_{ A'} + O(\varrho )\right].
\end{equation*} for some $f^{(2)}_{ A'}\in \Gamma(U, \mathbb{C}) $. Repeating this procedure, we get
  the conclusion as above.
  \end{proof}
\begin{rem} Such extension for CR functions was constructed by Andreotti-Hill \cite{AH72}, while
  extension for pluriharmonic functions satisfying $\partial\overline{\partial}$-equation was constructed by  Andreotti-Nacinovich in \cite{Andreotti2}.
\end{rem}

To show the Hartogs-Bochner extension, we need to solve   nonhomogeneous
 $k$-Cauchy-Fueter equation
 $
\mathcal{D}_0 u=f,$
 for $f$ satisfying the compatibility
condition  $\mathcal{D}_1f=0$.
\begin{thm}\label{thm:CF-D0}\cite[Theorem 5.3]{Wang} For  $f\in C_0 (\mathbb{R}^{4n+4},
\mathcal{ V}_1)$ such that $\mathcal{D}_1f=0$ in the sense of distributions, then there exists a
function $u\in C_0(\mathbb{R}^{4n+4},  \mathcal{V}_0 )\cap
W^{1,2}(\mathbb{R}^{4n+4},  \mathcal{V}_0   )$ ($u\in
C_0(\mathbb{R}^{4n+4}, \mathbb{C}  )\cap W^{2,2}(\mathbb{R}^{4n+4},
\mathbb{C} )$ if $k=0$) satisfying $
\mathcal{D}_0 u=f $ and vanishing on
the unbounded connected component of $ \mathbb{R}^{4n+4}\setminus
{\rm supp}  f$.
\end{thm}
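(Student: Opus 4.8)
The plan is to build the solution by an explicit constant-coefficient potential and then to extract the support property from the overdeterminedness of the system. The key structural input is that the $k$-Cauchy-Fueter complex \eqref{cf} is \emph{elliptic} over $\mathbb{R}^{4n+4}$, i.e.\ the symbol sequence \eqref{eq:symbol} is exact for $\xi\neq 0$. First I would form, with respect to the flat metric, the box operators $\Box_j:=\mathcal{D}_j^{*}\mathcal{D}_j+\mathcal{D}_{j-1}\mathcal{D}_{j-1}^{*}$ (with $\mathcal{D}_{-1}:=0$); exactness of \eqref{eq:symbol} forces each $\Box_j$ to be elliptic with positive-definite full symbol. In particular $\Box_0=\mathcal{D}_0^{*}\mathcal{D}_0$ is an elliptic constant-coefficient operator, of order $2$ when $k\geq 1$ and of order $4$ when $k=0$ (as then $\mathcal{D}_0=d^{0'}d^{1'}$ is second order, by \eqref{eq:operator-k-CF}). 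Using the anticommutation identities \eqref{eq:d20}, \eqref{eq:DD1} and \eqref{eq:DD2} together with Proposition \ref{prop:dd}, one checks moreover that $\Box_0$ is a constant multiple of $\Delta$ (resp.\ $\Delta^{2}$ if $k=0$) acting componentwise; its fundamental solution $\Gamma_0$ is then essentially the Newtonian (resp.\ biharmonic) kernel, and in any case $\Gamma_0$ and its derivatives decay at infinity, $\partial^{\alpha}\Gamma_0(x)=O\!\left(|x|^{m-(4n+4)-|\alpha|}\right)$ with $m=\operatorname{ord}\Box_0\in\{2,4\}$.

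Given $f\in C_0(\mathbb{R}^{4n+4},\mathcal{V}_1)$ with $\mathcal{D}_1 f=0$, set $u:=(\mathcal{D}_0^{*}\Gamma_0)*f=\Gamma_0*(\mathcal{D}_0^{*}f)$, so that $\Box_0 u=\mathcal{D}_0^{*}f$. Then $h:=\mathcal{D}_0 u-f$ satisfies $\mathcal{D}_0^{*}h=\Box_0 u-\mathcal{D}_0^{*}f=0$, and also $\mathcal{D}_1 h=\mathcal{D}_1\mathcal{D}_0 u-\mathcal{D}_1 f=0$ by \eqref{cf} and the hypothesis; hence $\Box_1 h=\mathcal{D}_1^{*}\mathcal{D}_1 h+\mathcal{D}_0\mathcal{D}_0^{*}h=0$. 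Since $\mathcal{D}_0^{*}\Gamma_0$ is locally integrable and $f$ bounded with compact support, $u$ is continuous and $h$ is a continuous $L^2$ function. But an $L^2$ solution of the elliptic constant-coefficient equation $\Box_1 h=0$ on $\mathbb{R}^{4n+4}$ must vanish: $\widehat{h}$ is then supported on the characteristic set $\{\det\sigma(\Box_1)=0\}=\{0\}$, so $h$ is a polynomial, hence $h\equiv 0$. Thus $\mathcal{D}_0 u=f$. For the norm statements: away from $\operatorname{supp}f$ one has $u=(\mathcal{D}_0^{*}\Gamma_0)*f=O\!\left(|x|^{1-(4n+4)}\right)$ if $k\geq 1$ and $O\!\left(|x|^{2-(4n+4)}\right)$ if $k=0$, which — together with one, resp.\ two, derivatives — is square-integrable near infinity because $n\geq 1$; near $\operatorname{supp}f$, the elliptic equation $\Box_0 u=\mathcal{D}_0^{*}f$ with $f\in C_0$ gives $u\in W^{1,2}_{\mathrm{loc}}$, resp.\ $W^{2,2}_{\mathrm{loc}}$. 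Hence $u\in C_0(\mathbb{R}^{4n+4},\mathcal{V}_0)\cap W^{1,2}$, resp.\ $\cap\,W^{2,2}$ when $k=0$.

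It remains to show that $u$ vanishes on the unbounded connected component $\Omega$ of $\mathbb{R}^{4n+4}\setminus\operatorname{supp}f$. There $\mathcal{D}_0 u=0$, so $u$ is $k$-regular, in particular real-analytic and componentwise harmonic on $\Omega$. As $\operatorname{supp}f$ is compact and $4n+4\geq 8$, the exterior of a large ball $B_R\supset\operatorname{supp}f$ is connected, so $\{|x|\geq R\}\subset\Omega$; on $\{|x|>R\}$ the function $u$ is $k$-regular and tends to $0$. By the Hartogs-type removable-singularity theorem for $k$-regular functions on $\mathbb{H}^{n+1}$, $n\geq 1$ — which is proved elementarily through a Cauchy-Fueter integral representation, independently of the statement at hand — $u$ extends to a $k$-regular function on all of $\mathbb{R}^{4n+4}$; being componentwise harmonic and vanishing at infinity it is identically $0$ by Liouville, whence $u\equiv 0$ on $\{|x|>R\}$ and then, by unique continuation on the connected open set $\Omega$, $u\equiv 0$ on $\Omega$.

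The step I expect to be the main obstacle is this last one: harmonicity together with decay at infinity does \emph{not} by itself force vanishing (witness $|x|^{2-N}$ on $\mathbb{R}^N\setminus\{0\}$), so one genuinely has to exploit the rigidity of the overdetermined $k$-Cauchy-Fueter system, and care is needed to avoid circularity with Theorem \ref{thm:Hartogs-Bochner} itself. The clean route is to quote the integral-representation proof of the interior Hartogs phenomenon for $k$-regular functions as above; alternatively one argues directly from the global $W^{1,2}$-membership of $u$ by expanding each component in solid spherical harmonics on $\{|x|>R\}$ and verifying that $\mathcal{D}_0 u=0$ annihilates every nonzero decaying mode when $n\geq 1$. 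By contrast, the ellipticity of the $\Box_j$ and the Fourier-support Liouville argument are routine given \eqref{eq:symbol}, and the only genuinely computational point is the identification of $\Box_0$ with a power of $\Delta$ using \eqref{eq:d20}, \eqref{eq:DD1} and \eqref{eq:DD2}.
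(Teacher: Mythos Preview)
Your overall architecture (potential + Liouville for the defect $h$) is reasonable for $k\geq 2$, but there is a genuine gap for $k\in\{0,1\}$, and this is exactly where the paper's recalled construction differs from yours. The $k$-Cauchy--Fueter complex is \emph{not} of uniform order: by \eqref{eq:operator-k-CF} the operator $\mathcal{D}_k=d^{0'}d^{1'}$ is of second order while its neighbours are first order. Hence your $\Box_1=\mathcal{D}_1^*\mathcal{D}_1+\mathcal{D}_0\mathcal{D}_0^{*}$ mixes orders precisely when $k=0$ or $k=1$, and exactness of \eqref{eq:symbol} does \emph{not} make it elliptic: the top-order principal symbol is $\sigma(\mathcal{D}_1)^*\sigma(\mathcal{D}_1)$ (resp.\ $\sigma(\mathcal{D}_0)\sigma(\mathcal{D}_0)^*$), whose kernel is $\operatorname{im}\sigma(\mathcal{D}_0)\neq 0$ (resp.\ $(\ker\sigma(\mathcal{D}_1))^{\perp}\neq 0$). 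Your Fourier-Liouville step ``$\Box_1 h=0$, $h\in L^2\Rightarrow h=0$'' therefore breaks down. The paper (following \cite{Wang}) avoids this by working on $\mathcal{V}_1$ with the modified fourth-order operators \eqref{eq:laplacian0}--\eqref{eq:laplacian}, squaring the lower-order term so that $\square_1$ is genuinely elliptic of order $4$ for every $k$; the solution is then the explicit formula $u=\mathcal{D}_0^{*}\mathcal{D}_0\mathcal{D}_0^{*}\mathbf{G}_1 f$ (and $u=\mathcal{D}_0^{*}\mathbf{G}_1 f$ for $k=0$), with $\mathbf{G}_1=\square_1^{-1}$ a convolution by a kernel homogeneous of degree $-4n$. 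Your argument is easily repaired by replacing $\Box_1$ with this $\square_1$ (since $\mathcal{D}_0^{*}h=0$ and $\mathcal{D}_1 h=0$ still give $\square_1 h=0$), but you should flag the order discrepancy rather than assert ellipticity of the naive box. A minor point: $\Box_0=\mathcal{D}_0^{*}\mathcal{D}_0$ is diagonal with entries \emph{proportional} to $\Delta$ but with different constants (cf.\ the boundary analogue \eqref{eq:diag}), not a single scalar multiple; this does not affect your argument.

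On the vanishing over the unbounded component you correctly identify the real difficulty (harmonicity plus decay is insufficient), but invoking ``the Hartogs-type removable-singularity theorem for $k$-regular functions'' is dangerous: in \cite{Wang} the Hartogs phenomenon for $k$-regular functions is established \emph{via} Theorem~\ref{thm:CF-D0}, so quoting it here is circular. Your alternative via a solid-harmonic expansion on the exterior of a large ball, using the overdetermined system $\mathcal{D}_0 u=0$ to kill all decaying modes when $n\geq 1$, is the honest route and should be carried out rather than deferred.
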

Let us recall the construction of  solutions in \cite{Wang}.
 Consider  the associated Hodge-Laplacian on $\Gamma(D, \mathcal{{V}}_1)$:
\begin{equation}\label{eq:laplacian0} \begin{split} & \square_{ 1}=  \mathcal{D}_{0 }   \mathcal{D}_{0   }^* +
\left(\mathcal{ D}_{ 1}^{ * } \mathcal{D}_{ 1} \right)^2  , \qquad \square_1= \left( \mathcal{D}_{0} \mathcal{D}_{0 }^{ * }
\right)^2 +  \mathcal{D}_1^{ * } \mathcal{D}_1,
\end{split}
\end{equation} if $k=0$ and $k=1$, respectively, and
\begin{equation} \label{eq:laplacian} \begin{split}  & \square_1=
\left(\mathcal{D}_{0 }   \mathcal{D}_{0 }^{ * } \right)^2+ \left( \mathcal{D}_{1
}^{ * } \mathcal{D}_{1} \right)^2,
\end{split}
\end{equation}
 if $k\geq 2$. They are all uniformly elliptic differential operators of
 $4$-th order with constant coefficients. Its
 inverse $ \mathbf{G}_1$ in
$L^2(\mathbb{R}^{4n+4}, \mathcal{V}_j )$
  is a convolution operator with matrix kernel   of
$C^\infty(\mathbb{R}^{4n+4}\setminus\{0\})$ homogeneous functions of
degree   $ -4n$. The solution is given by $\mathcal{D}_0^*\mathcal{D}_{0 }  \mathcal{ D}_{0 }^{ * }\mathbf{G}_1 f$ for $k\geq1$, i.e.
 \begin{equation*}
    \mathcal{D}_0 ( \mathcal{D}_0^*\mathcal{D}_{0 }  \mathcal{ D}_{0 }^{ * }\mathbf{G}_1 f)=f.
 \end{equation*}
 The solution is   $\mathcal{D}_0^* \mathbf{G}_1 f$ for $k=0$.

\vskip 3mm

 {\it Proof  of Theorem \ref{thm:Hartogs-Bochner}}. By Proposition \ref{prop:extension}, we can extend $f$ to a smooth function $ \widehat{ f}$ on $\overline{D}$
and extend
  $\mathcal{ {D}}_0 \widehat{ f}$ by $0$ outside of $\overline{D}$ to get a  $ \mathcal{{D}}_1$-closed element $ {{F}}\in \Gamma(\mathbb{H}^{n+1}, \mathcal{{V}}_1)$
supported in  $\overline{D}$. Then, by Theorem \ref{thm:CF-D0},
  there exists $H \in C (\mathbb{H}^{n+1}, \mathcal{ V} _0)$ with compact support   such that $ {F} =  \mathcal{{D}}_0 H$.
Note that
  $H$    is $k$-regular on $ \mathbb{H}^{n+1}\setminus \overline{D}$. Since a  $k$-regular function is harmonic \cite{Wang} (see this fact in \eqref{eq:diag} for right-type
  groups),  by analytic continuation, $H$ vanishes on   the connected open set $\mathbb{H}^{n+1}\setminus \overline{D}$. Then
$F = f-H$ gives us  the required extension.
\qed
 \section{  right-type
  groups  and  the quaternionic Monge-Amp\`{e}re operator }
\subsection{The nilpotent Lie groups of step two associated to  rigid  quadratic hypersurfaces}
Consider rigid model domain $D$ in \eqref{eq:rigid-hypersurface} and the projection
$\pi
 :  D    \longrightarrow    \mathbb{H}^n\times \mathbb{H}_+
 $ given by
 \begin{equation*}
    (\mathbf{q}', q_{n+1} ) \longmapsto (\mathbf{q}'  , q_{n+1}- \phi(\mathbf{q}')),
 \end{equation*}
  where $\mathbf{q}' \in \mathbb{H}^n$, which maps $bD$ to $\mathbb{H}^n\times \operatorname{Im} \mathbb{H}$.   The push  forward vector field $\pi_*\overline{Q}_l$ is exactly $ \overline{\partial}_{q_{l+1}}+\overline{\partial}_{q_{l+1}}\phi\cdot
\overline{\partial}_{\mathbf{t}} $, where  $\mathbf{t}=t_1\textbf{i}+t_2\textbf{j}+t_3\textbf{k}\in \operatorname{Im} \mathbb{H} $ (cf. \cite[Subsection 5.1]{shi-wang}).
Denote
\begin{equation}\label{eq:Xj}
   X_{4l+1}+\mathbf{i}X_{4l+2}+\mathbf{j}X_{4l+3}+\mathbf{k}X_{4l+4}:=\overline{\partial}_{q_{l+1}}+\overline{\partial}_{q_{l+1}}\phi\cdot
\overline{\partial}_{\mathbf{t}},
\end{equation}where $l=0,\ldots,n -1$.
Then
\begin{equation}\label{eq:X-b}
   X_{b}=\partial_{x_{b}}+2\sum_{\beta=1}^3\sum_{a=1}^{4n}\left(\mathbb S\mathbb I^\beta\right)_{ab}x_a\partial_{t_\beta}
\end{equation}
 by direct calculation \cite[Proposition 5.1]{shi-wang}, and so
\begin{equation}\label{eq:X-B}
   [X_{a},X_{b}]=2\sum_{\beta=1}^{3} B^\beta_{ab}\partial_{t_\beta},
\end{equation} where
\begin{equation}\label{eq:B-S}   B^\beta:=
   \mathbb{S}\mathbb I^\beta+\mathbb I^\beta\mathbb{S}
\end{equation}
 (\cite[Subsection 5.1]{shi-wang} \cite{wang13}). Here $\mathbb{I}^{\beta}={\rm diag}(I^\beta,I^\beta,\ldots)$ with
 \begin{equation}\label{eq:I}
    I^{1}:=\left(\begin{array}{cccc} 0 &  1 & 0 &0\\ -1& 0& 0& 0\\ 0& 0&0& -1\\0 &0& 1 &0\end{array}\right),   \qquad I^{2}:=\left(\begin{array}{cccc} 0 & 0 &
1 &0\\ 0& 0& 0& 1\\- 1& 0&0& 0\\0 &-1& 0 &0\end{array}\right),\qquad
I^{3}:=\left(\begin{array}{cccc} 0 & 0 & 0 &1\\ 0& 0& -1& 0\\ 0& 1&0& 0\\-1 &0& 0 &0\end{array}\right)
 \end{equation}
satisfying  commutating relation of quaternions
$(I^{1})^{2}=(I^{2})^{2}=(I^{3})^{2}=-I_{4 },$ $ I^{1}I^{2}=I^{3}.$
So ${\rm span}_\mathbb{R} \{X_1,\cdots$, $X_{4n},\partial_{t_1},
\partial_{t_2},\partial_{t_3} \}$ is a nilpotent Lie algebra with center ${\rm span}_\mathbb{R}\left\{\partial_{t_1},
\partial_{t_2},\partial_{t_3}\right\}.$ The corresponding nilpotent Lie group  of step two has
the multiplication given by (\ref{eq:mul}).

Recall that  the Lie algebra $\mathfrak{so}(4)$ of skew symmetric $4\times4$ matrices  has the decomposition
\begin{equation*}
   \mathfrak{so}(4)\cong\mathfrak{sp}(1)\oplus\mathfrak{sp}(1)
\end{equation*}
 with one $\mathfrak{sp}(1)$   spanned by (\ref{eq:I}) and the other one spanned by
 \begin{equation}\label{eq:J}\begin{split}
&J^1=\left(
\begin{array}{cccc}0&1&0&0\\-1&0&0&0\\0&0&0&1\\0&0&-1&0\end{array}\right),\qquad
J^2=\left(\begin{array}{cccc}
0&0&1&0\\0&0&0&-1\\-1&0&0&0\\0&1&0&0\end{array}\right),\qquad
J^3=\left(\begin{array}{cccc}
0&0&0&1\\0&0&1&0\\0&-1&0&0\\-1&0&0&0\end{array}\right),\end{split}  \end{equation}
which also satisfy  the
quaternionic  commutating relation.
If we write $\mathbb{S}$ as a block matrix $(\mathbb{S}^{(lm)})$ with $ \mathbb{S}^{(lm)} $ to be $4\times4$ matrices, then
\begin{equation}\label{eq:block-B}
   B_{lm}^\beta=I^\beta \mathbb{S}^{(lm)}+\mathbb{S}^{(lm)}I^\beta,
\end{equation} where $l,m=0,\ldots, n-1$, and
$
  B^\beta  =(B^\beta_{lm})
$. The following is a direct characterization of  right-type  groups in terms of matrices $B^\beta$.

\begin{prop}\label{prop:right-type}
  The   group $ \mathcal{{N}}_{\mathbb{S}}$   is
   right-type if and only if $ B_{lm}^\beta\in {\rm span}\, \{J^1,J^2,J^3,I_{4 }\}$ for each $l,m ,\beta $.
\end{prop}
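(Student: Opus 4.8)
The plan is to reduce the defining condition $\mathcal E_0=0$ to a pointwise linear condition on the Hessian of $\phi$ and then to match that condition with the membership $B_{lm}^\beta\in{\rm span}\,\{J^1,J^2,J^3,I_4\}$ through the splitting $\mathfrak{so}(4)\cong\mathfrak{sp}(1)\oplus\mathfrak{sp}(1)$ recalled in the excerpt. First I would make $\mathcal E_0$ explicit. For the rigid quadratic hypersurface $\varrho=\operatorname{Re}q_{n+1}-\phi(\mathbf q')$ with $\phi(\mathbf q')=\sum_{j,k}\mathbb S_{jk}x_jx_k$, every $d^{A'}=\nabla^{A'}_{\dot A}\,\omega^{\dot A}$ has constant coefficients, so $\mathcal E=-d^{0'}d^{1'}\varrho$ is a constant $2$-form; by the decomposition \eqref{eq:E=}, evaluated at the origin (the identity of $\mathcal N_{\mathbb S}$, where $\Omega^{A'}=\omega^{2n+o(A')}$ by \eqref{eq:Omega-N}, which by homogeneity suffices), $\mathcal E_0$ is the part of $\mathcal E$ supported on $\omega^{\dot A}\wedge\omega^{\dot B}$ with $\dot A,\dot B\le 2n-1$:
\begin{equation*}
\mathcal E_0=\sum_{\dot A,\dot B\le 2n-1}\nabla^{0'}_{\dot A}\nabla^{1'}_{\dot B}\phi\;\omega^{\dot A}\wedge\omega^{\dot B}.
\end{equation*}
Grouping the indices into the $n$ blocks and using \eqref{eq:k-CF-raised-flat}, the diagonal block contributes $\mathcal E_0^{(ll)}=2\operatorname{tr}(\mathbb S^{(ll)})\,\omega^{2l}\wedge\omega^{2l+1}$ (the combination $\nabla^{0'}_{2l}\nabla^{1'}_{2l+1}-\nabla^{0'}_{2l+1}\nabla^{1'}_{2l}$ being exactly the Laplacian in the four variables of block $l$), and for $l\ne m$ the block $\mathcal E_0^{(lm)}$ is four explicit linear combinations of the sixteen entries of the $4\times4$ block $\mathbb S^{(lm)}$ of \eqref{eq:block-B}. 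In particular $\mathcal E_0=0\iff\mathcal E_0^{(lm)}=0$ for all $l,m$.

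Next I would run the parallel linear algebra for $B_{lm}^\beta=I^\beta\mathbb S^{(lm)}+\mathbb S^{(lm)}I^\beta=\{\mathbb S^{(lm)},I^\beta\}$. Decompose the space of real $4\times4$ matrices under the two commuting copies $\mathfrak{sp}(1)_I={\rm span}\,\{I^1,I^2,I^3\}$ and $\mathfrak{sp}(1)_J={\rm span}\,\{J^1,J^2,J^3\}$ (of \eqref{eq:I} and \eqref{eq:J}) into the four isotypic pieces ${\rm span}\,\{I_4\}$, $\mathfrak{sp}(1)_I$, $\mathfrak{sp}(1)_J$ and the $9$-dimensional remainder $\mathcal R$ (spanned by the products $I^\delta J^\gamma$), so that ${\rm span}\,\{J^1,J^2,J^3,I_4\}={\rm span}\,\{I_4\}\oplus\mathfrak{sp}(1)_J$. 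Using only the quaternion relations for the $I^\beta$ and $[I^\beta,J^\gamma]=0$, the anticommutator $M\mapsto\{M,I^\beta\}$ sends ${\rm span}\,\{I_4\}\to\mathfrak{sp}(1)_I$, $\mathfrak{sp}(1)_I\to{\rm span}\,\{I_4\}$, $\mathfrak{sp}(1)_J\to\mathcal R$ and $\mathcal R\to\mathfrak{sp}(1)_J$. Hence, as $\beta$ runs over $1,2,3$, the $\mathfrak{sp}(1)_I$-component of $B_{lm}^\beta$ is controlled solely by the ${\rm span}\,\{I_4\}$-component of $\mathbb S^{(lm)}$, and its $\mathcal R$-component solely by the $\mathfrak{sp}(1)_J$-component of $\mathbb S^{(lm)}$; therefore $B_{lm}^\beta\in{\rm span}\,\{J^1,J^2,J^3,I_4\}$ for all $\beta$ is equivalent to the vanishing of both the ${\rm span}\,\{I_4\}$- and $\mathfrak{sp}(1)_J$-components of $\mathbb S^{(lm)}$. (When $l=m$, $\mathbb S^{(ll)}$ is symmetric, so its $\mathfrak{sp}(1)_J$-component is automatically $0$ and the condition collapses to $\operatorname{tr}\mathbb S^{(ll)}=0$.)

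Finally I would check that $\mathcal E_0^{(lm)}=0$ defines the very same subspace of $\mathbb S^{(lm)}$, i.e. that $\mathbb S^{(lm)}\mapsto\mathcal E_0^{(lm)}$ is, up to isomorphism, the projection onto ${\rm span}\,\{I_4\}\oplus\mathfrak{sp}(1)_J$. For diagonal blocks this is immediate from $\mathcal E_0^{(ll)}=2\operatorname{tr}(\mathbb S^{(ll)})\,\omega^{2l}\wedge\omega^{2l+1}$. For $l\ne m$ one compares the four coefficient formulae from the first step with the four scalar conditions of the second; both are $4$-dimensional linear conditions, and they agree because $\nabla^{0'},\nabla^{1'}$ are the Cauchy–Fueter operators built from the same embedding \eqref{tau} that produces the matrices $I^\beta$ in \eqref{eq:I}, so both projections are $(\mathfrak{sp}(1)_I\oplus\mathfrak{sp}(1)_J)$-equivariant and must land in matching isotypic summands; a single normalization check (e.g. $\mathbb S^{(ll)}=\operatorname{diag}(1,1,-1,-1)$ gives $\mathcal E_0^{(ll)}=0$ and $B^1_{ll}=2J^1$, while $\mathbb S^{(lm)}=I^1$ gives $\mathcal E_0^{(lm)}=0$ and $B^1_{lm}=-2I_4$, $B^2_{lm}=B^3_{lm}=0$) fixes the constants. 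Summing over all $l,m$ then yields $\mathcal E_0=0\iff B_{lm}^\beta\in{\rm span}\,\{J^1,J^2,J^3,I_4\}$ for all $l,m,\beta$, which is the proposition.

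I expect the main obstacle to be exactly this last matching for the off-diagonal blocks: carrying out the coefficient computation for $\mathcal E_0^{(lm)}$ from \eqref{eq:k-CF-raised-flat} and lining it up with the isotypic bookkeeping of the second step, while threading the sign and index conventions of \eqref{eq:I}, \eqref{eq:J} and \eqref{eq:k-CF-raised-flat} through consistently — in particular correctly pinning down which of the two copies of $\mathfrak{sp}(1)$ plays the role of the "$J$"-factor in the statement. The diagonal case and the reduction $\mathcal E_0=0\iff\mathcal E_0^{(lm)}=0$ are comparatively routine.
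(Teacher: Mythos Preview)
Your proposal is correct and reaches the same conclusion as the paper, but the route is genuinely different. The paper proceeds by brute force: it writes out the entries $\mathcal E_{(2l)(2m)}$, $\mathcal E_{(2l)(2m+1)}$, $\mathcal E_{(2l+1)(2m+1)}$ directly from \eqref{eq:k-CF-raised-flat}, extracts the four scalar conditions \eqref{eq:E=0} on the entries of $\mathbb S^{(lm)}$, then computes $B_{lm}^1$ (and similarly $B_{lm}^2,B_{lm}^3$) explicitly as a $4\times4$ matrix and observes by inspection that $B_{lm}^\beta\in{\rm span}\{J^1,J^2,J^3,I_4\}$ is equivalent to the same four conditions. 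Your argument replaces both coordinate computations by the isotypic decomposition of ${\rm Mat}_4(\mathbb R)$ under $\mathfrak{sp}(1)_I\oplus\mathfrak{sp}(1)_J$: the anticommutator analysis cleanly shows that $B_{lm}^\beta\in{\rm span}\{J^1,J^2,J^3,I_4\}$ for all $\beta$ is equivalent to the vanishing of the ${\rm span}\{I_4\}$- and $\mathfrak{sp}(1)_J$-components of $\mathbb S^{(lm)}$, and the four conditions \eqref{eq:E=0} are recognised as precisely the Frobenius pairings $\langle\mathbb S^{(lm)},I_4\rangle$ and $\langle\mathbb S^{(lm)},J^\gamma\rangle$. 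This is more conceptual and explains \emph{why} the $J$-copy appears, whereas the paper's computation is more self-contained but opaque. The obstacle you anticipate (matching the off-diagonal coefficients) dissolves once you note that the four linear forms in \eqref{eq:E=0} are literally $\langle\mathbb S^{(lm)},I_4\rangle$, $\langle\mathbb S^{(lm)},J^1\rangle$, $\langle\mathbb S^{(lm)},J^2\rangle$, $\langle\mathbb S^{(lm)},J^3\rangle$ read off from \eqref{eq:J}, so no equivariance argument or normalisation check is actually needed there.
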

\begin{proof} Note that $\mathbb{S}^{(lm)}_{ab}=\frac 12\partial_{4l+a}\partial_{4m+b}\phi= s_{(4l+a)(4m+b)}=\mathbb{S}^{(m l)}_{b a}$, i.e. $\mathbb{S}^{(lm)}=(\mathbb{S}^{(m l)})^t$, which implies $B_{lm}^\beta=-(B_{m l}^\beta)^t$. Write $\mathcal E =  d^{0 '}  d^{ 1'}\phi=\mathcal E_{AB }\omega^A\wedge\omega^B$  and note that $\mathcal E = \mathcal E_0$ in the rigid case.
  Then by the expression \eqref{eq:k-CF-raised-flat} of $\nabla_{ {A}}^{A'}$, we get
\begin{equation*}\label{eq:mathcal-E}\begin{split}
  \mathcal E_{(2l)(2m) }&  =  \frac 12\left(  \nabla_{ 2l}^{0'}   \nabla_{ 2m}^{ 1'}- \nabla_{2m}^{0'}   \nabla_{ 2l }^{ 1'}  \right)\phi\\&= \frac 12 (\partial_{4l+3}+\mathbf i\partial_{4l+4})(\partial_{4m+1}+\mathbf i\partial_{4m+2})\phi -\frac 12(\partial_{4l+1}+\mathbf i\partial_{4l+2})(\partial_{4m+3}+\mathbf i\partial_{4m+4}) \phi\\
&= \mathbb{S}^{(lm)}_{31}-\mathbb{S}^{(lm)}_{13}-\mathbb{S}^{(lm)}_{42}+\mathbb{S}^{(lm)}_{24}-\mathbf i\left(\mathbb{S}^{(lm)}_{14}-\mathbb{S}^{(lm)}_{41  } +\mathbb{S}^{(lm)}_{23}-\mathbb{S}^{(lm)}_{32}\right),\\
  \mathcal E_{(2l+1)(2m+1) }&  = \frac 12\left (  \nabla_{ 2l+1}^{0'}   \nabla_{ 2m+1}^{ 1'}- \nabla_{2m+1}^{0'}   \nabla_{ 2l+1 }^{ 1'} \right )\phi =   \overline{\mathcal E_{(2l)(2m) }},\\
  \mathcal E_{(2l)(2m+1) }&  =  \frac 12\left(  \nabla_{ 2l}^{0'}   \nabla_{ 2m+1}^{ 1'}- \nabla_{2m+1}^{0'}   \nabla_{ 2l }^{ 1'}  \right )\phi\\&= \frac 12(\partial_{4l+3}+\mathbf i\partial_{4l+4})(\partial_{4m+3}-\mathbf i\partial_{4m+4})\phi+\frac 12(\partial_{4l+1}+\mathbf i\partial_{4l+2})(\partial_{4m+1}-\mathbf i\partial_{4m+2})\phi  \\
&= \mathbb{S}^{(lm)}_{ 1 1 }+\mathbb{S}^{(lm)}_{22}+\mathbb{S}^{(lm)}_{33}+\mathbb{S}^{(lm)}_{44} +\mathbf i\left(\mathbb{S}^{(lm)}_{43}-\mathbb{S}^{(lm)}_{34}-\mathbb{S}^{(lm)}_{12}+\mathbb{S}^{(lm)}_{21}\right),
\end{split}\end{equation*}by $\overline{\nabla_{2l+1}^{0'}}=-{\nabla_{2l}^{1'}},$ $\overline{\nabla_{2m+1}^{1'}}={\nabla_{2m }^{0'}}$.
Thus $\mathcal E =0$ if and only if
\begin{equation}\label{eq:E=0}\begin{split}& \mathbb{S}^{(lm)}_{ 1 1 }+\mathbb{S}^{(lm)}_{22}+\mathbb{S}^{(lm)}_{33}+\mathbb{S}^{(lm)}_{44}=0,\\
   &\mathbb{S}^{(lm)}_{12}-\mathbb{S}^{(lm)}_{21} +\mathbb{S}^{(lm)}_{34} -\mathbb{S}^{(lm)}_{43}=0,\\&\mathbb{S}^{(lm)}_{13}-
   \mathbb{S}^{(lm)}_{31}- \mathbb{S}^{(lm)}_{24}+\mathbb{S}^{(lm)}_{42}=0,\\&\mathbb{S}^{(lm)}_{14}-\mathbb{S}^{(lm)}_{41  } +\mathbb{S}^{(lm)}_{23}-\mathbb{S}^{(lm)}_{32}=0.
\end{split}\end{equation}
By direct calculation, we have
\begin{equation}\label{eq:S-B} \begin{split}
& B_{lm}^1 :=\left(
\begin{array}{rrrr}\mathbb{S}^{*}_{21}-\mathbb{S}^{*}_{12}&\mathbb{S}^{*}_{22}+ \mathbb{S}^{*}_{11}&\mathbb{S}^{*}_{23} + \mathbb{S}^{*}_{14}&\mathbb{S}^{*}_{24}-\mathbb{S}^{*}_{13}\\
- \mathbb{S}^{*}_{11}-\mathbb{S}^{*}_{22} &-\mathbb{S}^{*}_{12}+\mathbb{S}^{*}_{21}&-\mathbb{S}^{*}_{13}+\mathbb{S}^{*}_{24}&-  \mathbb{S}^{*}_{14}-\mathbb{S}^{*}_{23}
\\-  \mathbb{S}^{*}_{4 1}-\mathbb{S}^{*}_{3 2} & -\mathbb{S}^{*}_{42} +\mathbb{S}^{*}_{31}&-\mathbb{S}^{*}_{4 3}+\mathbb{S}^{*}_{34}&-\mathbb{S}^{*}_{44}-  \mathbb{S}^{*}_{33}
\\ \mathbb{S}^{*}_{3 1}-\mathbb{S}^{*}_{4 2}&   \mathbb{S}^{*}_{32}+\mathbb{S}^{*}_{41}  & \mathbb{S}^{*}_{33} + \mathbb{S}^{*}_{44} &\mathbb{S}^{*}_{34}-\mathbb{S}^{*}_{4 3}\end{array}\right)
  \end{split} \end{equation}where $*=(lm)$. By (\ref{eq:I})-(\ref{eq:J}), it is direct to see that \eqref{eq:E=0} holds   if and only if
  \begin{equation}\label{eq:right-B} \begin{split}
B_{lm}^1 : =& (\mathbb{S}^{*}_{11}+\mathbb{S}^{*}_{22})J^1+(\mathbb{S}^{*}_{14}+\mathbb{S}^{*}_{23})J^2  +
(\mathbb{S}^{*}_{24}-\mathbb{S}^{*}_{13})J^3+( \mathbb{S}^{*}_{21}-\mathbb{S}^{*}_{12})I_4. \end{split} \end{equation}
  Similarly, we have
\begin{equation}\label{eq:right-B2} \begin{split}
 B_{lm}^2:= &  (\mathbb{S}^{*}_{3 2}-\mathbb{S}^{*}_{14} )J^1+( \mathbb{S}^{*}_{11}+\mathbb{S}^{*}_{33})J^2 +(\mathbb{S}^{*}_{12}+\mathbb{S}^{*}_{34})J^3+( \mathbb{S}^{*}_{31}-\mathbb{S}^{*}_{13})I_4,\\
 B^3_{lm}:=&  (\mathbb{S}^{*}_{13}+\mathbb{S}^{*}_{24})J^1+( \mathbb{S}^{*}_{34}-\mathbb{S}^{*}_{12})J^2 +(\mathbb{S}^{*}_{11}+\mathbb{S}^{*}_{44} )J^3+( \mathbb{S}^{*}_{41}-\mathbb{S}^{*}_{14})I_4.\end{split} \end{equation}
For $l=m$, coefficients of $I_4$ vanish since $B^\beta_{ ll }$ is skew-symmetric.  The Proposition is proved.
\end{proof}

 The  right quaternionic Heisenberg group \eqref{eq:right} is associated to the rigid quadratic hypersurface \eqref{eq:rigid-hypersurface}
with
$
   \phi =\sum_{l=0}^{n-1}\left(-3x^2_{4l+1}+x^2_{4l+2}
+x^2_{4l+3}+x^2_{4l+4}\right)
$   and $B^{\beta}={\rm diag}(-J^\beta,-J^\beta,\ldots)$ \cite{shi-wang}. This group  is right-type by Proposition \ref{prop:right-type}, and the tangential $k$-Cauchy-Fueter complex   constructed in \cite{shi-wang}  is a special case of  the subcomplex
\eqref{eq:subcomplex}.
The    left quaternionic Heisenberg group \eqref{eq:left} is associated to the rigid quadratic  hypersurface
with
 $
   \phi(\mathbf{q}') =|\mathbf{q}'|^2.
$
 The domain is   the {\it quaternionic Siegel upper half space} \cite{CDLWW,CMW}.
  In this case,    $B^{\beta}={\rm diag}(I^\beta,I^\beta,\ldots)$. This group  is not right-type  by Proposition \ref{prop:right-type}.

\begin{prop}\label{prop:d'2} On rigid hypersurfaces,
\begin{equation}\label{eq:dd-E}
   \mathfrak{d}^{(A'}\mathfrak{d}^{B')}  =  -\mathcal E\wedge \mathbf{{T}}^{(A' B ')}.
\end{equation}
 \end{prop}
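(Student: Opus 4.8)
The plan is to reduce the identity $\mathfrak{d}^{(A'}\mathfrak{d}^{B')} = -\mathcal E \wedge \mathbf{T}^{(A'B')}$ on a rigid hypersurface to the already-established decomposition of $d^{A'}$ in Proposition~\ref{prop:d-b}, namely $d^{A'} = \Omega^{A'}\wedge \partial_{4n+1} + \mathfrak{d}^{A'} + \Omega^{B'}\wedge \mathbf{T}_{B'}^{A'}$, together with the anticommutation $d^{(A'}d^{B')}=0$ from Proposition~\ref{prop:dd}(1). The key point is that on a \emph{rigid} hypersurface the coefficients $\phi$ are independent of $\operatorname{Im} q_{n+1}$, so $(\mathbf{N}_{A'B'}\varrho)$ is the identity matrix and $\Omega^{A'} = \omega^{2n+o(A')}$ \emph{exactly} (not just to first order), by Remark after Proposition~\ref{prop:d-b}. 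In particular the $\Omega^{A'}$ are closed constant-coefficient forms, $\partial_{4n+1}\Omega^{A'}=0$, and the $\mathbf{T}_{B'}^{A'}$ are the constant-coefficient operators of \eqref{eq:T-AB}, which commute with everything of interest.

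First I would expand $0 = d^{(A'}d^{B')}$ using Proposition~\ref{prop:d-b} twice. Writing $d^{A'} = \Omega^{A'}\wedge\partial_{4n+1} + d_b^{A'}$ with $d_b^{A'} = \mathfrak{d}^{A'} + \Omega^{C'}\wedge\mathbf{T}_{C'}^{A'}$, one gets
\begin{equation*}
0 = \Omega^{(A'}\wedge\partial_{4n+1}d_b^{B')} + d_b^{(A'}\Omega^{B')}\wedge\partial_{4n+1} + d_b^{(A'}d_b^{B')},
\end{equation*}
where the pure $\Omega\wedge\Omega\,\partial_{4n+1}^2$ term drops out by symmetrization. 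Since on a rigid hypersurface $\Omega^{A'}$ is constant and $d_b^{A'}\Omega^{B'} = \mathfrak{d}^{A'}\Omega^{B'} + \Omega^{C'}\wedge\mathbf{T}_{C'}^{A'}\Omega^{B'}$, and both $\mathfrak{d}^{A'}$ and $\mathbf{T}_{C'}^{A'}$ kill the constant form $\Omega^{B'}$, the second term vanishes; similarly $\partial_{4n+1}d_b^{B'}$ contributes only through $\partial_{4n+1}$ acting on coefficients, but the cross terms $\Omega^{A'}\wedge\Omega^{C'}\wedge\mathbf{T}_{C'}^{B'}\partial_{4n+1}$ symmetrize away against the corresponding pieces coming from $\mathbf{T}_{C'}^{A'} = R_{C'D'}\mathbf{N}_{D'}^{A'} - \delta_{C'}^{A'}\partial_{4n+1}$, whose $\partial_{4n+1}$ part is handled by the rigidity. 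What survives is exactly $d_b^{(A'}d_b^{B')} = 0$, i.e.
\begin{equation*}
\mathfrak{d}^{(A'}\mathfrak{d}^{B')} + \mathfrak{d}^{(A'}\bigl(\Omega^{C'}\wedge\mathbf{T}_{C'}^{B')}\bigr) + \Omega^{C'}\wedge\mathbf{T}_{C'}^{(A'}\mathfrak{d}^{B')} + \Omega^{C'}\wedge\mathbf{T}_{C'}^{(A'}\Omega^{D'}\wedge\mathbf{T}_{D'}^{B')} = 0.
\end{equation*}
Using the Leibniz rule \eqref{eq:Leibnitz2} for $\mathfrak{d}^{A'}$ and the fact that $\mathfrak{d}^{A'}\Omega^{C'}=0$ on the rigid hypersurface, the middle two terms combine to $\Omega^{C'}\wedge\bigl(\mathbf{T}_{C'}^{(B'}\mathfrak{d}^{A')} + \mathbf{T}_{C'}^{(A'}\mathfrak{d}^{B')}\bigr)$ with a sign from moving $\mathfrak{d}$ past $\Omega^{C'}$; after symmetrization these cancel (the commutator $[\mathfrak{d}^{A'},\mathbf{T}_{C'}^{B'}]$ vanishes because $\mathbf{T}$ has constant coefficients and $\mathfrak d$ differentiates in the base variables). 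The last term is $\Omega^{C'}\wedge\Omega^{D'}\,\mathbf{T}_{C'}^{(A'}\mathbf{T}_{D'}^{B')}$, which by antisymmetry in $C',D'$ equals $\tfrac12\Omega^{C'}\wedge\Omega^{D'}[\mathbf{T}_{C'},\mathbf{T}_{D'}]^{(A'B')}$-type expression; tracking indices this is precisely $-\mathcal E\wedge\mathbf{T}^{(A'B')}$, recalling $\mathcal E = -\Omega^{0'}\wedge\mathbf{T}_{0'}^{\phantom{0}}\cdots$, more directly $\mathcal E = -d^{0'}d^{1'}\varrho$ and on rigid hypersurfaces $\mathcal E = \mathcal E_0 = -\mathfrak{d}^{0'}\mathfrak{d}^{1'}\varrho$, which after the above is $\Omega^{0'}\wedge\Omega^{1'}$ times the Levi-type coefficient.

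The main obstacle will be the careful bookkeeping of the symmetrization $(A'B')$ together with the index-lowering conventions \eqref{eq:varepsilon}--\eqref{eq:varepsilon1} relating $\mathbf{T}_{B'}^{A'}$ to $\mathbf{T}^{A'B'}$, so that the surviving double-$\Omega$ term is correctly identified with $-\mathcal E\wedge\mathbf{T}^{(A'B')}$ rather than with some other contraction. In particular one must check that the quadratic-in-$\mathbf{T}$ term does not vanish (it would if the $\mathbf{T}_{C'}^{A'}$ commuted, but they do \emph{not} — their commutators among $\partial_{4n+2},\partial_{4n+3},\partial_{4n+4}$ are what produce $\mathcal E$), and that the coefficient form $\mathcal E$ appearing is exactly $-\mathfrak d^{0'}\mathfrak d^{1'}\varrho$. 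I would verify the final coefficient by a short direct computation using \eqref{eq:T-AB}: the commutator structure of the constant-coefficient vector fields $\partial_{4n+2},\partial_{4n+3},\partial_{4n+4}$ entering $\mathbf{T}_{C'}^{A'}$ is trivial, so in fact the quadratic term reduces to $\Omega^{0'}\wedge\Omega^{1'}$ times a scalar second-order operator, and comparing with $\mathcal E = -d^{0'}d^{1'}\varrho$ computed via Proposition~\ref{prop:d-b} on the rigid defining function pins down the identity. An alternative, cleaner route would be to apply both sides to a test form and use that $d^{A'}$ restricted to functions pulled back under the projection $\pi$ of Subsection on rigid domains agrees with $\mathfrak d^{A'}$ plus the normal pieces, but the direct expansion above seems the most self-contained.
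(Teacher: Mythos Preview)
Your overall strategy---expand $d^{(A'}d^{B')}=0$ via Proposition~\ref{prop:d-b} and isolate $\mathfrak d^{(A'}\mathfrak d^{B')}$---matches the paper's. But the execution has a fatal error: you assert that on a rigid hypersurface $\Omega^{A'}=\omega^{2n+o(A')}$ exactly and hence is a constant form annihilated by $\mathfrak d^{B'}$ and $\mathbf T_{C'}^{B'}$. This is false. Rigidity gives $(\mathbf N_{A'B'}\varrho)=\mathrm{id}$ and the formula \eqref{eq:T-AB} for $\mathbf T$, but $\Omega^{A'}=d^{A'}\varrho$ still contains the piece $-\nabla_A^{A'}\phi\,\omega^A$, which is not constant. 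Crucially, $d^{B'}\Omega^{C'}=d^{B'}d^{C'}\varrho$ does \emph{not} vanish: by \eqref{eq:d-Omega0} one has $d^{0'}\Omega^{1'}=-\mathcal E$ and $d^{1'}\Omega^{0'}=\mathcal E$, i.e.\ $d^{B'}\Omega^{C'}=-\varepsilon^{B'C'}\mathcal E$. This is exactly the term that produces $-\mathcal E\wedge\mathbf T^{(A'B')}$ after raising the index and symmetrizing.

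You instead try to extract $\mathcal E$ from the quadratic-in-$\mathbf T$ term $\Omega^{C'}\wedge\Omega^{D'}\wedge\mathbf T_{C'}^{(A'}\mathbf T_{D'}^{B')}$. But as you yourself observe, the $\mathbf T_{C'}^{A'}$ from \eqref{eq:T-AB} are constant-coefficient operators in $\partial_{4n+2},\partial_{4n+3},\partial_{4n+4}$ and therefore commute; the antisymmetry in $C',D'$ forced by $\Omega^{C'}\wedge\Omega^{D'}$ then makes this term vanish identically (this is the vanishing $\mathbf T_{[D'}^{(A'}\mathbf T_{C']}^{B')}=0$ in the paper). The correct argument is: restrict to functions independent of $x_{4n+1}$ so that $d^{A'}=\mathfrak d^{A'}+\Omega^{C'}\wedge\mathbf T_{C'}^{A'}$, expand $d^{B'}d^{A'}$ by Leibniz, and pick up $d^{B'}\Omega^{C'}\wedge\mathbf T_{C'}^{A'}=-\varepsilon^{B'C'}\mathcal E\wedge\mathbf T_{C'}^{A'}$; after symmetrizing in $A',B'$ the remaining pieces are the commutator $[\mathbf T_{C'}^{A'},\mathfrak d^{B'}]$ and the quadratic $\mathbf T$-term, both of which vanish.
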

\begin{proof} For functions independent of $x_{4n+1}$, we have  \begin{equation*}\label{eq:d-d} \begin{split}
 d^{ B'}   d^{ A'} &=d^{ B'}( \mathfrak{ d}^{ {A} '}    +\Omega^{ C'}\wedge   {\mathbf T}_{ C'}^{ A '} )\\&
 =\mathfrak{ d}^{ B '} \mathfrak{ d}^{ {A} '}+\Omega^{ C'}\wedge   {\mathbf T}_{ C'}^{B '}\mathfrak{ d}^{ {A} '}- \varepsilon^{ B'C'}\mathcal E\wedge   {\mathbf T}_{ C'}^{ A '}-\Omega^{ C'}\wedge  \mathfrak{ d}^{ B '} {\mathbf T}_{ C'}^{ A '}-\Omega^{ C'}\wedge   \Omega^{D'}\wedge   {\mathbf T}_{ D'}^{ B'} {\mathbf T}_{ C'}^{ A '}
\end{split}\end{equation*}by using the   the Leibnitz law \eqref{eq:Leibnitz2}.
Hence $d^{(A'} d^{B')}=0$ in Proposition \ref{prop:dd}  (1) gives us
\begin{equation*}
   \mathfrak{d}^{(A'}\mathfrak{d}^{B')}  = -\mathcal E\wedge \mathbf{{T}}^{(A' B ')} -\frac 12\Omega^{ C'}\wedge ( [  {\mathbf T}_{ C'}^{A' },\mathfrak{ d}^{  B '}] +[  {\mathbf T}_{ C'}^{ B '},\mathfrak{ d}^{A'}])+\Omega^{ C'}\wedge   \Omega^{D'}\wedge   {\mathbf T}_{ [D'}^{ (A' } {\mathbf T}_{ C']}^{ B ')} .
\end{equation*}
The result follows from $[  {\mathbf T}_{ C'}^{A' },\mathfrak{ d}^{  B '}] =0 ={\mathbf T}_{ [D'}^{ (A' } {\mathbf T}_{ C']}^{ B ')} $, by expressions of  $ \mathbf{{T}}^{A' B '}$  in \eqref{eq:T-AB}.
\end{proof}

Thus the anti-commutativity  \eqref{eq:delta20} of $\mathfrak{d}^{ A'}$'s holds on  right-type  groups.
 It is equivalent to the following brackets between  $Z _{  A}^{ A '}$'s, generalizing the result for  the right quaternionic Heisenberg group \cite{shi-wang}.

\begin{cor}\label{cor:Z -bracket} On the   group $ \mathcal{{N}}_{\mathbb{S}}$,
$
Z _{ [A}^{(A '} Z _{ B]}^{B')} = -\mathcal E_{ A B} \mathbf{ {T}}^{(A' B ')}
$. In particular, $
Z _{ [A}^{(A '} Z _{ B]}^{B')} = 0
$ if the group is right-type.
   \end{cor}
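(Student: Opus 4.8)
The plan is to deduce Corollary \ref{cor:Z -bracket} directly from Proposition \ref{prop:d'2} by comparing the action of $\mathfrak{d}^{(A'}\mathfrak{d}^{B')}$ on $0$-forms against the definition of $\mathfrak{d}^{A'}$ in terms of the vector fields $Z_A^{A'}$. First I would recall from \eqref{eq:mathfrak-d-2} that for a scalar function $f$ (a $0$-form) one has $\mathfrak{d}^{A'}f = Z_A^{A'}f\,\omega^A$, and then apply $\mathfrak{d}^{B'}$ once more using the Leibnitz law \eqref{eq:Leibnitz2}, obtaining
\begin{equation*}
\mathfrak{d}^{B'}\mathfrak{d}^{A'}f = Z_B^{B'}Z_A^{A'}f\;\omega^B\wedge\omega^A
= \tfrac12\bigl(Z_{[B}^{B'}Z_{A]}^{A'}f\bigr)\,\omega^B\wedge\omega^A .
\end{equation*}
Symmetrizing in the primed indices $A',B'$ (which is the operation $(\cdot)$ in the statement, cf. \eqref{eq:sym-0}) and relabelling, the left-hand side becomes $\mathfrak{d}^{(A'}\mathfrak{d}^{B')}f$, so the coefficient of $\omega^A\wedge\omega^B$ in $\mathfrak{d}^{(A'}\mathfrak{d}^{B')}f$ is precisely $Z_{[A}^{(A'}Z_{B]}^{B')}f$.

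Next I would expand the right-hand side of \eqref{eq:dd-E} in the same basis. Since $\mathcal E = \mathcal E_0$ on a rigid hypersurface and, by the wedge-product expansion used throughout Section 4, $\mathcal E_0 = \mathcal E_{AB}\,\omega^A\wedge\omega^B$ with $\mathcal E_{AB}$ the components involving only the $\omega^A$'s, the operator $-\mathcal E\wedge\mathbf{T}^{(A'B')}$ acts on a scalar $f$ by $-\mathcal E_{AB}\,\mathbf{T}^{(A'B')}f\;\omega^A\wedge\omega^B$. Matching the two coefficients of $\omega^A\wedge\omega^B$ (for $f$ an arbitrary smooth function, hence at the level of differential operators) yields $Z_{[A}^{(A'}Z_{B]}^{B')} = -\mathcal E_{AB}\,\mathbf{T}^{(A'B')}$, which is the asserted identity once we observe that the $\mathcal E_{A B}$ of Proposition \ref{prop:d'2} are exactly the quantities denoted $\mathcal E_{AB}$ in the statement. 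The ``in particular'' clause is then immediate: a right-type group is by definition one on which $\mathcal E_0$ vanishes, hence $\mathcal E_{AB}=0$, so $Z_{[A}^{(A'}Z_{B]}^{B')}=0$.

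I expect the only genuinely delicate point to be bookkeeping: making sure that the skew-symmetrization $[A\,B]$ on the unprimed indices coming from $\omega^B\wedge\omega^A = -\omega^A\wedge\omega^B$ is matched consistently with the symmetrization $(A'B')$ on the primed indices, and that the sign and normalization conventions agree with those fixed in \eqref{eq:varepsilon}--\eqref{eq:varepsilon1} when raising and lowering indices. A secondary check is that Proposition \ref{prop:d'2}, although stated on rigid hypersurfaces, already captures all right-type groups (they are rigid), so no further reduction is needed; the passage to general $\mathcal{N}_{\mathbb{S}}$ in the first sentence of the corollary is simply the rigid-quadratic case of that proposition. Everything else is a routine comparison of coefficients, so no hard analytic input is required.
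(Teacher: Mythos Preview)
Your proposal is correct and follows essentially the same approach as the paper: apply Proposition~\ref{prop:d'2} to a scalar function $u$ and read off the coefficient of $\omega^A\wedge\omega^B$, noting that $\mathfrak d^{(A'}\mathfrak d^{B')}u = Z_A^{(A'}Z_B^{B')}u\,\omega^A\wedge\omega^B$. The paper's proof is a one-line version of exactly this computation, so your additional remarks on the skew-symmetrization bookkeeping and the applicability of the rigid-hypersurface hypothesis to $\mathcal N_{\mathbb S}$ are consistent elaborations rather than a different route.
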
\begin{proof} It follows from Proposition \ref{prop:d'2} by
$
    \mathfrak{d}^{(A'}\mathfrak{d}^{B')}u  =    Z _{  A}^{(A '} Z _{ B }^{B')}u\, \omega^A\wedge \omega^B
$ for a scalar function  $u$.
\end{proof}
Although a $2n$-form is not an authentic differential form and we cannot integrate it, we can define a functional on $ L^1(\Omega,\wedge^{2n}\mathbb{C}^{2n})$ \cite{wan-wang} \cite{wang21}. For $F=f~\Omega_{2n}\in L^1(\Omega,\wedge^{2n}\mathbb{C}^{2n})$, let
\begin{equation}\label{2.24}\int_\Omega F:=\int_\Omega f dV,
\end{equation}where   $dV$ is the Lebesgue measure on $\mathbb{R}^{4n+3}$, which is invariant  on the
  group $ \mathcal{ N}_{\mathbb{S}}$,
$\Omega_{2n}:=\omega^0\wedge\omega^{  1 }\cdots\wedge
\omega^{2n-1}\in \wedge^{2n}_{\mathbb{R}+}\mathbb{C}^{2n}.$ Then
$\beta_n^n=n!\Omega_{2n}$ for
$\beta_n:=\sum_{l=0}^{n-1} \omega^{2l}\wedge\omega^{ 2l +1} .$

 \begin{lem}\label{lem:stokes} {\rm (Stokes-type formula)}   Let $\Omega$ be a bounded   domain on the   group $ \mathcal{{N}}_{\mathbb{S}}$  with smooth boundary and defining function $\rho$ (i.e. $\rho=0$ on $\partial \Omega$ and $\rho<0$ in $\Omega$) such that $|{\rm grad}\, \rho|=1$.
Assume that $T=\sum_AT_A\omega^{\widehat{A}}$ is a smooth $(2n-1)$-form in $\Omega$,
where $\omega^{\widehat{A}}=\omega^A  \rfloor \Omega_{2n}$. Then for   $h\in C^1(\overline{\Omega})$, we have
\begin{equation}\label{stokes}\int_\Omega h\mathfrak  d^{A'} T=-\int_\Omega \mathfrak  d^{A'} h\wedge T+\sum_{A=0}^{2n-1} \int_{\partial\Omega}hT_A
 Z_{A}^{A'}\rho\, dS,\end{equation}
where    $dS$ denotes the surface measure of $\partial\Omega$. There is no boundary term if $h=0$ on
$\partial\Omega$.
\end{lem}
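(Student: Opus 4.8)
The plan is to reduce the identity to the classical divergence theorem on $\mathbb{R}^{4n+3}$. First I would unwind the definitions: for a scalar $h$ and the $(2n-1)$-form $T = \sum_A T_A \omega^{\widehat A}$ (where $\omega^{\widehat A} = \omega^A \rfloor \Omega_{2n}$), the Leibnitz rule \eqref{eq:Leibnitz2} gives
\[
\mathfrak d^{A'}(hT) = \mathfrak d^{A'} h \wedge T + h\, \mathfrak d^{A'} T,
\]
so it suffices to establish
\[
\int_\Omega \mathfrak d^{A'}(hT) = \sum_{A=0}^{2n-1}\int_{\partial\Omega} h T_A\, Z_A^{A'}\rho\, dS.
\]
Now by the definition \eqref{eq:mathfrak-d-2} of $\mathfrak d^{A'}$ and the fact that $\omega^B \wedge \omega^{\widehat A} = \delta^B_A\,\Omega_{2n}$ (up to the orientation sign built into $\Omega_{2n}$), one computes
\[
\mathfrak d^{A'}(hT) = \sum_{A=0}^{2n-1} Z_A^{A'}(hT_A)\,\Omega_{2n},
\]
so that, using the functional \eqref{2.24}, the left-hand side becomes $\int_\Omega \sum_A Z_A^{A'}(hT_A)\, dV$. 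Thus the whole lemma amounts to the divergence-type statement
\[
\int_\Omega \sum_{A=0}^{2n-1} Z_A^{A'}(hT_A)\, dV = \sum_{A=0}^{2n-1}\int_{\partial\Omega} hT_A\, (Z_A^{A'}\rho)\, dS.
\]

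Next I would prove this divergence identity. Each $Z_A^{A'}$ is, by \eqref{eq:X-b} (or more generally \eqref{eq:Z}), a real first-order vector field of the form $Z_A^{A'} = \sum_j c^{A'}_{A,j}(x)\,\partial_{x_j}$ on $\mathbb{R}^{4n+3}$ whose coefficients are polynomials, hence smooth; crucially, on the groups $\mathcal N_{\mathbb S}$ these vector fields are divergence-free with respect to the (bi-invariant Lebesgue) measure $dV$, i.e. $\sum_j \partial_{x_j} c^{A'}_{A,j} = 0$ — this follows because each $Z_A^{A'}$ is a left-invariant vector field and $dV$ is the Haar measure, so $\operatorname{div}(g\, Z_A^{A'}) = Z_A^{A'} g$ for any scalar $g$. (On a general rigid hypersurface this still holds because the coefficients in \eqref{eq:X-b} are linear and the $\partial_{t_\beta}$-directions enter with $x$-independent coefficient after one differentiation; the divergence condition is exactly what makes $dV$ invariant.) Granting this, $Z_A^{A'}(hT_A) = \operatorname{div}\big((hT_A)\,\vec Z_A^{A'}\big)$ where $\vec Z_A^{A'}$ is the Euclidean vector field with components $c^{A'}_{A,j}$, and the ordinary divergence theorem on $\Omega \subset \mathbb{R}^{4n+3}$ gives
\[
\int_\Omega Z_A^{A'}(hT_A)\, dV = \int_{\partial\Omega} hT_A\, \langle \vec Z_A^{A'}, \nu\rangle\, dS,
\]
with $\nu$ the outward unit normal. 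Since the defining function satisfies $|\operatorname{grad}\rho| = 1$, we have $\nu = \operatorname{grad}\rho$, so $\langle \vec Z_A^{A'},\nu\rangle = \sum_j c^{A'}_{A,j}\,\partial_{x_j}\rho = Z_A^{A'}\rho$. Summing over $A$ yields the claimed identity. The final sentence (no boundary term when $h|_{\partial\Omega}=0$) is then immediate, and combining with the Leibnitz splitting above proves \eqref{stokes}.

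The main obstacle — really the only nontrivial point — is verifying that the vector fields $Z_A^{A'}$ are divergence-free with respect to $dV$ (equivalently, that $dV$ is the invariant measure and these are left-invariant fields). For the group case this is the observation already recorded just before \eqref{2.24} that $dV$ is invariant on $\mathcal N_{\mathbb S}$; one should check directly from \eqref{eq:X-b} that $\operatorname{div} X_b = \sum_\beta\sum_a (\mathbb S\mathbb I^\beta)_{ab}\,\delta_{?}\cdots$ — in fact $\partial_{x_a}$ of the coefficient $2(\mathbb S\mathbb I^\beta)_{ab}x_a$ in the $\partial_{t_\beta}$-slot, summed appropriately, vanishes because $\mathbb S\mathbb I^\beta + (\mathbb S\mathbb I^\beta)^t$ has trace controlled by the skew part, and more simply because $\partial_{t_\beta}$ does not appear in the coefficients. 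I would state this as a short computation. Everything else (the combinatorics $\omega^B\wedge\omega^{\widehat A}=\delta^B_A\Omega_{2n}$, the Leibnitz rule, the identification $\nu=\operatorname{grad}\rho$) is routine and can be dispatched in a line or two.
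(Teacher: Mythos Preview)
Your proposal is correct and follows essentially the same approach as the paper: reduce via the Leibnitz rule to computing $\int_\Omega \mathfrak d^{A'}(hT)$, use $\omega^B\wedge\omega^{\widehat A}=\delta^B_A\Omega_{2n}$ to rewrite this as $\int_\Omega \sum_A Z_A^{A'}(hT_A)\,dV$, and then apply the divergence/integration-by-parts formula \eqref{eq:Stokes} for the vector fields $X_j$. The paper's justification of the divergence-free property is exactly your final parenthetical remark---the coefficients of $\partial_{t_\beta}$ in $X_b$ (see \eqref{eq:X-b}) depend only on $x$ and not on $\mathbf t$, so $\operatorname{div} X_b=0$; your earlier musings about traces of $\mathbb S\mathbb I^\beta$ are unnecessary and slightly off-target, but you do land on the correct reason.
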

\begin{proof} The proof is the same as that on the Heisenberg group \cite{wang21}. Note that \begin{equation*}\mathfrak  d^{A'}(hT)=\sum_{B,A} Z_{B }^{A'}(hT_A)\omega^B\wedge\omega^{\widehat{A}}=\sum_{A} Z_{A}^{A'}(hT_A) \Omega_{2n}.\end{equation*} Then
 \begin{equation*}\int_\Omega \mathfrak  d^{A'}(hT)=\int_\Omega\sum_{A} Z_{A}^{A'}(hT_A) dV=\int_{\partial\Omega}\sum_{A} hT_A
 Z_{A}^{A'}\rho~dS,\end{equation*} by definition (\ref{2.24}) and integration by part,
 \begin{equation}\label{eq:Stokes}
    \int_\Omega  X_j f \, dV=\int_{\partial\Omega} f X_j \rho\,dS,
 \end{equation}
 for $j=1,\ldots 4n$. (\ref{eq:Stokes}) holds because   coefficients  of $\partial_{t_\beta}$'s in $X_j $ are independent of $\mathbf{t}$.
  (\ref{stokes}) follows from the above formula and
 $\mathfrak  d^{A'}(hT)=\mathfrak  d^{A'} h\wedge T+h\mathfrak  d^{A'} T$.
\end{proof}

\subsection{The   quaternionic Monge-Amp\`{e}re operator on right-type
  groups  }Recall that \cite{wan-wang,wang21}
a $ 2p $-form $\omega$ is said to be \emph{elementary strongly positive} if there exist linearly
independent right $\mathbb{H}$-linear mappings $\eta_j:\mathbb{H}^n\rightarrow \mathbb{H}$ , $j=1,\ldots,p$, such that
\begin{equation*}\omega=\eta_1^*\widetilde{\omega}^0\wedge
\eta_1^*\widetilde{\omega}^1\wedge\ldots\wedge\eta_p^*\widetilde{\omega}^0\wedge \eta_p^*\widetilde{\omega}^1,\end{equation*}where
$\{\widetilde{\omega}^0,\widetilde{\omega}^1\}$ is a basis of $\mathbb{C}^{2}$ and $\eta_p^*:~\mathbb{C}^{2}\rightarrow\mathbb{C}^{2n}$
is the induced $\mathbb{C}$-linear pulling back transformation of $\eta_p$. It is called \emph{strongly positive} if it belongs to the convex cone
$\text{SP}^{2p}\mathbb{C}^{2n}$  generated by elementary strongly positive $2p$-elements.
A  $2p$-element $\omega$ is said to be \emph{positive} if for any   strongly positive element
$\eta\in \text{SP}^{2n-2p}\mathbb{C}^{2n}$, $\omega\wedge\eta$ is positive. For a domain $\Omega$  in a   right-type
  group $ \mathcal{ N}_{\mathbb{S}}$, let
 $
\mathscr{D}^{p}(\Omega)=C_0^\infty(\Omega,\wedge^{p}\mathbb{C}^{2n})$.  An element of the dual space
$[\mathscr {D}^{2n-p}(\Omega)]' $ is called a~\emph{$p$-current}. A $2p$-current $T$ is said to be \emph{positive} if we have
$T(\eta)\geq0$ for any strongly positive form $\eta\in\mathscr {D}^{2n-2p}(\Omega)$.  Now for the $p$-current $F$, we define a $(p+1)$-current $\mathfrak d_{A'} F $ as
$(\mathfrak d_{A'}  F)(\eta):=-F(\mathfrak d_{A'} \eta),
$ for any test $(2n-p-1)$-form $\eta$, $  A'=0',1'$. We say a current $F$ is \emph{closed} if
$\mathfrak d_{0'} F=\mathfrak d_{1'} F=0 .$

A $[-\infty,\infty)$-valued  upper
semicontinuous function    on a   right-type
  group $  \mathcal{N}_{\mathbb{S}}$ is said to be \emph{plurisubharmonic}  if it is $L^1_{\rm loc} $ and  $\triangle u$ is
a closed positive $2$-current. For a $  C^2 $  plurisubharmonic functions $u $,  $\triangle u$ is a closed   strongly positive
$2$-form.

For      positive $(2n-2p)$-form  $T$ and
  an arbitrary compact subset $K$, define
$\|T\|_K:=\int_K T\wedge\beta_n^p.$
  For $u_1,\ldots,
u_n\in C^2$ on a   right-type
  group $  \mathcal{N}_{\mathbb{S}}$, we have\begin{equation}\label{eq:key-id}\begin{aligned}\triangle u_1\wedge \triangle
u_2\wedge\ldots\wedge\triangle u_n&=\mathfrak d_{0'}(\mathfrak d_{1'}u_1\wedge \triangle
u_2\wedge\ldots\wedge\triangle u_n)=-\mathfrak d_{1'}(\mathfrak d_{0'}u_1\wedge \triangle
u_2\wedge\ldots\wedge\triangle u_n)\\&=\mathfrak d_{0'}\mathfrak d_{1'}(u_1\triangle
u_2\wedge\ldots\wedge\triangle u_n)=\triangle (u_1
\triangle u_2\wedge\ldots\wedge\triangle u_n).
\end{aligned}\end{equation}
 It directly follows from $\mathfrak d_{A'} \triangle=0 $ by the anti-commutativity  \eqref{eq:delta20} of $\mathfrak{d}^{ A'}$'s on this kind of groups.

  \begin{thm}\label{thm:estimate-C0}   Let
$\Omega$ be a domain in    a   right-type
  group $ \mathcal{ N}_{\mathbb{S}}$. Let $K$ and $L$ be compact subsets
of $\Omega$ such that $L$ is contained in the interior of $K$. Then
there exists a constant $C$ depending only on $K,L$ such that for
any $C^2 $ plurisubharmonic functions $u_1,\ldots u_p$ on $\Omega $, we have
\begin{equation}\label{3.14}\|\triangle
u_1\wedge\ldots\wedge\triangle u_p \|_{L}\leq
C\prod_{i=1}^p\|u_i\|_{C^0(K)}.
\end{equation}  \end{thm}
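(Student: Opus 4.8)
The plan is to prove \eqref{3.14} by induction on $p$, using as the main tool the Stokes-type formula of Lemma~\ref{lem:stokes} together with the identity \eqref{eq:key-id}; the latter holds on a right-type group precisely because $\mathfrak d_{A'}\triangle=0$, which is the anti-commutativity \eqref{eq:delta20}. First I would reduce to the case that each $u_i$ is smooth: replacing $u_i$ by a group convolution $u_i\ast\psi_\varepsilon$ with a mollifier produces smooth plurisubharmonic functions converging to $u_i$ in $C^2$ on compact subsets, and since the constant produced below is independent of $\varepsilon$, letting $\varepsilon\to0$ yields the general $C^2$ statement (that convolution on $\mathcal{N}_{\mathbb{S}}$ preserves plurisubharmonicity is seen as on the Heisenberg group, cf.\ \cite{wang21}). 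So assume the $u_i$ are smooth; then all the forms $\triangle u_i$ and their wedge products are genuine smooth forms, each $\triangle u_i$ is closed by \eqref{eq:delta20}, and $\triangle u_1\wedge\cdots\wedge\triangle u_p$ is a positive $2p$-form, being a product of strongly positive $2$-forms (cf.\ \cite{wan-wang}).

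The induction hypothesis is \eqref{3.14} for $p-1$ factors and \emph{any} pair of compacts, one in the interior of the other. For the step, fix a chain $L=:K_0\Subset K_1\Subset\cdots\Subset K_p:=K$ of compacts and cut-off functions $\chi_i\in C_0^\infty(\Omega)$ with $0\le\chi_i\le1$, $\operatorname{supp}\chi_i$ in the interior of $K_i$, and $\chi_i\equiv1$ near $K_{i-1}$. Put $T:=\triangle u_2\wedge\cdots\wedge\triangle u_p$, a closed positive form by \eqref{eq:delta20} and \cite{wan-wang}, and note $\mathfrak d_{A'}(T\wedge\beta_n^{n-p})=0$ since $\mathfrak d_{A'}\beta_n=0$. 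Because $\triangle u_1\wedge T\wedge\beta_n^{n-p}$ is a nonnegative multiple of $\Omega_{2n}$ and $\chi_1\equiv1$ on $L$,
\[
\|\triangle u_1\wedge\cdots\wedge\triangle u_p\|_L\le\int_\Omega\chi_1\,\mathfrak d_{0'}\mathfrak d_{1'}u_1\wedge T\wedge\beta_n^{n-p}.
\]
I then integrate by parts twice with Lemma~\ref{lem:stokes}, moving the two derivatives off $u_1$ onto $\chi_1$; there are no boundary terms because $\chi_1$ is compactly supported, and the terms in which a derivative hits $T\wedge\beta_n^{n-p}$ vanish since that form is closed. This gives
\[
\|\triangle u_1\wedge\cdots\wedge\triangle u_p\|_L\le\Bigl|\int_\Omega u_1\,\mathfrak d_{0'}\mathfrak d_{1'}\chi_1\wedge T\wedge\beta_n^{n-p}\Bigr|.
\]
(The base case $p=1$ is the same computation with $T\wedge\beta_n^{n-p}$ replaced by $\beta_n^{n-1}$, and there the right-hand side is plainly $\le C\|u_1\|_{C^0(K)}$.)

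To finish the step I would use that a positive current is controlled by its trace measure: since $\mathfrak d_{0'}\mathfrak d_{1'}\chi_1$ is a fixed smooth $2$-form, bounded on $K_1$ by a constant $M_1$ depending only on $\chi_1$, one has $|\mathfrak d_{0'}\mathfrak d_{1'}\chi_1\wedge T\wedge\beta_n^{n-p}|\le C\,M_1\,T\wedge\beta_n^{\,n-p+1}$ as measures (cf.\ \cite{wan-wang,wang21}). Hence
\[
\|\triangle u_1\wedge\cdots\wedge\triangle u_p\|_L\le C\,M_1\,\|u_1\|_{C^0(K)}\int_{K_1}T\wedge\beta_n^{\,n-p+1}=C\,M_1\,\|u_1\|_{C^0(K)}\,\|\triangle u_2\wedge\cdots\wedge\triangle u_p\|_{K_1}.
\]
Applying the induction hypothesis to $\|\triangle u_2\wedge\cdots\wedge\triangle u_p\|_{K_1}$ with the chain $K_1\Subset\cdots\Subset K_p=K$ bounds it by $C'\prod_{i=2}^p\|u_i\|_{C^0(K)}$, and multiplying the two estimates yields \eqref{3.14}.

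The main obstacle, and where most of the care is needed, is the positivity bookkeeping behind the last two displays: one must know that $\triangle u_1\wedge\cdots\wedge\triangle u_p$ and $T$ are \emph{positive} currents, so that $\|\cdot\|_L$, $\|\cdot\|_{K_1}$ are bona fide nonnegative quantities, that $\triangle u_1\wedge T\wedge\beta_n^{n-p}$ is a nonnegative $2n$-element, and that $T$ is dominated by its trace $T\wedge\beta_n^{\,\cdot}$. All of this rests on the linear algebra of strongly positive quaternionic forms in the sense of \cite{wan-wang}, which behaves differently from the complex case. A secondary technical point is the regularization: it must be checked that group convolution on a right-type group preserves plurisubharmonicity, so that the smooth approximants used in the reduction are still psh and the positivity arguments apply to them.
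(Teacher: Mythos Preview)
Your approach is essentially the same as the paper's: integrate by parts twice via the Stokes-type formula and the closedness of $\triangle u_i$, dominate $\triangle\chi$ by a multiple of $\beta_n$ using the positivity lemma from \cite{wan-wang}, and iterate to peel off one $\triangle u_i$ at a time. The paper phrases the iteration via a covering of $L$ by balls rather than a chain of compacts, and omits your preliminary smoothing (unnecessary here since the $u_i$ are already $C^2$), but these are only cosmetic differences.
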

\begin{proof} The proof is similar to that on the Heisenberg group \cite{wang21}. By definition, $\triangle
u_1\wedge\ldots\wedge\triangle u_p $ is already   closed and strongly positive.   Since $L$ is compact, there is a covering of $L$ by a
family of balls $B_j'\Subset B_j\subseteq K$. Let $\chi\geq0$ be a smooth function equals to 1 on $\overline{B_j'}$ with support in
$B_j$. We have
\begin{equation}\label{eq:int-part}\begin{aligned} \int_{B_j}\chi \triangle
u_1\wedge\ldots\wedge\triangle u_p \wedge \beta_n^{n-p}&=-\int_{B_j}\mathfrak  d_{0'}\chi\wedge\mathfrak  d_{1'} u_1\wedge\triangle
u_2\wedge\ldots\wedge\triangle u_p \wedge \beta_n^{n-p}\\&
=  -\int_{B_j} u_1\mathfrak  d_{1'}\mathfrak  d_{0'}\chi\wedge\triangle
u_2\wedge\ldots\wedge\triangle u_p \wedge \beta_n^{n-p}\\&=   \int_{B_j} u_1 \triangle\chi\wedge\triangle
u_2\wedge\ldots\wedge\triangle u_p \wedge \beta_n^{n-p}\end{aligned}\end{equation}by using
the   identity \eqref{eq:key-id}  and    Stokes-type formula in Lemma \ref{lem:stokes}.
Then
 \begin{equation*}\begin{aligned}\|\triangle
u_1 \wedge\ldots\wedge\triangle u_p\|_{L\cap\overline{B_j'}}=&
\int_{L\cap\overline{B_j'}}\triangle
u_1 \wedge\ldots\wedge\triangle u_p\wedge\beta_n^{n-p}\leq \int_{B_j}\chi\triangle
u_1\wedge  \ldots\wedge\triangle u_p\wedge\beta_n^{n-p}\\=&\int_{B_j}u_1\triangle\chi
\wedge\triangle
u_2\wedge\ldots\wedge\triangle u_p\wedge\beta_n^{n-p}\\\leq& \frac 1\varepsilon
 \|u_1\|_{L^{\infty}(K)}\|\triangle\chi \|_{L^{\infty}(K) } \int_{B_j}
 \triangle
u_2\wedge\ldots\wedge\triangle u_p\wedge\beta_n^{n-p+1},
\end{aligned}\end{equation*}for some $\varepsilon>0$, by using (\ref{eq:int-part}) and the positivity of  $-\varepsilon\triangle\chi
 +\|\triangle\chi \|_{L^{\infty}(K) }\beta_n$ for sufficiently small $\varepsilon>0$ (cf. \cite[Lemma 3.3]{wan-wang}).
The result follows by repeating this procedure.
\end{proof}

  It is standard to yield the existence of Monge-Amp\`{e}re measure (cf. e.g. \cite{alesker1} \cite{wang13}) from the Chern-Levine-Nirenberg type estimate in Theorem \ref{thm:estimate-C0}. We omit   details.
   Let $\{u_j\}$ be a sequence of $C^2$ plurisubharmonic
functions converging to $u$ uniformly on compact subsets of  a
domain $\Omega$ in    a   right-type
  group $ \mathcal{ N}_{\mathbb{S}}$.  Then $u $ be a continuous  plurisubharmonic function on $\Omega$. Moreover,  $(\triangle u_j )^n$ is a family of
uniformly bounded measures on each compact subset  $K$ of
$\Omega$ and weakly
converges to a non-negative measure on $\Omega$. This measure depends only on $u$ and not on
the choice of an approximating sequence  $\{u_j\}$.

  \section{The generalization of Malgrange's vanishing theorem and the Hartogs-Bochner extension  for  $k$-CF functions  on right-type
  groups}
\subsection{Subelliptic estimate }
For a nilpotent Lie group of step $2,$  its Lie algebra $\mathfrak{g}$  has decomposition:
$\mathfrak{g}=\mathfrak{g}_1\oplus\mathfrak{g}_2$
satisfying $[\mathfrak{g}_1,\mathfrak{g}_1]\subset\mathfrak{g}_2,\
[\mathfrak{g},\mathfrak{g}_2]=0.$    The group is called {\it stratified} if $[\mathfrak{g}_1,\mathfrak{g}_1]=\mathfrak{g}_2 $ (cf. \cite{BLU}).   Consider the  \emph{condition (H)}: for any $\lambda\in\mathfrak{g}_2^*\setminus\{0\},$ the skew-symmetric bilinear form
  $B_\lambda(Y,Y')=\langle\lambda,[Y,Y']\rangle,$
for  $Y,Y'\in \mathfrak{g}_1$,  is non-degenerate.

 \begin{proof} [Proof of Proposition \ref{prop:Subelliptic-estimate}] The estimate is proved in \cite{shi-wang} for the right quaternionic Heisenberg group. The proof can be adapted to right-type
  groups since we have similar  brackets by Corollary \ref{cor:Z -bracket} and the associated Hodge-Laplacian operator.

Vector fields in \eqref{eq:X-b} are not left invariant on $\mathcal{N}_{\mathbb{S}} $, but
   $X_{b}=\partial_{x_{b}}+2\sum_{\beta=1}^3\sum_{a=1}^{4n} B^\beta_{ab} x_a\partial_{t_\beta}$ is (cf. \cite[Subsection 5.1]{shi-wang}). Since they have the same brackets, we denote them also by  $X_{b}$'s by abuse of notations. The {\it SubLaplacian} on the
  groups  $\mathcal{N}_{\mathbb{S}} $ is
$
   \triangle_b :=-\sum_{a=1}^{4n} X_{ a}^2.
$

 By Theorem \ref{thm:bd-complex-main:1},  $f\in \Gamma\left(  \mathcal{N}_{\mathbb{S}} ,\mathscr{V}_{0}\right)$ on   a   right-type
  group  can be written as  $f=f_a\mathbf{S}_{k }^a$ for scalar functions $f_a$. Then
  $\mathscr{D  }_{0}f= (\mathscr{D  }_{0}^{(1)}f,\mathscr{D  }_{0}^{(2)}f) $ with
  \begin{equation}\label{eq:D-0-1}\begin{split}
      \mathscr{D  }_{0}^{(1)}(f_a\mathbf{S}_{k }^a)&=\partial_{A'}\mathfrak  d^{ A '}(f_a\mathbf{S}_{k }^a)=Z_{A}^{0'}f_a \mathbf{S}_{k-1 }^{a}\omega^A+Z_{A}^{1'}f_a \mathbf{S}_{k -1}^{a -1}\omega^A\\
    &= (Z_{A}^{0'}f_a +  Z_{A}^{1'}f_{a+1}  )\mathbf{S}_{k-1 }^{a }\omega^A\\
    &=Z_{A}^{A'}f_{b+o(A')}   \mathbf{S}_{k-1 }^{b }\omega^A.
\end{split}  \end{equation}

We introduce the inner products $\langle\cdot,\cdot\rangle$ of $\mathscr{V}_{0}$ and $\mathscr{V  }_{1}^{(1)}$ by requiring $\{\mathbf{S}_{k }^a\}$ and $\{ \mathbf{S}_{k -1}^{a }\omega^A\}$ to be orthonormal bases, respectively.
Then the operator formal adjoint to $\mathscr{D  }_{0}^{(1)}$ is given by
\begin{equation*}\begin{split}
      \mathscr{D  }_{0}^{(1)*} g
    &=- \overline{Z_{A}^{0'}}g_{A,0} \mathbf{S}_{k  }^{0 }-\overline{Z_{A}^{1'}}g_{A,k-1} \mathbf{S}_{k  }^{k }-\sum_{a=1 }^{k-1}\overline{Z_{A}^{B'}}g_{A,a-o(B')} \mathbf{S}_{k  }^{a },
\end{split}  \end{equation*}for $g=g_{A,b}\mathbf{S}_{k-1 }^b\omega^A\in C_0^\infty(\mathcal{N}_{\mathbb{S}}, \mathscr{V  }_{1}^{(1)})$, where $b$ is taken over $0,\ldots, k-1$.
This is because
\begin{equation*}\begin{split}\int_{\mathcal{N}_{\mathbb{S}}}\langle \mathscr{D  }_{0}^{(1)}f,g\rangle
         &= \int_{\mathcal{N}_{\mathbb{S}}}Z_{A}^{B'}f_{b+o(B')} \overline{g_{A,b}}dV=-\int_{\mathcal{N}_{\mathbb{S}}}f_{b+o(B')} \overline{\overline{Z_{A}^{B'}}g_{A,b}}dV
\end{split}  \end{equation*}by relabeling indices and integration by part (\ref{eq:Stokes}).
The {\it Hodge-Laplacian operator associated to $\mathscr{D  }_{0}^{(1)}$} is
\begin{equation*}\begin{split}
    \mathscr{D  }_{0}^{(1)*}  & \mathscr{D  }_{0}^{(1)}f=-\overline{Z_{A}^{0'}} Z_{A}^{0'}f_0\mathbf{S}_{k  }^{0 }-\overline{Z_{A}^{1'}} Z_{A}^{1'}f_k\mathbf{S}_{k  }^{k }-\sum_{a=1 }^{k-1} \sum_{ A, A',B' }\overline{Z_{A}^{B'}}Z_{A}^{A'}f_{a+o(A')-o(B')} \mathbf{S}_{k  }^{a }  \\&=-\overline{Z_{A}^{0'}} Z_{A}^{0'}f_0\mathbf{S}_{k  }^{0 }-\overline{Z_{A}^{1'}} Z_{A}^{1'}f_k\mathbf{S}_{k  }^{k }-\sum_{a=1 }^{k-1}  \left(\overline{Z_{A}^{0'}} Z_{A}^{0'} f_{a}+ \overline{Z_{A}^{1'} }Z_{A}^{1'} f_{a}+ \overline{Z_{A}^{0'}} Z_{A}^{1'} f_{a+1}+ \overline{Z_{A}^{1'}} Z_{A}^{0'} f_{a-1}\right)\mathbf{S}_{k  }^{a }.
\end{split}\end{equation*}
Note that
\begin{equation*}\begin{split}
    \sum_{A=2l,2l+1  }  \overline{ Z}_{A}^{0'}Z_{A}^{0'}  & =X_{ 4l+3}^2+  X_{4l+4}^2  + \mathbf{i}[X_{ 4l+3},  X_{4l+4}]
+X_{4l+1}^2+ X_{4l+2}^2-\mathbf{i}[X_{4l+1}, X_{4l+2}] \\&= X_{4l+1}^2+ X_{4l+2}^2+
                              X_{ 4l+3}^2+  X_{4l+4}^2 = \sum_{A=2l,2l+1  }   \overline{Z}_{A}^{1'}Z_{A}^{1'},
\end{split}\end{equation*}
by \eqref{eq:t-k-CF-raised}, since
\begin{equation}\label{eq:X-X}\begin{split}
   0=4Z _{ [2l}^{(0 '} Z _{ 2l+1]}^{1')} =[ Z _{  2l}^{ 0 '}, Z _{ 2l+1 }^{1' }]+[ Z _{  2l}^{ 1'}, Z _{ 2l+1 }^{0 ' }]=-[X_{ 4l+3},  X_{4l+4}] + [X_{4l+1}, X_{4l+2}]   ,
  \end{split}\end{equation}by using Corollary \ref{cor:Z -bracket}. Similarly, we have
  and
\begin{equation*}\begin{split}
       \sum_{A=2l,2l+1  }  \overline{ Z}_{A}^{0'}Z_{A}^{1'}&=Z_{2l+1}^{1'}Z_{2l}^{1'} -{Z_{2l }^{1'}}Z_{2l+1}^{1'}=2Z_{[2l+1}^{1'}Z_{2l]}^{1'}=0,
\\    \sum_{A=2l,2l+1  }  \overline{ Z}_{A}^{1'}Z_{A}^{0'} &=- {Z_{2l+1}^{0'}}Z_{2l}^{0'} +{Z_{2l }^{0'}}Z_{2l+1}^{0'}= 2Z_{[2l}^{0'}{Z_{2l+1]}^{0'}}=0,\end{split}  \end{equation*}by
$
   \overline{Z_{2l}^{1'}}=- {Z_{2l+1}^{0'}},$ $ \overline{Z_{2l+1}^{1'}}={Z_{2l }^{0'}}.
$
So we get
 \begin{equation*}\begin{split}
    \mathscr{D  }_{0}^{(1)*}  & \mathscr{D  }_{0}^{(1)}f= \triangle_b f_0\mathbf{S}_{k  }^{0 }+ 2\sum_{a=1 }^{k-1} \triangle_b f_a \mathbf{S}_{k  }^{a }+\triangle_b f_k\mathbf{S}_{k  }^{k }.
\end{split}\end{equation*} If  identify $f\in \Gamma(bD,\mathscr{V}_0)$  with a vector $f=(f_0,\ldots, f_k)^t\in  \mathbb{C}^{k+1}\cong \odot^{k}\mathbb{C}^{2}$,
we have
\begin{equation}\label{eq:diag}
      \mathscr{D  }_{0}^{(1)*}     \mathscr{D  }_{0}^{(1) } = \operatorname{diag} \left(
   \triangle_b ,     2\triangle_b , \ldots, 2\triangle_b  ,  \triangle_b
 \right).
\end{equation}

  Now we have
 \begin{equation*}\begin{split}
    \left\|\mathscr{D  }_{0}^{(1)}f\right\|_0^2&=\left\|\mathscr{D  }_{0}^{(1)}f\right\|_0^2+\left\|\mathscr{D  }_{0}^{(2)}f\right\|_0^2
    \geq \left\|\mathscr{D  }_{0}^{(1)}f\right\|_0^2\\&=\int_{\mathcal{N}_{\mathbb{S}}}\langle \mathscr{D  }_{0}^{(1)*}\mathscr{D  }_{0}^{(1)}f ,f\rangle \geq-\int_{\mathcal{N}_{\mathbb{S}}}\sum_{a=1}^{4n}\langle X_{ a}^2f ,f\rangle=\sum_{a=1}^{4n}\|X_{ a} f\|_0^2.
 \end{split}\end{equation*}
It is well known that the $ \frac 12$-subelliptic estimate is satisfied when $[X_a,X_b]$'s span $  \left\{ \partial_{t_1},
\partial_{t_2},\partial_{t_3}\right\}$, i.e. it is stratified (cf. e.g. \cite{F}).

If $k=0$, note that
\begin{equation*}
   \mathfrak d^{0'} \mathfrak d^{1'}u=Z_{A}^{0'}Z_{B}^{0'}\omega^A\wedge\omega^B=\sum_{l=0}^{2n-1}\bigtriangleup_l \omega^{2l}\wedge\omega^{2l+1}+
   \sum_{|A-B|\neq 1}Z_{A}^{0'}Z_{B}^{1'}\omega^A\wedge\omega^B,
\end{equation*}
with
\begin{equation*}\begin{split}
    \bigtriangleup_l&=Z_{2l }^{0'} Z_{2l+1}^{1'}-Z_{2l+1}^{0'} Z_{2l }^{1'}=X_{4l+1}^2+ X_{4l+2}^2+ X_{ 4l+3}^2+  X_{4l+4}^2-\mathbf{i}[X_{4l+3}, X_{4l+4}] + \mathbf{i}[X_{ 4l+1},  X_{4l+2}]\\&=
   X_{4l+1}^2+ X_{4l+2}^2+ X_{ 4l+3}^2+  X_{4l+4}^2,
\end{split}\end{equation*}by sing \eqref{eq:t-k-CF-raised} and  \eqref{eq:X-X}. So we have
\begin{equation*}
   \|\mathfrak d^{0'} \mathfrak d^{1'}u\|_0^2\geq  \|\triangle_b u\|_0^2\geq \frac 12\sum_{a=1}^{4n}\|X_{ a} f\|_0^2-\|  u\|_0^2
\end{equation*}by the Cauchy-Schwarz inequality.
The estimate follows similarly.
 \end{proof}

 \begin{cor}\label{cor:regularity} Suppose that  $\mathcal{N}_{\mathbb{S}}$ is stratified. For any   $s\in \mathbb{Z}$, if $u\in \mathscr E'(\mathcal{N}_{\mathbb{S}} ,\mathscr{V}_0)$ satisfies  $ \mathscr{D  }_{0}u \in W^s (\mathcal{N}_{\mathbb{S}} ,\mathscr{V}_1)$,  we have $  u \in W^{s+\frac 12} (\mathcal{N}_{\mathbb{S}} ,\mathscr{V}_0)$.
Moreover, if  $\operatorname{supp}( u ) \subset K$,
   there are constants $C_{s, K}\geq 0$, $c_{s, K}> 0$ such that
 \begin{equation}\label{eq:Subelliptic-estimate-r}
C_{s, K} \| u\|_s^2+\left\|\mathscr{D  }_{0} u\right\|_s^2\geq c_{s, K}\| u\|_{s+\frac 12}^2.
    \end{equation}
 \end{cor}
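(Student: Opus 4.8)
The plan is to upgrade the basic subelliptic estimate of Proposition \ref{prop:Subelliptic-estimate} to all Sobolev orders by a standard bootstrapping argument based on the fact that $\mathscr{D}_0^{(1)*}\mathscr{D}_0^{(1)}$ is, up to a positive diagonal scaling, the sum of squares SubLaplacian $\triangle_b$ acting componentwise, as was computed in \eqref{eq:diag}. First I would observe that since $\mathcal{N}_{\mathbb{S}}$ is stratified, H\"ormander's condition holds for $X_1,\dots,X_{4n}$, so $\triangle_b$ is hypoelliptic and satisfies the $\frac12$-subelliptic estimate $\|X_a v\|_s^2 \lesssim \|\triangle_b v\|_{s}\|v\|_s + \|v\|_s^2$ together with the fact that $\|v\|_{s+1/2}^2 \lesssim \sum_a\|X_a v\|_s^2 + \|v\|_s^2$ on functions supported in a fixed compact set; these are the classical Rothschild--Stein / Folland estimates referenced via \cite{F}. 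Combining these with \eqref{eq:diag} componentwise gives, for $u$ supported in $K$,
\begin{equation*}
\|u\|_{s+1/2}^2 \leq c\,\bigl(\|\mathscr{D}_0^{(1)*}\mathscr{D}_0^{(1)}u\|_{s-1/2}\,\|u\|_{s+1/2} + \|u\|_s^2\bigr),
\end{equation*}
and then absorbing $\|u\|_{s+1/2}$ on the left and using $\|\mathscr{D}_0^{(1)*}\mathscr{D}_0^{(1)}u\|_{s-1/2} \lesssim \|\mathscr{D}_0^{(1)}u\|_s$ (the adjoint is a first-order operator with polynomial coefficients, but left-invariant after the change of vector fields, hence order $-1$ loss is controlled) yields \eqref{eq:Subelliptic-estimate-r}.

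Next I would handle the a priori regularity statement. Given $u\in\mathscr{E}'(\mathcal{N}_{\mathbb{S}},\mathscr{V}_0)$ with $\mathscr{D}_0 u\in W^s$, in particular $\mathscr{D}_0^{(1)}u\in W^s$, so $\mathscr{D}_0^{(1)*}\mathscr{D}_0^{(1)}u\in W^{s-1}$; by \eqref{eq:diag} each component $u_a$ satisfies $\triangle_b u_a\in W^{s-1}$. Since $u$ has compact support, I would apply Friedrichs mollifiers $u_\varepsilon = u * \chi_\varepsilon$ (or difference quotients along the $X_a$ directions) and apply the estimate \eqref{eq:Subelliptic-estimate-r}, which is already established for smooth compactly supported functions, to $u_\varepsilon$; the commutator $[\triangle_b,\text{mollifier}]$ is controlled uniformly in $\varepsilon$ by the usual Friedrichs lemma because the coefficients of $X_a$ grow at most polynomially and $u$ is compactly supported. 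Passing to the limit $\varepsilon\to0$ gives $u\in W^{s+1/2}$ and the estimate \eqref{eq:Subelliptic-estimate-r} for $u$ itself, starting the induction at the base case $s=0$ which is essentially Proposition \ref{prop:Subelliptic-estimate}.

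The main obstacle I anticipate is not the elliptic-style bootstrap itself, which is routine once \eqref{eq:diag} is in hand, but rather the careful bookkeeping of the commutator terms arising from the fact that the operators $X_a$ from \eqref{eq:X-b} have non-constant (though polynomial) coefficients and are only left-invariant after replacing them by the genuinely left-invariant fields $X_b = \partial_{x_b} + 2\sum_{\beta,a}B_{ab}^\beta x_a\partial_{t_\beta}$; one must check that the Friedrichs mollifier argument and the commutator $[X_a,\text{mollifier}]$ remain uniformly bounded on the fixed compact support, and that the passage from $\mathscr{D}_0^{(1)*}\mathscr{D}_0^{(1)}$ back to $\mathscr{D}_0$ does not lose derivatives. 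A secondary technical point is that \eqref{eq:diag} was derived under the right-type hypothesis via Corollary \ref{cor:Z-bracket}, so the vanishing of the cross terms $\overline{Z_A^{0'}}Z_A^{1'}$ must be invoked exactly as in the proof of Proposition \ref{prop:Subelliptic-estimate}; I would simply cite that computation rather than repeat it. Once these are in place, the induction on $s$ (both upward and, if needed, downward to negative $s$ by duality) closes the proof.
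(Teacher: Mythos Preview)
Your overall strategy matches the paper's exactly: the paper simply says this follows from the subelliptic estimate of Proposition~\ref{prop:Subelliptic-estimate} by the standard procedure (citing \cite{Nacinovich17}), with the key input being that $\mathscr{D}_0 u$ controls $\triangle_b u$ via \eqref{eq:diag}, so that Folland's regularity theory for the SubLaplacian \cite{F} applies. Your mollifier/bootstrap outline is precisely the ``standard procedure'' the paper has in mind.

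There is, however, a half-derivative bookkeeping error in your chain of inequalities. You write
\[
\|u\|_{s+1/2}^2 \leq c\bigl(\|\mathscr{D}_0^{(1)*}\mathscr{D}_0^{(1)}u\|_{s-1/2}\,\|u\|_{s+1/2} + \|u\|_s^2\bigr)
\]
and then claim $\|\mathscr{D}_0^{(1)*}\mathscr{D}_0^{(1)}u\|_{s-1/2} \lesssim \|\mathscr{D}_0^{(1)}u\|_s$. But $\mathscr{D}_0^{(1)*}$ is genuinely first order, so applying it to $v=\mathscr{D}_0^{(1)}u$ only gives $\|\mathscr{D}_0^{(1)*}v\|_{s-1}\lesssim\|v\|_s$, not $\|\cdot\|_{s-1/2}$; your parenthetical even says ``order $-1$ loss''. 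As written the argument closes only to $\|u\|_s$, which is vacuous. The fix is not to pass through $\|\triangle_b u\|_{s-1/2}$ at all. Instead, pair directly: from the proof of Proposition~\ref{prop:Subelliptic-estimate} one has the identity $\langle\mathscr{D}_0^{(1)*}\mathscr{D}_0^{(1)}u,u\rangle_0=\|\mathscr{D}_0^{(1)}u\|_0^2\ge\sum_a\|X_a u\|_0^2$, and the commutator $[\Lambda^{2s},\mathscr{D}_0^{(1)}]$ (or $[\Lambda^{2s},X_a]$) is of order $2s$ because the coefficients are linear in $x$, hence
\[
\sum_a\|X_a u\|_s^2 \le \|\mathscr{D}_0^{(1)}u\|_s^2 + C\|u\|_s\bigl(\|\mathscr{D}_0^{(1)}u\|_s+\sum_a\|X_a u\|_s\bigr),
\]
after which Folland's estimate $\|u\|_{s+1/2}^2\lesssim\sum_a\|X_a u\|_s^2+\|u\|_s^2$ and absorption give \eqref{eq:Subelliptic-estimate-r}. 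Equivalently, apply the $s=0$ inequality to $\Lambda^s u$ and commute. With this correction your argument goes through and is exactly what the paper intends.
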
Recall that a distribution in $  \mathcal{E}' $ always has   compact support.
This regularity follows from   subelliptic estimate by the standard procedure (cf. \cite{Nacinovich17}). We just mention that $ \mathscr{D  }_{0}u \in W^s (\mathcal{N}_{\mathbb{S}} ,\mathscr{V}_1)$ implies $  \triangle_b u \in W^s (\mathcal{N}_{\mathbb{S}} ,\mathscr{V}_1)$ and omit   details (cf. \cite{F} for a version of regularity  for SubLaplacians).

\subsection{Abstract duality theorem}
A cohomological complex of topological vector spaces
is a pair $(E^\bullet, d)$ where $ E^\bullet = (E^q)_{q\in\mathbb{ Z}}$ is a sequence of topological vector
spaces and $d = (d^q)_{q\in\mathbb{ Z}}$ is a sequence of continuous linear maps $d^q:E^q\rightarrow E^{q+1}$   satisfying $d^{q+1} \circ d^q= 0$. Its cohomology groups  $H^q(E^\bullet )$
are the quotient spaces $\ker d^{q }/\operatorname{Im } d^{q-1}$,
endowed with the  quotient topology.

A homological complex of topological vector spaces is a pair $(E_\bullet, d)$ where $ E_\bullet = (E_q)_{q\in\mathbb{ Z}}$ is a sequence of topological vector
spaces and $d = (d_q)_{q\in\mathbb{ Z}}$ is a sequence of continuous linear maps $d_q:E_{q+1}\rightarrow E_q $
   satisfying  $ d_{q-
  1} \circ d_q= 0$. Its  homology groups  $H_q(E_\bullet )$
are the quotient spaces $\ker d_{q-1}/\operatorname{Im }d_{q }$,
endowed with the quotient topology.
The dual complex of a cohomological complex $(E^\bullet, d)$ of topological
vector spaces is the homological complex $(E_\bullet', d')$ where $ E_\bullet' = (E_q')_{q\in\mathbb{ Z}}$  with
$  E_q' $
the strong dual of $  E_q $ and $d' = (d_q')_{q\in\mathbb{ Z}}$  with $ d_q' $
the transpose map of $ d_q $.

Recall that   a Fr\'echet-Schwartz space  is a topological vector space whose topology is defined by an increasing sequence of seminorms  such that   the unit ball
with respect to the seminorm   is relatively compact for the topology associated to the previous seminorm. A Fr\'echet-Schwartz space and the dual of a Fr\'echet-Schwartz space are both reflexive \cite{LL99}. We need the following abstract duality theorem.

\begin{thm} \label{thm:duality} \cite[Theorem 1.6]{LL99} Let $(E^\bullet, d)$ be a cohomological complex of  Fr\'echet-Schwartz spaces
or of dual of Fr\'echet-Schwartz spaces and let $(E_\bullet, d)$ be its dual complex. For each $  q\in\mathbb{ Z} $ , the following
assertions are equivalent :
\\
(i) $\operatorname{Im }  d^q = \{g\in E^{q+1} | \langle g, f\rangle = 0$ for any $f \in\ker d_q'\}$;
\\(ii) $H^{q+1}(E^\bullet )$ is separated;
\\(iii) $ d^q $
is a topological homomorphism;
\\(iv) $ d_q' $
is a topological homomorphism;
\\
(v) $H_{q }(E^\bullet )$ is separated;
\\
(vi) $\operatorname{Im }  d_q' = \{f\in E_q' | \langle f, g\rangle = 0$ for any $f \in\ker d^q \}$.
\end{thm}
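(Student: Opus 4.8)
This statement is a theorem of pure functional analysis, and the plan is to close the cycle of implications
\[
\mathrm{(i)}\Longleftrightarrow\mathrm{(ii)}\Longleftrightarrow\mathrm{(iii)}\Longleftrightarrow\mathrm{(iv)}\Longleftrightarrow\mathrm{(v)}\Longleftrightarrow\mathrm{(vi)}
\]
out of three standard ingredients: the bipolar theorem for the two ``annihilator'' equivalences $\mathrm{(i)}\Leftrightarrow\mathrm{(ii)}$ and $\mathrm{(v)}\Leftrightarrow\mathrm{(vi)}$; the closed--range form of the open mapping theorem, valid in the category of Fréchet--Schwartz spaces and, dually, of duals of Fréchet--Schwartz spaces, for $\mathrm{(ii)}\Leftrightarrow\mathrm{(iii)}$ and $\mathrm{(iv)}\Leftrightarrow\mathrm{(v)}$; and the duality between a topological homomorphism and its strong transpose for $\mathrm{(iii)}\Leftrightarrow\mathrm{(iv)}$. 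Throughout one uses that every $E^q$ and every $E_q'$ is reflexive, a fact recorded above for Fréchet--Schwartz spaces and their duals.

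First I would record the elementary annihilator identities $(\operatorname{Im}d^q)^{\perp}=\ker d_q'$ in $E_{q+1}'$ and, using reflexivity to identify $(E_q')'$ with $E_q$, $(\operatorname{Im}d_q')^{\perp}=\ker d^q$ in $E_q$. By the bipolar theorem the double annihilator of a linear subspace is its closure, so $(\ker d_q')^{\perp}=\overline{\operatorname{Im}d^q}$; hence $\mathrm{(i)}$ holds exactly when $\operatorname{Im}d^q$ is closed, which is precisely the assertion that the quotient $H^{q+1}(E^\bullet)=\ker d^{q+1}/\operatorname{Im}d^q$ is Hausdorff, giving $\mathrm{(i)}\Leftrightarrow\mathrm{(ii)}$. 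Running the same argument with the strong dual complex gives $\mathrm{(v)}\Leftrightarrow\mathrm{(vi)}$.

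Next, for $\mathrm{(ii)}\Leftrightarrow\mathrm{(iii)}$ I would factor $d^q$ as $E^q\twoheadrightarrow E^q/\ker d^q\xrightarrow{\ \bar d^q\ }E^{q+1}$, where $\bar d^q$ is a continuous linear bijection onto $\operatorname{Im}d^q$. If the $E^q$ are Fréchet--Schwartz, then $E^q/\ker d^q$ is Fréchet, so $\operatorname{Im}d^q\cong E^q/\ker d^q$ is complete, hence closed in $E^{q+1}$; conversely, if $\operatorname{Im}d^q$ is closed it is Fréchet in the induced topology and the open mapping theorem forces $\bar d^q$ to be an isomorphism, i.e. $d^q$ is a topological homomorphism. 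In the dual category one substitutes the open mapping theorem for webbed (equivalently, ultrabornological $\mathrm{(LB)}$-) spaces, which covers duals of Fréchet--Schwartz spaces. The identical reasoning, applied to $d_q'$ with the strong topologies, yields $\mathrm{(iv)}\Leftrightarrow\mathrm{(v)}$.

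The heart of the matter, and the step I expect to be the main obstacle, is $\mathrm{(iii)}\Leftrightarrow\mathrm{(iv)}$: a continuous linear map of the relevant type is a topological homomorphism for the given topologies if and only if its strong transpose is one. The forward implication is the classical fact that the strong transpose of a homomorphism between Fréchet spaces (resp.\ their duals) is again a homomorphism, proved by a careful comparison of polars of $0$-neighbourhoods with equicontinuous sets; the delicate point is to ensure that the relevant quotient and subspace topologies on the dual side are exactly the strong ones. For the converse one uses reflexivity once more: the canonical identifications turn the bitranspose $(d_q')'$ back into $d^q$, so applying the forward implication to the homomorphism $d_q'$ (between the reflexive spaces $E_{q+1}'$ and $E_q'$) shows that $d^q=(d_q')'$ is a topological homomorphism. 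Assembling these one-directional implications closes the cycle; the details are carried out in \cite[Theorem~1.6]{LL99}.
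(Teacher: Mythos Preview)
The paper does not give its own proof of this theorem: it is quoted verbatim as \cite[Theorem~1.6]{LL99} and used as a black box in Section~7, so there is no argument in the paper to compare your proposal against. Your outline is the standard route to this abstract duality result and is essentially what one finds in Laurent-Thi\'ebaut--Leiterer; the only place to tighten the exposition is the passage for $\mathrm{(iii)}\Rightarrow\mathrm{(ii)}$, where you should make explicit that the isomorphism $\operatorname{Im}d^q\cong E^q/\ker d^q$ of \emph{topological} vector spaces is exactly the hypothesis that $d^q$ is a topological homomorphism, after which completeness of the quotient forces closedness of the image.
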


A continuous linear map $\psi$ between topological vector spaces $L_1$ and $L_2$ is called a {\it topological homomorphism} if for each open subset $U\subset L_1$, the image $\psi(U)$ is an open subset of $\psi(L_1)$. It is known that if $L_1$ is a  Fr\'echet  space, $\psi$ is a topological homomorphism if and only if $\psi(L_1)$ is closed   \cite[P. 77]{SW99}. See e.g. \cite{Brinkschulte10,HN00,LL99} for applications of abstract duality theorem to $\overline{\partial}$- or $\overline{\partial}_b$-complex. We adapt their methods to the boundary complex   of   the $k$-Cauchy-Fueter complex.

For a complex vector space $V$, let $\mathscr E(\mathcal{N}_{\mathbb{S}},V)$ be the space of  smooth  $V $-valued functions with the topology of uniform convergence on compact
sets of the functions and all their derivatives. Endowed with this topology
$\mathscr E(\mathcal{N}_{\mathbb{S}},V)$  is a Fr\'echet-Schwartz space.
 Let $\mathscr D (\mathcal{N}_{\mathbb{S}},V)$ be
the space
of compactly supported elements of $\mathscr E(\mathcal{N}_{\mathbb{S}},V)$.
For   a compact subset $K$ of $\mathcal{N}_{\mathbb{S}}$, let  $\mathscr D_K(\mathcal{N}_{\mathbb{S}},V)$
the closed subspace of
 $\mathscr E(\mathcal{N}_{\mathbb{S}},V)$  with support in $K$ endowed with the induced topology.
Choose $\{K_n\}_{n\in N}$ an exhausting sequence of compact subsets of $\mathcal{N}_{\mathbb{S}}$. Then
$\mathscr D (\mathcal{N}_{\mathbb{S}},V)=\cup_{n=1}^\infty \mathscr D_{K_n}(\mathcal{N}_{\mathbb{S}},V)$. We put on $\mathscr D (\mathcal{N}_{\mathbb{S}},V)$ the strict inductive limit
topology defined by the Fr\'echet-Schwartz spaces  $ \mathscr D_{K_n}(\mathcal{N}_{\mathbb{S}},V)$ \cite{LL99}.    Denote  by $[\mathscr E(\mathcal{N}_{\mathbb{S}} ,V)]'$ the  dual of $\mathscr E(\mathcal{N}_{\mathbb{S}} ,V)$ and by $[\mathscr D(\mathcal{N}_{\mathbb{S}} ,V)]'$ the  dual of $\mathscr D(\mathcal{N}_{\mathbb{S}} ,V)$.

\subsection{The generalization of Malgrange's vanishing theorem and  Hartogs-Bochner extension for $k$-CF functions }
   On a right-type
  group, we have the dual differential complex $\widehat{\mathscr{V}}_\bullet $:
 \begin{equation} \label{eq:dual-complex}
0\longleftarrow \Gamma(\mathcal{N}_{\mathbb{S}} ,\widehat{\mathscr{V}}_0 )\xleftarrow{\widehat{\mathscr{D}} _0 } \cdots\longleftarrow \Gamma(\mathcal{N}_{\mathbb{S}} ,\widehat{\mathscr{V}}_j )\xleftarrow{\widehat{\mathscr{D}} _j }
    \cdots\xleftarrow{\widehat{\mathscr{D}} _{2n-2} }
   \Gamma(\mathcal{N}_{\mathbb{S}} ,\widehat{\mathscr{V }}_{2n-1} )\longleftarrow 0.
 \end{equation}
 where $\widehat{\mathscr{V}}_j=\widehat{\mathscr{V}}_j^{(1)}\oplus \widehat{\mathscr{V}}_j^{(2)}$ with
 \begin{equation}\label{eq:Q-dual}\begin{array}{llll}
\widehat{\mathscr{V}}_j^{(1)}:=&\mathscr{V} ^{\sigma_j,2n-\tau_j } ,\qquad\qquad & \widehat{\mathscr{V}}_j^{(2)}:=\mathscr{V} ^{\sigma_{j+1 },2n+1-\tau_j  }  ,\qquad    & {\rm if}\quad j \neq k  ,\\\widehat{\mathscr{V}}_k^{(1)}:=&\mathscr{V} ^{0,2n-k } ,\qquad\qquad & \widehat{\mathscr{V}}_k^{(2)}:=\mathscr{V} ^{0,2n-k }   ,
\end{array}\end{equation}
and $\widehat{\mathscr{V}}_0^{(2)}=\emptyset$. Then, $[\mathscr E(\mathcal{N}_{\mathbb{S}} ,\mathscr{V}_j)]'$ (respectively, $[\mathscr D(\mathcal{N}_{\mathbb{S}} ,\mathscr{V}_j)]'$) can be identified with $ \mathscr E'(\mathcal{N}_{\mathbb{S}} ,\widehat{\mathscr{V}}_j)$ (respectively, $ \mathscr D'(\mathcal{N}_{\mathbb{S}} ,\widehat{\mathscr{V}}_j) $).

  The dual can be realization as follows (we only need $j=0,1$ here).
For  $F\in\mathscr{E} (\mathcal{N}_{\mathbb{S}} , \odot^{k }\mathbb{C}^2\otimes\Lambda^{2n }\mathbb{C}^{2n }  )$,
 write it as
$
    (F^{\mathbf{A}' })
 $ with $F^{\mathbf{A}' }\in\mathscr{E} (\mathcal{N}_{\mathbb{S}} , \Lambda^{2n }\mathbb{C}^{2n }  )$ (see the Appendix for this notation).
 It  defines a functional on $\mathscr D(\mathcal{N}_{\mathbb{S}} ,\mathscr{V}_0)$ by
  \begin{equation}\label{eq:integral-functional}\langle F,\varphi\rangle :=\int  F^{\mathbf{A}' }\varphi_{  \mathbf{A}'}   ,
\end{equation}
 for $\varphi\in\mathscr D(\mathcal{N}_{\mathbb{S}} ,\mathscr{V}_0)$. Similarly, for  $\mathbb{F}=(\mathbb{F}_1,$ $\mathbb{F}_2)\in\mathscr{E} (M, \widehat{\mathscr{V}}_ 1 )$,
 write $\mathbb{F}=(      \mathbb{F}_1^{\mathbf{A}' } ,   \mathbb{F}_2^{\mathbf{B}' })
  $ with $     \mathbb{F}_1^{\mathbf{A}' }\in  \mathscr{E} (\mathcal{N}_{\mathbb{S}} ,$ $  \Lambda^{2n-1 }\mathbb{C}^{2n }  ),   \mathbb{F}_2^{\mathbf{B}' }\in  \mathscr{E} (\mathcal{N}_{\mathbb{S}} ,  \Lambda^{2n }\mathbb{C}^{2n }  )
  $,  $|\mathbf{A}'|=\sigma_1=k-1$, $|\mathbf{B}'|=\sigma_2=k-2$. It  defines a functional on $\mathscr D(\mathcal{N}_{\mathbb{S}} ,\mathscr{V}_1)$ by
  \begin{equation}\label{eq:integral}\langle  {\mathbb{F}}, \psi\rangle :=\int \left(\mathbb{F}_1^{\mathbf{A}' } \wedge  \psi^1_{\mathbf{A}' } +\mathbb{F}_2^{\mathbf{B}' }    \psi_{\mathbf{B}' }^2\right)  ,
\end{equation}for any $\psi=(  \psi^1_{\mathbf{A}' }, \psi_{\mathbf{B}' }^2)\in\mathscr D(\mathcal{N}_{\mathbb{S}} ,\mathscr{V}_1)$,
since $\mathbb{F}_1^{\mathbf{A}' } \wedge  \psi^1_{\mathbf{A}' } +\mathbb{F}_2^{\mathbf{B}' }   \psi_{\mathbf{B}' }^2$ is an element of $\mathscr{D} (\mathcal{N}_{\mathbb{S}} ,  \Lambda^{2n }\mathbb{C}^{2n }  )$.

Recall that on a right-type
  group,
$\mathscr D_0:\mathscr E(\mathcal{N}_{\mathbb{S}} ,\mathscr{V}_ 0) \rightarrow \mathscr E(\mathcal{N}_{\mathbb{S}} ,\mathscr{V}_1)$ is given by
\begin{equation*} \begin{split}{\mathscr{D} }_{0}f  =&\left(
 \partial_{A'}\mathfrak  d^{A '} f ,-\partial_{A'}\partial_{B'} \mathbf{T }^{A' B '}f \right)
\end{split} \end{equation*}
for $f\in
\mathscr E(\mathcal{N}_{\mathbb{S}} ,\mathscr{V}_ 0) $ by \eqref{eq:bdoperator<}.
Then for $\mathbb{F}=(\mathbb{F}_1,\mathbb{F}_2)\in
\mathscr D(\mathcal{N}_{\mathbb{S}} ,\widehat{\mathscr{V}}_ 1) $
 \begin{equation*}\begin{split}  \langle\widehat{{\mathscr{D} }}_{0}{\mathbb{F}}, f \rangle&=\langle{\mathbb{F}},  \mathscr{D}_{0}f \rangle = \int  \mathbb{F}_1^{\mathbf{A}' } \wedge  \mathfrak d^{A'}f_{ A'\mathbf{A}'} -\int  \mathbb{F}_2^{\mathbf{B}' }  \mathbf{T }^{A' B '}f _{ A'B '\mathbf{B}'}    \\
 &=-\int   \mathfrak d^{(A'}\mathbb{F}_1^{\mathbf{A}') }     f_{ A'\mathbf{A}'}+\int   \mathbf{T }^{(A' B '}\mathbb{F}_2^{\mathbf{B}') }f_{ A'B '\mathbf{B}'}
           \end{split} \end{equation*}by  using Stokes-type formula in Lemma \ref{lem:stokes} and symmetrization.
         So we get
           \begin{equation}\label{eq:widehat-D}
            \left(\widehat{ \mathscr{D }}_{0}{\mathbb{F}}\right)^{ A' B ' \mathbf{B}'  } =-  \mathfrak  d^{(A'}\mathbb{F}_1^{B'\mathbf{B}') }  +  \mathbf{T }^{(A' B '}\mathbb{F}_2^{\mathbf{B}') }.
           \end{equation}

  M\'etivier proved the following theorem for analytic hypoellipticity.
\begin{thm}\label{elliptic}{\rm (\cite[Theorem 0]{Metivier2})}
Let $P$ be a homogeneous  left invariant differential operator on a nilpotent Lie group  satisfying condition (H). Then  $P$ is analytic hypoelliptic if and only if
  $P$ is $C^\infty$ hypoelliptic.
 \end{thm}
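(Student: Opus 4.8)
The plan is to reconstruct M\'etivier's argument, which routes the question through the unitary representation theory of the group. The starting point is the Rockland--Helffer--Nourrigat criterion: a homogeneous left invariant differential operator $P$ on a graded nilpotent Lie group is $C^\infty$ hypoelliptic if and only if it satisfies the \emph{Rockland condition}, that $d\pi(P)$ be injective on the smooth vectors of $\pi$ for every nontrivial irreducible unitary representation $\pi$. Thus the hypothesis ``$P$ is $C^\infty$ hypoelliptic'' is equivalent to this injectivity statement; conversely a nonzero smooth null vector of some $d\pi(P)$ would, via a matrix coefficient $g\mapsto\langle\pi(g)v,w\rangle$, produce a global smooth solution of $Pu=0$ which is not real analytic, so the Rockland condition is equally necessary for analytic hypoellipticity. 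Hence both properties in the statement are equivalent to the \emph{same} representation-theoretic condition, and the real content is to show that the Rockland condition forces analytic, not merely $C^\infty$, regularity of solutions.

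Here is where condition (H) enters. On a step two group it guarantees that, apart from the characters of $\mathfrak g/[\mathfrak g,\mathfrak g]$, every nontrivial irreducible representation is of Schr\"odinger type $\pi_\lambda$ on $L^2(\mathbb R^m)$ with $2m=\dim\mathfrak g_1$, indexed by $\lambda\in\mathfrak g_2^\ast\setminus\{0\}$, because $B_\lambda$ is non-degenerate for every such $\lambda$; there is no intermediate stratum of representations to handle. Performing the partial Fourier transform of $P$ in the central variables therefore converts it into a smooth family $\{P_\lambda\}_{\lambda\neq0}$ of differential operators on $\mathbb R^m$ with polynomial coefficients, depending on $\lambda$ in a manner homogeneous under the parabolic dilations. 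After the symplectic normalization of $B_\lambda$ and rescaling to $|\lambda|=1$ one obtains a single model operator $P_1$ in a Shubin-type (harmonic oscillator) class; the Rockland condition says precisely that $P_1$ is injective, and, in this globally elliptic calculus, that it is invertible with inverse in the corresponding analytic-regularizing class.

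The last step is the analytic bootstrap. Using the analytic regularity theory for globally (Shubin) elliptic operators on $\mathbb R^m$ one gets that $P_1^{-1}$ has a symbol with Cauchy-type (factorial) bounds; reinstating the parameter $\lambda$ and inverting the central Fourier transform, one glues these inverses into a left parametrix $E$ for $P$ whose Schwartz kernel is real analytic off the diagonal. Feeding in $Pu=f$ with $f$ real analytic then yields $u=Ef+(\text{analytic correction})$, hence $u$ real analytic, which is analytic hypoellipticity. The main obstacle is exactly the uniform control of $P_\lambda^{-1}$ and all its $\lambda$-derivatives with the correct factorial growth and the correct homogeneous behaviour as $|\lambda|\to0$, so that the $\lambda$-integral defining $E$ converges to an analytic kernel; condition (H) is what rules out any degeneration worse than the expected homogeneous one at the boundary set $\lambda=0$, and verifying these estimates is the heart of M\'etivier's proof.
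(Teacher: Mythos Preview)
The paper does not contain its own proof of this theorem: it is quoted verbatim as an external result, \cite[Theorem 0]{Metivier2}, and then applied (together with the subelliptic estimate of Corollary~\ref{cor:regularity}) to obtain Corollary~\ref{cor:anal}. There is therefore nothing in the paper to compare your proposal against.

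That said, your sketch is a reasonable high-level outline of the strategy in M\'etivier's original paper. A few cautions if you intend to flesh it out. First, the Rockland--Helffer--Nourrigat equivalence you invoke is a substantial theorem in its own right, and M\'etivier's 1980 argument for step-two groups under condition~(H) does not actually route through it; rather, he works directly with the partial Fourier transform in the central variables and exploits condition~(H) to guarantee that the resulting family $P_\lambda$ is globally elliptic in the Shubin sense for every $\lambda\neq 0$. Second, your claim that a nonzero smooth null vector of $d\pi(P)$ yields a non-analytic global solution via a matrix coefficient needs care: the matrix coefficient is analytic in $g$ whenever both vectors are analytic vectors for $\pi$, so one has to choose $w$ carefully to defeat analyticity. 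Third, the ``main obstacle'' you identify---uniform analytic-type control of $P_\lambda^{-1}$ as $\lambda$ varies, including the degeneration at $\lambda\to 0$---is indeed the technical core, and you should expect most of the work to lie there rather than in the representation-theoretic framing.
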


\begin{cor}\label{cor:anal} On a   right-type
  group $ \mathcal{ N}_{\mathbb{S}}$ satisfying condition (H),
$\Delta_b$ is analytic hypoelliptic.
\end{cor}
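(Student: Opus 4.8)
The plan is to reduce the statement to M\'etivier's Theorem~\ref{elliptic}, whose hypotheses require three things about $\triangle_b$: that it is a homogeneous left invariant differential operator on the nilpotent Lie group $\mathcal{N}_{\mathbb{S}}$, that $\mathcal{N}_{\mathbb{S}}$ satisfies condition (H) (which is assumed), and that $\triangle_b$ is $C^\infty$ hypoelliptic. So the proof will consist in checking the first and third points and then quoting the theorem.

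First I would record that $\triangle_b=-\sum_{a=1}^{4n}X_a^2$ is left invariant: as noted in the proof of Proposition~\ref{prop:Subelliptic-estimate}, one may replace the vector fields $X_b$ of \eqref{eq:X-b} by the genuinely left invariant vector fields $X_b=\partial_{x_b}+2\sum_{\beta,a}B^\beta_{ab}x_a\partial_{t_\beta}$ without changing their brackets, hence without changing the associated SubLaplacian. Next I would observe that condition (H) forces $\mathcal{N}_{\mathbb{S}}$ to be stratified of step two: if $[\mathfrak{g}_1,\mathfrak{g}_1]$ were a proper subspace of $\mathfrak{g}_2$, one could choose $\lambda\in\mathfrak{g}_2^*\setminus\{0\}$ annihilating it, and then the form $B_\lambda(Y,Y')=\langle\lambda,[Y,Y']\rangle$ of condition (H) would be identically zero, a contradiction. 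Consequently $\mathcal{N}_{\mathbb{S}}$ carries the canonical dilations $\delta_r$, acting by $r$ on $\mathfrak{g}_1$ and by $r^2$ on $\mathfrak{g}_2$, with respect to which each $X_a\in\mathfrak{g}_1$ is homogeneous of degree one; therefore $\triangle_b$ is homogeneous of degree two.

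For the $C^\infty$ hypoellipticity I would invoke H\"ormander's theorem: since $\mathcal{N}_{\mathbb{S}}$ is stratified of step two, the first brackets of $X_1,\dots,X_{4n}$ already span $\operatorname{span}_{\mathbb{R}}\{\partial_{t_1},\partial_{t_2},\partial_{t_3}\}$, so $X_1,\dots,X_{4n}$ together with these brackets span the full tangent space at every point, H\"ormander's bracket condition holds, and the sum of squares $\triangle_b$ is $C^\infty$ hypoelliptic. This is also implicit in the subelliptic estimate behind Proposition~\ref{prop:Subelliptic-estimate} and the regularity statement of Corollary~\ref{cor:regularity}. With all three hypotheses verified, Theorem~\ref{elliptic} yields that $\triangle_b$ is analytic hypoelliptic, which is the assertion.

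The one step that needs genuine care, rather than being a direct citation, is the passage from condition (H) to stratification, and hence to the existence of the dilations that make the word ``homogeneous'' meaningful in Theorem~\ref{elliptic}; everything else is H\"ormander's theorem together with the cited result of M\'etivier, so I do not anticipate a serious obstacle.
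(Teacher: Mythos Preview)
Your proposal is correct and takes essentially the same approach as the paper: invoke M\'etivier's Theorem~\ref{elliptic} and supply $C^\infty$ hypoellipticity of $\triangle_b$. The paper's own proof is a one-line citation of Theorem~\ref{elliptic} together with the subelliptic estimate \eqref{eq:Subelliptic-estimate-r} from Corollary~\ref{cor:regularity}; you spell out more of the background (left invariance, homogeneity, the implication condition~(H) $\Rightarrow$ stratified $\Rightarrow$ H\"ormander's bracket condition), but the logical skeleton is identical.
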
It follows from Theorem \ref{elliptic}
and subelliptic estimate \eqref{eq:Subelliptic-estimate-r} in Corollary \ref{cor:regularity}.

 \begin{thm}\label{thm:closed} On a   right-type
  group $ \mathcal{ N}_{\mathbb{S}}$ satisfying condition (H), $\mathscr{D  }_{0}:\mathscr E'(\mathcal{N}_{\mathbb{S}} ,\mathscr{V}_0)\rightarrow \mathscr E'(\mathcal{N}_{\mathbb{S}} ,\mathscr\mathcal {V}_1)$ and  $\mathscr{D  }_{0}:\mathscr D(\mathcal{N}_{\mathbb{S}} ,\mathscr\mathcal {V}_0)\rightarrow \mathscr D(\mathcal{N}_{\mathbb{S}} ,\mathscr\mathcal {V}_1)$ have closed
range.
 \end{thm}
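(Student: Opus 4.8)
The plan is to deduce closedness of the range of $\mathscr{D}_0$ in both the $\mathscr E'$ and $\mathscr D$ settings from the subelliptic estimate of Proposition \ref{prop:Subelliptic-estimate}, refined in Corollary \ref{cor:regularity}, together with analytic hypoellipticity of $\triangle_b$ (Corollary \ref{cor:anal}). The key point is that, on a right-type group, $\mathscr{D}_0^{(1)*}\mathscr{D}_0^{(1)}$ is diagonal with entries equal to a positive multiple of $\triangle_b$ by \eqref{eq:diag}, so that $\mathscr{D}_0 f = 0$ together with compact support forces, via the subelliptic estimate and elliptic regularity of $\triangle_b$, that $f$ itself vanishes; the same mechanism gives local a-priori estimates that upgrade weak limits.

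First I would treat $\mathscr{D}_0:\mathscr E'(\mathcal{N}_{\mathbb S},\mathscr{V}_0)\to\mathscr E'(\mathcal{N}_{\mathbb S},\mathscr{V}_1)$. Suppose $g_n=\mathscr{D}_0 u_n\to g$ in $\mathscr E'$ with $u_n\in\mathscr E'(\mathcal{N}_{\mathbb S},\mathscr{V}_0)$; one may assume a fixed compact $K$ contains the supports (using a cutoff and that $\mathscr E'$ is the union of the $W^s_K$). By the Banach--Steinhaus theorem the $g_n$ lie in a bounded subset of some $W^s_K$, hence the $\mathscr{D}_0 u_n$ are bounded in $W^s$; Corollary \ref{cor:regularity} then gives $u_n\in W^{s+1/2}_K$ with
\begin{equation*}
c_{s,K}\|u_n\|_{s+1/2}^2\le C_{s,K}\|u_n\|_s^2+\|\mathscr{D}_0 u_n\|_s^2 .
\end{equation*}
The only missing ingredient to close this into a uniform bound is control of the lower-order term $\|u_n\|_s$; here is where I would invoke that $\ker\mathscr{D}_0$ restricted to compactly supported distributions is trivial — which follows from \eqref{eq:diag}, the subelliptic estimate, and analytic (hence $C^\infty$) hypoellipticity of $\triangle_b$, since a compactly supported solution of $\triangle_b u=0$ must be zero. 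A standard functional-analytic argument (if no uniform bound held, normalize $\|u_n\|_s=1$, extract a weakly convergent subsequence in $W^{s+1/2}_K$, which converges strongly in $W^s_K$ by compact embedding of Folland--Stein/Sobolev spaces on a fixed compact, to a limit $u$ with $\mathscr{D}_0 u=0$, $\operatorname{supp}u\subset K$, hence $u=0$, contradicting $\|u\|_s=1$) yields a uniform estimate $\|u_n\|_{s+1/2}\le C$. Then $u_n$ has a weakly convergent subsequence $u_n\to u$ in $W^{s+1/2}_K$, and by continuity $\mathscr{D}_0 u=g$, proving $g\in\operatorname{Im}\mathscr{D}_0$.

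For $\mathscr{D}_0:\mathscr D(\mathcal{N}_{\mathbb S},\mathscr{V}_0)\to\mathscr D(\mathcal{N}_{\mathbb S},\mathscr{V}_1)$ the argument is parallel: $\mathscr D$ is the strict inductive limit of the $\mathscr D_{K_n}$, so a convergent sequence $\mathscr{D}_0 u_n\to g$ lives in a single $\mathscr D_{K}=\mathscr E\cap\{\operatorname{supp}\subset K\}$, and the same subelliptic estimate (now in every $W^s$, i.e. giving smoothness of $u$ when $g$ is smooth, by hypoellipticity of $\triangle_b$ off a fixed compact to localize the support of $u$) together with the triviality of compactly supported solutions of $\mathscr{D}_0 u=0$ produces uniform $C^\infty_K$ bounds on $u_n$; Ascoli then gives a limit $u\in\mathscr D_K$ with $\mathscr{D}_0 u=g$. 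Equivalently, one can invoke Theorem \ref{thm:duality}: having shown $\mathscr{D}_0$ on $\mathscr E'$ (the dual of the Fréchet--Schwartz complex $\mathscr E(\mathcal{N}_{\mathbb S},\widehat{\mathscr{V}}_\bullet)$) has closed range, assertion (iv)$\Leftrightarrow$(iii) transfers closedness to the transpose $\widehat{\mathscr{D}}_0$ on the $\mathscr D$-side, and symmetrically. The main obstacle is the uniform lower-order control $\|u_n\|_s\le C$: it rests entirely on the fact that a distribution of compact support annihilated by $\mathscr{D}_0$ must vanish, which in turn uses the diagonal form \eqref{eq:diag}, the subelliptic estimate, the hypoellipticity of $\triangle_b$ under condition (H), and the unique continuation consequence of condition (H); everything else is the routine weak-compactness/regularization machinery.
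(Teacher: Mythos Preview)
Your approach is essentially the same as the paper's: subelliptic estimate (Corollary \ref{cor:regularity}) plus analytic hypoellipticity of $\triangle_b$ under condition (H) to kill the lower-order term via a normalization/compactness contradiction, then extract a weak limit. One point needs tightening: you write ``one may assume a fixed compact $K$ contains the supports (using a cutoff\ldots)'', but a cutoff of $u_n$ would change $\mathscr{D}_0 u_n$. What the paper does---and what you should do---is argue \emph{first}, from $\operatorname{supp}(\mathscr{D}_0 u_n)\subset K$, that $u_n$ is $k$-CF on $\mathcal{N}_{\mathbb S}\setminus K$, hence (via \eqref{eq:diag} and analytic hypoellipticity of $\triangle_b$) real-analytic there and, being already compactly supported as an element of $\mathscr E'$, identically zero on each unbounded component of $\mathcal{N}_{\mathbb S}\setminus K$; this is what forces $\operatorname{supp}(u_n)\subset K$ uniformly and makes Corollary \ref{cor:regularity} applicable with $K$-dependent constants. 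You clearly have this mechanism in hand (you invoke it for the kernel triviality), but it must be used \emph{before} the a-priori estimate, not only inside the contradiction step.
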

\begin{proof}
Let $\{f_\nu \}$ be a sequence in
$ \mathscr E'(\mathcal{N}_{\mathbb{S}} ,\mathscr{V}_0) $ such that all $\mathscr{D  }_{0}f_\nu\rightarrow g$ in $ \mathscr E'(\mathcal{N}_{\mathbb{S}} ,\mathscr{V}_0) $. Then $\mathscr{D  }_{0}f_\nu $ are all supported in a fixed compact subset $K   $ for all $\nu$ and there is $s\in  \mathbb{Z}$
such that $\mathscr{D  }_{0}f_\nu\in W^s (\mathcal{N}_{\mathbb{S}} ,\mathscr{V}_0)$,  and $\mathscr{D  }_{0}f_\nu\rightarrow g $ in $ W^s (\mathcal{N}_{\mathbb{S}} ,\mathscr{V}_1)$ (cf. \cite{Schwartz}). Consequently,  $\mathscr{D  }_{0}f_\nu\equiv 0$   outside of $  K$, i.e. $f_\nu$ is $k$-CF on $\mathcal{N}_{\mathbb{S}} \setminus K$. We
can assume that $\mathcal{N}_{\mathbb{S}} \setminus K$ has no compact connected component. Since
$\Delta_b$ is analytic hypoelliptic by Corollary \ref{cor:anal}, and  each component of a $k$-CF function annihilalted by $\Delta_b$ by \eqref{eq:diag},
   $k$-CF functions $f_\nu |_{\mathcal{N}_{\mathbb{S}} \setminus K } $   vanish on   each connected component of
$\mathcal{N}_{\mathbb{S}} \setminus K$, and thus  $\{f_\nu \} $  are also supported in  $  K$.

This argument also implies that the estimate \eqref{eq:Subelliptic-estimate-r} in Corollary \ref{cor:regularity} holds with $C_{s, K}= 0$. If this is not true, there exist a sequence $h_\nu\in W^s(\mathcal{N}_{\mathbb{S}} ,\mathscr{V}_0)$ such that
\begin{equation*}
    \left\|\mathscr{D  }_{0} h_\nu\right\|_s^2<\frac 1\nu  \| h_\nu\|_{s+ \frac 12}^2.
\end{equation*}
By rescaling we can assume that $\| h_\nu\|_s  = 1$ for each $\nu$. By \eqref{eq:Subelliptic-estimate-r}
\begin{equation*}
   C_{s, K}  \geq \left(c_{s, K}-\frac 1\nu\right)\| h_\nu\|_{s+\frac 12}^2.
\end{equation*}Thus $\{h_\nu \}$ is bounded in the Sobolev space $  W^{s+\frac 12}(\mathcal{N}_{\mathbb{S}} ,\mathscr{V}_0)$.
By the well known compactness  of the inclusion $  W^{s+ \frac 12}(\mathcal{N}_{\mathbb{S}} ,\mathscr{V}_0)\subset  W^s(\mathcal{N}_{\mathbb{S}} ,\mathscr{V}_0)$, there is a subsequence
  that   converges to a function $h_\infty$ in $  W^s(\mathcal{N}_{\mathbb{S}} ,\mathscr{V}_0)$. We have
\begin{equation*}
   \| h_\infty\|_s  = 1,\qquad  \mathscr{D  }_{0}h_\infty =0.
\end{equation*}  Then $\triangle_b h_\infty=0$ and $h_\infty$ is compactly supported. So $h_\infty\equiv 0$ by analytic continuation, which contradicts to $\| h_\infty\|_s  = 1$.

By the estimate \eqref{eq:Subelliptic-estimate-r}  in Corollary \ref{cor:regularity}  with $C_{s, K}= 0$, we see that
  $\{f_\nu \}$ is uniformly bounded in $  W^{s+\frac 12}  (\mathcal{N}_{\mathbb{S}} ,\mathscr{V}_0)$, and hence contains a subsequence which
converges to a compactly   supported weak solution $ f\in W^{s }  (\mathcal{N}_{\mathbb{S}} ,\mathscr{V}_0)$ of  $\mathscr{D  }_{0}f=g$. Namely, the image of $\mathscr{D  }_{0} $ in $\mathscr E'(\mathcal{N}_{\mathbb{S}} ,\mathscr{V}_1)$ is  closed.  The closedness of the image of $\mathscr{D  }_{0} $ in $\mathscr D(\mathcal{N}_{\mathbb{S}} ,\mathscr{V}_1)$ follows from the   proved result for
$ \mathscr E'(\mathcal{N}_{\mathbb{S}} ,\mathscr{V}_1)$  and the hypoellipticity.
\end{proof}

 \begin{proof}[Proof of Theorem \ref{thm:Malgrange}] By Theorem \ref{thm:closed},   the sequences
  \begin{equation}\label{eq:D0-closed}\begin{split} 0\longrightarrow&  \mathscr  D(\mathcal{N}_{\mathbb{S}} ,\mathscr{V}_0)\xrightarrow{\mathscr{D  }_{0}} \mathscr D(\mathcal{N}_{\mathbb{S}} ,\mathscr{V}_1),\\
  0\longrightarrow&  \mathscr E'(\mathcal{N}_{\mathbb{S}} ,\mathscr{V}_0)\xrightarrow{\mathscr{D  }_{0}} \mathscr E'(\mathcal{N}_{\mathbb{S}} ,\mathscr{V}_1),
           \end{split} \end{equation}
both are exact and  have closed ranges.   Thus $\mathscr{D  }_{0}$'s in \eqref{eq:D0-closed} are topological homomorphisms \cite[P. 77]{SW99}. Now we can apply   abstract duality theorem
\ref{thm:duality} (vi) to sequences in \eqref{eq:D0-closed} to get exact sequences
 \begin{equation}\label{eq:D0-closed-dual}\begin{split} 0\longleftarrow&  \mathscr  D'(\mathcal{N}_{\mathbb{S}} ,\widehat{\mathscr{V}}_0)\xleftarrow{\widehat{\mathscr{D  }}_{0}} \mathscr D'(\mathcal{N}_{\mathbb{S}} ,\widehat{\mathscr{V}}_1),\\
  0\longleftarrow&  \mathscr E (\mathcal{N}_{\mathbb{S}} ,\widehat{\mathscr{V}}_0)\xleftarrow{\widehat{\mathscr{D  }}_{0}} \mathscr E (\mathcal{N}_{\mathbb{S}} , \widehat{\mathscr{V}}_1),
           \end{split} \end{equation}i.e.   $\widehat{\mathscr{D  }}_{0}$'s are  surjective. The result follows.
\end{proof}

We  have the following   Hartogs-Bochner extension   for $k$-CF functions.

\begin{thm}\label{thm:Hartogs-Bochner-CF} Suppose that the   right-type
  group $ \mathcal{ N}_{\mathbb{S}}$ satisfies  condition (H) and $k>0$. Let
 $\Omega  $ be a domain of $\mathcal{N}_{\mathbb{S}}$ with   smooth boundary such that  $ \mathcal{ N}_{\mathbb{S}}\setminus \overline{\Omega}$ connected, and let $\rho$ be a defining function (i.e. $\rho=0$ on $\partial \Omega$ and $\rho<0$ in $\Omega$) such that $|{\rm grad}\, \rho|=1$.  Suppose that $f $ is the restriction to $\partial\Omega$
 of a  $ C^ 2(\Omega,\mathscr{V}_0)$ function,
with $\mathscr{D  }_{0}f $ vanishing to the second order on $\partial\Omega$, and satisfies the momentum condition
 \begin{equation}\label{eq:momentum}
   \int_{\partial\Omega} \left[f_{ A'\mathbf{A}'}(G_1^{\mathbf{A}' })_A  Z_A^{A'}\rho - f_{ A'B '\mathbf{B}'}  \mathbf{T }^{(A' B '}\rho\,G_2^{\mathbf{B}') }\right]  dS=0
 \end{equation}for any $G\in\mathscr E(\mathcal{N}_{\mathbb{S}} ,\widehat{\mathscr{V}}_1)$.
Then there exists  a $k$-CF function $\widetilde{f}\in  C^ 2(\overline{\Omega},\mathscr{V}_0)$  such that $\widetilde{f} =f $ on $\partial\Omega$.
 \end{thm}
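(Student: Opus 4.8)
\emph{Strategy.} The plan is to reduce the problem to solving, with compact support, a nonhomogeneous $k$-CF equation on all of $\mathcal{N}_{\mathbb{S}}$, and to deduce this solvability from the abstract duality theorem together with the closed-range result of Theorem \ref{thm:closed}, in the spirit of the proof of Theorem \ref{thm:Malgrange}. Denote again by $F\in C^2(\overline{\Omega},\mathscr{V}_0)$ the given function with $F|_{\partial\Omega}=f$ and $\mathscr{D}_0F$ vanishing to the second order on $\partial\Omega$. Since $k>0$, $\mathscr{D}_0$ is a first-order differential operator by \eqref{eq:bdoperator<}; hence extending $\mathscr{D}_0F$ by $0$ to $\mathcal{N}_{\mathbb{S}}\setminus\overline{\Omega}$ produces, using the second-order vanishing, a function $g\in C^1(\mathcal{N}_{\mathbb{S}},\mathscr{V}_1)$ supported in $\overline{\Omega}$, so $g\in\mathscr{E}'(\mathcal{N}_{\mathbb{S}},\mathscr{V}_1)$. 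If one can find $u\in\mathscr{E}'(\mathcal{N}_{\mathbb{S}},\mathscr{V}_0)$ with $\mathscr{D}_0u=g$, then $u$ is compactly supported and $\widetilde f:=F-u|_{\overline{\Omega}}$ satisfies $\mathscr{D}_0\widetilde f=g-g=0$ on $\Omega$ and $\widetilde f|_{\partial\Omega}=f$, once we show $u|_{\partial\Omega}=0$. (If a smoother extension were wanted, one would first replace $F$ by a Whitney-type extension, as in Proposition \ref{prop:extension} adapted to $\partial\Omega\subset\mathcal{N}_{\mathbb{S}}$, so that $\mathscr{D}_0F$ is flat on $\partial\Omega$ and $g$ is smooth.)

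\emph{Solving the equation.} By Theorem \ref{thm:closed}, $\mathscr{D}_0\colon\mathscr{E}'(\mathcal{N}_{\mathbb{S}},\mathscr{V}_0)\to\mathscr{E}'(\mathcal{N}_{\mathbb{S}},\mathscr{V}_1)$ has closed range, hence is a topological homomorphism; it is moreover injective, a compactly supported $k$-CF element being $\triangle_b$-harmonic componentwise by \eqref{eq:diag}, so real-analytic by Corollary \ref{cor:anal}, so zero. Since this map is the transpose of $\widehat{\mathscr{D}}_0\colon\mathscr{E}(\mathcal{N}_{\mathbb{S}},\widehat{\mathscr{V}}_1)\to\mathscr{E}(\mathcal{N}_{\mathbb{S}},\widehat{\mathscr{V}}_0)$, the abstract duality theorem \ref{thm:duality} (equivalence of (iii) and (i), applied as in the proof of Theorem \ref{thm:Malgrange}) gives
\[
  \operatorname{Im}\bigl(\mathscr{D}_0\colon\mathscr{E}'(\mathcal{N}_{\mathbb{S}},\mathscr{V}_0)\to\mathscr{E}'(\mathcal{N}_{\mathbb{S}},\mathscr{V}_1)\bigr)
  =\bigl\{\,h\in\mathscr{E}'(\mathcal{N}_{\mathbb{S}},\mathscr{V}_1):\ \langle h,G\rangle=0\ \text{for all}\ G\in\ker\widehat{\mathscr{D}}_0\,\bigr\}.
\]
So it suffices to show $\langle g,G\rangle=0$ for every $G=(G_1,G_2)\in\mathscr{E}(\mathcal{N}_{\mathbb{S}},\widehat{\mathscr{V}}_1)$ with $\widehat{\mathscr{D}}_0G=0$. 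Here $\langle g,G\rangle$ is the integral over $\Omega$ of $\mathscr{D}_0F$ paired against $G$ as in \eqref{eq:integral-functional}--\eqref{eq:integral}; integrating by parts by means of the Stokes-type formula of Lemma \ref{lem:stokes} and using the explicit form \eqref{eq:widehat-D} of $\widehat{\mathscr{D}}_0$, together with the symmetrization of primed indices, the very computation that produced \eqref{eq:widehat-D} now yields, since $F$ is defined only on $\overline{\Omega}$,
\[
  \langle g,G\rangle=\int_\Omega\langle F,\widehat{\mathscr{D}}_0G\rangle\,dV+\int_{\partial\Omega}\Bigl[f_{A'\mathbf{A}'}(G_1^{\mathbf{A}'})_A\,Z_A^{A'}\rho-f_{A'B'\mathbf{B}'}\,\mathbf{T}^{(A'B'}\rho\,G_2^{\mathbf{B}')}\Bigr]dS.
\]
The first integral vanishes because $\widehat{\mathscr{D}}_0G=0$, and the second by the momentum condition \eqref{eq:momentum}. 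Hence $g\in\operatorname{Im}\mathscr{D}_0$, and the desired $u$ exists.

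\emph{Conclusion.} On $\mathcal{N}_{\mathbb{S}}\setminus\overline{\Omega}$ we have $\mathscr{D}_0u=g=0$, so $u$ is $k$-CF there; by \eqref{eq:diag} its components are annihilated by $\triangle_b$, which is analytic hypoelliptic by Corollary \ref{cor:anal} (this is where condition (H) is used), so $u$ is real-analytic on $\mathcal{N}_{\mathbb{S}}\setminus\overline{\Omega}$. As $u$ has compact support and $\mathcal{N}_{\mathbb{S}}\setminus\overline{\Omega}$ is connected and unbounded, analytic continuation forces $u\equiv0$ there; in particular $u|_{\partial\Omega}=0$, so $\widetilde f:=F-u|_{\overline{\Omega}}$ restricts to $f$ on $\partial\Omega$ and is $k$-CF on $\Omega$, hence real-analytic in the interior of $\Omega$. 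Finally, that $\widetilde f\in C^2(\overline{\Omega},\mathscr{V}_0)$ follows by rewriting $\mathscr{D}_0u=g$ componentwise — via \eqref{eq:diag} this becomes $\operatorname{diag}(\triangle_b,2\triangle_b,\ldots,\triangle_b)u=\mathscr{D}_0^{(1)*}g_1$ modulo the contribution of $g_2$ — and invoking the subelliptic regularity of Corollary \ref{cor:regularity} together with hypoellipticity up to $\partial\Omega$.

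\emph{Main obstacle.} The heart of the argument is the integration-by-parts identity for $\langle g,G\rangle$: one must keep careful track of the pairing conventions \eqref{eq:integral-functional}--\eqref{eq:integral}, of the symmetrizations of the primed indices, and of the boundary terms produced by Lemma \ref{lem:stokes}, so that the surviving boundary contribution is recognized as exactly the integrand of the momentum condition \eqref{eq:momentum}. A secondary technical point is to promote the regularity of the solution $u$ all the way up to $\partial\Omega$, so that the extension attains the stated $C^2$ smoothness.
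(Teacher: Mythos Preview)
Your proof is correct and follows essentially the same route as the paper: extend $\mathscr{D}_0 f$ by zero to get a compactly supported element, invoke the abstract duality theorem together with the closed-range result of Theorem~\ref{thm:closed} to characterize $\operatorname{Im}\mathscr{D}_0$ in $\mathscr{E}'$, verify the orthogonality condition via the Stokes-type integration by parts and the momentum hypothesis~\eqref{eq:momentum}, solve, and then use analytic hypoellipticity to kill the solution on $\mathcal{N}_{\mathbb{S}}\setminus\overline{\Omega}$. The only cosmetic difference is that you invoke the equivalence (iii)$\Leftrightarrow$(i) of Theorem~\ref{thm:duality} directly from closed range, whereas the paper passes through (v)$\Leftrightarrow$(i) using the vanishing of $H_0(\mathscr{E}(\mathcal{N}_{\mathbb{S}},\widehat{\mathscr{V}}_\bullet))$ established in Theorem~\ref{thm:Malgrange}; both are legitimate entry points into the same conclusion, and your remarks on the $C^2$ regularity up to the boundary are in fact slightly more explicit than the paper's.
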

 \begin{proof}
 Extending $\mathscr{D}_0 f$ by $0$ outside of $\overline{\Omega}$, we get a  $\mathscr{D}_1$-closed continuous element $  F \in \mathscr E'(\mathcal{N}_{\mathbb{S}} ,\mathscr{V}_1)$
supported in  $\overline{\Omega}$.  Since $H_0(\mathscr E (\mathcal{{N}}_{\mathbb{S}},  \widehat{\mathscr{V}}_\bullet))$ vanish by Theorem \ref{thm:Malgrange},
so it is separated. Thus,
we can apply    abstract duality theorem
\ref{thm:duality} (i) to the second sequences  in \eqref{eq:D0-closed} and \eqref{eq:D0-closed-dual} to see that for
 \begin{equation*}
     \operatorname{Im }   \mathscr{D}_0= \{\mathbb{F}\in \mathscr E'(\mathcal{N}_{\mathbb{S}} , {\mathscr{V}}_1 )| \langle \mathbb{F},G\rangle = 0 \text{ for any } G \in\ker  \widehat{\mathscr{D}}_0\}.
 \end{equation*}Consequently, we have $F\in  \operatorname{Im }   \mathscr{D}_0$, because   for any $G\in \ker  \widehat{\mathscr{D}}_0\subset \mathscr E (\mathcal{N}_{\mathbb{S}} ,\widehat{\mathscr{V}}_1 )$, we have
  \begin{equation*} \begin{split} \langle   F , G\rangle  =& \langle   \mathscr{D}_0 f , G\rangle =   \int \left( \mathfrak d^{A'}f_{ A'\mathbf{A}'}  \wedge G_1^{\mathbf{A}' }    +\mathbf{T }^{A' B '}f_{ A'B '\mathbf{B}'}  G_2^{\mathbf{B}' } \right)  \\
  =& \langle   f , \widehat{\mathscr{D}}_0 G\rangle +\int_{\partial\Omega} \left[f_{ A'\mathbf{A}'}(G_1^{\mathbf{A}' })_A  Z_A^{A'}\rho\, dS -f_{ A'B '\mathbf{B}'}  \mathbf{T }^{(A' B '}\rho\, G_2^{\mathbf{B}') }\right]  dS=0.
\end{split} \end{equation*}by using \eqref{eq:widehat-D} and Stokes-type formula \eqref{stokes}.
Hence, there exists a distribution $H \in \mathcal E'(\mathcal{N}_{\mathbb{S}} , \mathscr{V }_0)$ such that $ {F} = \mathscr{D}_0 H$.

Recall that a distribution in $  \mathcal{E}' $ always has   compact support. Now by the regularity of the $\mathscr{D}_0 $ operator in Corollary \ref{cor:regularity}, $H \in W^{1+\epsilon} (\mathcal{N}_{\mathbb{S}} , {\mathscr{V}}_0)$.
Then $H$  is $k$-regular  on the connected  open  set   $ \mathcal{N}_{\mathbb{S}} \setminus \overline{\Omega}$,
  since supp $F \subset \overline{\Omega}$. By real analyticity of  $k$-CF functions on this kind of groups, $H$ vanishes on $ \mathcal{N}_{\mathbb{S}} \setminus \overline{\Omega}$. Hence,
$F = f-H$ gives us  the required extension.
\end{proof}

\section{Appendix. }
The
{\it $\sigma$-th symmetric power} $ \odot^{\sigma
}\mathbb{C}^{2 } $ is a subspace of $ \otimes^{\sigma
}\mathbb{C}^{2 } $, and an element of $\odot^{\sigma} \mathbb{C}^2 $ is given by a $2^\sigma$-tuple  $
(  f_{\mathbf{A} ' })\in \otimes^{\sigma} \mathbb{C}^2$ with $\mathbf{A} '=A_1' \ldots A_\sigma'$ ($A_1' ,\ldots, A_\sigma'   =0',1' $)   such that
$  f_{\mathbf{A} '} $ is invariant under   permutations of
subscripts, i.e.
\begin{equation*}
     f_{A_1' \ldots A_\sigma'} =  f_{   A_{\pi(1)}'\ldots A_{\pi(\sigma)} '}
 \end{equation*} for any $\pi\in  {S}_\sigma$, the group of permutations on $\sigma$ letters.
The  symmetrization    of indices is   defined as
\begin{equation}\label{eq:sym-0}
     f_{\cdots (A_1'\ldots A_\sigma')\cdots}: =\frac 1 {\sigma!}\sum_{\pi\in  {S}_\sigma}  f_{\cdots  A_{\pi(1)}'\ldots A_{\pi(\sigma)}' \cdots}.
     \end{equation}
In particular, if $(f_{A_1'\ldots A_\sigma'})\in \otimes^\sigma \mathbb{C}^{2 }$ is symmetric in $A_2'\ldots A_\sigma'$, then we have
 \begin{equation}\label{eq:sym-1}
   f_{(A_1'\ldots A_\sigma')}=\frac 1k \left(f_{A_1'A_2'\ldots A_\sigma'}+\cdots+ f_{A_s'A_1'\ldots \widehat{A_s'}\ldots A_\sigma'}+\cdots+ f_{A_\sigma'A_1'\ldots A_{\sigma-1}' } \right).
 \end{equation}

For $j=0,1,\cdots,k-1 $, we write an element of $\Gamma(\mathbb{H}^{n+1}, \mathcal{{V}}_j)$ as a tuple $f=(f_{\mathbf{A}'})$ with symmetric indices $\mathbf{A}'=A_1'\cdots A_{ \sigma_j }'$  and $f_{  \mathbf{A}'}\in \Gamma(\mathbb{H}^{n+1},\wedge^{j}\mathbb{C}^{2(n+1)})$.
      $\mathcal{ {D}}_j $   is a differential operators of  first order  given by
 \begin{equation}\label{eq:D-j-1}
\left( \mathcal{{D}}_jf\right)_{\mathbf{A}'}= \sum_{A '=0',1'}d^{A '}f_{  A '\mathbf{A}'};
 \end{equation}
  while  for $j=k+1\cdots,2n+1 $,
 we write an element of $\Gamma(\mathbb{H}^{n+1}, \mathcal{{V}}_j)$ as a tuple $f=(f^{ \mathbf{A}'})$ with symmetric indices $\mathbf{A}' $  and  $f^{ \mathbf{A}'}\in \Gamma(\mathbb{H}^{n+1},\wedge^{j+1}\mathbb{C}^{2(n+1)})$.
 Then,
 \begin{equation}\label{eq:D-j-3}
    \left(\mathcal{ {D}}_{j}f\right)^{A '\mathbf{A}'}= d^{(A '}f^{\mathbf{A}')} ,
 \end{equation} where $(\cdots)$ is the symmetrization    of indices.

Define   isomorphisms
\begin{equation}\label{eq:Pi}
   \mathbf{ \dot{\Pi}}_j:  \Gamma(\mathbb{H}^{n+1},  \mathcal{{V}}_j) \longrightarrow \Gamma\left(\mathbb{H}^{n+1},  \mathcal{P}_{\sigma_j } (\mathbb{C}^2)  \otimes \wedge^{\tau_j}\mathbb{C}^{2n +2} \right).
\end{equation}
For $j=0,1,\cdots,k $,  the isomorphism $
   \mathbf{ \dot{\Pi}}_j$   is given by
 \begin{equation}\label{eq:dot-Pi1}
   \mathbf{ \dot{\Pi}}_j\left(f_{ \mathbf{A}  '  }\right) =  f_{ a } \mathbf{S}^{a}_{\sigma_j  }  ,
 \end{equation}
   where $f_{ a }:=f_{ \mathbf{A}  ' }$ with $o( \mathbf{A}  ')=a$, and the summation is taken over $ a=0,1,\ldots,\sigma_{j } $.
   If $j=k+1,\cdots,2n+1 $, the isomorphism $
   \mathbf{ \dot{\Pi}}_j$ is given by
\begin{equation}\label{eq:dot-Pi2}
    \mathbf{\dot{\Pi}}_j\left(f ^{\mathbf{A}  '  }\right)= f^{ \mathbf{A}  ' }s_{\mathbf{A}'}=f ^{a}  \widetilde{ \mathbf{S}}^{a}_{\sigma_j }\binom{\sigma_j  } a
\end{equation}
  where $f ^{a } =f^{ \mathbf{A}  ' }$ with $o( \mathbf{A}  ')=a$. Under this realization, operators $  \mathcal{D}_j$'s in  \eqref{eq:operator-k-CF}  in the $k$-Cauchy-Fueter complex  is the same as \eqref{eq:dot-Pi1}-\eqref{eq:dot-Pi2} by the following proposition.

\begin{prop} \label{prop:Pi} For $f\in \Gamma (\mathbb{H}^{n+1},  \mathcal{{V}}_j )$, we have
\begin{equation*}  \mathbf{ \dot{\Pi}}_{j+1}(  \mathcal{D}_jf)
 = \left\{
    \begin{array}{ll}  \partial_{A'} d^{ {A} '}\mathbf{ \dot{\Pi}}_j( f),\quad &{\rm if}\quad j=0, \ldots, k-1,\\
      s_{A'} d^{ {A} '}\mathbf{ \dot{\Pi}}_j( f) ,\quad &{\rm if}\quad  j=k+1,\ldots,2n+1.
    \end{array}\right.
\end{equation*}
\end{prop}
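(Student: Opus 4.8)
The plan is to reduce the identity to a purely combinatorial bookkeeping of how the symmetrizer interacts with the two elementary operations $f_{\mathbf A'}\mapsto f_{A'\mathbf A'}$ (raising/absorbing a primed index) on the one hand, and $\partial_{A'}$ or $s_{A'}$ acting on the polynomial basis $\mathbf S^a_\sigma$, $\widetilde{\mathbf S}^a_\sigma$ on the other. Since $d^{A'}$ commutes with everything in the primed-index slot — it only touches the $\wedge^\bullet\mathbb C^{2(n+1)}$ factor and the $\mathbb R^{4(n+1)}$ variables — the whole statement is an identity about the composition $\mathbf{\dot\Pi}_{j+1}\circ(\text{absorb }A')$ versus $(\partial_{A'}\text{ or }s_{A'})\circ\mathbf{\dot\Pi}_j$, with a ``label'' $d^{A'}$ carried along inertly. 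So the first step is to fix $j$ in the range $0,\dots,k-1$, write $f=(f_{\mathbf A'})$ with $|\mathbf A'|=\sigma_j=k-j$, and compute both sides coefficient-by-coefficient in the basis $\{\mathbf S^b_{\sigma_{j+1}}\}$, $\sigma_{j+1}=\sigma_j-1$.

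First I would handle the case $j=0,\dots,k-1$. By \eqref{eq:D-j-1}, $(\mathcal D_j f)_{\mathbf A'}=d^{0'}f_{0'\mathbf A'}+d^{1'}f_{1'\mathbf A'}$; since $f$ has symmetric primed indices this automatically produces a symmetric tuple, so no extra symmetrization is needed. Applying \eqref{eq:dot-Pi1} at level $j+1$: if $o(\mathbf A')=b$ then the $\mathbf S^b_{\sigma_{j+1}}$-component of the left-hand side is $d^{0'}f_b+d^{1'}f_{b+1}$, where I write $f_c$ for $f_{\mathbf A'}$ with $o(\mathbf A')=c$. On the right-hand side, $\mathbf{\dot\Pi}_j(f)=\sum_a f_a\mathbf S^a_{\sigma_j}$, and by \eqref{eq:partial-S} we have $\partial_{0'}\mathbf S^a_{\sigma_j}=\mathbf S^a_{\sigma_j-1}$, $\partial_{1'}\mathbf S^a_{\sigma_j}=\mathbf S^{a-1}_{\sigma_j-1}$; hence $\partial_{A'}d^{A'}\mathbf{\dot\Pi}_j(f)=\sum_a d^{0'}f_a\,\mathbf S^a_{\sigma_{j+1}}+\sum_a d^{1'}f_a\,\mathbf S^{a-1}_{\sigma_{j+1}}$, whose $\mathbf S^b_{\sigma_{j+1}}$-component is exactly $d^{0'}f_b+d^{1'}f_{b+1}$. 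These agree, which settles this range. The case $j=k+1,\dots,2n+1$ is the mirror image: here \eqref{eq:D-j-3} gives $(\mathcal D_j f)^{A'\mathbf A'}=d^{(A'}f^{\mathbf A')}$, so the symmetrization \eqref{eq:sym-1} is genuinely present and must be unwound. Using \eqref{eq:dot-Pi2}, both sides get expanded in the basis $\widetilde{\mathbf S}^a_\sigma$ with the binomial weights $\binom{\sigma_j}{a}$; the identity $s_{0'}\widetilde{\mathbf S}^a_\sigma=\widetilde{\mathbf S}^{a}_{\sigma+1}$, $s_{1'}\widetilde{\mathbf S}^a_\sigma=\widetilde{\mathbf S}^{a+1}_{\sigma+1}$ from \eqref{eq:partial-S'} converts multiplication by $s_{A'}$ into index shifts, and one checks that the binomial coefficients arising from symmetrizing $d^{(A'}f^{\mathbf A')}$ — namely that among the $\sigma_{j+1}$ positions the index $A'=1'$ can sit in $\binom{\sigma_{j+1}}{a}$ ways — combine with $\binom{\sigma_j}{a}$ to reproduce the weights on the other side via Pascal's rule $\binom{\sigma_{j+1}}{a}+\binom{\sigma_{j+1}}{a-1}=\binom{\sigma_j}{a}$.

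The main obstacle, and the only place real care is needed, is this combinatorial matching in the second range: one must verify that the normalization $\tfrac1{\sigma!}$ in the symmetrizer \eqref{eq:sym-0}, the binomial weights $\binom{\sigma_j}{a}$ built into the identification \eqref{eq:dot-Pi2}, and the shift rules \eqref{eq:partial-S'} conspire correctly, rather than leaving a spurious factor. I would do this bookkeeping once and for all by testing on a single monomial coefficient $f^{\mathbf A'}$ with a prescribed number of $1'$'s, tracking how many symmetrized terms land on each target index; Pascal's rule is exactly what makes it work, and it is the quaternionic-index analogue of the elementary fact that $\partial_{A'}$ is dual to $s_{A'}$ (remarked after \eqref{eq:RT-}). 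Everything else — the fact that $d^{A'}$ passes through, that symmetry of $f$ makes the first range symmetrizer-free, that the ranges $j<k$ and $j>k$ are handled by $\partial_{A'}$ and $s_{A'}$ respectively — is immediate from the definitions \eqref{eq:operator-k-CF}, \eqref{eq:D-j-1}, \eqref{eq:D-j-3} and the basis relations \eqref{eq:partial-S}, \eqref{eq:partial-S'}.
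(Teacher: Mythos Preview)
Your approach is essentially the paper's: expand both sides in the basis $\{\mathbf S^b_{\sigma_{j+1}}\}$ for $j<k$ and in $\{\widetilde{\mathbf S}^b_{\sigma_{j+1}}\binom{\sigma_{j+1}}{b}\}$ for $j>k$, and match coefficients using \eqref{eq:partial-S}, \eqref{eq:partial-S'}. The first range is carried out correctly and agrees with the paper line for line.

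In the second range your sketch has the right plan but one slip and one slightly muddled description. First, since $\sigma_{j+1}=\sigma_j+1$ for $j\geq k+1$, the Pascal identity reads $\binom{\sigma_j}{a}+\binom{\sigma_j}{a-1}=\binom{\sigma_{j+1}}{a}$, not the reverse as you wrote. Second, the actual mechanism is not a raw count of ``how many ways $A'=1'$ can sit'' but rather the weights in the symmetrization \eqref{eq:sym-1}: at $o(A'\mathbf A')=a+1$ the symmetrizer produces $\tfrac{\sigma_{j+1}-(a+1)}{\sigma_{j+1}}\,d^{0'}f^{a+1}+\tfrac{a+1}{\sigma_{j+1}}\,d^{1'}f^{a}$, and these fractions are precisely the ratios $\binom{\sigma_j}{a+1}/\binom{\sigma_{j+1}}{a+1}$ and $\binom{\sigma_j}{a}/\binom{\sigma_{j+1}}{a+1}$ that arise when you collect terms on the $s_{A'}d^{A'}$ side. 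That they sum to $1$ \emph{is} Pascal's rule, so your instinct was right; with the direction corrected and the symmetrizer weights identified as above, the argument goes through exactly as in the paper.
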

\begin{proof} Note that for $j=0, \ldots, k-1$,
 \begin{equation*} \begin{split} \partial_{A'}  d^{A'}\left(f_{ a } \mathbf{S}^{a}_{\sigma_j } \right)
 &  =    d^{0'} f_{ a } \mathbf{S}^{a}_{\sigma_{j+1 } }+ d^{1'} f_{ a } \mathbf{S}^{a-1}_{\sigma_{j+1 } } =   \left(d^{0'} f_{ b }+d^{1'} f_{b+1}\right ) \mathbf{S}^{b}_{\sigma_{j+1 }}  ,
\end{split}\end{equation*}by \eqref{eq:partial-S}, where $b$ is taken   over $0,1,\ldots,\sigma_{j+1}=\sigma_{j }-1$.
Apply the mapping  $\mathbf{ \dot{\Pi}}_{j+1}^{-1} $   to get
\begin{equation*}\begin{split}
  \left [\mathbf{ \dot{\Pi}}^{-1}_{j+1}\left(\partial_{A'} d^{ {A} '}\mathbf{ \dot{\Pi}}_j( f)\right)\right]_{ \mathbf{A}  '  } & = d^{0'} f_{0' \mathbf{A}  '  }+d^{1'} f_{1' \mathbf{A}  '  } .
\end{split}\end{equation*}

For $j=k+1,\cdots,2n+1$, noting that $\sigma_{j+1 }=\sigma_{j }+1$, we get
 \begin{equation*} \begin{split} s_{A'}  d^{A'}\left(f ^{ a } \widetilde{ \mathbf{S}}^{a}_{\sigma_j }  \binom{\sigma_j  } a\right)
 &  = \left(   d^{0'} f ^{ a } \widetilde{ \mathbf{S}}^{a}_{\sigma_j+1 }+    d^{1'} f ^{ a } \widetilde{ \mathbf{S}}^{a+1}_{\sigma_j+1 }\right) \binom{\sigma_j  } a \\&  =\left( \frac { \sigma_{j+1 }-(a+1)}{\sigma_{j+1 }}d^{0'} f ^{ a+1 } + \frac {a+1}{\sigma_{j+1 }}d^{1'} f ^{ a} \right) \widetilde{ \mathbf{S}}^{a+1}_{\sigma_{j+1 } } \binom{ {\sigma_{j+1 } }} {a+1}.
\end{split}\end{equation*}
Thus, for  $A'\mathbf{A}  '$ with $|A'\mathbf{A}  '|=\sigma_{j+1 }$ and $o(A'\mathbf{A}  ')=a+1$, we have
\begin{equation*}\begin{split}
     \left[\mathbf{ \dot{\Pi}}^{-1}_{j+1}\left(s_{A'} d^{ {A} '}\mathbf{ \dot{\Pi}}_j( f)\right)\right]^{ A'\mathbf{A}  '  } & =\frac { \sigma_{j+1 }-(a+1)}{\sigma_{j+1 }}  d^{0'}f^{\scriptstyle0'\ldots 0'\overbrace {\scriptstyle 1'  \ldots1'}^{a+1} }+ \frac {a+1}{\sigma_{j+1 }} d^{1'} f^{\scriptstyle0'\ldots 0'\overbrace {\scriptstyle 1' \ldots 1' }^{a } }  = d^{(A'}f^{  \mathbf{A}  ')  }
\end{split}\end{equation*}
by \eqref{eq:sym-1}.
\end{proof}

\end{document}